\numberwithin{equation}{section}
\numberwithin{equation}{section}
\newtheorem{theorem}{Theorem}[section]
\newtheorem{corollary}[theorem]{Corollary}
\newtheorem{lemma}[theorem]{Lemma}
\theoremstyle{definition}
\newtheorem{definition}[theorem]{Definition}
\newtheorem{example}[theorem]{Example}
\newtheorem{remark}[theorem]{Remark}
\newtheoremstyle{named}{}{}{\itshape}{}
{\bfseries}{.}{.5em}{\thmnote{#3}#1}
\theoremstyle{named}
\DeclareMathOperator{\End}{End} 
\DeclareMathOperator{\Dom}{Dom}
\DeclareMathOperator{\Spec}{Spec} 
\DeclareMathOperator{\Ker}{Ker} 
\DeclareMathOperator{\supp}{supp}
\DeclareMathOperator{\Ph}{Ph}
\newcommand{\R}{\mathbb{R}}
\newcommand{\C}{\mathbb{C}}
\newcommand{\N}{\mathbb{N}}
\newcommand{\Z}{\mathbb{Z}}
\newcommand{\ol}{\overline}
\def\ker{{\rm Ker}~}
\def\Re{\mathrm{Re}}
\def\Im{\mathrm{Im}}
\def\cC{\mathscr{C}}
\def\cCc{\mathscr{C}^\infty_c}
\begin{document}
\title[Semi-classical spectral asymptotics of Toeplitz operators]
{Semi-classical spectral asymptotics of Toeplitz operators on CR manifolds}

\author[Hendrik Herrmann]{Hendrik Herrmann}
\address{Bergische Universit\"at Wuppertal, 
Fakult\"at 4 - Mathematik und Naturwissenschaften, 
Gau{\ss}stra{\ss}e 20, 42119 Wuppertal, Germany}
\thanks{}
\email{hherrmann@uni-wuppertal.de or post@hendrik-herrmann.de}

\author[Chin-Yu Hsiao]{Chin-Yu Hsiao}
\address{Institute of Mathematics, Academia Sinica, Astronomy-Mathematics Building, 
No. 1, Sec. 4, Roosevelt Road, Taipei 10617, Taiwan}
\thanks{Chin-Yu Hsiao was partially supported by the Taiwan Ministry 
of Science and Technology projects  108-2115-M-001-012-MY5, 
109-2923-M-001-010-MY4. George Marinescu and Wei-Chuan Shen are 
partially supported by the DFG funded projects
SFB/TRR 191 "Symplectic Structures in Geometry, Algebra and Dynamics"
(Project-ID 281071066-TRR 191), 
and the ANR-DFG project QuaSiDy (Project-ID 490843120). 
Hendrik Herrmann is partially supported by the 
ANR-DFG project QuaSiDy (Project-ID 490843120).
}
\email{chsiao@math.sinica.edu.tw or chinyu.hsiao@gmail.com}
	
\author[George Marinescu]{George Marinescu}
\address{Universit{\"a}t zu K{\"o}ln,  Mathematisches Institut,
Weyertal 86-90, 50931 K{\"o}ln, Germany
\newline\mbox{\quad}\,Institute of Mathematics `Simion Stoilow', 
Romanian Academy, Bucharest, Romania}
\thanks{}
\email{gmarines@math.uni-koeln.de}
	
\author[Wei-Chuan Shen]{Wei-Chuan Shen}
\address{Universit{\"a}t zu K{\"o}ln,  Mathematisches Institut,
Weyertal 86-90,   50931 K{\"o}ln, Germany}
\email{wshen@uni-koeln.de}
		
\date{\today}

\begin{abstract}
Let $X$ be a compact strictly pseudoconvex embeddable CR manifold and let
$T_P$ be the Toeplitz operator on $X$ associated with a first order
pseudodifferential operator $P$. We consider the operator $\chi_k(T_P)$ 
defined by functional
calculus of $T_P$, where $\chi$ is a smooth function
with compact support in the positive real line and 
$\chi_k(\lambda):=\chi(k^{-1}\lambda)$. We
show that $\chi_k(T_P)$ admits a full asymptotic expansion as 
$k\to+\infty$. As applications, we obtain several CR analogues of results concerning 
the high powers of line bundles in complex geometry. In particular, 
we establish a Kodaira type embedding theorem, 
Tian's convergence theorem and an 
embedding theorem of strictly pseudoconvex CR manifolds 
into perturbed spheres.
\end{abstract}
\maketitle
\tableofcontents	
\newpage
\section{Introduction}
The study of reproducing kernels 
in several complex variables plays an important role in
several fields of mathematics. 
The Szeg\H{o} kernel is is the reproducing kernel 
of the space of square integrable CR functions on a CR manifold.
Our starting point is a fundamental result by 
Boutet de Monvel--Sj\"ostrand \cite[(1.5) Th\'eor\`eme]{BouSj75}, 
showing that the Szeg\H{o} kernel of a compact strictly pseudoconvex
CR manifold of dimension greater or equal to five
is a complex Fourier integral operator (see \eqref{eq:PiFIO}). 
This result has many applications in complex geometry and 
geometric quantization. 
It can be used to produce many smooth 
CR functions and is related to a global CR embedding 
theorem by Boutet de Monvel~\cite{Bou75}, 
cf.~also \cite[\S 12]{ChSh01}, stating that
for a compact strictly pseudoconvex CR 
manifold $X$ of dimension greater or equal to five
there are global smooth CR functions $f_1,\ldots,f_N$ 
such that the map
\begin{equation}\label{e-gue221208yyd}
\Phi: X\rightarrow\mathbb C^N,\quad
x\in X\rightarrow(f_1(x),\ldots,f_N(x))\in\mathbb C^N,
\end{equation}
is an embedding. Boutet de Monvel's embedding theorem 
is of very general nature.
For the study of specific CR geometry problems, 
such as the stability of CR structures or the existence of spherical CR embeddings, 
we need to refine the embedding theorem, 
and find adapted embeddings \eqref{e-gue221208yyd}.
In K\"ahler geometry 
similar problems have been well studied using semi-classical 
Bergman kernel asymptotic 
expansions~\cite{Cat99,DLM06, HM14, MM07, Zel98} 
and the Kodaira embedding theorem. Therefore, we believe it is 
important to consider a semiclassical version of the 
Boutet de Monvel and Sj\"ostrand's result and a Kodaira-type embedding 
theorem for strictly pseudoconvex CR manifolds. 
In this paper, we achieve these semi-classical theorems 
by studying the weighted spectral projections 
of first-order elliptic self-adjoint Toeplitz operators involving packages of eigenvalues
indexed by a semi-classical parameter $k$ drifting to the right on 
the real axis when $k\to+\infty$, cf.~Theorem 
\ref{thm:ExpansionMain}. 
In the following we will formulate the most important results.
	
Let $(X, T^{1,0}X)$ be an orientable  
compact strictly pseudoconvex Cauchy--Riemann (CR) manifold
of hypersurface type and of dimension $2n+1$, $n\geq1$, 
where $T^{1,0}X$ denotes the CR structure of $X$. 
Let $J\in \End(HX)$ be the complex structure map on the 
associated Levi distribution 
$HX={\rm Re\,}(T^{1,0}X)\subset TX$. 
Let $\xi\in\cC^\infty(X,T^*X)$ be a contact form on $X$ 
such that the Levi form
$\mathcal{L}=\frac12d\xi(\cdot,J\cdot)$ 
is positive definite and $dV_{\xi}:=\frac{2^{-n}}{n!}\xi\wedge \left(d\xi\right)^n$ 
be the volume form induced by $\xi$. Let $\xi_x:=\xi(x)$, $x\in X$.
Let $dV$ be any volume form on $X$ and consider
$L^2(X)=L^2(X,dV)$ the associated space of 
of square integrable functions. 
	
Let $\overline{\partial}_b$ be the tangential Cauchy--Riemann
operator on $X$. We assume that the Kohn Laplacian 
$\Box_b^{(0)}=\overline{\partial}^*_b\overline{\partial}_b$ has closed range in 
$L^2$. We recall that in this case, according to Boutet de Monvel's proof 
\cite{Bou74,Bou75} and Kohn's argument \cite{Koh86}, 
for all compact strictly pseudoconvex CR manifolds,
the closed range condition is equivalent to the CR embeddability of 
$X$ into some $\mathbb{C}^N$. We also recall that the closed range condition holds 
for any compact strictly pseudoconvex CR manifold when 
$n\geq 2$, cf.~\cite{Bou75}, or when $n=1$ and $X$ 
admits a transversal CR $\R$-action, cf.~\cite{Lem92,MY07}. 
	
We denote the space of square integrable CR functions by 
\begin{equation}\label{eq:h0b}
H^0_b(X):=\{u\in L^2(X):\overline\partial_b u=0\},
\end{equation} 
where $\overline\partial_b u$ is defined in the sense of currents. 
It equals $\ker\Box_b^{(0)}$ and is a closed subspace of 
$L^2(X)$. The Szeg\H{o} projection is the 
orthogonal projection $\Pi:L^2(X)\to H^0_b(X)$,
and the associated Schwartz kernel $\Pi(x,y)\in\mathscr{D}'(X\times X)$
is called the Szeg\H{o} kernel. If $\Box_b^{(0)}$ has
closed range in $L^2$, then $\Pi$ maps $\cC^\infty(X)$
into $\cC^\infty(X)$, cf.\ \cite[Theorem 1.2]{Hs10} for example.

Let $P\in L^1_\mathrm{cl}(X)$ be a first-order formally 
self-adjoint classical pseudodifferential operator and  let
$T_P:=\Pi P\Pi:\cC^\infty(X)\to\cC^\infty(X)$ be the Toeplitz operator 
associated to $P$. If the principal symbol $\sigma_P$ of $P$ 
restricted to the symplectic cone
\begin{equation}\label{eq:symco}
\Sigma:=\{(x,t\xi_x):x\in X,~t>0\}\subset T^*X
\end{equation}
is everywhere positive, we say that $T_P$ is elliptic. 
Then $T_P$ has a self-adjoint $L^2$-extension, 
cf.\ Theorem \ref{thm:T_P is self-adjoint}, and
the spectrum $\Spec(T_P)\subset\R$ of $T_P$
consists only of isolated
eigenvalues, is bounded from below
and has only $+\infty$ as a point of accumulation.
Moreover, for every 
$\lambda\in\Spec(T_P)\setminus\{0\}$, the eigenspace 
$\Ker(T_P-\lambda I)$
is a finite dimensional subspace of $H^0_b(X)\cap\cC^\infty(X)$ 
(see Theorem \ref{thm:Spec(T_P)}).

The main analytic object of this paper is a spectral operator
and its Schwartz kernel.
We consider a function $\chi\in\cC^\infty_c(\R)$ with 
$\supp\chi\subset(0,+\infty)$ and define for $k>0$ the function
\begin{equation}\label{eq:chik}
\chi_k:\R\to\C\,,\quad \chi_k(t)=\chi\big(k^{-1}t\big).
\end{equation}
Then $\chi_k\in\cCc(\R)$, $\supp\chi_k=k\supp\chi\subset(0,+\infty)$.
We will study the spectral operator $\chi_k(T_P)$ constructed
by the functional calculus.  The kernel of $\chi_k(T_P)$
is given by 
\begin{equation}\label{eq:KerChiTp}
\chi_k(T_P)(x,y)=\sum_{j=1}^{+\infty}
\chi_k(\lambda_j)f_j(x)\overline{f_j}(y)\in\cC^\infty(X\times X),
\end{equation} 
where $\lambda_1\leq \lambda_2\leq\ldots\leq\lambda_j\leq\ldots\,$ 
is the sequence of non-zero eigenvalues of \(T_P\) arranged in 
increasing order and counted with multiplicity, 
and \(\{f_j\}_{j=1}^{+\infty}\) is an orthonormal system
for $H^0_b(X)\cap\cC^\infty(X)$
consisting of eigenfunctions of \(T_P\) corresponding 
to the sequence of eigenvalues 
$\{\lambda_j\}_{j=1}^{+\infty}$. 
The range of the operator $\chi_k(T_P)$ contains
the eigenspaces of $T_P$ corresponding to eigenvalues
in the set $k\supp\chi$, which drifts to the right and increases as
$k\to+\infty$.
We consider a function $\chi$ with support in $(0,+\infty)$
in order to avoid the zero eigenvalue of $T_P$,
which has infinite multiplicity and whose 
corresponding eigenfunctions are not necessarily CR.

Our first result describes the kernel 
of the spectral operator $\chi_k(T_P)$ 
as a semi-classical Fourier integral modulo $k$-negligible 
smooth kernels. To state it, we recall
Boutet de Monvel--Sj\"ostrand's fundamental theorem \cite{BouSj75} 
(see also \cite{Hs10,HM17JDG}) about the structure of the 
Szeg\H{o} kernel, cf.\ Theorem \ref{Boutet-Sjoestrand theorem}. 
For any coordinate patch $(D,x)$ on $X$,
there is a smooth function $\varphi:D\times D\to\C$ with
\begin{equation}
\label{Eq:PhaseFuncMainThm}
\begin{split}
&\operatorname{Im}\varphi(x,y)\geq 0,\\
&\varphi(x,y)=0~\text{if and only if}~y=x,\\
&d_x\varphi(x,x)=-d_y\varphi(x,x)=\xi(x),
\end{split}
\end{equation} 
such that we have on $D\times D$, the Szeg\H{o} projector
can be approximated by a Fourier integral operator
\begin{equation}\label{eq:PiFIO}
\Pi(x,y)=\int_0^{+\infty} 
e^{it\varphi(x,y)}s(x,y,t)dt+F(x,y),
\end{equation}
where $F(x,y)\in\cC^\infty(D\times D)$ and $s(x,y,t)
\in S^{n}_{\operatorname{cl}}(D\times D\times{\R}_+)$ 
is a classical H\"ormander symbol satisfying $s(x,y,t)
\sim\sum_{j=0}^{+\infty}s_j(x,y)t^{n-j}$ in 
$S^{n}_{1,0}(D\times D\times{\R}_+)$ and 
\begin{equation}
\label{eq:leading term s_0 intro}
s_0(x,x)=\frac{1}{2\pi^{n+1}}\frac{dV_\xi}{dV}(x).
\end{equation} 
\begin{theorem}
		\label{thm:ExpansionMain}
		Let $(X,T^{1,0}X)$ be an orientable 
		compact strictly pseudoconvex Cauchy--Riemann manifold
		of dimension $2n+1$, $n\geq1$,
		such that the Kohn Laplacian on $X$ has closed range in 
		$L^2(X)$. Let $\xi$ be be a contact form on $X$ such that the Levi form
		$\mathcal{L}=\frac12d\xi(\cdot,J\cdot)$ is positive definite.
		Let $(D,x)$ be any coordinate patch and let 
		$\varphi:D\times D\to\C$ be the phase function satisfying 
		\eqref{Eq:PhaseFuncMainThm} and \eqref{eq:PiFIO}.
		Then for any formally self-adjoint first order pseudodifferential operator 
		$P\in L^1_\mathrm{cl}(X)$ with
		$\sigma_P(\xi)>0$ on $X$,  
		and for any $\chi\in\cC^\infty_c((0,+\infty))$, $\chi\not\equiv 0$,
		the Schwartz kernel of $\chi_k(T_P)$, 
		$\chi_{k}(\lambda):=\chi\left(k^{-1}\lambda\right)$,
		can be represented for $k$ large by
		\begin{equation}
		\label{eq:asymptotic expansion of chi_k(T_P)}
		\chi_k(T_P)(x,y)=\int_0^{+\infty} 
		e^{ikt\varphi(x,y)}{A}(x,y,t,k)dt+O\left(k^{-\infty}\right)~\text{on}~D\times D,
		\end{equation}
		where ${A}(x,y,t,k)\in S^{n+1}_{\mathrm{loc}}
		(1;D\times D\times{\R}_+)$,
		\begin{equation}
		\label{Eq:LeadingTermMainThm}
		\begin{split}
		&{A}(x,y,t,k)\sim\sum_{j=0}^{+\infty} {A}_{j}(x,y,t)k^{n+1-j}~
		\mathrm{in}~S^{n+1}_{\mathrm{loc}}(1;D\times D\times{\R}_+),\\
		&A_j(x,y,t)\in\mathscr{C}^\infty(D\times D\times{\R}_+),~j=0,1,2,\ldots,\\
		&{A}_{0}(x,x,t)=\frac{1}{2\pi ^{n+1}}
		\frac{dV_{\xi}}{dV}(x)\,\chi(\sigma_P(\xi_x)t)\,t^n\not\equiv 0,
		\end{split}
		\end{equation}
		and for some compact interval $I\Subset\R_+$,
		\begin{equation}
		\begin{split}
		\supp_t A(x,y,t,k),~\supp_t A_j(x,y,t)\subset I,\ \ j=0,1,2,\ldots~.
		\end{split}
		\end{equation}
		Moreover, for any $\tau_1,\tau_2\in\cC^\infty(X)$ 
		such that $\supp(\tau_1)\cap\supp(\tau_2)=\emptyset$, 
		we have
		\begin{equation}
		\label{Eq:FarAwayDiagonalMainThm}
		\tau_1\chi_k(T_P)\tau_2=O\left(k^{-\infty}\right).
		\end{equation}
	\end{theorem}

The representation \eqref{eq:asymptotic expansion of chi_k(T_P)} 
of the kernel of $\chi_k(T_P)$
can be viewed as a semi-classical version of the theorem of 
Boutet de Monvel and Sj\"ostrand. It
shows that $\chi_k(T_P)$ is a semi-classical Fourier integral 
operator with complex phase and
with canonical relation generated by the phase $\varphi(x,y)t$. 
The integral in \eqref{eq:asymptotic expansion of chi_k(T_P)} 
is a smooth kernel, because $t$ runs in the bounded interval
$I$. The term $O(k^{-\infty})$ denotes $k$-negligible
smooth kernels (cf.\ Definition \ref{D:knegl}). We refer to 
\eqref{eq:s^mloc} for the definition of 
the space $S^{n+1}_{\mathrm{loc}}(1;D\times D\times{\R}_+)$.
		
It was shown in~\cite[Lemma 12.2]{BG81} 
that there is a classical elliptic pseudodifferential operator 
$Q$ on $X$ so that 
\begin{equation}\label{e-gue230619yyd}
Q\Pi\equiv\Pi Q,\ \ T_P\equiv\Pi Q\Pi. 
\end{equation}
From \eqref{e-gue230619yyd}, we can check that 
\begin{equation}\label{e-gue230619yydI}
\chi_k(T_P)=\Pi\circ\chi_k(Q)\circ\Pi+O(k^{-\infty}). 
\end{equation} 
Although it is possible to investigate how $\chi_k(T_P)$ behaves for 
large values of $k$ by 
using to \eqref{e-gue230619yyd} and \eqref{e-gue230619yydI}, 
we adopt an alternative method in this article. 
Namely, we develop a calculus for $z$-dependent Fourier integral 
operators with complex phase in order to obtain an expansion of 
$(z-T_P)^{-1}\Pi$, cf.~ Theorem~\ref{t-gue221218yyd}, 
and we use the Helffer--Sj\"ostrand's
formula (cf.~\cite[\S 2]{Dav95} or \cite[\S 8]{DiSj99}) 
to reduce the study of
$\chi_k(T_P)$ to the asymptotic expansion of 
\begin{equation}
\int_\C\frac{\partial\widetilde\chi_k}{\partial\overline{z}}(z-T_P)^{-1}
\Pi\,\frac{dz\wedge d\overline{z}}{2\pi i},
\end{equation}
as $k\to+\infty$, where $\widetilde\chi_k$ 
is an almost analytic extension of $\chi_k$.
We refer to Section \ref{sec:StrategyOfProof} 
for further details about the strategy of proving 
Theorem~\ref{thm:ExpansionMain}, 
and to Theorems \ref{thm:chi_k(T_P) by Psi} and 
\ref{thm:ExpMain Sec 4} for the proof. 
A spectral asymptotics result in 
the case of an arbitrary function $\chi\in\cCc(\R)$ will be given in 
Theorem \ref{thm:EGCO}.

Although it is possible to study the expansion of 
$(z-T_P)^{-1}\Pi$ by using the calculus of 
Hermite type Fourier integral operators in 
\cite[\S 2-11]{BG81}, it is unclear how to get the 
phase function \eqref{Eq:PhaseFuncMainThm} and 
the semi-classical symbol \eqref{Eq:LeadingTermMainThm}
to describe $\chi_k(T_P)$ as in \eqref{eq:asymptotic expansion 
of chi_k(T_P)} by using such kind of calculus.  
The calculus used in this paper not only 
gives us an expansion formula for the operator 
$(z-T_P)^{-1}\Pi$, but also yields 
the semi-classical Fourier integral operator 
\eqref{eq:asymptotic expansion of chi_k(T_P)} 
suitable to explore CR geometry.

We will later explore the application of this to CR embedding problems. 
Additionally, \eqref{eq:asymptotic expansion 
of chi_k(T_P)} could aid further research on geometric quantization 
of CR manifolds, building on the works of \cite{HsHua21,HsMaMar23}.
	
A direct consequence of Theorem \ref{thm:ExpansionMain} is the following. 
\begin{corollary}
\label{C:asyk}
In the situation of Theorem \ref{thm:ExpansionMain}, 
the kernel of the operator $\chi_k(T_P)$ has the asymptotic expansion on the diagonal 
\begin{equation}\label{eq:1.8}
\chi_k(T_P)(x,x)\sim\sum_{j=0}^{+\infty}
\mathcal{A}_j(x)k^{n+1-j}~\text{in}~S^{n+1}_{\rm loc}(1;X),
\end{equation}
where $\mathcal{A}_j\in\mathscr{C}^\infty(X)$ and
		\begin{equation}\label{eq:1.9}
		\mathcal{A}_0(x)=\frac{1}{2\pi^{n+1}}
		\frac{dV_{\xi}(x)}{\sigma_P(\xi_x)^{n+1}dV(x)}
		\int_{\R_+}\chi(t)t^ndt.
		\end{equation}
		The trace of $\chi_k(T_P)$ has the asymptotics as $k\to+\infty$,
		\begin{equation}\label{eq:1.9a}
		\operatorname{Tr}\chi_k(T_P)=\int_X \chi_k(T_P)(x,x)dV=
		\frac{k^{n+1}}{2\pi^{n+1}}
		\int_X\frac{dV_{\xi}(x)}{\sigma_P(\xi_x)^{n+1}}
		\int_{\R_+}\chi(t)t^ndt+O(k^{n})\,.
		\end{equation}
	\end{corollary}
	
We can rewrite the integral from \eqref{eq:1.9a} in geometric terms.
Let us denote by $dV_\Sigma$ the symplectic volume form on $\Sigma\subset T^*X$.
Then from \eqref{eq:symco} we immediately have
\begin{equation}\label{eq:19b}
\int_X\frac{dV_{\xi}(x)}{\sigma_P(\xi_x)^{n+1}}
\int_{\R_+}\chi(t)t^ndt=\int_\Sigma\chi(\sigma_P|_\Sigma)dV_\Sigma\,,
\end{equation}
thus formula \eqref{eq:1.9a} becomes 
\begin{equation}\label{eq:1.9c}
\operatorname{Tr}\chi_k(T_P)=
\frac{k^{n+1}}{2\pi^{n+1}}
\int_\Sigma\chi(\sigma_P|_\Sigma)dV_\Sigma+O(k^{n})\,,\quad k\to+\infty\,.
\end{equation}
We will illustrate Theorem~\ref{thm:ExpansionMain}
by three examples. Firstly, we will directly exhibit the asymptotic 
expansion of $\chi_k(T_P)(x,x)$ on the circle bundle as $k\to+\infty$
in Theorem~\ref{thm:Trace on circle bundle}. 
In Theorem~\ref{thm:main thm is sharp} we will also 
calculate the spectral asymptotics of $\chi_k(T_P)$
on the diagonal when $\chi\in\cCc(\R)$,  $\chi\geq 0$
and $\chi(0)\neq 0$, where we will see the remainder 
in Theorem~\ref{thm:EGCO} is sharp.
Finally, we will calculate in Theorem \ref{thm:Spectrum} 
the spectrum of a Toeplitz operator 
on a Grauert tube over a torus explicitly.

In the sequel we present several applications of Theorem \ref{thm:ExpansionMain}.
The first one concerns a Szeg\H{o} type limit theorem for scaled spectral measures.
Szeg\H{o} limit theorems give the weak
asymptotics of the spectral measure of a family of operators with 
respect to the parameter of the family, see \cite{Gui79}. 
Szeg\H{o} first obtained such a result for 
truncated infinite Toeplitz matrices, as the size of the truncation tends to infinity.
Let \(X\) and \(P\) be as in Theorem~\ref{thm:ExpansionMain}. 
Let $(\lambda_j)_{j\in\N}$
be the  sequence of non-zero eigenvalues
of \(T_P:\Dom T_P\to H^0_b(X)\) counted with multiplicity.
The counting spectral measure of $T_P$ is
$\mu=\sum_{j=1}^{+\infty}\delta\left(t-\lambda_j\right)$,
where \(\delta\) denotes the Dirac measure at zero. 
By the Weyl law for $T_P$ (cf.\ \cite[Theorem 13.1]{BG81}), 
the spectrum counting function $N(k)=\#\{j\in\N:\lambda_j\leq k\}$
has the asymptotics 
\footnote{The normalizing constant is different from \cite{BG81} 
since we have a different normalization of the volume form $dV_\xi$.}
\begin{equation}\label{eq:Nk}
N(k)=\frac{\operatorname{vol}(\Sigma_1)}{2\pi^{n+1}}k^{n+1}
+O(k^n),\quad k\to+\infty,
\end{equation}
where $\operatorname{vol}(\Sigma_1)$ is the symplectic volume of the
subset $\Sigma_1\subset\Sigma$ where $\sigma_P\leq 1$.
Note that $N(k)=\langle\mu,\mathds{1}_{(-\infty,k]}\rangle$.
In order to obtain a refined picture of the distribution of the eigenvalues of $T_P$,
we consider $\chi\in\cC^\infty_c(\R_+)$ and we test the measure $\mu$ 
with functions $\chi_k$ (cf.\ \eqref{eq:chik}), 
whose supports drift to the right on the real axis
as $k\to+\infty$. We have 
$\langle\mu,\chi_k\rangle=\sum_{j=1}^{+\infty}\chi(k^{-1}\lambda_j)=
\langle\sum_{j=1}^{+\infty}\delta\left(t-k^{-1}\lambda_j\right),\chi\rangle$.
We are thus led to consider the scaled spectral measures \(\mu_k\) given by 
\begin{equation}\label{eq:scaledspectraclmeasuredefinition}
\mu_k=k^{-n-1}\sum_{j=1}^{+\infty}\delta\left(t-k^{-1}\lambda_j\right).
\end{equation} 
\begin{theorem}\label{thm:ScaledSpectralMeasures}
In the situation of Theorem~\ref{thm:ExpansionMain}, 
the scaled spectral measures \(\mu_k\) 
converges weakly as \(k\to+\infty\) to the continuous measure 
\(\mu_\infty\) on $(0,+\infty)$ defined by 
\begin{equation}
\mu_\infty=\mathcal{C}_P\,t^ndt\,,\quad \text{with
$\mathcal{C}_P:=\frac{1}{2\pi^{n+1}}
\int_X\frac{dV_\xi}{\sigma_P(\xi)^{n+1}}\,,$}
\end{equation}
where $dt$ is the Lebesgue measure on $\R$. 
\end{theorem} 
	
Note that by \eqref{eq:19b} we have
\begin{equation}
\mu_\infty=\frac{1}{2\pi^{n+1}}(\sigma_P)_*(dV_\Sigma),
\end{equation}
where $(\sigma_P)_*(dV_\Sigma)$ is the push-forward 
by the symbol map
$\sigma_P:\Sigma\to(0,+\infty)$ of the measure defined by 
the symplectic volume form $dV_\Sigma$ on $\Sigma$. 
Related results for the scaled spectral measures of Toeplitz operators 
can be found in \cite[\S 13]{BG81}.
	
	Next, we present a 
	Kodaira type CR embedding theorem, 
	which is a refinement of Boutet de Monvel's CR embedding theorem.
	Applying Theorem \ref{thm:ExpansionMain} and taking coordinates as
	in \cite[Part I, Chapter 8]{Hs10} to compute the tangential Hessian of 
	$\varphi$ (cf.~Theorem \ref{thm:tangential hessian of varphi}), 
	we can use ingredients from \cite{Hs15,Hs18,HLM21} 
	to get the following CR embedding for $k$ large.
	\begin{theorem}
		\label{thm:embedding}
		Consider a CR manifold $(X,T^{1,0}X)$ and a Toeplitz operator 
		$T_P$ as in Theorem~\ref{thm:ExpansionMain}. 
		We denote by \(0<\lambda_1\leq\lambda_2\leq\ldots\) the positive eigenvalues 
		counting multiplicities of $T_P$ and let \(f_1,f_2\ldots\) 
		be an orthonormal set in \(H^0_b(X)\) such that $T_Pf_j=\lambda_jf_j$ for all $j\in\N$.
		Let \(0<\delta_1<\delta_2\) and \(\chi\in\cC^\infty_0((\delta_1,\delta_2))\).
		Let \(N_k=\#\{j\in\N:0<\lambda_j\leq k\delta_2\}\). Then the map
		\(G_k\colon X\to \C^{N_k}\),
		\begin{equation}
		G_k(x)=\big(\chi(k^{-1}\lambda_1)f_1,\ldots,
		\chi(k^{-1}\lambda_{N_k})f_{N_k}\big).
		\end{equation}
		is a CR embedding for all sufficiently large \(k\).
	\end{theorem}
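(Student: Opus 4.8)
The plan is to reduce the statement to the asymptotics of Theorems~\ref{thm:ExpansionMain} and~\ref{C:asyk} and then run a Tian-type embedding argument, in the spirit of \cites{Hs15,Hs18,HLM21}. We may assume $\chi\not\equiv0$. Each eigenfunction $f_j$ lies in $H^0_b(X)\cap\cC^\infty(X)$, hence is a smooth CR function, so every component $\chi(k^{-1}\lambda_j)f_j$ of $F_k$ is CR and $F_k\colon X\to\C^{N_k}$ is automatically a CR map; since $X$ is compact, $F_k$ is a CR embedding as soon as it is an injective immersion, which is what I would prove for $k$ large. The link to the expansion is the elementary identity
\[
\langle F_k(x),F_k(y)\rangle_{\C^{N_k}}=\sum_{j=1}^{N_k}|\chi(k^{-1}\lambda_j)|^2f_j(x)\overline{f_j(y)}=\Psi_k(T_P)(x,y),\qquad\Psi:=|\chi|^2,
\]
which holds because $\supp\Psi_k=k\,\supp\chi\subset(k\delta_1,k\delta_2)$, so that the terms with $j>N_k$ (where $\lambda_j>k\delta_2$) and those with $\chi(k^{-1}\lambda_j)=0$ drop out; thus $\sum_{\lambda_j\in\supp\Psi_k}\Psi(k^{-1}\lambda_j)f_j(x)\overline{f_j(y)}$ reduces to the displayed sum over $j=1,\dots,N_k$. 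As $\Psi\in\cC^\infty_c((0,+\infty))$ with $\Psi\geq0$, $\Psi\not\equiv0$, Theorems~\ref{thm:ExpansionMain} and~\ref{C:asyk} apply to $\Psi_k(T_P)$; in particular, by \eqref{eq:1.9},
\[
|F_k(x)|^2=\Psi_k(T_P)(x,x)=\frac{k^{n+1}}{2\pi^{n+1}}\,\frac{dV_\xi(x)}{\sigma_P(\xi_x)^{n+1}\,dV(x)}\int_{\R_+}\Psi(t)t^n\,dt+O(k^n)
\]
uniformly on $X$, and the leading coefficient is continuous and strictly positive there. Hence there is $c_0>0$ with $|F_k(x)|^2\geq c_0k^{n+1}$ for all $x\in X$ and $k\gg1$; in particular $F_k(x)\neq0$ for all $x$ (base-point freeness).

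For injectivity, suppose $F_{k_\ell}(x_\ell)=F_{k_\ell}(y_\ell)$ with $x_\ell\neq y_\ell$ and $k_\ell\to+\infty$; after passing to a subsequence, $x_\ell\to x_0$, $y_\ell\to y_0$. If $x_0\neq y_0$, choose $\tau_1,\tau_2\in\cC^\infty(X)$ with disjoint supports, equal to $1$ near $x_0$ resp.\ $y_0$; then \eqref{Eq:FarAwayDiagonalMainThm} applied to $\Psi_k(T_P)$ gives $\Psi_{k_\ell}(T_P)(x_\ell,y_\ell)=O(k_\ell^{-\infty})$, whereas the identity above and base-point freeness give $\Psi_{k_\ell}(T_P)(x_\ell,y_\ell)=|F_{k_\ell}(x_\ell)|^2\geq c_0k_\ell^{n+1}$, a contradiction. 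So $x_0=y_0=:x_*$ and the problem becomes local in a coordinate patch $(D,x)$ about $x_*$; there I would take the coordinates of \cite{Hs10} normalizing the tangential Hessian of $\varphi$ (Theorem~\ref{tangential hessian of varphi}) and use the representation \eqref{asymptotic expansion of chi_k(T_P)}. Two facts are needed. First, integrating by parts in $t$ in \eqref{asymptotic expansion of chi_k(T_P)} (the $t$-integral ranging over a fixed compact $I\Subset\R_+$, with $A(x,y,\cdot,k)$ vanishing to infinite order at $\partial I$) and using $\operatorname{Im}\varphi\geq0$, $\varphi(x,y)=0\Leftrightarrow x=y$ and $d_x\varphi(x,x)=\xi(x)$ yields the standard off-diagonal decay $|\Psi_k(T_P)(x,y)|\leq C_M\,k^{n+1}(1+k\rho(x,y)^2)^{-M}$ for every $M$, where $\rho$ is the parabolic (Heisenberg) distance; combined with $|\Psi_k(T_P)(x,y)|=|F_k(x)|^2\geq c_0k^{n+1}$ this forces $\rho(x_\ell,y_\ell)\leq Ck_\ell^{-1/2}$. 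Second, after the parabolic rescaling at $x_*$ (horizontal variables $\sim k^{-1/2}$, Reeb variable $\sim k^{-1}$), \eqref{asymptotic expansion of chi_k(T_P)}, \eqref{Eq:LeadingTermMainThm} and Theorem~\ref{tangential hessian of varphi} give $\cC^\infty_{\mathrm{loc}}$-convergence of $k^{-n-1}\Psi_k(T_P)$, together with all its derivatives, to a model reproducing kernel
\[
\mathscr P_0(u,v)=\frac{1}{2\pi^{n+1}}\,\frac{dV_\xi}{dV}(x_*)\int_{\R_+}\Psi(\sigma_P(\xi_{x_*})t)\,t^n\,e^{it\varphi_0(u,v)}\,dt
\]
with $\varphi_0$ the model Heisenberg phase; its coherent-state map $u\mapsto\mathscr P_0(\cdot,u)$ is an injective immersion, whence strict Cauchy--Schwarz $|\mathscr P_0(u,v)|^2<\mathscr P_0(u,u)\mathscr P_0(v,v)$ for $u\neq v$. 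Applying this to the rescaled points $u_\ell,v_\ell$ (convergent after a further subsequence, say to $u_\infty,v_\infty$), the equality $F_{k_\ell}(x_\ell)=F_{k_\ell}(y_\ell)$ passes to the limit as $\mathscr P_0(u_\infty,v_\infty)=\mathscr P_0(u_\infty,u_\infty)=\mathscr P_0(v_\infty,v_\infty)$, forcing $u_\infty=v_\infty$, i.e.\ $\sqrt{k_\ell}\,\rho(x_\ell,y_\ell)\to0$. In this remaining very-close regime, a Taylor expansion of $F_k$ at $x_\ell$ together with the immersion estimate below (which bounds $dF_k$ from below by $\sqrt k$ on horizontal tangent vectors and by $k$ on the Reeb direction) shows that $F_{k_\ell}(x_\ell)=F_{k_\ell}(y_\ell)$ with $\rho(x_\ell,y_\ell)=o(k_\ell^{-1/2})$ is impossible unless $x_\ell=y_\ell$. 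Hence $F_k$ is injective for $k\gg1$.

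For the immersion I would argue similarly: if $dF_{k_\ell}(x_\ell)$ failed to be injective for some $x_\ell\to x_*$, a nonzero kernel vector $V_\ell\in T_{x_\ell}X$ would, after the same rescaling (its horizontal and Reeb parts carrying weights $\sqrt k$, $k$), yield a nonzero tangent vector of the model annihilated by the differential of $u\mapsto\mathscr P_0(\cdot,u)$ --- contradicting that the model map is an immersion. Equivalently, the $k$-rescaled Gram matrix built from $\Psi_k(T_P)(x,x)$ and its first $HX$- and Reeb-derivatives, whose leading term is computed explicitly from $\varphi_0$ and \eqref{Eq:LeadingTermMainThm}, is uniformly nondegenerate, which forces $dF_k(x)$ to be injective with the stated lower bounds. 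Together with base-point freeness and injectivity this shows $F_k$ is a CR embedding for all $k\gg1$.

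The step I expect to be the main obstacle is the near-diagonal rescaling analysis: one has to (i) establish the $\cC^\infty_{\mathrm{loc}}$-convergence of $k^{-n-1}\Psi_k(T_P)$ and of all its derivatives to $\mathscr P_0$, uniformly in the base point $x_*\in X$, which requires both the precise tangential Hessian expansion of $\varphi$ (Theorem~\ref{tangential hessian of varphi}) and uniform control of the symbol $A(x,y,t,k)$ of \eqref{asymptotic expansion of chi_k(T_P)} under the Heisenberg dilation; and (ii) deal with the degenerate regime $\sqrt k\,\rho(x,y)\to0$, where separation of points collapses onto the immersion statement and must be handled by a quantitative, $k$-uniform form of the local embedding property of the model. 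These are precisely the technical ingredients extracted from \cites{Hs15,Hs18,HLM21}, and the theorem follows once the expansion of Theorem~\ref{thm:ExpansionMain} is available.
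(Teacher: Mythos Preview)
Your overall strategy matches the paper's: reduce everything to the kernel identity $\langle F_k(x),F_k(y)\rangle=\eta_k(T_P)(x,y)$ with $\eta=|\chi|^2$, then prove base-point freeness, immersion, and injectivity in turn. Base-point freeness and the far-off-diagonal step (via \eqref{Eq:FarAwayDiagonalMainThm}) are exactly as in the paper.

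The near-diagonal analysis, however, is organized differently. You propose a model-kernel route: parabolic rescaling, $\cC^\infty_{\mathrm{loc}}$-convergence of $k^{-n-1}\eta_k(T_P)$ to a Heisenberg model $\mathscr P_0$, strict Cauchy--Schwarz for $\mathscr P_0$, and then a quantitative Taylor/immersion argument in the regime $\sqrt{k}\,\rho(x_\ell,y_\ell)\to0$. The paper instead carries out a direct case analysis (Lemma~\ref{lem:Fkinjective}): Cases~I--II dispose of the situations where $k|z^k-w^k|^2$ or $k|\langle\partial_x\varphi(y_k,y_k),x_k-y_k\rangle|$ stays bounded away from zero by elementary estimates on the oscillatory integral; Case~III (both quantities $\to0$) is handled by the second-derivative trick on
\[
\mathscr H_k(u)=\frac{|\eta_k(T_P)(ux_k+(1-u)y_k,y_k)|^2}{\eta_k(T_P)(y_k,y_k)\,\eta_k(T_P)(ux_k+(1-u)y_k,ux_k+(1-u)y_k)},
\]
using $\mathscr H_k(0)=\mathscr H_k(1)=1$, $\mathscr H_k\le1$, and then showing the normalized leading term of $\mathscr H_k''$ is strictly negative (via H\"older's inequality and the tangential Hessian of $\varphi$). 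This sidesteps both the model-kernel convergence and the anisotropic Taylor argument you sketch; the latter is the most delicate point in your outline, since the lower bounds $\sqrt k$ (horizontal) and $k$ (Reeb) on $dF_k$ do not by themselves yield local injectivity without matching control on the anisotropic second derivatives. Likewise, the paper's immersion proof (Lemma~\ref{lem:Fkimmersion}) computes the rescaled pullback metric directly against $\xi\otimes\xi$ and the Levi form rather than passing to a model. Both approaches are valid; the paper's is more self-contained, while yours is closer in spirit to the Bergman-kernel literature you cite. Note also that the paper actually proves injectivity of $\tilde F_k=F_k/|F_k|$, which is slightly stronger and is reused later for the perturbed-sphere embedding.
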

	In the situation of Theorem \ref{thm:embedding}, 
	for \(k>0\), we consider the CR map
	\begin{equation}\label{eq:embeddingMap}
	F_k\colon X\to \C^{N_k},\quad
	F_k(x)=\sqrt{\frac{2\pi^{n+1}}{k^{n+1}}}G_k(x).
	\end{equation}
	Theorem \ref{thm:embedding} implies that \(F_k\) in 
	\eqref{eq:embeddingMap} is also a CR embedding for \(k\) sufficiently large. 
	Let us consider  the following structures on \(\C^N\).
	For \(\beta=(\beta_1,\ldots,\beta_N)\in(\R_+)^N\), \(N\geq 2\), 
	a holomorphic \(\R\)-action on \(\C^N\) is given by $t\circ (z_1,\ldots,z_N)=(e^{i\beta_1t}z_1,\ldots,e^{i\beta_Nt}z_N)$.
	The infinitesimal action is then given by the vector field
	\begin{equation}\label{eq:DefinitionTbetaIntroduction}
	\mathcal{T}_\beta=\sum_{j=1}^Ni\beta_j\left(z_j\frac{\partial}{\partial z_j}-\overline{z}_j\frac{\partial}{\partial \overline{z}_j}\right),\,\,\,\, (z_1,\ldots,z_N)\in\C^N,
	\end{equation}
	and we consider the smooth $1$-form \(\omega_{\beta}\) on \(\C^N\setminus\{0\}\) defined by
	\begin{equation}\label{eq:DefinitionOmegabetaIntroduction}
	\omega_{\beta}=\frac{1}{2i}\frac{\sum_{j=1}^N
		(\overline{z}_jdz_j-z_jd\overline{z}_j)}{\sum_{j=1}^N\beta_j|z_j|^2}.
	\end{equation}
	It follows that \(\omega_{\beta}(\mathcal{T}_\beta)=1\) 
	for all \(z\neq 0\). Let \(S^{2N-1}\subset\C^N\) 
	be a shpere of positive radius centered in zero and denote 
	the inclusion map by \(\iota\colon S^{2N-1}\to\C^N\). 
	Then, \(S^{2N-1}\) is a compact, orientable, 
	strictly pseudoconvex codimension one CR manifold where 
	the CR structure is induced by the complex structure of \(\C^N\). 
	We find that \(\mathcal{T}_\beta\) defines a transversal vector field 
	on \(S^{2N-1}\) which is a CR vector field meaning that the flow 
	of \(\mathcal{T}_\beta\) preserves the CR structure. 
	The corresponding real $1$-form on \(S^{2N-1}\) annihilating 
	the CR structure is given by \(\iota^*\omega_{\beta}\). 
	
	We will now examine to which extend the map \(F_k\) 
	preserves \(\mathcal{T}_\beta\) and 
	\(\omega_{\beta}\) for \(\beta=\lambda(k):=(\lambda_1,\ldots,\lambda_{N_k})\).

	\begin{theorem}
		\label{thm:vectorfield}
		Consider the situation in Theorem \ref{thm:ExpansionMain} and 
		$F_k$ as in \eqref{eq:embeddingMap}. 
		Choose the volume form and the pseudodifferential operator to be \(dV:=dV_{\xi}\) and $P:=-i\mathcal{T}$ respectively,  
		where \(\mathcal{T}\) is the Reeb vector field determined by 
		\(\iota_\mathcal{T}d\xi=0\) and \(\iota_{\mathcal{T}}\xi=1\). 
		Then we have that in 
		$\mathscr{C}^\infty$-topology as $k\to+\infty$,
		\begin{equation}
		\begin{split}
		&|k^{-1}dF_k\mathcal{T}|^2=
		C'_{\chi}+O(k^{-1}),\\
		&|k^{-1}\mathcal{T}_{\lambda(k)}\circ F_k|^2=
		C'_{\chi}+O(k^{-1}),\\
		&|k^{-1}\mathcal{T}_{\lambda(k)}\circ F_k-
		k^{-1}dF_k\mathcal{T}|^2=O(k^{-1}), 
		\end{split}
		\end{equation}
		where \(C'_\chi=\int_0^{+\infty} t^{n+2}|\chi|^2(t)dt\).
	\end{theorem}

		If \(\mathcal{T}\) 
		is in addition a CR vector field we have that \(F_k\) is equivariant 
		with respect to the flow of \(\mathcal{T}\) on \(X\) and \(\mathcal{T}_{\lambda(k)}\) 
		on \(\C^{N_k}\) respectively. 
		Hence we find  \((F_k)_*\mathcal{T}=\mathcal{T}_{\lambda(k)}\) in that case. 
		In general, the equality \((F_k)_*\mathcal{T}=\mathcal{T}_{\lambda(k)}\) 
		may fail. However one still has an asymptotic equality 
		as described in Theorem~\ref{thm:vectorfield}.

	\begin{theorem}
		\label{thm:pullbackweightoneform}
		Under the hypotheses of Theorem \ref{thm:ExpansionMain} 
		and with $F_k$ as in~\eqref{eq:embeddingMap} 
		we have that \(F_k^*\omega_{\lambda(k)}\) 
		is well-defined for all sufficiently large \(k>0\) and we have 
		\begin{equation}\label{eq:pullbackweightoneform}
		F_k^*\omega_{\lambda(k)}=\sigma_P(\xi)^{-1}\xi+O(k^{-1})\,,\quad
		k\to+\infty,
		\end{equation}
		in the $\mathscr{C}^\infty$-topology on $X$.
	\end{theorem}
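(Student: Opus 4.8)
The plan is to rewrite $F_k^*\omega_{\lambda(k)}$ as a quotient of Schwartz kernels of spectral operators evaluated on the diagonal, and then substitute the expansions of Theorem~\ref{thm:ExpansionMain} and Theorem~\ref{C:asyk}. Put $c_k=\sqrt{2\pi^{n+1}/k^{n+1}}$ and $\chi_j=\chi(k^{-1}\lambda_j)$. Since the $j$-th component of $F_k$ in \eqref{eq:embeddingMap} is $z_j\circ F_k=c_k\chi_j f_j(x)$ with $\chi_j$ constant in $x$, we get $F_k^*(\bar z_j\,dz_j)=c_k^2|\chi_j|^2\,\overline{f_j}\,df_j$ and $F_k^*\bigl(\sum_j\lambda_j|z_j|^2\bigr)=c_k^2\sum_j\lambda_j|\chi_j|^2|f_j|^2$, and the prefactor $c_k^2$ cancels in $\omega_{\lambda(k)}$. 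Introducing $g:=|\chi|^2$ and $h(t):=t\,|\chi(t)|^2$ (both in $\cCc(\R)$, nonnegative, supported in $(0,+\infty)$, $\not\equiv0$), writing $g_k(T_P)(x,y)=\sum_j g(k^{-1}\lambda_j)f_j(x)\overline{f_j(y)}$ (likewise $h_k(T_P)$, a \emph{finite} sum since $g,h$ have compact support), and using $\lambda_j=k\,(k^{-1}\lambda_j)$, one obtains
\[
F_k^*\omega_{\lambda(k)}=\frac{\tfrac{1}{2i}(d_x-d_y)g_k(T_P)(x,y)\big|_{y=x}}{k\,h_k(T_P)(x,x)}\,.
\]
This is well defined for $k$ large, because $|F_k(x)|^2=c_k^2\,g_k(T_P)(x,x)$ and Theorem~\ref{C:asyk} applied to $g$ gives $g_k(T_P)(x,x)\sim k^{n+1}\mathcal{A}_0^g(x)$ with $\mathcal{A}_0^g>0$ on $X$ (as $\chi\not\equiv0$), so $F_k(x)\neq0$.

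For the numerator I would invoke the off-diagonal representation of Theorem~\ref{thm:ExpansionMain} applied to $g$: on $D\times D$ one has $g_k(T_P)(x,y)=\int_0^{+\infty}e^{ikt\varphi(x,y)}A^g(x,y,t,k)\,dt+O(k^{-\infty})$ with $A^g\sim\sum_jA^g_jk^{n+1-j}$, $\supp_tA^g$ in a compact interval $I$, and $A^g_0(x,x,t)=\frac{1}{2\pi^{n+1}}\frac{dV_\xi}{dV}(x)\,g(\sigma_P(\xi_x)t)\,t^n$. Differentiating under the $t$-integral (legitimate, since $I$ is compact so the integral is a genuine smooth kernel), using $\varphi(x,x)=0$ and $d_x\varphi(x,x)=\xi(x)=-d_y\varphi(x,x)$ from \eqref{Eq:PhaseFuncMainThm}, and noting the $O(k^{-\infty})$ remains $k$-negligible after differentiation, gives
\[
\tfrac{1}{2i}(d_x-d_y)g_k(T_P)(x,y)\big|_{y=x}=k\,\xi(x)\!\int_I t\,A^g(x,x,t,k)\,dt+\tfrac{1}{2i}\!\int_I(d_xA^g-d_yA^g)(x,x,t,k)\,dt .
\]
The second term, built only from $x$- and $y$-derivatives of the amplitude, is $O(k^{n+1})$, while the first is $\sim k^{n+2}\xi(x)\int_0^{+\infty}t\,A^g_0(x,x,t)\,dt+O(k^{n+1})$ in the $\cC^\infty$-topology; the substitution $s=\sigma_P(\xi_x)t$ yields $\int_0^{+\infty}t\,A^g_0(x,x,t)\,dt=\frac{1}{2\pi^{n+1}}\frac{dV_\xi}{dV}(x)\,\sigma_P(\xi_x)^{-(n+2)}C_\chi$ with $C_\chi:=\int_0^{+\infty}s^{n+1}|\chi(s)|^2\,ds>0$. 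For the denominator, Theorem~\ref{C:asyk} applied to $h$ gives $h_k(T_P)(x,x)=\frac{k^{n+1}}{2\pi^{n+1}}\frac{dV_\xi(x)}{\sigma_P(\xi_x)^{n+1}dV(x)}C_\chi+O(k^n)$ in the $\cC^\infty$-topology, so $k\,h_k(T_P)(x,x)\sim k^{n+2}$ with a positive leading coefficient, in particular $\geq c\,k^{n+2}$ for $k$ large.

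Dividing, the common factors $\frac{1}{2\pi^{n+1}}\frac{dV_\xi}{dV}(x)$ and $C_\chi$ cancel and we are left with
\[
F_k^*\omega_{\lambda(k)}=\frac{k^{n+2}\sigma_P(\xi_x)^{-(n+2)}\xi(x)+O(k^{n+1})}{k^{n+2}\sigma_P(\xi_x)^{-(n+1)}+O(k^{n+1})}=\sigma_P(\xi)^{-1}\xi+O(k^{-1})
\]
in the $\cC^\infty$-topology on $X$ (covering $X$ by finitely many coordinate patches $D$), which is exactly \eqref{eq:pullbackweightoneform}. The main obstacle is the bookkeeping in these last steps: one must check that the amplitude-derivative contribution to the numerator is genuinely one power of $k$ below the leading $k^{n+2}$ term, and that dividing by $k\,h_k(T_P)(x,x)$ keeps all error terms $O(k^{-1})$ in the $\cC^\infty$-topology on $X$; both are automatic once one has, from Theorem~\ref{C:asyk}, a full $\cC^\infty$-asymptotic expansion of $h_k(T_P)(x,x)$ with nowhere-vanishing leading coefficient.
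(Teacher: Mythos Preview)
Your proof is correct and follows essentially the same approach as the paper's: write $\omega_{\lambda(k)}=\alpha/g$ with $\alpha=\tfrac{1}{2i}\sum(\bar z_jdz_j-z_jd\bar z_j)$ and $g(z)=\sum\lambda_j|z_j|^2$, identify $F_k^*\alpha$ with the imaginary part of $d_x\eta_k(T_P)(x,y)|_{y=x}$ for $\eta=|\chi|^2$ (your $\tfrac{1}{2i}(d_x-d_y)g_k(T_P)$ is the same thing by Hermitian symmetry of the kernel), identify $F_k^*g$ with $k\,h_k(T_P)(x,x)$ for $h(t)=t|\chi(t)|^2$, and feed in the diagonal expansions. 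The paper packages the two computations as Lemma~\ref{lem:firstDifferentialMainThm} and Lemma~\ref{lem:evpowersexpansion}, whereas you rederive them inline, but the content is identical.
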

	\begin{corollary}\label{cor:TianCR2FormV2}
		In the situation of Theorem~\ref{thm:pullbackweightoneform} 
		with \(P=-i\mathcal{T}\) as in Theorem~\ref{thm:vectorfield} 
		we have 
		\begin{equation}\label{eq:pullbackweighttwoform}
		F_k^*d\omega_{\lambda(k)}=d\xi+O(k^{-1})\,,\quad
		k\to+\infty,
		\end{equation}
		in the $\mathscr{C}^\infty$-topology on $X$.
	\end{corollary} 
		We note that in the situation of 
		Theorem~\ref{thm:pullbackweightoneform}, 
		\(d\omega_{\lambda_k}\) induces a Levi form on any sphere 
		\(S^{2N_k-1}\subset \C^{N_k}\) of positive radius centered in zero. 
		Furthermore, the Levi form on \(X\) with respect to \(\xi\) 
		is induced by \(d\xi\) (see Definition~\ref{D:Leviform}). 
		Hence,  Corollary~\ref{cor:TianCR2FormV2} can be seen as a 
		CR analog of a theorem of Tian~\cite{Ti90} who proved that the 
		pullback of the Fubini--Study metric on \(\C\mathbb{P}^N\) 
		under the Kodaira map for large tensor powers of a positive 
		holomorphic line bundle over a complex manifold approximates 
		the line bundle curvature. 
		We also point out that 
		\eqref{eq:pullbackweightoneform} and
		\eqref{eq:pullbackweighttwoform} 
		are invariant under scaling of the map \(F_k\) 
		by constant factors and that the leading terms are independent 
		of the choice of $\chi$. Moreover, Theorem~\ref{thm:pullbackweightoneform} 
		holds for any choice of pseudodifferential operator \(P\) 
		satisfying the assumptions in Theorem~\ref{thm:ExpansionMain}.

Theorem~\ref{thm:pullbackweightoneform} indicates
that the CR and contact structures on \(X\) are closely related 
to those of spheres in \(\C^{N_k}\) via the map \(F_k\) when \(k\) 
becomes large. We will show now that under the right choices of the 
operator \(T_P\) we can achieve that \(F_k(X)\) is actually lying 
in a sphere up to a small perturbation when \(k\) becomes large.
We note that the study of the embeddability of 
of strictly pseudoconvex CR manifolds in spheres is a 
important topic in CR geometry and complex analysis.
Fornaess~\cite{For76} obtained 
the embeddability in boundaries of strictly convex domains 
in complex Euclidean space 
for a large class of strictly pseudoconvex CR manifolds,.  
It was shown by Faran~\cite{Faran88} (see also Forstneric~\cite{Forst86}) 
that there exist real analytic  strictly pseudoconvex hypersurfaces 
in the complex Euclidean space which are not CR-embedabble into 
finite dimensional spheres. However, Lempert~\cite{Lempert90} 
(see also~\cite{Lempert82}) proved that every compact, 
real analytic, strictly pseudoconvex hypersurface in a complex 
Euclidean space can be CR embedded into an infinite dimensional sphere.
Forstneric~\cite{Forst86} and L{\o}w~\cite{Low85} 
independently proved that any bounded strictly pseudoconvex 
domain with a sufficiently smooth boundary can be properly and 
holomorphically embedded into some finite dimensional unit ball
and in addition proved that such an embedding can be chosen 
to be continuous up to the boundary.
Theorem~\ref{thm:embeddingperturbedsphere} provides an embedding 
result for strictly pseudoconvex CR manifolds into arbitrarily small 
perturbations of spheres, preserving the Reeb vector field asymptotically.
	
	Let \(S^{2N-1}\subset \C^N\) be the sphere of radius 
	one centered in zero. Given a smooth function 
	\(f\colon S^{2N-1}\to (-1,\infty)\) we define the perturbed sphere 
	\(S^{2N-1}(f)\) by
	\begin{equation}
	S^{2N-1}(f):=\left\{z\in\C^N\setminus\{0\}\colon |z|=
	1+f\Big(\frac{z}{|z|}\Big)\right\}.
	\end{equation}
	The projection \(z\mapsto z/|z|\) defines a 
	diffeomorphism between \(S^{2N-1}(f)\) and \(S^{2N-1}\). 
	Furthermore, \(S^{2N-1}(f)\) carries the induced CR structure 
	as a real hypersurface in \(\C^N\setminus\{0\}\). Given \(\beta\in\R_+^N\), 
	we define a $1$-form \(\alpha_{f,\beta}\) on \(\C^N\setminus\{0\}\) by
	\begin{equation}\label{eq:DefinitionAlphafbetaIntroduction}
	\alpha_{f,\beta}=-\frac{i(\partial-\overline{\partial})\rho(z)}
	{\sum_{j=1}^N\beta_j|z_j|^2},
	\end{equation}
	where \(\rho\) is the defining function for \(S^{2N-1}(f)\) 
	given by $\rho(z)=|z|-f\left(z/|z|\right)-1$.
	It turns out that the pullback of \(\alpha_{f,\beta}\) to 
	\(S^{2N-1}(f)\) via the inclusion map is a non-vanishing real 
	$1$-form annihilating the CR structure \(T^{1,0}S^{2N-1}(f)\) 
	of \(S^{2N-1}(f)\).  We will show now that any compact strictly 
	pseudoconvex CR manifold as in Theorem \ref{thm:ExpansionMain} 
	can be CR embedded in a perturbed sphere where the perturbation 
	can be chosen arbitrary small. This phenomenon can be seen as an 
	almost spherical embedding.
	\begin{theorem}
		\label{thm:embeddingperturbedsphere}
		Consider the situation in Theorem~\ref{thm:ExpansionMain} 
		and denote by \(\mathcal{T}\) the Reeb vector field with 
		respect to \(\xi\) determined by \(\iota_{\mathcal{T}}d\xi=0\) 
		and \(\iota_{\mathcal{T}}\xi=1\). 
		Fix any Hermitian metric and a family of \(\mathscr{C}^m\)-norms, 
		\(m\geq 0\), on \(X\). For any \(\varepsilon>0\) and any \(m\in\N\) 
		there exists \(N\in\N\), a perturbed sphere \(S^{2N-1}(f)\subset \C^N\) 
		where \(f\colon S^{2N-1}\to (-1,\infty)\) is smooth, \(\beta\in(\R_+)^N\)  
		and a CR embedding \(F\colon X\to\C^N\) with the following properties:
		\begin{itemize}
\item[(i)] \(F(X)\subset S^{2N-1}(f)\).
\item[(ii)] \(\sup_{S^{2N-1}}|f|\leq \varepsilon\).
\item[(iii)] There exists an open neighborhood \(U\) of 
\(\{\widetilde{F}(x)\colon x\in X\}\) in 
\(S^{2N-1}\), where \(\widetilde{F}(x):=F(x)/|F(x)|\) for \(x\in X\), 
such that  \(\sup_U|df|\leq \varepsilon \).
\item[(iv)] \(\|f\circ \widetilde{F}\|_{\mathscr{C}^m(X)}\leq \varepsilon\), 
\item[(v)] \(|F^*\alpha_{f,\beta}-\xi|\leq \varepsilon\).
\item[(vi)] \(F_*\mathcal{T}\) is transversal to 
\(T^{1,0}S^{2N-1}(f)\oplus T^{0,1}S^{2N-1}(f) \) at any point of \(F(X)\). 
\item[(vii)] We have
\[\Big\|1-\frac{|F_*\mathcal{T}|}{|\mathcal{T}_\beta\circ F|}
\Big\|_{\mathscr{C}^m(X)}\leq \varepsilon,\quad 
\Bigl\|1-\frac{\langle F_*\mathcal{T},
\mathcal{T}_\beta\circ F\rangle}{|F_*\mathcal{T}|
|\mathcal{T}_\beta\circ F|}\Bigr\|_{\mathscr{C}^m(X)}\leq 
\varepsilon,\quad 
\|F^*\omega_\beta-\xi\|_{\mathscr{C}^m(X)}\leq \varepsilon.
\]
\end{itemize}
\end{theorem}
Note that the choice of \(\varepsilon>0\) may have an effect on the 
dimension \(N\). 
As a direct consequence of Theorem~\ref{thm:embeddingperturbedsphere},
any strictly pseudoconvex, orientable, compact CR manifold \(X\)
of dimension greater or equal to five can be embedded in a 
perturbed sphere. 

It was shown in \cite{OV07}
that any compact Sasakian manifold admits a CR-embedding into a
Sasakian manifold which is diffeomorphic to a sphere.
If in addition the first 
de Rham cohomology group of \(X\) vanishes, it was
proved in \cite{VCoe11} that $X$ can be CR-embedded into a sphere with 
a transversally deformed Sasakian structure. 
See also the related result \cite{LP22} in Sasakian geometry. 
We note that Theorem~\ref{thm:embeddingperturbedsphere} 
can be also stated as a result on CR embeddings into spheres 
with non-standard CR structures. 
We refer to Theorem~\ref{cor:SphericalEmbedding} for a precise statement. 

\section{Preliminaries}
\subsection{Notations}
We use the following notations throughout this article 
$\mathbb{Z}$ is the set of integers, 
$\N=\{1,2,3,\ldots\}$ is the set of natural numbers and we 
put $\N_0=\N\bigcup\{0\}$; $\mathbb R$ is the set of
real numbers. Also, $\R_+:=\{x\in\R:x>0\}$ and 
$\overline{\mathbb R}_+=\R_+\cup\{0\}$.
For a multi-index $\alpha=(\alpha_1,\ldots,\alpha_n)\in\N^n_0$ 
and $x=(x_1,\ldots,x_n)\in\mathbb R^n$, we set
\begin{equation}
\begin{split}
&x^\alpha=x_1^{\alpha_1}\ldots x^{\alpha_n}_n,\\
& \partial_{x_j}=\frac{\partial}{\partial x_j}\,,\quad
\partial^\alpha_x=\partial^{\alpha_1}_{x_1}\ldots\partial^{\alpha_n}_{x_n}
=\frac{\partial^{|\alpha|}}{\partial x^\alpha}\cdot
\end{split}
\end{equation}
Let $z=(z_1,\ldots,z_n)$, $z_j=x_{2j-1}+ix_{2j}$, $j=1,\ldots,n$, 
be coordinates on $\C^n$. We write
\begin{equation}
\begin{split}
&z^\alpha=z_1^{\alpha_1}\ldots z^{\alpha_n}_n\,,
\quad\ol z^\alpha=\ol z_1^{\alpha_1}\ldots\ol z^{\alpha_n}_n\,,\\
&\partial_{z_j}=\frac{\partial}{\partial z_j}=
\frac{1}{2}\Big(\frac{\partial}{\partial x_{2j-1}}-
i\frac{\partial}{\partial x_{2j}}\Big)\,,
\quad\partial_{\ol z_j}=\frac{\partial}{\partial\ol z_j}
=\frac{1}{2}\Big(\frac{\partial}{\partial x_{2j-1}}+
i\frac{\partial}{\partial x_{2j}}\Big),\\
&\partial^\alpha_z=\partial^{\alpha_1}_{z_1}\ldots\partial^{\alpha_n}_{z_n}
=\frac{\partial^{|\alpha|}}{\partial z^\alpha}\,,\quad
\partial^\alpha_{\ol z}=\partial^{\alpha_1}_{\ol z_1}
\ldots\partial^{\alpha_n}_{\ol z_n}
=\frac{\partial^{|\alpha|}}{\partial\ol z^\alpha}\,.
\end{split}
\end{equation}
For $j, s\in\mathbb Z$, set $\delta_{js}=1$ if $j=s$, 
$\delta_{js}=0$ if $j\neq s$.
	
Let $U\subset\R^{n_1}$ and $V\subset\R^{n_2}$ be open sets. 
Let $\cCc(V)$ and $\mathscr{C}^\infty(U)$ be the space of smooth functions with
compact support in $V$ and the space of smooth functions on $U$,
respectively; $\mathscr{D}'(U)$ and $\mathscr{E}'(V)$ be the space
of distributions on $U$ and the space of distributions
with compact support in $V$, respectively. For all $s\in\mathbb{R}$,
we consider the Sobolev space $H^s(\R^{n_1})$
of order $s$ on $\mathbb R^{n_1}$, and we define 
$H^s_{\operatorname{loc}}(U):=\{u\in\mathscr{D}'(U):\rho 
u\in H^s(\R^{n_1})~\text{for all}~\rho\in\cCc(U)\}$
and $H^s_{\operatorname{comp}}(U):=
H^s_{\operatorname{loc}}(U)\cap\mathscr{E}'(U)$. 
Let $M$ be a smooth compact manifold. 
By using a partition of unity, we define the Soboloev space 
$H^s(M)$, $s\in\mathbb R$, of order $s$ on $M$ in the standard way. 
We write $\|\cdot\|_{H^s}$ to denote a Sobolev norm on $H^s(M)$. 
	
Let $F:\cC^\infty_c(V)\to\mathscr{D}'(U)$ be a continuous operator
and let $F(x,y)\in\mathscr{D}'(U\times V)$ be the Schwartz kernel of $F$, 
cf.~\cite[\S 5.2]{Hoe03}. In this work, we will identify $F$ with $F(x,y)$.
	We say that
	$F$ is a smoothing operator 
	if $F(x,y)\in\cC^\infty(U\times V)$. 
	Note that the following conditions are equivalent.
	\begin{equation}
	\begin{split}
	&F(x,y)\in\mathscr{C}^\infty(U\times V).\\\
	&F:\mathscr{E}'(V)\to \mathscr{C}^\infty(U)~\text{is continuous}.\\
	&F:H^{-s}_{\mathrm{comp}}(V)\to H^s_{\mathrm{loc}}(U)~\text{is continuous for all}~s\in\mathbb{N}_0.
	\end{split}
	\end{equation}
	For two continuous linear operators $A,B:\cC^\infty_c(V)\to\mathscr{D}'(U)$, we write $A\equiv B$ (on $U\times V$) or $A(x,y)\equiv B(x,y)$ (on $U\times V$) if $A-B$ is a smoothing operator, where $A(x,y),B(x,y)\in\mathscr{D}'(U\times V)$ are the distribution kernels of $A$ and $B$, respectively.
	
	Let $g\in\cC(\mathbb C\setminus\{0\})$ be a positive continuous function and let   $G:\cCc(V)\to\mathscr{D}'(U)$ be an operator possibly depending on some parameter $z\in\C$. We will use the notation $G=O_{H^s(V)\to H^\ell(U)}(g)$, where $s, \ell\in\mathbb R$, if for any $\tau\in\cC^\infty_c(V)$, $\widehat\tau\in\cC^\infty_c(U)$, $\widehat\tau G\tau:H^s(\mathbb R^{n_2})\to H^\ell(\mathbb R^{n_1})$ is bounded and there is some constant $C_{s,\ell}>0$ independent of $z$ such that the operator norm of $\widehat\tau G\tau:H^s(\mathbb R^{n_2})\to H^\ell(\mathbb R^{n_1})$ is bounded by
	$C_{s,\ell}g$, and we simply put $G=O_{H^s\to H^\ell}(g)$ if $U=V$.
	
	For a smooth function $f\in\mathscr{C}^\infty(U\times U)$, 
	we write $f(x,y)=O\left(|x-y|^{+\infty}\right)$ if for all multi-indices  
	$\alpha,\beta\in\N_0^{n_1}$, 
	$\partial_x^\alpha\partial_y^\beta f(x,x)=0$, for all $x\in U$. 
	
	Let us introduce some notion in microlocal analysis used in this paper. Let $D\subset\R^{2n+1}$ be an open set. 
	
	\begin{definition}
		For any $m\in\mathbb R$, $S^m_{1,0}(D\times D\times\mathbb{R}_+)$ 
		is the space of all $s(x,y,t)\in\cC^\infty(D\times D\times\mathbb{R}_+)$ 
		such that for all compact sets $K\Subset D\times D$, all $\alpha, 
		\beta\in\N^{2n+1}_0$ and $\gamma\in\N_0$, 
		there is a constant $C_{K,\alpha,\beta,\gamma}>0$ satisfying the estimate
		\begin{equation}
		\left|\partial^\alpha_x\partial^\beta_y\partial^\gamma_t a(x,y,t)\right|\leq 
		C_{K,\alpha,\beta,\gamma}(1+|t|)^{m-|\gamma|},\ \ 
		\mbox{for all $(x,y,t)\in K\times\mathbb R_+$, $|t|\geq1$}.
		\end{equation}
		We put $S^{-\infty}(D\times D\times\mathbb{R}_+)
		:=\bigcap_{m\in\mathbb R}S^m_{1,0}(D\times D\times\mathbb{R}_+)$.
	\end{definition}
	Let $s_j\in S^{m_j}_{1,0}(D\times D\times\mathbb{R}_+)$, 
	$j=0,1,2,\ldots$ with $m_j\rightarrow-\infty$ as $j\rightarrow+\infty$. 
	By the argument of Borel construction, there always exists $s\in S^{m_0}_{1,0}(D\times D\times\mathbb{R}_+)$ 
	unique modulo $S^{-\infty}$ such that 
	\begin{equation}
	s-\sum^{\ell-1}_{j=0}s_j\in S^{m_\ell}_{1,0}(D\times D\times\mathbb{R}_+)
	\end{equation}
	for all $\ell=1,2,\ldots$. If $s$ and $s_j$ have the properties above, we write
	\begin{equation}
	s(x,y,t)\sim\sum^{+\infty}_{j=0}s_j(x,y,t)~\text{in}~ 
	S^{m_0}_{1,0}(D\times D\times\mathbb{R}_+).
	\end{equation}
	Also, we use the notation 
	\begin{equation}  
	s(x, y, t)\in S^{m}_{{\rm cl\,}}(D\times D\times\mathbb{R}_+)
	\end{equation}
	if $s(x, y, t)\in S^{m}_{1,0}(D\times D\times\mathbb{R}_+)$ and we can find $s_j(x, y)\in\cC^\infty(D\times D)$, $j\in\N_0$, such that 
	\begin{equation}
	s(x, y, t)\sim\sum^{+\infty}_{j=0}s_j(x, y)t^{m-j}\text{ in }S^{m}_{1, 0}
	(D\times D\times\mathbb{R}_+).
	\end{equation}
For smooth paracompact manifolds $M_1, M_2$,
we define the symbol spaces 
$$S^m_{{1,0}}(M_1\times M_2\times\mathbb R_+),
\quad S^m_{{\rm cl\,}}(M_1\times M_2\times\mathbb R_+)$$ 
and asymptotic sums thereof in the standard way \cite{Hoe07,GriSj94}.
		
For $m\in\R$, let $L^m_{{\rm cl\,}}(D)$ denote the 
space of classical pseudodifferential operators of order $m$ on $D$. 
For $P\in L^m_{{\rm cl\,}}(D)$, we denote by 
$\sigma_P$ denote the principal symbol of $P$. 
We use similar notations in the case of manifolds.

	We recall some notations of semi-classical analysis.
	Let $U$ be an open set in $\mathbb{R}^{n_1}$ and let $V$ 
	be an open set in $\mathbb{R}^{n_2}$. 
	
	\begin{definition}\label{D:knegl}
		We say a $k$-dependent continuous linear operator
		$F_k:\cCc(V)\to\mathscr{D}'(U)$ is $k$-negligible 
		if for all $k$ large enough, $F_k$ is a smoothing operator,
		and for any compact set $K$ in $U\times V$, for all multi-index
		$\alpha\in\N_0^{n_1}$, $\beta\in\N_0^{n_2}$ and $N\in\mathbb{N}_0$, 
		there exists a constant $C_{K,\alpha,\beta,N}>0$ such that
		\begin{equation} 
		\left|\partial^\alpha_x\partial^\beta_y F_k(x,y)\right|\leq 
		C_{K,\alpha,\beta,N} k^{-N},
		\end{equation}
		for all $x,y\in K$ and $k$ large enough.
		We write $F_k(x,y)=O(k^{-\infty})$ (on $U\times V$) or $F_k=O(k^{-\infty})$ (on $U\times V$) if $F_k$ is $k$-negligible. 
	\end{definition}

	Next, we recall semi-classical symbol spaces. Let $W$ be an open set in $\mathbb{R}^{N}$, we define the space
	\begin{equation}
	S(1;W):=\{a\in\mathscr{C}^\infty(W):\sup_{x\in W}|\partial^\alpha_x a(x)|<\infty~\text{for all}~\alpha\in\mathbb{N}_0^N\}.
	\end{equation}
	{Consider the space $S^0_{\operatorname{loc}}(1;W)$ containing all smooth functions $a(x,k)$ with
		real parameter $k$ such that for all multi-index $\alpha\in\mathbb{N}_0^N$, any
		cut-off function $\chi\in\cCc(W)$, we have
		\begin{equation}
		\sup_{\substack{k\in\mathbb{R}\\k\geq 1}}\sup_{x\in W}|\partial^\alpha_x(\chi(x)a(x,k))|<+\infty.
		\end{equation}
		For general $m\in\mathbb{R}$, we can also consider
		\begin{equation}\label{eq:s^mloc}
		S^m_{\operatorname{loc}}(1;W):=\{a(x,k):k^{-m}a(x,k)\in\ S^0_{\operatorname{loc}}(1;W)\}.  
		\end{equation}
		In other words, $S^m_{\operatorname{loc}}(1;W)$ takes all the smooth function $a(x,k)$ with parameter $k\in\mathbb{R}$ satisfying the following estimate. For any compact set $K\Subset W$, any multi-index $\alpha\in\N^n_0$, there is a constant $C_{K,\alpha}>0$ independent of $k$ such that 
		\begin{equation}
		|\partial^\alpha_x (a(x,k))|\leq C_{K,\alpha} k^{m},~\text{for all}~x\in K,~k\geq1. 
		\end{equation}
		For a sequence of $a_j\in S^{m_j}_{\operatorname{loc}}(1;W)$ where 
		$m_j\searrow-\infty$, and $a\in S^{m_0}_{\operatorname{loc}}(1;W)$
		we say that
		\begin{equation}\label{eq_assum}
		a(x,k)\sim\sum_{j=0}^\infty a_j(x,k)\:\:
		\text{in}~S^{m_0}_{\operatorname{loc}}(1;W),
		\end{equation}
		if for all $l\in\mathbb{N}$, we have
		\begin{equation}
		a-\sum_{j=0}^{\ell-1} a_j\in S^{m_\ell}_{\operatorname{loc}}(1;W).
		\end{equation}
		In fact, for any sequence $a_j$ as above, there exists an 
		asymptotic sum $a$ as in \eqref{eq_assum}, 
		which is unique up to 
		$S^{-\infty}_{\operatorname{loc}}(1;W):=
		\cap_{m} S^m_{\operatorname{loc}}(1;W)$.}  
For detailed treatment of semi-classical analysis we refer to \cite{DiSj99}.
All the notations introduced above can be generalized to paracompact manifolds.
	
\subsection{Boutet de Monvel--Sj\"ostrand analysis of Szeg\H{o} kernel}
\label{sec:CRmanifoldsMicroLocal}
Let $X$ be a smooth orientable manifold of real dimension $2n+1,~n\geq 1$. 
We say $X$ is a (codimension one) Cauchy--Riemann (CR for short) 
manifold with CR structure \(T^{1,0}X\) if 
$T^{1,0}X\subset\mathbb{C}TX$ is a subbundle such that
\begin{enumerate}
\item[(i)] $\dim_{\mathbb{C}}T^{1,0}_{p}X=n$ for any $p\in X$.
\item[(ii)] $T^{1,0}_p X\cap T^{0,1}_p X=\{0\}$ for any $p\in X$, where 
$T^{0,1}_p X:=\overline{T^{1,0}_p X}$.
\item[(iii)] For $V_1, V_2\in \mathscr{C}^{\infty}(X,T^{1,0}X)$, 
we have $[V_1,V_2]\in\mathscr{C}^{\infty}(X,T^{1,0}X)$, where 
$[\cdot,\cdot]$ stands for the Lie bracket between vector fields. 
\end{enumerate}
Fix a smooth Hermitian metric $\langle\cdot|\cdot\rangle$ 
on the complexified tangent bundle 
$\C TX$ such that $T^{1,0}X$ is orthogonal to $T^{0,1}X$. 
Then locally there is a real vector field 
$\mathcal{T}$ with $|\mathcal{T}|^2:=\langle\mathcal{T}|\mathcal{T}\rangle=1$ 
which is pointwise orthogonal to $T^{1,0}X\oplus T^{0,1}X$. 
Notice that $\mathcal{T}$ 
is unique up to the choice of sign. 
Denote  $T^{*1,0}X$ and $T^{*0,1}X$ the dual bundles in 
$\C T^*X$ annihilating $\C\mathcal{T}\bigoplus T^{0,1}X$
and $\C\mathcal{T}\bigoplus T^{1,0}X$, respectively. 
Define the bundles of $(0,q)$ forms by $T^{*0,q}X:=\wedge^qT^{*0,1}X$. 
The Hermitian metric on $\C TX$ induces, by duality, a Hermitian metric 
on $\mathbb CT^*X$, 
and also on $T^{*0,q}X$ for all $q\in\{0,\ldots,n\}$. 
We shall also denote all these induced metrics by 
$\langle\cdot|\cdot\rangle$. For any open set $D\subset X$, 
denote $\Omega^{0,q}(D)$ 
be the space of smooth sections of $T^{*0,q}X$ over $D$ and let $\Omega^{0,q}_c(D)$ 
be the subspace of $\Omega^{0,q}(D)$ whose elements have compact support in $D$.

Let $\{\omega_j\}_{j=1}^n$ of $T^{*1,0}X$ be a local orthonormal frame. The real 
$(2n)$-form $$\omega:=i^n\omega_1\wedge
\overline{\omega}_1\wedge\ldots\wedge\omega_n\wedge
\overline{\omega}_n$$ is 
independent of the choice of orthonormal frame, so it is globally defined. 
Locally, there exists a real $1$-form 
$\xi$ with $|\xi|^2:=\langle\xi|\xi\rangle=1$ 
and orthogonal to $T^{*1,0}X\oplus T^{*0,1}X$. 
The $\xi$ is unique up to the choice of sign. 
Since $X$ is orientable, there is a nowhere vanishing $(2n+1)$-form 
$\Theta$ on $X$. Thus, $\xi$ can be specified uniquely by 
requiring that $\omega\wedge\xi=f\Theta$, 
where $f$ is a positive smooth function. 
	
\begin{definition}
The so chosen $1$-form $\xi$ is globally defined, 
and we call it the characteristic form on $X$.
\end{definition} 
	
The Levi distribution $HX$ of the CR manifold $X$ is the real part of 
$T^{1,0}X \oplus T^{0,1}X$,
i.e., the unique subbundle $HX$ of $TX$ such that 
\begin{align}\label{eq:2.5}
\C HX=T^{1,0}X \oplus T^{0,1}X.
\end{align} 
Let $J:HX\to HX$ be the complex structure given by 
$J(u+\ol u)=iu-i\ol u$, for $u\in T^{1,0}X$. 
If we extend $J$ complex linearly to $\C HX$ we have
$T^{1,0}X \, = \, \left\{ V \in \C HX \,:\, \, JV \, 
=  \,  iV  \right\}$.
Thus the CR structure $T^{1,0}X$ is determined by
the Levi distribution and $J$.
The annihilator $(HX)^{\perp}\subset T^*X$ is given by 
$(HX)^{\perp}=\R\xi$, so
we have
$\langle\,\xi(x),u\,\rangle=0$,
for any $u\in H_x X,\: x\in X$ 
(we also use the notation $\xi_x$ for $\xi(x)$).
The restriction of $d\xi$ on $HX$ is a $(1,1)$-form. 

\begin{definition}\label{D:Leviform}
The Levi form $\mathcal{L}_x=\mathcal{L}^{\xi}_x$ of $X$ at $x\in X$ 
associated to $\xi$ is the symmetric bilinear map
\begin{equation}\label{eq:2.12}
\mathcal{L}_x:H_x X\times H_x X\to\R,\quad \mathcal{L}_x(u,v)
=\frac12d\xi(u,Jv),\quad \text{ for } u, v\in H_x X.
\end{equation}  
It induces a Hermitian form
\begin{equation}\label{eq:2.12b}
\mathcal{L}_x:T^{1,0}_xX\times T^{1,0}_xX\to\C,
\:\: \mathcal{L}_x(U,V)=\frac{1}{2i}d\xi(U, \ol V) ,
\:\:U, V\in T^{1,0}_xX.
\end{equation}  
\end{definition}
\begin{definition}
A CR manifold $X$ is said to be  strictly pseudoconvex if 
there exists a characteristic $1$-form $\xi$
such that for every $x\in X$ the Levi form
$\mathcal{L}^{\xi}_x$ is positive definite. 
In this case, such $1$-form $\xi$ is also called a contact form 
because $\xi\wedge(d\xi)^n\neq 0$, where $\dim_\R X=2n+1$.
\end{definition} 
	
From now on, we will assume that $(X,T^{1,0}X)$ 
is a compact strictly pseudoconvex CR manifold of 
dimension $2n+1$ and we fix a characteristic $1$-form 
$\xi$ on $X$ as in Theorem \ref{thm:ExpansionMain}
so that $\mathcal{L}^{\xi}_x$ is positive definite at every $x\in X$.
	
\begin{definition}[{\cite[Lemma/Definition 1.1.9]{Gei08}}]\label{D:Reebfield}
The Reeb field $\mathcal{T}=\mathcal{T}(\xi)$
associated with the contact form $\xi$ is
the vector field which is uniquely defined by the equations
$\iota_{\mathcal{T}}\xi=1$, $\iota_{\mathcal{T}}d\xi=0$.
\end{definition} 

We take $\langle\cdot|\cdot\rangle$ so that 
$|\mathcal{T}|=1$, $|\xi|=1$ on $X$.
With respect to the given Hermitian metric 
$\langle\cdot|\cdot\rangle$, we consider  the orthogonal projection
\begin{equation}
\pi^{(0,q)}:\Lambda^q\mathbb{C}T^*X\to T^{*0,q}X.
\end{equation}
The tangential Cauchy--Riemann operator is defined to be 
\begin{equation}
\label{tangential Cauchy Riemann operator}
\overline{\partial}_b:=\pi^{(0,q+1)}\circ d:
\Omega^{0,q}(X)\to\Omega^{0,q+1}(X).
\end{equation}
By Cartan's formula, we can check that 
\begin{equation}
\overline{\partial}_b^2=0.  
\end{equation}
In this paper, we work with two volume forms on $X$:
\begin{itemize}
\item[({i})] A given smooth positive $(2n+1)$-form $dV(x)$.
\item[({ii})] The $(2n+1)$-form 
$dV_{\xi}:=\frac{2^{-n}}{n!}\xi\wedge\left(d\xi\right)^n$ 
given by the contact form $\xi$.
\end{itemize}

\begin{remark}
By the change of variable $t\mapsto \sigma_P(\xi)^{-1}t$, 
Theorem \ref{thm:ExpansionMain} can be rewritten such that the phase 
function becomes $\sigma_P(\xi)^{-1}\varphi$ 
and the leading term of the symbol becomes
\begin{equation}
\label{Eq:LeadingTermMainThmII}
\frac{1}{2\pi ^{n+1}}\frac{dV_{\xi}}
{\sigma_P(\xi)^{n+1}dV}\chi(t)t^n.
\end{equation}	
The $1$-form \(\sigma_P(\xi)^{-1}\xi\)
does not depend on the choice of the contact form 
\(\xi\). We call \(\alpha_P:=\sigma_P(\xi)^{-1}\xi\)
the characteristic form of \(X\) with respect to \(P\).
It is a smooth, non-vanishing real one form 
with \(\C\otimes\operatorname{Ker}\alpha_P=T^{1,0}X\oplus T^{0,1}X\). 
One finds that 
\begin{equation}
\frac{dV_{\xi}}{\sigma_P(\xi)^{n+1}}
=\frac{2^{-n}}{n!}\alpha_P\wedge (d\alpha_P)^n.
\end{equation}
This yields a representation of the leading term given by 
\eqref{Eq:LeadingTermMainThmII} independent of \(\xi\).
\end{remark} 	
	
Take the $L^2$-inner product $(\cdot|\cdot)$ on 
$\Omega^{0,q}(X)$ induced by $dV$ and $\langle\cdot|\cdot\rangle$ via
\begin{equation}
(f|g):=\int_X\langle f|g\rangle dV,~f,g\in\Omega^{0,q}(X).
\end{equation}
We denote by $L^2_{(0,q)}(X)$ the completion of 
$\Omega^{0,q}(X)$ with respect to $(\cdot|\cdot)$, 
and we write $L^2(X):=L^2_{(0,0)}(X)$. 
Let $\|\cdot\|$ denote the corresponding $L^2$-norm on $L^2_{(0,q)}(X)$
and let $\overline{\partial}_b^*$ be the formal adjoint of $\overline{\partial}_b$
with respect to $(\cdot|\cdot)$. The Kohn Laplacian $\Box^{(0)}_{b}$ is defined by 
\begin{equation}
\Box^{(0)}_{b}:=\overline{\partial}_b^*\overline{\partial}_b:
\mathscr{C}^\infty(X)\to\mathscr{C}^\infty(X).
\end{equation}
It is not an elliptic operator because its principal symbol 
$\sigma(\Box^{(0)}_{b})\in\mathscr{C}^\infty(T^*X)$ 
has a non-empty characteristic set
$\{(x,\eta)\in T^*X:\eta=\lambda\xi(x),~\lambda\in\mathbb{R}\setminus\{0\}\}$. 
The symplectic cone \eqref{eq:symco} is the
component of the characteristic set defined by $\lambda>0$.
In fact, $\Box^{(0)}_{b}$ may not even be hypoelliptic, i.e, 
$\Box^{(0)}_{b}u\in\mathscr{C}^\infty(X)$ might not imply that 
$u\in\mathscr{C}^\infty(X)$. We extend $\overline{\partial}_b$ 
to the $L^2$-space by
\begin{equation}
\label{e-gue201223yydu}
\begin{split}
&\overline{\partial}_b: \Dom\overline{\partial}_b
\subset L^2(X)\to L^2_{(0,1)}(X),\\
&\Dom\overline{\partial}_b:=\big\{u\in L^2(X):
\overline{\partial}_bu\in L^2_{(0,1)}(X)\big\},
\end{split}
\end{equation} 
where $\overline{\partial}_bu$ is calculated in the sense of
distributions. The kernel of this extension of $\overline{\partial}_b$
is the space $H^0_b(X)$ of square integrable CR functions \eqref{eq:h0b}.

Let $\overline{\partial}^*_{b,H}: \Dom\overline{\partial}^*_{b,H}
\subset L^2_{(0,1)}(X)\to L^2(X)$ be the $L^2$ adjoint of $\overline{\partial}_{b}$. 
Let $\Box^{(0)}_b: \Dom\Box^{(0)}_b\subset L^2(X)\to L^2(X)$, where
\begin{equation}
\Dom\Box^{(0)}_b=\Big\{u\in L^2(X): u\in\Dom\overline{\partial}_b, 
\overline{\partial}_bu\in\Dom\overline{\partial}^*_{b,H}\Big\},   
\end{equation}
and $\Box^{(0)}_bu=\overline{\partial}^*_{b,H}\,
\overline{\partial}_{b}u$, $u\in \Dom\Box^{(0)}_b$. 
	
We can check that $H_b^{0}(X)=\ker\Box^{(0)}_{b}=
\big\{u\in\Dom\Box^{(0)}_{b}:\Box^{(0)}_{b} u=0\big\}$. 
The results in \cite{Bou75, Koh85} yield the following theorem. 
\begin{theorem}\label{T:BK} 
Let $(X,T^{1,0}X)$ be a compact strictly pseudoconvex 
Cauchy--Riemann manifold
such that the Kohn Laplacian on $X$ has closed range in 
$L^2(X)$. Then $\dim H_b^{0}(X)=+\infty$ and $\Box^{(0)}_{b}$ 
is not hypoelliptic.
\end{theorem}
	\begin{definition}
		The Szeg\H{o} projection on $X$ is the orthogonal projection
		\begin{equation}
		\Pi: L^2(X)\to H^0_b(X).
		\end{equation}
		The Schwartz kernel $\Pi(x,y)\in\mathscr{D}'(X\times X)$ 
		of $\Pi$ is called the Szeg\H{o} kernel.
	\end{definition} 
The singularities of the Szeg\H{o} kernel are described 
by means of microlocal analysis in \cite{BouSj75}, 
see also \cite{Hs10,HM17JDG}.
We use here the formulation from \cite[Part I, Theorem 1.2]{Hs10}.
\begin{theorem}\label{Boutet-Sjoestrand theorem}  
Let $(X,T^{1,0}X)$ and $\xi$ be as in Theorem \ref{thm:ExpansionMain}. 
Then $\Pi(x,y)$ is smooth away from the diagonal of $X\times X$, 
and on any coordinate chart $(D,x)$ on $X$ it can be expressed 
by an oscillatory integral,
\begin{equation}\label{eq:Szego FIO}
\begin{split}
&\Pi(x,y)\equiv\Pi_\varphi(x,y)~\text{on}~X\times D,\\
&\Pi_\varphi(x,y)=\int_0^{+\infty} e^{it\varphi(x,y)}s^\varphi(x,y,t)dt,
\end{split}
\end{equation}
where
\begin{equation}\label{eq:varphi(x,y)}
\begin{split}
&\varphi(x,y)\in \mathscr{C}^\infty(D\times D),\\
&\mbox{${\rm Im\,}\varphi(x,y)\geq 0$},\\
&\mbox{$\varphi(x,y)=0$ if and only if $y=x$},\\
&\mbox{$d_x\varphi(x,x)=-d_y\varphi(x,x)=\xi(x)$},
\end{split}
\end{equation}
and
\begin{equation}
\label{eq:s(x,y,t)}
\begin{split}
&s^\varphi(x,y,t)\in S^n_{1,0}(D\times D\times\mathbb R_+),\\
&s^\varphi(x,y,t)\sim\sum^{+\infty}_{j=0}s^\varphi_j(x,y)t^{n-j}
\:\:\text{in}~S^n_{1,0}(D\times D\times\mathbb R_+),\\
&s^\varphi_j(x,y)\in\mathscr{C}^\infty(D\times D),~j=0,1,2,\ldots,\\
&s^\varphi(x,y,t),~s_j^\varphi(x,y)
\:\:\text{are properly supported on}~D\times D,~j=0,1,\ldots,\\
&s^\varphi_0(x,x)=\frac{1}{2\pi^{n+1}}\frac{dV_{\xi}}{dV}(x)\neq 0.
\end{split}
\end{equation}
\end{theorem}
\begin{remark}
Originally, \cite[Part I Theorem 1.2]{Hs10} is stated such that 
\eqref{eq:Szego FIO} holds on $D\times D$ with not necessarily 
properly supported $s^\varphi(x,y,t)$ and $s_j^\varphi(x,y)$, $j=0,1,\ldots$\,. 
However, it is always possible to adjust these functions to be properly 
supported thanks to $\Pi$ being
smoothing away from diagonal on $X$, by using 
a properly supported partition of unity 
\cite[p.~28-29]{GriSj94}. Thus, we can 
ensure that \eqref{eq:Szego FIO} holds on $X\times D$ not just on $D\times D$. 
This setup makes it easier to discuss the composition 
of Fourier integral operators later on.
\end{remark}
\begin{definition}\label{d-gue230125yyd}
Let $(X,T^{1,0}X)$ and $\xi$ be as in 
Theorem \ref{thm:ExpansionMain}. For any coordinate patch 
$(D,x)$ on $X$ and any $\Lambda\in\cC^\infty(X,\R_+)$, 
we let ${\Ph}(\Pi,\Lambda\xi,D)$ be the set all phase 
functions $\psi(x,y)\in\cC^\infty(D\times D)$ satisfying
\begin{equation}\label{eq:phase function psi}
\begin{split}
&\mbox{${\rm Im\,}\psi(x,y)\geq 0$},\\
&\mbox{$\psi(x,y)=0$ if and only if $y=x$},\\
&\mbox{$d_x\psi(x,x)=-d_y\psi(x,x)=\Lambda(x)\xi(x)$},
\end{split}
\end{equation}
such that there exists a symbol 
$s^\psi(x,y,t)\in S^n_{\rm cl}(D\times D\times\R_+)$ with 
\begin{equation}   \Pi(x,y)\equiv\int_0^{+\infty}e^{it\psi(x,y)}s^\psi(x,y,t)dt.
\end{equation}
\end{definition} 
	
For any $\psi\in {\Ph}(\Pi,\Lambda\xi,D)$, 
we denote from now on by $s^\psi(x,y,t)$ the full symbol in 
$S^n_{\rm cl}(D\times D\times{\R}_+)$ 
up to $S^{-\infty}(D\times D\times{\R}_+)$ such that $\Pi\equiv\Pi_\psi$,
\begin{equation}\label{eq:FIO Pi_psi}
\Pi_\psi(x,y):=\int_0^{+\infty} e^{it\psi(x,y)}s^\psi(x,y,t)dt,
\end{equation} 
and $s^\psi(x,y,t)$ and $s_j(x,y)$ are properly supported on 
$D\times D$, $j\in\N_0$. We have the following formula for 
$s_0^\psi(x,x)$.
\begin{theorem}
\label{thm:s_0^psi(x,x)}
For any $\psi\in {\Ph}(\Pi,\Lambda\xi,D)$, 
the leading term $s_0^\psi(x,y)$ of $\Pi_\psi$ satisfies
\begin{equation}\label{eq:s_0^psi(x,x)}
s^\psi_0(x,x)=\frac{1}{2\pi^{n+1}}\frac{dV_\xi}{dV}(x)\Lambda(x)^{n+1}.
\end{equation}
\end{theorem}
\begin{proof}
We first recall the classical formula \cite[(1.6)]{BouSj75} 
for $x\neq 0$, $\mathrm{Re}~x\geq 0$ and $m\in\mathbb{Z}$. 
We have in the sense of distributions,
\begin{equation}
\int_0^{+\infty} e^{-tx}t^mdt=
m!(x+i0)^{-m-1},\:\:\text{for $m\geq0$},
\end{equation}   
where the distribution $(x+i0)^{-m-1}$ is defined 
as in \cite[\S 3.2]{Hoe03}. Moreover, we have for the finite 
part ($\operatorname{F.P.}$) distribution, c.f.~\cite[\S 3.2]{Hoe03},
\begin{equation}
\operatorname{F.P.}\int_0^{+\infty} e^{-tx}t^mdt=
\frac{(-1)^m}{(-m-1)!}(x+i0)^{-m-1}
\left(\log (x+i0)+\gamma-\sum_{j=1}^{-m-1}\frac{1}{j}\right),
\:\:\text{for $m<0$},
\end{equation}
where $\gamma:=\lim_{m\to+\infty}
\left(\sum_{j=1}^m\frac{1}{j}-\log m\right)$ 
is the Euler constant. Combining these formulas with 
Theorem \ref{Boutet-Sjoestrand theorem}, we have
\begin{equation}\label{eq:plsSz}
\begin{split}
&\Pi_\varphi(x,y)=\frac{F_\varphi(x,y)}{(-i(\varphi(x,y)+i0))^{(n+1)}}
+G_\varphi(x,y)\log(-i(\varphi(x,y)+i0)),\\
&F_\varphi(x,y)=\sum_{j=0}^n (n-j)! s_j^\varphi(x,y)
(-i\varphi(x,y))^j+F(x,y)\varphi(x,y)^{n+1},\\
&G_\varphi(x,y)\equiv\sum_{j=0}^{+\infty}
\frac{(-1)^{j+1}}{j!}s^\varphi_{n+j}(x,y)(-i\varphi(x,y))^j,
\end{split}
\end{equation}
where $F(x,y),F_\varphi(x,y),G_\varphi(x,y)\in\cC^\infty(D\times D)$.
By \cite[Theorem 5.4]{HM17JDG}
we have a smooth function $f(x,y)$ with $f(x,x)\neq 0$ such that
\begin{equation}\label{eq:equivalence of Szego phase function}
\varphi(x,y)=f(x,y)\psi(x,y)+O(|x-y|^{+\infty}).
\end{equation}
For any point $p\in D$, we can take a local coordinates $x$ 
around $p$ such that $\mathcal T=\frac{\partial}{\partial x_{2n+1}}\cdot$ 
Identifying $p$ to $0\in\R^{2n+1}$, \eqref{eq:varphi(x,y)} implies that
\begin{equation}\label{eq:f((0,x_2n+1,0))}
f((0,x_{2n+1}),0)=\Lambda(0)^{-1}+O(|x_{2n+1}|).
\end{equation}
Combining $\Pi_\varphi\equiv\Pi_\psi$, 
\eqref{eq:plsSz}, 
\eqref{eq:equivalence of Szego phase function}, 
\eqref{eq:f((0,x_2n+1,0))} and 
$0\neq\psi((0,x_{2n+1}),0)=O(|x_{2n+1}|)$, we can check that 
\begin{equation}
s_0^\varphi((0,x_{2n+1}),0)=
\Lambda(0)^{-n-1}s_0^\psi((0,x_{2n+1}),0)+O(|x_{2n+1}|),
\end{equation}
and hence $s_0^\varphi(0,0)=\Lambda(0)^{-n-1}s_0^\psi(0,0)$ by continuity. 
The argument above works for all point in $D$, so 
the transformation rule \eqref{eq:s_0^psi(x,x)} of the leading term follows.
\end{proof}

We point out that we adopt the global 
description of the leading term $s_0(x,x)$ for \eqref{eq:s(x,y,t)}. 
In the local picture, near any point $p\in X$, let $\{W_j\}_{j=1}^n$ 
be an orthonormal frame of $T^{1,0}X$ in a neighborhood of $p$ 
such that $\mathcal{L}_p(W_j,\overline{W}_s)=\delta_{js}\mu_j$, 
for some $\mu_j>0$ and $j,s\in\{1,\ldots,n\}$. Define
\begin{equation}
\det\mathcal{L}_p:=\Pi_{j=1}^n\mu_j(p).
\end{equation} 
If we set $g_{jk}=\langle\frac{\partial}{\partial x_j}|
\frac{\partial}{\partial x_k}\rangle$ and 
$g=(g_{jk})_{j,k=1}^{2n+1}$ then
\begin{equation}
dV_\xi(x)=\det\mathcal{L}_x\sqrt{\det(g)}\,dx.
\end{equation}
\begin{remark}  
The result 
\cite[Theorem 1.2]{Hs10} is stated and proved for general 
$(0,q)$-forms under the assumption that the Levi form 
is globally non-degenerate with constant signature $(n_-,n_+)$. 
In this case, the condition $Y(q)$ of Kohn holds 
if and only if $q\notin \{n_-,n_+\}$. In particular, 
the strict pseudoconvexity implies that condition $Y(0)$ fails.
The characteristic form $\omega_0$ in \cite[Part I]{Hs10} 
corresponds to our $-\xi$ when $q=0$, in order to 
specify the case $q=n_-$ and $q=n_+$ naturally. 
If $\varphi_+$ and $\varphi_-$ are phase functions 
of the Fourier integral operators describing the 
Szeg\H{o} projections 
on $(0,n_+)$-forms and $(0,n_-)$-forms, respectively, 
then $\omega_0$ is chosen such that 
$d_x\varphi_\pm(x,x)=\pm\omega_0(x)$. 
The convention $\omega_0=-\xi$ applies to the series of papers
\cite{HM14,HM17CPDE,HM17JDG,Hs18,GaHs23,GaHs21}. 
We use here the convention that $\xi$ satisfies \eqref{eq:2.12}
in order to be consistent to the convention of 
\cite{BG81}. 
\end{remark} 

When $X$ is as in Theorem \ref{thm:ExpansionMain}, it is known
by \cite{BouSj75,Hs10,HM17JDG}, 
that $\Pi$ is a pseudodifferential operator of 
order $0$ of type $\big(\frac{1}{2},\frac{1}{2}\big)$. 
Thus, by the Calderon--Vaillancourt's theorem \cite[Theorem 18.6.6]{Hoe07}
we have
\begin{equation}
\label{Sobolev bounded of Pi}
\Pi=O_{H^s\to H^s}(1),\quad s\in\mathbb Z.
\end{equation}
We end this subsection by the following local description 
of the tangential Hessian of the phase function. 
Such local picture is essential for calculating the leading coefficient 
of the Szeg\H{o} kernel expansion \cite{BouSj75,Hs10,HM17JDG}. 
Also, it turns out that it is crucial in our proof of the 
CR embedding theorem (Theorem~\ref{thm:embedding}).
\begin{theorem}[{\cite[Part I, Theorem 1.4]{Hs10}}]
\label{thm:tangential hessian of varphi}
For any given point $p\in X$, let $\{W_j\}_{j=1}^n$ 
be an orthonormal frame of $T^{1,0}X$ in a neighborhood 
of $p$ such that $\mathcal{L}_p(W_j,\overline{W}_s)=\delta_{js}\mu_j$ 
for some $\mu_j>0$ and $j,s\in\{1,\ldots,n\}$. 
There exist local coordinates near $p$ such that
\begin{equation}
\label{eq_loccoord}
x(p)=0,\quad \xi(p)=dx_{2n+1},\quad
\mathcal{T}=\frac{\partial}{\partial x_{2n+1}},
\end{equation}
and 
\begin{equation}
\begin{split}
W_j=\frac{\partial}{\partial z_j}+i\mu_j\overline{z}_j
\frac{\partial}{\partial x_{2n+1}}-d_jx_{2n+1}
+\frac{\partial}{\partial x_{2n+1}}+
\sum_{\ell=1}^{2n}e_{j\ell}(x)\frac{\partial}{\partial x_\ell},
\end{split}
\end{equation} 
where $d_j\in\mathbb C$, $e_{j\ell}=O(|x|)$, 
$e_{j\ell}\in\cC^\infty$, $j\in\{1,\ldots,n\}$, 
$\ell\in\{1,\ldots,2n\}$. In these coordinates we have
for any $\varPhi\in{\Ph}(\Pi,\xi,D)$, 
\begin{equation}
\label{thm:tangential hessian of varphi in Theorem}
\begin{split}
\varPhi(x,y)&=x_{2n+1}-y_{2n+1}+
i\sum_{j=1}^n\mu_j|z_j-w_j|^2+
\sum_{j=1}^n i\mu_j(\overline{z}_jw_j-z_j\overline{w}_j)\\
&+\sum_{j=1}^n \left(d_j(z_jx_{2n+1}-
w_jy_{2n+1})+\overline{d}_j(\overline{z}_jx_{2n+1}
-\overline{w}_jy_{2n+1})\right)\\
&+(x_{2n+1}-y_{2n+1})f(x,y)+O\left(|(x,y)|^3\right),
\end{split}
\end{equation}
where $f\in\mathscr{C}^\infty(D\times D)$ satisfies 
$f(x,y)=\overline{f}(y,x)$ and $f(0,0)=0$.
\end{theorem}
According to \cite[Part I Proposition 7.16]{Hs10}, 
there exist $C>0$ such that 
in a small enough neighborhood we have in these coordinates,
\begin{equation}
\label{eq:Im varphi>C|z-w|^2}
\Im\varPhi(x,y)\geq C|z-w|^2.
\end{equation}
\begin{remark}\label{rmk:varphi(x,y)=x_2n+1+g(x',y)}
We can make the calculation in section \S 3 simpler by using 
the following specific phase function.
Let $\varphi$ be the one described in 
Theorem~\ref{Boutet-Sjoestrand theorem} and let $(D,x)$ be coordinates
as in Theorem \ref{thm:tangential hessian of varphi}. 
By using the Malgrange preparation theorem \cite[Theorem 7.5.5]{Hoe03}
and the Melin--Sj\"ostrand equivalence of phase functions 
\cite[Definition 4.1 \& Theoren 4.2]{MeSj75}, 
we can make the following choice for $\varphi$, 
\begin{equation}\label{e-gue221212yyd}
\varphi(x,y)=x_{2n+1}+\widehat\varphi(x',y), 
\end{equation}
where $\widehat\varphi\in\cC^\infty(D\times D)$, 
$x'=(x_1,\ldots,x_{2n})$. If $\varphi_1, \varphi_2\in {\Ph}(\Pi,\xi,D)$
satisfy 
\eqref{e-gue221212yyd}, 
we see from~\cite[Theorem 5.4]{HM17JDG} 
that $\varphi_1-\varphi_2=O(|x-y|^{+\infty})$. 
From now on, unless otherwise stated, we will 
work with the phase function $\varphi\in {\Ph}(\Pi,\xi,D)$ satisfying 
\eqref{e-gue221212yyd}. 
Note that $\varphi$ is uniquely determined up to an error of 
size $O(|x-y|^{+\infty})$. 
\end{remark}
\subsection{Strategy for the proof of Theorem 
\ref{thm:ExpansionMain}}\label{sec:StrategyOfProof}
	
Let us explain the strategy we use 
to prove Theorem \ref{thm:ExpansionMain}. 
We study $\chi_k(T_P)$ by examining
\begin{equation}
\chi_k(T_P)=\chi_k(T_P)\circ\Pi=
\int_\mathbb C\frac{\partial\widetilde{\chi}_k}
{\partial\overline{z}}(z-T_P)^{-1}\circ\Pi~
\frac{dz\wedge d\overline{z}}{2\pi i},
\end{equation}
where $\widetilde\chi_k$ is an almost analytic extension of $\chi_k$. 
The last equality above holds by the Helffer--Sj\"ostrand formula and 
is applied in \cite[Theorem 1.1]{GaHs21} to study functional calculus 
of zeroth-order elliptic self-adjoint Toeplitz operators. 

In our approach $T_P$ is treated 
as a pseudodifferential operator of type 
$(\frac{1}{2},\frac{1}{2})$ and as a classical Fourier integral 
operator with complex phase, and we cannot obtain the 
asymptotic expansion of $(z-T_P)^{-1}$ in the parameter $z$
directly from the calculus of pseudodifferential operators \cite{Hoe07}. 
Instead, we use the calculus and analysis of 
Melin--Sj\"ostrand \cite{MeSj75} and 
Boutet de Monvel--Sj\"ostrand \cite{BouSj75}
to study the asymptotic expansion of $(z-T_P)^{-1}\Pi$. 
This method has been thoroughly 
established for zeroth-order operators 
\cite{GaHs21}, however, further investigation is necessary 
for our first-order situation. 
Roughly speaking, in Theorem \ref{t-gue221218yyd}, 
for all $N\in\mathbb{N}_0$ we can find  
$\psi\in{\Ph}(\Pi,\sigma_P(\xi)^{-1}\xi,D)$
and $z$-dependent Fourier integral operators
$B_z^{(0)}$,\ldots, $B_{z}^{(N)}$ and $R_z^{(N+1)}$ 
with Schwartz kernel
\begin{equation}\label{eq:B_z^j in Section 2.3}
 B_z^{(j)}(x,y)=\int^{+\infty}_0 e^{it{\psi(x,y)}}
 \frac{\sum^{p_j}_{h=0}b_h^{(j)}(x,y,t)z^h}{(z-t)^{\ell_j}}
 t^{n-j-1}dt,~j=0\ldots,N,
\end{equation}
and
\begin{equation}
 R_z^{(N+1)}(x,y)=\int^{+\infty}_0 e^{it\psi(x,y)}
 \frac{\sum^{p_{N+1}}_{h=0}r_h^{(N+1)}(x,y,t)z^h}
 {(z-t)^{\ell_{N+1}}}t^{n-N-1}dt,
  \end{equation}
respectively, such that 
$\Pi B_z^{(j)}\equiv B_z^{(j)}\Pi
\equiv B_z^{(j)}\mod\mathscr{C}^\infty_z$, $j=0,\ldots,N$, and
\begin{equation}\label{eq:z-microlocal expansion 2.3}
\begin{split}
&(z-T_P)\sum_{j=0}^{N}B_z^{(j)}\equiv\Pi+
{R_z^{(N+1)}}\mod\mathscr{C}^\infty_z,
\end{split}
\end{equation}
where 
\[
\begin{split}
&p_j\leq\ell_j, j=0,\ldots,N+1, 
b_h^{(j)}(x,y,t)\in S^{\ell_j-h}_{\rm cl}(D\times D\times\mathbb R_+), 
j=0,\ldots,N,\\ 
&r_h^{(N+1)}(x,y,t)\in 
S^{\ell_{N+1}-h}_{\rm cl}(D\times D\times\mathbb R_+)
\end{split}
\]
and $\mathscr{C}^\infty_z$ is the class of $z$-dependent smoothing operators
introduced in Definition \ref{def:C infty_z}, and for $j=0$ in 
\eqref{eq:B_z^j in Section 2.3} we have $\ell_0=1$, 
$p_0=0$ and $b_0^{(0)}(x,y,t)\sim
\sum_{h=0}^\infty b^{(0)}_h(x,y)t^{1-h}$
in $S^{1}_{1,0}(D\times D\times\mathbb R_+)$ 
with $b^{(0)}_0(x,x)=s_0^\varphi(x,x)$ given by 
\eqref{eq:s(x,y,t)}. 
In Lemma \ref{l-gue230112yyd}, 
we will see that for each $j=0,\ldots,N$, 
$$\displaystyle{\int_\mathbb{C}
\frac{\partial\widetilde{\chi}_k}{\partial\overline{z}}}
B_z^{(j)}(x,y)\frac{d z\wedge d\overline{z}}{2\pi i}$$ 
can always be calculated by the Cauchy--Pompeiu 
formula and turns out to be a semi-classical Fourier integral 
operator with complex phase. However, for 
\eqref{eq:z-microlocal expansion 2.3} it is difficult to define 
a suitable $z$-dependent symbol space to control 
terms such as 
$$\int_\mathbb C \frac{\partial\widetilde\chi_k}{\partial\overline{z
}}(z-T_P)^{-1}{R_z^{(N+1)}}\frac{dz\wedge d\overline{z}}{2\pi i}$$ 
in the approximation of $\chi_k(T_P)$ for fixed $k$. 
To handle this, we need a semi-classical estimate in $k$. 
In \S 4, for $k$ is large enough, after using 
a suitable truncation and the fact that $T_P$
has discrete spectrum, we establish the 
semi-classical estimate for 
$$\int_\mathbb C \frac{\partial\widetilde\chi_k}
{\partial\overline{z}}(z-T_P)^{-1}{R_z^{(N+1)}}
\frac{dz\wedge d\overline{z}}{2\pi i}$$ 
in Theorem \ref{t-gue230114yyde}, and as a by-product we can also control 
$$\int_\mathbb C \frac{\partial\widetilde\chi_k}
{\partial\overline{z}}(z-T_P)^{-1}{F_z^{(N+1)}}
\frac{dz\wedge d\overline{z}}{2\pi i},\quad F_z^{(N+1)}\in\cC^\infty_z,$$ 
by $O(k^{-\infty})$ in Theorem \ref{t-gue230123yyd}. 
We can hence approximate $\chi_k(T_P)$ by semi-classical 
Fourier integral operators of complex phase in 
Theorem \ref{thm:chi_k(T_P) by Psi}. 
Moreover, using the global theory of semi-classical Fourier integral operator 
with complex phase, we will see in 
Theorem \ref{thm:ExpMain Sec 4} that we can 
choose any suitable phase functions for such approximation 
with an explicit transformation rule for the leading term. 
In particular, we conclude Theorem \ref{thm:ExpansionMain}. 
	
\section{Properties of Toeplitz operators and a 
$z$-dependent calculus}
\label{Asymptotic expansion for eesolvents of Toeplitz 
operators on CR functions}
In this section we prepare the material for the 
proof of Theorem \ref{thm:ExpansionMain}.

\subsection{The Toeplitz operator $T_P$ and its functional calculus}
We recall that in Theorem \ref{thm:ExpansionMain}, 
we consider a formally self-adjoint order one pseudodifferential operator 
$P\in L^1_\mathrm{cl}(X)$ with principal symbol 
$\sigma_P\in\mathscr{C}^\infty(T^*X)$ such that
\begin{equation}\label{eq:spp}
\begin{split}
\sigma_P(\xi)> 0.
\end{split}
\end{equation}
This means that $\sigma_P|_\Sigma>0$ everywhere 
for the symplectic cone $\Sigma$ in \eqref{eq:symco}, 
which is a connected component of the 
characteristic set of the Kohn Laplacian.
We consider the first-order elliptic Toeplitz operator
	\begin{equation}
	T_P:=\Pi P\Pi:\cC^\infty(X)\to\mathscr{C}^\infty(X),
	\end{equation}
 and denote its Schwartz kernel by $T_P(x,y)$.
After introducing a properly supported cut-off function near 
the diagonal of $D\times D$, using
 Melin--Sj\"ostrand complex stationary phase formula 
\cite[Theorem 2.3]{MeSj75} and 
Theorem \ref{Boutet-Sjoestrand theorem}, 
we can follow \cite[Theorem 4.4]{GaHs23} 
to describe the singularities of $T_P$ in the following result.
\begin{theorem}\label{BdM-Sj thm for Toeplitz operators}
In the situation in Theorem \ref{thm:ExpansionMain} 
the kernel $T_P(x,y)$ can be expressed 
in any coordinate chart $(D,x)$ on $X$ by an oscillatory integral
as follows,
\begin{equation}\label{e-gue221217yyd}
\begin{split}
&T_P(x,y)\equiv T_\varphi(x,y)~\text{on}~X\times D,\\
&T_\varphi(x,y)=\int_0^{+\infty} e^{it\varphi(x,y)}a(x,y,t)dt,
\end{split}
\end{equation}
where $\varphi$ is described in 
Remark \ref{rmk:varphi(x,y)=x_2n+1+g(x',y)} and
\begin{equation}\label{e-gue221218ycd}
\begin{split}
&a(x,y,t)\in S^{n+1}_{\operatorname{cl}}(D\times D\times{\R}_+),\\
&\mbox{$a(x,y,t)\sim\sum^{+\infty}_{j=0}a_j(x,y)t^{n+1-j}$ 
in $S^n_{1,0}(D\times D\times\mathbb R_+)$},\\
&\mbox{$a_j(x,y)\in\mathscr{C}^\infty(D\times D)$, $j=0,1,2,\ldots$},\\
&\mbox{$a_0(x,x)=s^\varphi_0(x,x)\sigma_P(\xi_x)$ },\\
&\mbox{$a(x,y,t)$, $a_j(x,y)$ are properly supported on 
$D\times D$, $j=0,1,\ldots$,}
\end{split}
\end{equation}
and $s^\varphi_0(x,x)$ is the leading term of $\Pi$ as in \eqref{eq:s(x,y,t)}.
\end{theorem}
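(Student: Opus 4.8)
\emph{Strategy.} The plan is to express $T_P=\Pi P\Pi$ as a composition inside the calculus of (complex--phase) Fourier integral operators, using the Boutet de Monvel--Sj\"ostrand description of $\Pi$ from Theorem~\ref{Boutet-Sjoestrand theorem}: first apply the first order pseudodifferential operator $P$ to the Szeg\H{o} parametrix (a $\Psi\mathrm{DO}\circ\mathrm{FIO}$ composition), then compose the result with $\Pi$ again (an $\mathrm{FIO}\circ\mathrm{FIO}$ composition of Szeg\H{o} type), invoking the Melin--Sj\"ostrand stationary phase formula \cite{MS75}*{Theorem~2.3} at each step. For Step~1, note that $\Pi$ is smoothing off the diagonal (Remark~\ref{Szego type FIO II}) and $P$ is pseudolocal, so $T_P$ is smoothing off the diagonal; hence it suffices to work on a coordinate chart $(D,x)$ near the diagonal with a properly supported cut-off. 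Writing $\Pi\equiv\Pi_\varphi$ with $\Pi_\varphi(x,y)=\int_0^{+\infty}e^{it\varphi(x,y)}s^\varphi(x,y,t)\,dt$ and $s^\varphi$ properly supported, the operator $\Pi-\Pi_\varphi$ is smoothing; since $P\colon\cC^\infty(D)\to\cC^\infty(D)$ and $P\colon\mathscr{E}'(D)\to\mathscr{D}'(D)$ are continuous and $\Pi,\Pi_\varphi$ are continuous, we obtain $T_P=\Pi P\Pi\equiv\Pi_\varphi P\Pi_\varphi$ on $D\times D$.

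\emph{Step 2: the operator $P\Pi_\varphi$.} Because $d_x\varphi(x,y)\neq 0$ near the diagonal (indeed $d_x\varphi(x,x)=\xi(x)$ by \eqref{eq:varphi(x,y)}), applying the classical first order operator $P$ to the oscillatory integral $\Pi_\varphi$ produces, by the standard $\Psi\mathrm{DO}\circ\mathrm{FIO}$ stationary phase expansion, a kernel of the same Szeg\H{o} type,
\[
(P\Pi_\varphi)(x,y)\equiv\int_0^{+\infty}e^{it\varphi(x,y)}b(x,y,t)\,dt,\qquad b\in S^{n+1}_{\mathrm{cl}}(D\times D\times\R_+),
\]
with $b$ properly supported, $b\sim\sum_{j=0}^{+\infty}b_j(x,y)t^{n+1-j}$, $b_j\in\cC^\infty(D\times D)$, and full leading symbol $\sigma_P\bigl(x,t\,d_x\varphi(x,y)\bigr)\,s^\varphi(x,y,t)$; evaluating on the diagonal and using homogeneity $\sigma_P(x,t\xi(x))=t\,\sigma_P(\xi_x)$ gives $b_0(x,x)=\sigma_P(\xi_x)\,s^\varphi_0(x,x)$.

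\emph{Step 3: composition with $\Pi_\varphi$ and the leading term.} It remains to analyse $\Pi_\varphi\circ(P\Pi_\varphi)$, i.e.
$\int_D\!\int_0^{+\infty}\!\int_0^{+\infty}e^{i(t\varphi(x,z)+s\varphi(z,y))}s^\varphi(x,z,t)\,b(z,y,s)\,dt\,ds\,dz$. Using homogeneity of the phase in $(t,s)$ to pass to a single large parameter, one reduces to a stationary phase integral in the fibre variable and in $z$; the stationary-point conditions force, modulo $O(|x-y|^\infty)$, the configuration $z=x=y$ with frequency ratio $1$ (here one uses $d_z\varphi(x,z)|_{z=x}=-\xi(x)$, $d_z\varphi(z,y)|_{z=y}=\xi(y)$ together with $\mathrm{Im}\,\varphi\ge 0$ and \eqref{eq:Im varphi>C|z-w|^2} to localize), which is exactly the configuration occurring in the proof that $\Pi^2\equiv\Pi$. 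The Melin--Sj\"ostrand formula \cite{MS75}*{Theorem~2.3}, applied with almost-analytic extensions, then yields $T_P\equiv\int_0^{+\infty}e^{it\varphi(x,y)}a(x,y,t)\,dt$ with $a\in S^{n+1}_{\mathrm{cl}}(D\times D\times\R_+)$ properly supported and $a\sim\sum_{j=0}^{+\infty}a_j(x,y)t^{n+1-j}$, $a_j\in\cC^\infty(D\times D)$. For the leading coefficient, the stationary phase contribution on the diagonal factors as $a_0(x,x)=c(x)\,s^\varphi_0(x,x)\,b_0(x,x)$ with a normalizing (Hessian/Gaussian) factor $c(x)$ not depending on $P$; taking $P=\mathrm{Id}$ (so $b=s^\varphi$) reproduces $\Pi^2\equiv\Pi$ and forces $s^\varphi_0(x,x)=c(x)\,s^\varphi_0(x,x)^2$, hence $c(x)\,s^\varphi_0(x,x)=1$ since $s^\varphi_0(x,x)\neq 0$. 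Combining with Step~2 gives $a_0(x,x)=b_0(x,x)=s^\varphi_0(x,x)\,\sigma_P(\xi_x)$, as asserted.

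\emph{Main obstacle.} The crux is Step~3: composing Fourier integral operators with genuinely complex phases. One must use almost-analytic continuations of $\varphi$ and of the symbols, check that the composed phase is equivalent in the sense of \cite{MS75} to $t\varphi(x,y)$, verify that the reduced phase keeps $\mathrm{Im}\ge 0$ with the transverse non-degeneracy needed for the stationary phase expansion to produce a \emph{classical} (polyhomogeneous) symbol of the stated order, and confirm that all remainders are genuinely smoothing rather than merely of lower order in $t$ --- this is precisely where the positivity \eqref{eq:Im varphi>C|z-w|^2} and the properly-supported reductions of Remarks~\ref{Szego type FIO II}--\ref{Szego type FIO III} are indispensable. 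The $\Psi\mathrm{DO}\circ\mathrm{FIO}$ step and the algebraic identification of $a_0$ are, by comparison, routine.
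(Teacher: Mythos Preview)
Your proposal is correct and follows essentially the same route as the paper: the paper does not give a self-contained proof but indicates (and cites \cite{HsG21a}*{Theorem~3.4}) that the result follows by introducing a properly supported cut-off near the diagonal, using Theorem~\ref{Boutet-Sjoestrand theorem}, and applying the Melin--Sj\"ostrand stationary phase formula \cite{MS75}*{Theorem~2.3} to the composition --- exactly the $\Psi\mathrm{DO}\circ\mathrm{FIO}$ followed by $\mathrm{FIO}\circ\mathrm{FIO}$ analysis you carry out. Your determination of $a_0(x,x)$ via the $\Pi^2\equiv\Pi$ normalization is a clean way to pin down the Hessian factor.
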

The continuity properties of $P$ and $\Pi$ imply that
$T_P$ is a bounded operator between the Sobolev spaces
$H^{s+1}$ and $H^s$ for all $s\in\mathbb{R}$, 
\begin{equation}
\label{eq:Sobolev boundedness of T_P}
T_P=O_{H^{s+1}\to H^s}(1).
\end{equation} 
Using \cite[Lemma 12.2]{BG81}
and standard elliptic estimates we have the following.
\begin{theorem}\label{thm:elliptic estimate for T_P}
In the situation in Theorem \ref{thm:ExpansionMain},
for every $s\in\R$ there exists $C_s>0$ such that 
\begin{equation}
\|u\|_{H^{s+1}}\leq C_s(\|T_Pu\|_{H^s}+\|u\|_{H^s})
\end{equation} 
for all  $u\in H^{s+1}(X)$ with $u=\Pi u$. 
In particular, given an eigenvalue $\lambda\in\mathbb R$, $\lambda\neq0$, 
of $T_P$, then for any $s\in\Z$ there exists a constant
$\widehat C_s>0$ such that 
\begin{equation}
\label{eq:eigenfunction estimates}
\|u\|_{H^s}\leq\widehat C_s (1+|\lambda|)^s\|u\| 
\end{equation} 
for all $u\in\Ker(T_P-\lambda I)$. 
Moreover,~\eqref{eq:eigenfunction estimates} 
holds with \(\lambda=0\) for all \(u\in \ker T_P\cap H_b^0(X)\).
\end{theorem}
From Theorem \ref{thm:elliptic estimate for T_P} we infer immediately: 
\begin{theorem}\label{thm:T_P is self-adjoint}
In the situation in Theorem \ref{thm:ExpansionMain}, 
the maximal extension of $T_P$ defined by  
\begin{equation}
T_P:\Dom T_P\subset L^2(X)\to L^2(X),\quad
\Dom T_P:=\big\{u\in L^2(X):~T_P u\in L^2(X)\big\},
\end{equation}
where $T_P u=\Pi P\Pi u$ is defined in the sense of distributions,
is a self-adjoint operator. 
\end{theorem} 
Moreover, we have the following.
\begin{theorem}\label{thm:Spec(T_P)}
In the situation in Theorem \ref{thm:ExpansionMain},
the spectrum $\Spec(T_P)\subset\R$ of $T_P$
consists only of isolated
eigenvalues, is bounded from below
and has only $+\infty$ as a point of accumulation.
Moreover, for every 
$\lambda\in\Spec(T_P)\setminus\{0\}$, the eigenspace 
$\Ker(T_P-\lambda I)$
is a finite dimensional subspace of $H^0_b(X)\cap\cC^\infty(X)$.
\end{theorem}
\begin{proof}
By~\cite[Proposition 2.14]{BG81} the spectrum of
the operator $T_{P}|_{H^0_b(X)}:H^0_b(X)\to H^0_b(X)$
consists only of isolated
eigenvalues of finite multiplicity, is bounded from below
and has only $+\infty$ as a point of accumulation.
The conclusion follows from the fact that
$\Spec(T_P)\setminus\{0\}=
\Spec(T_P|_{H^0_b(X)})\setminus\{0\}$ and the restrictions to 
$\R\setminus\{0\}$ of the spectral measures
of $T_P$ and $T_P|_{H^0_b(X)}$ coincide.
\end{proof}

Combining Theorems \ref{thm:elliptic estimate for T_P}
and \ref{thm:Spec(T_P)}, by functional calculus of self-adjoint operators, 
we can define the operator $\chi_k(T_P)$ and \eqref{eq:KerChiTp} holds. 
In fact, with the same notations used in \eqref{eq:KerChiTp},
for any $\epsilon>0$ and for the indicator function 
$\mathds{1}_{[k^{1-\epsilon},k^{1+\epsilon}]}$, we also have
\begin{equation}
\label{eq:1_k(T_P)}
\mathds{1}_{[k^{1-\epsilon},k^{1+\epsilon}]}(T_P)(x,y)=\sum_{\lambda_j\in[k^{1-\epsilon},k^{1+\epsilon}]}f_j(x)\overline{f_j(y)}.
\end{equation}
This formula will be used in \S 4.

The main analytical objective in this paper is to study the 
spectral asymptotics of $T_P$ in the semi-classical 
limit by applying Helffer--Sj\"ostrand formula 
(cf.\ \cite[\S 2]{Dav95} or \cite[\S 8]{DiSj99}) 
to $T_P$ by a sequence of rescaled cut-off functions.
For any fixed $0<\delta_1<\delta_2<\infty$, and a fixed 
$\chi\in\cCc(\mathbb{R}_+)$ satisfying 
$\supp(\chi)\subset(\delta_1,\delta_2)$, 
$\chi\not\equiv 0$, we let $\chi_{k}(\lambda):=\chi\left(k^{-1}\lambda\right)$. 
The Helffer--Sj\"ostrand formula implies that
	\begin{equation}
	\label{eq:HS formula for T_P}
	\chi_k(T_P)=\frac{1}{2\pi i}\int_{\mathbb{C}}\frac{\partial\widetilde{\chi_k}}{\partial\overline{z}}(z-T_P)^{-1}dz\wedge d\overline{z},
	\end{equation}
	where $\widetilde{\chi_k}$ is an almost analytic extension of $\chi_{k}$. The integral \eqref{eq:HS formula for T_P} is a well-defined operator from $L^2(X)$ to $L^2(X)$. Notice that $\supp(\chi_k)\cap\{0\}=\emptyset,$
	hence
	\begin{equation}
	\label{eq:chi_k(T_P)Pi}
	\chi_k(T_P)=\chi_k(T_P)\Pi=
	\frac{1}{2\pi i}\int_{\mathbb{C}}
	\frac{\partial\widetilde{\chi_k}}{\partial\overline{z}}(z-T_P)^{-1}\Pi~ 
	dz\wedge d\overline{z}.
	\end{equation}
\begin{remark}\label{remark:supp tilde chi_k}
The following observation will be used frequently in \S 4. 
Let $\supp\chi\subset(\delta_1,\delta_2)$. Since for each $k$ 
we have
\begin{equation}
\chi_k(T_P)=\chi\left(k^{-1}T_P\right)=
\int_\C\frac{\partial\widetilde\chi}{\partial\overline{z}}(z-k^{-1}T_P)^{-1}\frac{dz\wedge d\overline{z}}{2\pi i},
\end{equation}	
which is independent of the choice of the almost analytic extension 
of $\widetilde{\chi}$, we can take in \eqref{eq:chi_k(T_P)Pi}
the almost analytic extension of $\chi_k$ to be
$\widetilde{\chi_k}(z)=\widetilde{\chi}(\frac{z}{k})$
with a special $\widetilde\chi$, so that 
$\supp(\widetilde{\chi})\subset\{z\in\C:
\delta_1/2<|z|<2\delta_2\}$ and for 
$k$ sufficiently large we have
\begin{equation}\label{eq:|z| approx k}
\supp\widetilde{\chi_k}\subset
\Big\{z\in\C: \frac{k\delta_1}{2}<|z|<2k\delta_2\Big\}.
\end{equation}
\end{remark}
	
\subsection{An asymptotic expansion of $(z-T_P)^{-1}\Pi$}
Inspired by the functional calculus of zeroth-order 
elliptic self-adjoint Toeplitz operators on CR manifolds 
\cite[Theorem 1.1]{GaHs21}, in this section we will show 
how to establish the asymptotic expansion of $(z-T_P)^{-1}\Pi$
by a series of $z$-dependent Fourier integral operators. 
	
We first define classes of $z$-dependent smoothing operators, 
that appear in the remainder of the expansion of 
$(z-T_P)^{-1}\Pi$. Furthermore, in \S 4, 
we will show that they only contribute as 
$O(k^{-\infty})$ when we plug the expansion of $(z-T_P)^{-1}\Pi$
into Helffer--Sj\"ostrand formula.  
We need a cut-off $\tau$ in order to avoid a possible blow-up 
of the negative power of $t$ near $0$ in the asymptotic expansion:
	\begin{equation}
	\label{eq: cut off tau}
	\tau\in\cC^\infty(\R),~\tau(t)=0\;\;\text{for $t\leq 1$},
	~\tau(t)=1\;\;\text{for $t\geq 2$}.
	\end{equation}
Let $\varepsilon>0$ be fixed but arbitrary.
\begin{definition}\label{def:z-depnednt smoothing operator type 1}
We denote by $\widetilde{\mathcal{R}}_z$ the set
of finite linear combinations of operators
\begin{equation}
\int_0^{+\infty}\mathcal{S}(x,y,t)
\frac{(1+z)^{M_2}}{(z-t)^{M_1}}\tau(\varepsilon t)dt
\end{equation}	
over $\C$, with symbol $\mathcal{S}\in 
S^{-\infty}(D\times D\times{\R}_+)$
properly supported on $D\times D$, and $M_1,M_2\in\N_0$.
\end{definition} 
\begin{definition}\label{d-gue230123yyd}
We denote by $\widehat{\mathcal{R}}_z$
the set of finite linear combinations of operators
\begin{equation}
\int_0^{+\infty}\mathcal{S}(x,y,t)\frac{(1+z)^{M_2}}{(z-t)^{M_1}}
\tau(\varepsilon t)dt
\end{equation}
over $\C$, with symbol $\mathcal{S}\in S^{-\infty}(X\times D\times{\R}_+)$, 
$M_1,M_2\in\N_0$.
\end{definition} 
\begin{definition}
\label{def:z-depnednt smoothing operator type II}
We denote by $\mathcal{R}_z$ the set of finite linear combination of operators
\begin{equation}\label{eq:mathscr R_z}
\int_0^{+\infty} e^{it\psi(x,y)}\mathscr{S}(x,y,t)
\frac{(1+z)^{M_2}}{(z-t)^{M_1}}\tau(\varepsilon t)dt
\end{equation}
where $\mathscr{S}(x,y,t)=O(|x-y|^{+\infty})$ is a symbol 
in $S^m_{\rm cl}(D\times D\times{\R}_+)$, $m\in\mathbb R$, 
$\mathscr{S}(x,y,t)$ is properly supported on $D\times D$, 
$M_1,M_2\in\N_0$, 
and $\psi\in {\Ph}(\Pi,\Lambda\xi,D)$ 
(cf.\ Definition \ref{d-gue230125yyd}). 
\end{definition} 
\begin{definition}\label{def:C infty_z}
 We define the set
\begin{equation}
\begin{split}
&\mathscr{R}_z:=\{\sum_{j\in J}c_ju_j:
c_j\in\C,~u_j\in \widetilde{\mathcal{R}}_z\cup
{\mathcal{R}}_z, |J|<+\infty\},\\
&\mathscr{C}^\infty_z:=\{\sum_{j\in J}c_ju_j:
c_j\in\C,~u_j\in \widetilde{\mathcal{R}}_z\cup
\widehat{\mathcal{R}}_z\cup\mathcal{R}_z, |J|<+\infty\}.
\end{split}
\end{equation}
\end{definition}	
	
Second, we introduce another class of $z$-dependent Fourier 
integral operators, which is the main part in the expansion of 
$(z-T_P)^{-1}\Pi$. From now on, we fix an open set $D$ of $X$ 
with local coordinates $x=(x_1,\ldots,x_{2n+1})$ on $D$. 
\begin{definition}\label{d-gue221218yyd}
Let $\psi\in {\Ph}(\Pi,\Lambda\xi,D)$ 
(cf.\ Definition \ref{d-gue230125yyd})
and $m\in\mathbb Z$. We let $\widehat S^{(m)}_\psi[z]$ 
be the set of all properly supported $z$-dependent continuous 
operators $E_z: \cC^\infty_c(D)\to\cC^\infty(X)$ such that 
the distribution kernel of $E_z$ satisfies 
\begin{equation}    
E_z(x,y)=\int^{+\infty}_0 e^{it\psi(x,y)}\frac{\sum^p_{j=0}
e_j(x,y,t)z^j}{(z-t)^\ell}\tau(\varepsilon t)t^{n-m}dt+F(x,y),
\end{equation}
where $F\in\mathscr{R}_z$, $\ell\in\N$, $p\in\N_0$, 
$p\leq\ell$, $e_j(x,y,t)\in S^{\ell-j}_{\rm cl}(D\times D\times\mathbb R_+)$,
$e_j(x,y,t)\sim\sum_{h=0}^{+\infty}e_{j,h}(x,y)
t^{\ell-j-h}$ in $S^{\ell-j}_{1,0}(D\times D\times\mathbb R_+)$, 
$e_j(x,y,t)$ and $e_{j,h}(x,y)$ are properly supported on 
$D\times D$, $j=0,1,\ldots,p$, $h\in\N_0$, and $\tau$ is as in \eqref{eq: cut off tau}.
\end{definition}
	
Finally, from the proof of \cite[Lemma 4.1]{GaHs23}, 
we can deduce the following crucial lemma which 
will be frequently used in this section.	
\begin{lemma}\label{Hsiao's lemma}
Let $m\in\R$, $\psi\in {\Ph}(\Pi,\Lambda\xi,D)$
and $I_{\psi,q}^{(m)}$ be a Fourier integral operator given by
\begin{equation}
\label{Szego type FIO I^m_varphi,q}
I_{\psi,q}^{(m)}(x,y)=\int_0^{+\infty} e^{it\psi(x,y)}q(x,y,t)dt,
\end{equation}
where $q(x,y,t)\sim\sum_{j=0}^{+\infty} q_j(x,y)t^{m-j}$
in $S^m_{1,0}(D\times D\times{\R}_+)$, and $q(x,y,t)$
and $q_j(x,y)$ are properly supported on $D\times D$, $j\in\N_0$. 
If we assume that $q_0(x,x)=0$ and there is some 
$\varPhi\in{\Ph}(\Pi,\xi,D)$ such that
\begin{equation}
		\label{microlocal self-adjoint by Szego kernel}
		I_{\psi,q}^{(m)}\equiv\Pi_\varPhi I_{\psi,q}^{(m)}\Pi_\varPhi,
		\end{equation}
		then we can find smooth functions $h(x,y)$ and $\rho(x,y)$ properly supported on $D\times D$ with $h(x,x)\neq 0$ and $\rho(x,y)=O\left(|x-y|^{+\infty}\right)$ such that
		\begin{equation}
		\label{q_0 vanishes to infinite order}
		q_0(x,y)=h(x,y)\psi(x,y)+\rho(x,y).
		\end{equation}
	\end{lemma}

We can now state and prove the main result in this section.
\begin{theorem} \label{t-gue221218yyd} 
In the situation in Theorem \ref{thm:ExpansionMain},
for every $N\in\N_0$ there exist $B^{(j)}_z\in\widehat S^{(j+1)}_\psi[z]$, 
$j=0,1,\ldots,N$,  $R^{(N+1)}_z\in\widehat S^{(N+1)}_\psi[z]$ and 
$F^{(N+1)}_z\in\mathscr{C}^\infty_z$, such that on $X\times D$
		\begin{equation}
		\label{e-gue221218yyda}
		(z-T_P)\sum^N_{j=0}B^{(j)}_z(x,y)=\Pi(x,y)+R^{(N+1)}_z(x,y)+F^{(N+1)}_z(x,y),
		\end{equation} 
		where $\psi\in {\Ph}(\Pi,\sigma_P(\xi)^{-1}\xi,D)$ and
		\begin{equation}\label{e-gue221218yydb}
		\begin{split}
		&B_z^{(0)}(x,y)=\int_{0}^{+\infty} e^{it\psi(x,y)}\frac{b_0(x,y,t)}{z-t}\tau(\varepsilon t)t^ndt,\\
		&b_0(x,y,t)\sim\sum_{j=0}^{+\infty}b_{0,j}(x,y)t^{n-j}\in S^0_{1,0}(D\times D\times{\R}_+),\\
		&b_{0,0}(x,x)=\sigma_P(\xi_x)^{-n-1}s^\varphi_0(x,x)=\frac{1}{2\pi^{n+1}}\frac{dV_\xi}{dV}(x)\sigma_P(\xi_x)^{-n-1}.
		\end{split}
		\end{equation}
	\end{theorem} 
	
\begin{proof} 
For every smoothing operator $F:\mathscr E'(D)\to\cC^\infty(X)$
and every $E_z\in \widehat S^{(m)}_\psi[z]$, $m\in\mathbb Z$,
it is straightforward to see that $FE_z\in\cC^\infty_z$. 
From this observation and with the notation in Remark \ref{rmk:varphi(x,y)=x_2n+1+g(x',y)}, we can change 
$T_P$ and $\Pi$ in \eqref{e-gue221218yyda} to $T_\varphi$ in 
\eqref{e-gue221217yyd} and $\Pi_\varphi$ in \eqref{eq:FIO Pi_psi}, 
respectively. 
We first prove \eqref{e-gue221218yyda} for $N=0$. 
Let $\mathfrak{b}^{(0)}(x,y,t)\in 
S^0_{{\rm cl\,}}(D\times D\times\mathbb R_+)$ 
be a properly supported symbol and let $\mathfrak{b}^{(0)}_0(x,y)$ 
be the leading term of $\mathfrak{b}^{(0)}(x,y,t)$.
Let 
\begin{equation}\label{e-gue221219yyd}
\begin{split}
\mathfrak{B}_z^{(0)}(x,y)
:=&\int^{+\infty}_0 e^{it\varphi(x,y)}
\frac{\mathfrak{b}^{(0)}(x,y,t)}{z-t\sigma_P(\xi_x)}
\tau(\varepsilon t)t^n dt\\
=&\int^{+\infty}_0 e^{it\varphi(x,y)}
\frac{t\mathfrak{b}^{(0)}(x,y,t)}{z-t\sigma_P(\xi_x)}
\tau(\varepsilon t)t^{n-1} dt\in\widehat{S}^{(1)}_\varphi[z].
\end{split}		
\end{equation}

The symbol $\mathfrak{b}^{(0)}(x,y,t)$ will be determined later. 
First, we notice that in the sense of oscillatory integral,
\begin{equation}\label{right integration}
\begin{split}
&\mathfrak{B}_z^{(0)}\Pi_\varphi(x,y)\\
=&\int_{D\times\ol\R_+\times\ol\R_+} 
e^{it\varphi(x,w)+i\sigma\varphi(w,y)}
\frac{\tau(\varepsilon t)t^n}{z-t\sigma_P(\xi_x)}
\mathfrak{b}^{(0)}(x,w,t)s^\varphi(w,y,\sigma)dV(w)d\sigma dt\\
=&\int_0^{+\infty}\int_{D\times\ol\R_+} e^{it(\varphi(x,w)
+i\sigma\varphi(w,y))}\mathfrak{b}^{(0)}(x,w,t)
s^\varphi(w,y,t\sigma)dV(w)d\sigma
\frac{\tau(\varepsilon t)t^{n+1}}{z-t\sigma_P(\xi_x)}dt.
\end{split}
\end{equation}
Notice that at $w=x=y$ and $\sigma=1$, 
the complex phase function $\varphi(x,w)+
i\sigma\varphi(w,y)$ satisfies $d_{w,\sigma}(\varphi(x,w)+
i\sigma\varphi(w,y))=0$ and 
$\det\operatorname{Hess}_{w,\sigma}(\varphi(x,w)
+i\sigma\varphi(w,y))\neq 0$. 
It is not difficult to see that the critical value of 
$\widetilde\varphi(x,\widetilde w)+
i\widetilde\sigma\,\widetilde\varphi(\widetilde w,y)$ 
can be taken to be of the form \eqref{e-gue221212yyd}, 
where $\widetilde\varphi$, $\widetilde{w}$ and 
$\widetilde\sigma$ are the almost analytic extensions of $\varphi$,
$w$ and $\sigma$, respectively.
From this observation and the complex stationary phase 
formula of Melin--Sj\"ostrand, it is straightforward to check that 
		\begin{equation}
		\mathfrak{B}_z^{(0)}\Pi_\varphi(x,y)=\int_0^{+\infty} e^{it\varphi(x,y)}\frac{\tau(\varepsilon t)t^n}{z-t\sigma_P(\xi_x)}\beta^{(0)}_0(x,y,t)dt+F_{1,z}(x,y), 
		\end{equation}
		where $F_{1,z}\in\mathscr{C}^\infty_z$.
		Here, $\beta^{(0)}_0(x,y,t)\in S^0_\mathrm{cl}(D\times D\times\mathbb{R}_+)$ has the leading term
		\begin{equation}
		\beta^{(0)}_0(x,x)=\mathfrak{b}^{(0)}_0(x,x).
		\end{equation}
		Now, we handle the term
		\begin{equation}
		(z\Pi_\varphi-T_\varphi)\mathfrak{B}_z^{(0)}\Pi_\varphi.  
		\end{equation}
		Let $a(x,y,t)\in S^{n+1}_{{\rm cl\,}}(D\times D\times\mathbb R_+)$, $a_0(x,y)\in\cC^\infty(D\times D)$ be as in \eqref{e-gue221218ycd}. 
We notice that $a(x,y,t)=t\alpha(x,y,t)\mod S^{-\infty}(D\times D\times{\R}_+)$
for some $\alpha\in S^n_{\operatorname{cl}}(D\times D\times{\R}_+)$,
where
$\alpha(x,y,t)\sim\sum_{j=0}^{+\infty}\alpha_j(x,y)t^{n-j}$ 
in $S^n_{1,0}(D\times D\times\R_+)$, $\alpha(x,y,t)$ and 
$\alpha_j(x,y)$ are properly supported on $D\times D$, $j\in\N_0$,
and $\alpha_0(x,x)=a_0(x,x)=s_0^\varphi(x,x)\sigma_P(\xi_x)$.  
We can hence write
\begin{equation}\label{e-gue221218ycdI}
\begin{split}
&(z\Pi_\varphi-T_\varphi)\left(\mathfrak{B}_z^{(0)}\Pi_\varphi\right)(x,y)\\
=&\int_{D\times\ol\R_+\times\ol\R_+} e^{i\sigma\varphi(x,w)+
it\varphi(w,y)}\left(zs^\varphi(x,w,\sigma)-\sigma\alpha(x,w,\sigma)\right)\\
&\times\frac{\tau(\varepsilon t)t^n}{z-t\sigma_P(\xi_w)}
\beta^{(0)}(w,y,t)dV(w)d\sigma dt+F_{2,z}(x,y),
\end{split}
\end{equation}
where $F_{2,z}\in\mathscr{C}^\infty_z$.
We apply the change of variables
\begin{equation}
t=\sigma_P(\xi_w)^{-1}\widehat{t},~\sigma=
\sigma_P(\xi_w)^{-1}\widehat{t}~\widehat{\sigma}
\end{equation} 
to the oscillatory integral
\begin{equation}
\begin{split}\label{varPhi compose varPhi}	
&\int_{D\times\ol\R_+\times\ol\R_+} e^{i\sigma\varphi(x,w)+
it\varphi(w,y)}\left(zs^\varphi(x,w,\sigma)-\sigma\alpha(x,w,\sigma)\right)\\
&\times\frac{\tau(\varepsilon t)t^n}
{z-t\sigma_P(\xi_w)}\beta^{(0)}(w,y,t)dV(w)d\sigma dt.
\end{split}
\end{equation}
For convenience, we still use $t$ and $\sigma$ instead
of $\widehat{t}$ and $\widehat{\sigma}$ for the later computation. 
By integrating over $w$ and $\sigma$ with Melin--Sj\"ostrand complex stationary phase formula, \eqref{varPhi compose varPhi} becomes
			\begin{equation}
			\label{eq:stationary phase formula for (z-A)Pi B Pi}
			\begin{split}
			&\int_0^{+\infty}\int_{D\times\overline{\mathbb{R}}_+} \exp{\left({it\frac{\sigma\varphi(x,w)+\varphi(w,y)}{\sigma_P(\xi_w)}}\right)}\\
			&\times\left(zs^\varphi\left(x,w,t\frac{\sigma}{\sigma_P(\xi_w)}\right)-\frac{t\sigma}{\sigma_P(\xi_w)}\alpha\left(x,w,t\frac{\sigma}{\sigma_P(\xi_w)}\right)\right)\\
			&\times\beta^{(0)}\left(w,y,\frac{t}{\sigma_P(\xi_w)}\right)
			\frac{t}{\sigma_P(\xi_w)^{2+n}}\tau_\varepsilon\left(\frac{t}{\sigma_P(
				\xi_w)}\right)dV(w)d\sigma\frac{t^n}{z-t}dt\\
			=&\int_0^{+\infty}e^{it\psi(x,y)}\left(z\beta^{(0),s^\varphi}(x,y,t)-t\beta^{(0),\alpha}(x,y,t)\right)\frac{\widehat\tau(\varepsilon t)t^n}{z-t}dt+\int_0^{+\infty}\frac{1}{z-t}\varepsilon_z(x,y,t)dt,
			\end{split}
			\end{equation}
where $\psi\in {\Ph}(\Pi,\sigma_P(\xi)^{-1}\xi,D)$, 
$\beta^{(0),s^\varphi}(x,y,t),\beta^{(0),\alpha}(x,y,t)\in 
S^0_{{\rm cl\,}}(D\times D\times\mathbb R_+)$, 
$$\beta^{(0),s^\varphi}(x,y,t)\sim
\sum_{j=0}^{+\infty}\beta^{(0),s^\varphi}_j(x,y)t^{-j},\quad
\beta^{(0),\alpha}(x,y,t)\sim\sum_{j=0}^{+\infty}
\beta^{(0),\alpha}_j(x,y)t^{-j}$$ in 
$S^0_{1,0}(D\times D\times\mathbb R_+)$, 
$\beta^{(0),s^\varphi}(x,y,t)$, $\beta^{(0),\alpha}(x,y,t)$, 
$\beta^{(0),s^\varphi}_j(x,y)$ and $\beta^{(0),\alpha}_j(x,y)$
are properly supported on $D\times D$ for $j\in\N_0$, and moreover
$\widehat{\tau}(t)\in\cC^\infty(\R_+)$,
$\widehat{\tau}(t)=0$ when $t\leq c_1$ and 
$\widehat{\tau}(t)=1$ when $t\geq c_2$ for some constant 
$c_2>c_1>0$, and $\varepsilon_z(x,y,t)$
is a polynomial in $z$ of order one with coefficients in
$S^{-\infty}(D\times D\times{\R}_+)$. 
So we can arrange \eqref{varPhi compose varPhi} into
\begin{equation}
			\begin{split}
			\int_0^{+\infty} e^{it\psi(x,y)}\frac{z\beta^{(0),s^\varphi}(x,y,t)-t\beta^{(0),\alpha}(x,y,t)}{z-t}\tau(\varepsilon t)t^ndt+F_{3,z}(x,y), 
\end{split}
\end{equation}
where $F_{3,z}\in\mathscr{C}^\infty_z$.
Also, from computing the Hessian of 
$$\frac{\sigma\varphi(x,w)+\varphi(w,y)}{\sigma_P(\xi_w)}$$ 
via Theorem \ref{thm:tangential hessian of varphi} 
and the leading term formula of the symbols 
$$s^\varphi\left(x,w,t\frac{\sigma}{\sigma_P(\xi_w)}\right)
\quad \frac{t\sigma}{\sigma_P(\xi_w)}
\alpha\left(x,w,t\frac{\sigma}{\sigma_P(\xi_w)}\right)$$
it is straightforward to check that 
\begin{equation}\label{eq:beta}
\beta^{(0),s^\varphi}_0(x,x)=\beta^{(0),\alpha}_0(x,x)=
\sigma_P(\xi_x)^{-n-1}\beta^{(0)}_0(x,x)=
\sigma_P(\xi_x)^{-n-1}\mathfrak{b}^{(0)}_0(x,x).
\end{equation}
As the convention \eqref{eq:FIO Pi_psi} before, we write
\begin{equation}
\Pi_\psi(x,y):=\int_{0}^{+\infty} e^{it\psi(x,y)}s^{\psi}(x,y,t)dt,
\end{equation}
where $s^\psi(x,y,t)\sim\sum_{j=0}^{+\infty}s^\psi_j(x,y)t^{n-j}$
in $S^{n}_{\rm cl}(D\times D\times\R_+)$,  
and for \eqref{e-gue221219yyd} now we take
		\begin{equation}
		\label{eq:mathfrak b^0}
		\mathfrak{b}^{(0)}(x,y,t):=\sigma_P(\xi_x)^{n+1}s^\psi_0(x,y).
		\end{equation}
		Let $\widetilde B^{(0)}_z:=\Pi_\varphi\mathfrak{B}^{(0)}_z\Pi_\varphi$. From the discussion above, we see that there are $B^{(0)}_z\in\widehat S^{(1)}_\psi[z]$ and $F_z\in\mathscr{C}^\infty_z$ such that $\widetilde B^{(0)}_z=B^{(0)}_z+F_z$, $B^{(0)}_z$ satisfies \eqref{e-gue221218yydb}, and 
		\begin{equation}\label{e-gue221214yyd}
		\begin{split}
		&A(z,x,y)\\
		:=&(z-T_\varphi)B_{z}^{(0)}(x,y)-\Pi_\psi(x,y)\\
		=&\int_0^{+\infty} e^{it\psi(x,y)}\frac{(z\beta^{(0),s^\varphi}_0(x,y)-t\beta^{(0),\alpha}_0(x,y))-(z-t)s^\psi_0(x,y)}{z-t}\tau(\varepsilon t)t^n dt\\
		+&\widehat R^{(1)}_z(x,y)+\widehat\zeta_z(x,y), 
		\end{split}
		\end{equation}
		where $\widehat R^{(1)}_z\in\widehat S^{(1)}_\psi[z]$, $\widehat\zeta_z\in\mathscr{C}^\infty_z$. 
We consider $A(0,x,y)$ and let $A(0)$ be the properly supported continuous operator $A(0): \cC^\infty_c(D)\to\cC^\infty(X)$ 
		with the distribution kernel $A(0,x,y)$. From \eqref{e-gue221214yyd}, we have $\Pi_\varphi A(0)\Pi_\varphi\equiv A(0)$ and
		\begin{equation}
		A(0,x,y)\equiv\int^{+\infty}_0 e^{it\psi(x,y)}\widehat a(x,y,t)dt,    
		\end{equation}
where $\widehat a(x,y,t)\in S^n_{{\rm cl\,}}
(D\times D\times\mathbb R_+)$, 
$\widehat a(x,y,t)\sim\sum_{j=0}^{+\infty}\widehat a_j(x,y)t^{n-j}$ 
in $S^n_{1,0}(D\times D\times\mathbb R_+)$, $\widehat a(x,y,t)$ 
and $\widehat a_j(x,y)$ are properly supported on 
$D\times D$, $j\in\N_0$, and
\begin{equation}
\widehat a_0(x,y)=\beta^{(0),\alpha}_0(x,y)-s^\psi_0(x,y).   
\end{equation}
From \eqref{eq:beta} and \eqref{eq:mathfrak b^0}, 
we deduce that $\widehat a_0(x,x)=0$.
Thus, we can apply Lemma~\ref{Hsiao's lemma}
to find some smooth functions $h_0(x,y)$ and $\rho_0(x,y)$
properly supported on $D\times D$ with $h_0(x,x)\neq 0$ 
and $\rho_0(x,y)=O(|x-y|^{+\infty})$ such that
		\begin{equation}
		\widehat a_0(x,y)=h_0(x,y)\psi(x,y)+\rho_0(x,y).
		\end{equation}
		This relation allows us to apply integration by parts with respect to $t$ in  \eqref{e-gue221214yyd}, and we have
		\begin{equation}\label{e-gue221214yydI}
		\begin{split}
		A(z,x,y)=&(z-T_\varphi)B_{z}^{(0)}(x,y)-\Pi_\psi(x,y)\\
		=&\int_0^{+\infty} e^{it\psi(x,y)}\frac{z\beta^{(0),s^\varphi}_0(x,y)-zs^\psi_0(x,y)}{z-t}\tau(\varepsilon t)t^n dt+\widetilde R^{(1)}_z(x,y)+\widetilde\zeta_z(x,y), 
		\end{split}
		\end{equation}
		where $\widetilde R^{(1)}_z\in\widehat S^{(1)}_\psi[z]$, $\widetilde\zeta_z\in\mathscr{C}^\infty_z$.
Next, we consider 
		\begin{equation}
		B(x,y):=\left.\frac{\partial}{\partial z}A(z,x,y)\middle|\right._{z=0}    
		\end{equation}
		and let $B$ be the properly supported continuous operator $B: \cC^\infty_c(D)\to\cC^\infty(X)$ 
		with the distribution kernel $B(x,y)$. From \eqref{e-gue221214yydI}, we can check that $\Pi_\varphi B\Pi_\varphi\equiv B$ and
		\begin{equation}
		B(x,y)\equiv\int^{+\infty}_0 e^{it\psi(x,y)}\widehat b(x,y,t)dt,    
		\end{equation}
		$\widehat b(x,y,t)\in S^{n-1}_{{\rm cl\,}}(D\times D\times\mathbb R_+)$, $\widehat b(x,y,t)\sim\sum_{j=0}^{+\infty}\widehat b_j(x,y)t^{n-1-j}$ in $S^{n-1}_{1,0}(D\times D\times\mathbb R_+)$, $\widehat b(x,y,t)$ and $\widehat b_j(x,y)$ are properly supported on $D\times D$, $j\in\N_0$, and 
		\begin{equation}
		\widehat b_0(x,y)=-\beta^{(0),s^\psi}_0(x,y)+s^\psi_0(x,y).  
		\end{equation}
		From \eqref{eq:beta} and \eqref{eq:mathfrak b^0}, we deduce that $\widehat b_0(x,x)=0$. So, again we can apply Lemma~\ref{Hsiao's lemma} and find some smooth functions $h_1(x,y)$, $\rho_1(x,y)$ properly supported on $D\times D$ with $h_1(x,x)\neq 0$ and $\rho_1(x,y)=O(|x-y|^{+\infty})$ such that 
  \begin{equation}
  \widehat b_0(x,y)=h_1(x,y)\psi(x,y)+\rho_1(x,y).    
  \end{equation}
  This relation enable us to do integration by parts with respect to $t$ in \eqref{e-gue221214yydI}, and we can conclude that 
		\begin{equation}
		\label{e-gue221219yydI}
		(z-T_P)B^{(0)}_z(x,y)-\Pi_\psi(x,y)=R^{(1)}_z(x,y)+F^{(1)}_z(x,y), 
		\end{equation}
		where $R^{(1)}_z\in\widehat S^{(1)}_\psi[z]$, $F^{(1)}_z\in\mathscr{C}^\infty_z$. From the observation in the beginning of our discussion and \eqref{e-gue221219yydI}, we finish the proof of \eqref{e-gue221218yyda} for $N=0$. 
		
		Now, we assume that \eqref{e-gue221218yyda} holds for $N\leq N_0$, for some $N_0\in\N_0$. We are going to prove that \eqref{e-gue221218yyda} holds for $N=N_0+1$. 
		By induction hypothesis, we can find $B^{(j)}_z\in\widehat S^{(j+1)}_\psi[z]$, $j=0,1,\ldots,N_0$, where $\psi\in {\Ph}(\Pi,\sigma_P(\xi)^{-1}\xi,D)$, $B^{(0)}_z$ satisfies \eqref{e-gue221218yydb}, 
		such that 
		\begin{equation}
		\label{eq:induction hypothesis}
		(z-T_P)\sum^{N_0}_{j=0}B^{(j)}_z(x,y)-\Pi_{\psi}(x,y)=R^{(N_0+1)}_z(x,y)+F^{(N_0+1)}_z(x,y),
		\end{equation}
		$R^{(N_0+1)}_z\in\widehat S^{(N_0+1)}_\psi[z]$, $F^{(N_0+1)}_z\in\mathscr{C}^\infty_z$. We write 
		\begin{equation}    R^{(N_0+1)}_z(x,y)=\int^{+\infty}_0e^{it\psi(x,y)}\frac{\sum^p_{j=0}r_j(x,y)t^{\ell-j}z^j}{(z-t)^\ell}\tau(\varepsilon t)t^{n-N_0-1}dt+\widehat R_z(x,y),
\end{equation}
where $\ell, p\in\N_0$, $p\leq\ell$, 
$r_j(x,y)\in\cC^\infty(D\times D)$, 
$r_j(x,y)$ is properly supported on $D\times D$, 
$j=0,1,\ldots,p$, $\widehat R_z\in \widehat S^{(N_0+2)}_\psi[z]$. 
Now, we let 
\begin{multline}    \mathfrak{B}^{(N_0+1)}_z(x,y):=\int^{+\infty}_0e^{it\varphi(x,y)}\frac{-\sum^p_{j=0}r_j(x,y)t^{\ell-j}z^j\sigma_P(\xi_x)^{(\ell-j)-N_0+n}}{(z-t\sigma_P(\xi_x))^{\ell+1}}\tau(\varepsilon t)t^{n-N_0-1}dt
			\end{multline}
			and $\widetilde B^{(N_0+1)}_z(x,y):=(\Pi_\varphi\circ\mathfrak{B}^{(N_0+1)}_z\circ\Pi_\varphi)(x,y)$. We can repeat the procedure \eqref{eq:stationary phase formula for (z-A)Pi B Pi}--\eqref{e-gue221214yyd} as before and deduce that there are $B^{(N_0+1)}_z(x,y)\in\widehat S^{(N_0+2)}_\psi[z]$ and $F_z\in\mathscr{C}^\infty_z$ such that $\widetilde B^{(N_0+1)}_z=B^{(N_0+1)}_z+F_z$ and 
			\begin{multline}
			\label{e-gue221222yyd}
			(z-T_\varphi)B^{(N_0+1)}_z(x,y)+R^{(N_0+1)}_z(x,y)\\
			=\int^{+\infty}_0 e^{it\psi(x,y)}\frac{-\sum^{p+1}_{j=0}\widehat r_j(x,y)t^{\ell+1-j}z^j+\sum^{p+1}_{j=0}\mathring r_j(x,y)t^{\ell+1-j}z^j}{(z-t)^{\ell+1}}\tau(\varepsilon t)t^{n-N_0-1}\\+\widehat \gamma_z(x,y)+\widehat F_z(x,y),
			\end{multline}
			where
\begin{equation}
			\frac{\sum^p_{j=0}r_j(x,y)t^{\ell-j}z^j}{(z-t)^\ell}=\frac{(z-t)\sum^p_{j=0}r_j(x,y)t^{\ell-j}z^j}{(z-t)^{\ell+1}}=\frac{\sum^{p+1}_{j=0}\mathring r_j(x,y)t^{\ell+1-j}z^j}{(z-t)^{\ell+1}},
			\end{equation}			
			 $\widehat r_j(x,y)\in\cC^\infty(D\times D)$, $\widehat r_j(x,x)=\mathring r_j(x,x)$, $j=0,1,\ldots,p+1$, $\widehat \gamma_z\in\widehat S^{(N_0+2)}_\psi[z]$, $\widehat F_z\in\mathscr{C}^\infty_z$. Let $Q(z,x,y):=(z-T_\varphi)B^{(N_0+1)}_z(x,y)+R^{(N_0+1)}_z(x,y)$. From \eqref{eq:induction hypothesis} and the construction of $B_z^{(N_0+1)}$, we have $\Pi_\varphi(\frac{\partial^h}{\partial z^h}Q(z,x,y)|_{z=0})\Pi_\varphi\equiv \frac{\partial^h}{\partial z^h}Q(z,x,y)|_{z=0}$ for each $h=0,1,\ldots,p+1$, so by the argument for \eqref{e-gue221219yydI} with some minor change, we can iteratively apply Lemma~\ref{Hsiao's lemma} to \eqref{e-gue221222yyd} and find some smooth functions $\widehat h_j(x,y)$ and $\widehat \rho_j(x,y)$ properly supported on $D\times D$ with $\widehat h_j(x,x)\neq 0$ and $\widehat \rho_j=O(|x-y|^{+\infty})$, $j=0,1,\ldots,p+1$, such that
			\begin{equation}
			\widehat r_j(x,y)-\mathring r_j(x,y)=\widehat h_j(x,y)\psi(x,y)+\widehat \rho_j(x,y),~ j=0,1,\ldots,p+1.
			\end{equation}
These relations permit to integrate by parts with respect to 
$t$ in \eqref{e-gue221222yyd} and we conclude that 
\begin{equation}
(z-T_P)\sum^{N_0+1}_{j=0}B^{(j)}_z(x,y)-
\Pi_\psi(x,y)=R^{(N_0+2)}_z(x,y)+F^{(N_0+2)}_z(x,y),
\end{equation}
$R^{(N_0+2)}_z(x,y)\in\widehat S^{(N_0+2)}_\psi[z]$, 
$F^{(N_0+2)}_z(x,y)\in\mathscr{C}^\infty_z$.
Thus, we have proved \eqref{e-gue221218yyda} by induction. 
\end{proof}
	
\section{The full asymptotic expansion of $\chi_{k}(T_P)$} \label{s-gue230127yyd}
The goal of this section is to prove Theorem~\ref{thm:ExpansionMain}. 
From now on, we fix  a number $\epsilon>0$ and let 
$\mathds{1}_{[k^{1-\epsilon},k^{1+\epsilon}]}$ 
be the indicator function of $[k^{1-\epsilon},k^{1+\epsilon}]$. 
We also fix a chart $(D,x=(x_1,\ldots,x_{2n+1}))$
on $X$ and assume that $\chi,\chi_k,\widetilde\chi_k$ are as in Theorem \ref{thm:ExpansionMain} and Remark \ref{remark:supp tilde chi_k}.
		\begin{definition}
			\label{def:properly supported semi-classical FIO}
			Let $m\in\mathbb Z$ and $\psi\in {\Ph}(\Pi,\Lambda\xi,D)$. We let $\mathcal{I}^{(m)}_\psi[k]$ 
			be the set of all $k$-dependent continuous operators 
			$H_k: \cCc(D)\to\cC^\infty(X)$ such that 
			the distribution kernel of $H_k$ satisfies 
			\begin{equation}  
			H_k(x,y)=\int^{+\infty}_0 e^{ikt\psi(x,y)}h(x,y,t,k)dt+G_k,
			\end{equation}
			where $G_k=O(k^{-\infty})$ on $X\times D$, $h(x,y,t,k)\sim\sum_{j=0}^{+\infty}h_j(x,y,t,k)$ in $S^{n+1-m}_{\rm loc}(1;D\times D\times\R_+)$, $h_j(x,y,t,k)\in S^{n+1-m-j}_{\rm loc}(1;D\times D\times\R_+)$, $h_j(x,y,t,k_0)\in S^{n-m-j}_{1,0}(D\times D\times\R_+)$ for each fixed $k_0>0$, $h(x,y,t,k)$ and $h_j(x,y,t,k)$ are properly supported on $D\times D$, $j\in\N_0$.
		\end{definition}
	
	\begin{lemma}\label{l-gue230112yyd}
For $m\in\N_0$ and $B^{(m)}_z$ 
	in Theorem~\ref{t-gue221218yyd}, we can find an 
		$A_k^{(m)}\in\mathcal{I}^{(m)}_\psi[k]$ for some $\psi\in {\Ph}(\Pi,\sigma_P(\xi)^{-1}\xi,D)$ such that on $X\times D$ 
		\begin{equation}
		\frac{1}{2\pi i}\int_{\mathbb C}\frac{\partial\widetilde\chi_k}
		{\partial\overline z}B^{(m)}_z(x,y)dz\wedge d\overline z
		=A_k^{(m)}(x,y),
		\end{equation}
		where $\displaystyle A_k^{(m)}(x,y)=\int^{+\infty}_0 e^{ikt\psi(x,y)}a^{(m)}(x,y,t,k)dt$, 
		\begin{equation}
		\mbox{$a^{(m)}(x,y,t,k)\sim\sum^{+\infty}_{j=0}a^{(m)}_j(x,y,t)k^{n+1-m-j}$ in $S^{n+1-m}_{{\rm loc\,}}(1;D\times D\times\mathbb R_+)$,}
		\end{equation}
		and
		\begin{equation}
		{\supp}_t\,a^{(m)}(x,y,t,k)\subset\supp\chi,~{\supp}_t\,a^{(m)}_j(x,y,t)\subset\supp\chi,~j\in\N_0.
		\end{equation}
		Moreover, for $m=0$, 
		\begin{equation}\label{e-gue230113yyd}
		a^{(0)}_0(x,x,t)=\frac{1}{2\pi^{n+1}}\frac{dV_\xi}{dV}(x)~\sigma_P(\xi_x)^{-n-1}~\chi(t)t^n. 
		\end{equation}
	\end{lemma}
	
	\begin{proof}
		We write 
		\begin{equation}
		B^{(m)}_z(x,y)=\int^{+\infty}_0e^{it\psi(x,y)}
		\frac{\sum^p_{j=0}r_j(x,y,t)z^j}{(z-t)^\ell}\tau(\varepsilon t)t^{n-m-1}dt,
		\end{equation}
		where $\ell, p\in\N_0$, $p\leq\ell$, 
		$r_j(x,y,t)\sim\sum_{j=0}^{+\infty} r_{j,h}(x,y)t^{\ell-j}$ in $S^{\ell-j}_{1,0}(D\times D\times\mathbb R_+)$, 
		$r_j(x,y,t)$ and $r_{j,h}(x,y)$ are properly supported on $D\times D$, 
		$j=0,1,\ldots,p$ and $h\in\N_0$. 
From the theory of oscillatory integral, partial integration in $t$ 
and the Cauchy--Pompeiu formula, 
		for large enough $k>0$ we have
		\begin{equation}\label{e-gue230113yydI}
		\begin{split}
		&\frac{1}{2\pi i}\int_{\mathbb C}\frac{\partial\widetilde\chi_k}
		{\partial\overline z}B^{(m)}_z(x,y)dz\wedge d\overline z\\
		=&\frac{1}{2\pi i}\int_{\mathbb C}\left(\int_{\overline{\mathbb R}_+}
		\frac{\partial\widetilde\chi_k}{\partial\overline z}
		e^{it\psi(x,y)}\frac{\sum^p_{j=0}r_j(x,y,t)z^j}{(z-t)^\ell}
		\tau(\varepsilon t)t^{n-m-1}dt\right)dz\wedge d\overline z\\
		=&\frac{1}{2\pi i}\int_{\overline{\mathbb R}_+}\left(\int_{\mathbb C}\frac{\partial\widetilde\chi_k}{\partial\overline z}\frac{(-1)^{\ell-1}}{(\ell-1)!}
		\left(\frac{\partial}{\partial t}\right)^{\ell-1}\Bigr(e^{it\psi}
		\tau(\varepsilon t)t^{n-m-1}
		\sum^p_{j=0}r_j(x,y,t)z^j\Bigr)\frac{1}{z-t}dz\wedge d\overline z\right)dt\\
		=&\int_{\R_+}\chi(k^{-1}t)\sum^{\ell-1}_{q=0}e^{it\psi}
		\psi^q\widehat r_q(x,y,t)dt\\
		=&k\int_{\R_+}\chi(t)\sum^{\ell-1}_{q=0}
		e^{ikt\psi(x,y)}\psi^q\widehat r_q(x,y,kt)dt,
		\end{split}
		\end{equation}
		where $\widehat r_q(x,y,t)\in S^{n-m+q}_{{\rm cl\,}}
		(D\times D\times\mathbb R_+)$, $q=0,1,\ldots,\ell-1$. 
		From \eqref{e-gue230113yydI} 
		and by using integration by parts in $t$ again, our lemma follows. 
	\end{proof}
	
	\begin{lemma}\label{l-gue230114yyd}
		For $m\in\N_0$ and $B^{(m)}_z$ 
		  in Theorem~\ref{t-gue221218yyd}, on $X\times D$
		\begin{equation}
		(\Pi-\mathds{1}_{[k^{1-\epsilon},k^{1+\epsilon}]}(T_P))\circ
		\frac{1}{2\pi i}\int_{\mathbb C}\frac{\partial\widetilde\chi_k}
		{\partial\overline z}B^{(m)}_zdz\wedge d\overline z=O(k^{-\infty}).
		\end{equation}
	\end{lemma}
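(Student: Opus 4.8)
The plan is to show that the operator $(\Pi-1_{[k^{1-\epsilon},k^{1+\epsilon}]}(T_P))\circ G^{(m)}_k$, where I abbreviate $G^{(m)}_k:=\frac{1}{2\pi i}\int_{\mathbb C}\frac{\partial\widetilde\chi_k}{\partial\overline z}B^{(m)}_z\,dz\wedge d\overline z$, is $k$-negligible on $X\times D$; by the standard characterisation of $k$-negligible operators it suffices to prove $\|(\Pi-1_{[k^{1-\epsilon},k^{1+\epsilon}]}(T_P))G^{(m)}_k\|_{H^{-s}_{\mathrm{comp}}\to H^{\sigma}_{\mathrm{loc}}}=O(k^{-N})$ for all $s,\sigma,N$. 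First I would record two crude estimates on $G^{(m)}_k$. By Lemma~\ref{l-gue230112yyd} its kernel is $\int_0^{+\infty}e^{ikt\psi(x,y)}a^m(x,y,t,k)\,dt$ with $a^m\in S^{n+1-m}_{\mathrm{loc}}(1;D\times D\times\R_+)$ and $\operatorname{supp}_t a^m\subset\operatorname{supp}\chi\Subset\R_+$, so differentiating under the integral sign (each $x$- or $y$-derivative of the phase producing a factor $O(k)$) gives $\|G^{(m)}_k\|_{H^{-s}\to L^2}\lesssim_s k^{\,n+1-m+\lceil s\rceil}$. Moreover, subtracting the cases $N=m$ and $N=m-1$ of \eqref{e-gue221218yyda} in Theorem~\ref{t-gue221218yyd} gives $(z-T_P)B^{(m)}_z=\delta_{m,0}\Pi+G_z$ with $G_z$ a finite sum of operators in $\hat S^{(m)}_\psi[z]$ and $\mathfrak R_z$; applying $\frac{1}{2\pi i}\int_{\mathbb C}\frac{\partial\widetilde\chi_k}{\partial\overline z}(\cdot)\,dz\wedge d\overline z$, using $z\,\partial_{\overline z}\widetilde\chi_k=\partial_{\overline z}(z\widetilde\chi_k)$ together with $\int_{\mathbb C}\partial_{\overline z}\widetilde\chi_k\,dz\wedge d\overline z=0$ (so the $\delta_{m,0}\Pi$-term drops) and Lemma~\ref{l-gue230112yyd} applied with $\chi$ replaced by $s\mapsto s\chi(s)$, one finds that $T_PG^{(m)}_k$ is again of the form $\int_0^{+\infty}e^{ikt\psi}(\cdot)\,dt$ with a symbol of $k$-order $n+2-m$ and $t$-support in $\operatorname{supp}\chi$, modulo the Helffer--Sj\"ostrand images of the operators in $\mathfrak R_z$. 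Iterating, $\|T_P^{\ell}G^{(m)}_k\|_{H^{-s}\to L^2}\lesssim_{\ell,s}k^{\,n+1-m+\ell+\lceil s\rceil}$ for every $\ell\in\N$ — the point being that a factor of $T_P$ raises the $k$-order by exactly one.

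Next I would write $\Pi-1_{[k^{1-\epsilon},k^{1+\epsilon}]}(T_P)=P^-_k+P^+_k$, the spectral projections of $T_P\colon\operatorname{Dom}T_P\to H^0_b(X)$ onto $\overline{\operatorname{span}}\{f_j:\lambda_j<k^{1-\epsilon}\}$ and onto $\overline{\operatorname{span}}\{f_j:\lambda_j>k^{1+\epsilon}\}$ respectively, and estimate $P^{\mp}_kG^{(m)}_k$ through the expansion $P^{\mp}_kG^{(m)}_ku=\sum'_j\langle G^{(m)}_ku,f_j\rangle f_j$. For $P^+_k$: since $T_P$ is self-adjoint with $T_Pf_j=\lambda_jf_j$,
\[|\langle G^{(m)}_ku,f_j\rangle|=\lambda_j^{-\ell}\,|\langle T_P^{\ell}G^{(m)}_ku,f_j\rangle|\le\lambda_j^{-\ell}\,\|T_P^{\ell}G^{(m)}_ku\|\lesssim_{\ell,s}\lambda_j^{-\ell}\,k^{\,n+1-m+\ell+\lceil s\rceil}\,\|u\|_{H^{-s}};\]
combining with $\|f_j\|_{H^{\sigma}}\lesssim(1+\lambda_j)^{\sigma}$ from Theorem~\ref{subelliptic estimate for T_P} and using the Weyl law \eqref{eq:Nk} to sum over $\lambda_j>k^{1+\epsilon}$ (the sum converges once $\ell>\sigma+n+1$), the total power of $k$ is $(\text{a constant depending on }m,s,\sigma)-\epsilon\ell\to-\infty$ as $\ell\to+\infty$. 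Hence $P^+_kG^{(m)}_k=O(k^{-\infty})$ in every norm $H^{-s}\to H^{\sigma}$.

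For $P^-_k$ this device fails because the $\lambda_j<k^{1-\epsilon}$ may cluster near $0$; instead I would use that $G^{(m)}_k$ lives at frequency $\asymp k$ while each $f_j$ with $\lambda_j<k^{1-\epsilon}$ lives at frequency $\lesssim k^{1-\epsilon}$. Concretely, let $Q_k$ be a pseudodifferential operator with symbol $\tilde q(x,\eta/k)$, where $\tilde q\in\cCc(T^*D)$ is supported in $\{|\eta|\ge c_1/4\}$ and equals $1$ on a neighbourhood of $\{(x,t\,\sigma_P(\xi_x)^{-1}\xi(x)):x\in D,\ t\in\operatorname{supp}\chi\}$; since $d_x\psi(x,x)=\sigma_P(\xi_x)^{-1}\xi(x)$ and $\operatorname{Im}\psi\ge0$ with $\psi$ vanishing only on the diagonal, a non-stationary phase argument gives $Q_kG^{(m)}_k=G^{(m)}_k+O(k^{-\infty})$. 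Pairing, $\langle G^{(m)}_ku,f_j\rangle=\langle G^{(m)}_ku,Q^*_kf_j\rangle+O(k^{-\infty})\|u\|_{H^{-s}}\|f_j\|$, and because the symbol of $Q^*_k$ is supported in $\{|\eta|\ge c_1k/4\}$ one obtains, using $\|f_j\|_{H^{\sigma'}}\lesssim(1+\lambda_j)^{\sigma'}$ (Theorem~\ref{subelliptic estimate for T_P}) and $\lambda_j<k^{1-\epsilon}$,
\[\|Q^*_kf_j\|_{H^{\sigma}}\lesssim k^{\sigma}\,\|\mathds{1}_{\{|\eta|\ge c_1k/4\}}(D)f_j\|_{L^2}\lesssim_{\sigma'}k^{\sigma}\,k^{-\sigma'}\|f_j\|_{H^{\sigma'}}\lesssim k^{\sigma-\epsilon\sigma'}\|f_j\|,\]
i.e.\ $\|Q^*_kf_j\|_{H^{\sigma}}=O(k^{-\infty})\|f_j\|$ uniformly for $\lambda_j<k^{1-\epsilon}$. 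Combining with $\|G^{(m)}_ku\|\lesssim k^{\,n+1-m+\lceil s\rceil}\|u\|_{H^{-s}}$, the estimate $\#\{j:\lambda_j<k^{1-\epsilon}\}\lesssim k^{(1-\epsilon)(n+1)}$ from \eqref{eq:Nk}, and $\|f_j\|_{H^{\sigma}}\lesssim k^{(1-\epsilon)|\sigma|}$, summing over $\lambda_j<k^{1-\epsilon}$ yields $P^-_kG^{(m)}_k=O(k^{-\infty})$.

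Adding the $P^+_k$ and $P^-_k$ estimates proves the desired $O(k^{-\infty})$ bound for the kernel on $D\times D$, and the statement on $X\times D$ follows by inserting a partition of unity on $X$. I expect the main obstacle to lie, on one hand, in making the iterated bound $\|T_P^{\ell}G^{(m)}_k\|_{H^{-s}\to L^2}\lesssim k^{\,n+1-m+\ell+\lceil s\rceil}$ rigorous and uniform — this requires exploiting the precise remainder structure of Theorem~\ref{t-gue221218yyd} so that each factor of $T_P$ raises the $k$-order by exactly one, not more, and it requires controlling the Helffer--Sj\"ostrand images of the operators in $\mathfrak R_z$ (which one checks to be $O(k^{-\infty})$, or at worst polynomially bounded with the same $k$-order growth) — and, on the other hand, in the $P^-_k$ part, where the elementary $T_P$-division is unavailable and one must match the frequency scale $k$ of $G^{(m)}_k$ against the frequency localisation of the low eigenfunctions quantitatively and uniformly in $j$.
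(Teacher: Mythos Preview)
Your overall decomposition $\Pi-1_{[k^{1-\epsilon},k^{1+\epsilon}]}(T_P)=P^-_k+P^+_k$ and the spectral pairing $\langle G^{(m)}_ku,f_j\rangle$ are exactly what the paper does. The two halves diverge in execution, and in one of them your route is markedly harder than the paper's.

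\medskip
\textbf{High eigenvalues ($P^+_k$).} Here the paper takes a much shorter path than you do: it simply observes, by complex stationary phase applied to the composition, that
\[
((T_P)^N A_k)(x,y)=\int e^{ikt\hat\psi(x,y)}r(x,y,t,k)\,dt+O(k^{-\infty}),\qquad r\in S^{n+1-m+N}_{\mathrm{loc}},
\]
with $t$-support still compact in $\R_+$. This immediately gives the bound $\|(T_P)^N A_k\|\lesssim k^{n+1-m+N}$ without going back to the parametrix \eqref{e-gue221218yyda}, without tracking $\mathfrak R_z$-remainders, and without iterating the Helffer--Sj\"ostrand identity with $\chi\mapsto s^\ell\chi(s)$. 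Your route via $(z-T_P)B^{(m)}_z=\delta_{m,0}\Pi+G_z$ is valid for one step, but the iteration is exactly the obstacle you flag: after the first step you no longer have an identity of the form $(z-T_P)(\cdot)=\ldots$ for the new pieces $R^{(m)}_z$, $R^{(m+1)}_z$, so the induction does not close cleanly. You can of course salvage it (every element of $\hat S^{(j)}_\psi[z]$ admits the same stationary-phase description as $B^{(m)}_z$), but at that point you are re-deriving the paper's one-line observation. I would replace your iteration by the direct stationary-phase computation of $(T_P)^N A_k$.

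\medskip
\textbf{Low eigenvalues ($P^-_k$).} Here your approach is a genuine and correct alternative. The paper does the most elementary thing: since $\partial_{u_{2n+1}}\psi\neq0$ one integrates by parts repeatedly in the $u_{2n+1}$-variable inside $\int A_k(u,y)\overline{f_j}(u)\,dV(u)$; each step gains $k^{-1}$ while differentiating $\overline{f_j}$ costs only $(1+\lambda_j)^{O(1)}\lesssim k^{(1-\epsilon)O(1)}$ by \eqref{e-gue230114yydIV}. Your semiclassical cutoff $Q_k$ encodes the same non-stationary-phase gain but packages it as ``$G^{(m)}_k$ lives at frequency $\asymp k$ while $f_j$ with $\lambda_j<k^{1-\epsilon}$ lives at frequency $\lesssim k^{1-\epsilon}$''. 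Both buy exactly the same $k^{-\epsilon}$ per step; the paper's version is lower-tech, yours is perhaps more transparent about why the gain is there.

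\medskip
In short: the strategy is right, the low-frequency half is a legitimate variant, but for the high-frequency half drop the parametrix iteration and compute $(T_P)^N A_k$ directly by stationary phase as the paper does---this eliminates precisely the obstacle you identified.
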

	
	\begin{proof}
		From Lemma~\ref{l-gue230112yyd}, we have 
		\begin{equation}
		\frac{1}{2\pi i}\int_{\mathbb C}\frac{\partial\widetilde\chi_k}{\partial\overline z}(z)B^{(m)}_z dz\wedge d\overline z=A_k^{(m)},
		\end{equation}
		where 
\begin{equation}
A_k^{(m)}(x,y)=\int^{+\infty}_0 e^{ikt\psi(x,y)}a^{(m)}(x,y,t,k)dt, 
\end{equation}		
$\psi\in {\Ph}(\Pi,\sigma_P(\xi)^{-1}\xi,D)$,
$a^{(m)}(x,y,t,k)\sim\sum_{h=0}^{+\infty}a^{(m)}_h(x,y,t)
k^{n+1-m-h}$ in $S^{n+1-m}_{{\rm loc\,}}
(1;D\times D\times\mathbb R_+)$, 
$a^{(m)}(x,y,t,k)$ and $a^{(m)}_h(x,y,t)$ 
are properly supported on $D\times D$, $h\in\N_0$, 
and $\supp_t\,a^{(m)}(x,y,t,k)\subset\supp\chi$. 
So we only need to prove 
		\begin{equation}
		(\Pi-\mathds{1}_{[k^{1-\epsilon},k^{1+\epsilon}]}(T_P))\circ A_k^{(m)}=O(k^{-\infty}).
		\end{equation}
	From Theorem \ref{thm:Spec(T_P)}, we can write
		\begin{equation}\label{e-gue230114yydIII}
		\begin{split}
		&((\Pi-\mathds{1}_{[k^{1-\epsilon},k^{1+\epsilon}]}(T_P))\circ A_k^{(m)})(x,y)=I(x,y)+II(x,y),\\
		&I(x,y)=\sum_{\lambda_j>k^{1+\varepsilon}}f_j(x)\int A_k^{(m)}(u,y)\overline f_j(u)dV(u),\\
		&II(x,y)=\sum_{\lambda_j<k^{1-\varepsilon}}f_j(x)\int A_k^{(m)}(u,y)\overline f_j(u)dV(u).
		\end{split}
		\end{equation}
		We may assume that $\frac{\partial\psi}{\partial x_{2n+1}}\neq 0$ on $D\times D$. We have 
		\begin{equation}\label{e-gue230114yydV}
		\begin{split}
		&II(x,y)\\
		=&\sum_{\lambda_j<k^{1-\epsilon}}f_j(x)\int\int e^{ikt\psi(u,y)}a^{(m)}(u,y,t,k)\overline f_j(u)dV(u)dt\\
		=&\sum_{\lambda_j<k^{1-\epsilon}}f_j(x)\int\int\frac{\partial}{\partial u_{2n+1}}(e^{ikt\psi(u,y)})\frac{1}{ikt\partial_{u_{2n+1}}\psi}a^{(m)}(u,y,t,k)\overline f_j(u)dV(u)dt\\
		=&\sum_{\lambda_j<k^{1-\varepsilon}}-f_j(x)\int\int e^{ikt\psi(u,y)}\frac{\partial}{\partial u_{2n+1}}\Bigr(\frac{1}{ikt\partial_{u_{2n+1}}\psi}a^{(m)}(u,y,t,k)\overline f_j(u)\Bigr)dV(u)dt.
		\end{split}
		\end{equation}
		Form Theorem~\ref{thm:elliptic estimate for T_P} we conclude that there exists \(n_0\in\N\) such that for any \(\ell\in\N_0\) we find a constant \(C_\ell>0\) with 
		\begin{equation}\label{e-gue230114yydIV}
		\|f_j(x)\|_{\cC^\ell(X)}\leq C_\ell(1+|\lambda_j|)^{n_0+\ell},
		\end{equation}
		for all \(j\in\N\). From \eqref{e-gue230114yydV} and 
		\eqref{e-gue230114yydIV}, we can integrate by parts several 
		times and deduce that $II(x,y)=O(k^{-\infty})$ on $X\times D$. 
		
		Next, we fix a large enough number $N\in\N$ and notice that 
		\begin{equation}\label{e-gue230114yydVI}
		I=\sum_{\lambda_j>k^{1+\epsilon}}\lambda^{-N}_jf_j(x)\int((T_P)^NA_k)(u,y)\overline f_j(u)dV(u). 
		\end{equation}
		From Melin--Sj\"ostrand complex stationary phase formula, it is straightforward to check that we can find some $ A_k^{(m-N)}\in\mathcal I^{(m-N)}_\psi[k]$ such that on $X\times D$
		\begin{equation}\label{e-gue230114yydII}
		((T_P)^N A_k^{(m)})(x,y)=A_k^{(m-N)}(x,y)+O(k^{-\infty})
		\end{equation}
		where 
		\begin{equation}
		A_k^{(m-N)}(x,y)=\int_0^{+\infty} e^{ikt\widehat\psi(x,y)}r(x,y,t,k)dt,
		\end{equation}
		$r(x,y,t,k)\in S^{n+1-m+N}_{{\rm loc\,}}(1;D\times D\times\mathbb R_+)$ is properly supported on $D\times D$, $r(x,y,t,k)=0$ if $t\notin I$
		for some interval $I\Subset\mathbb R_+$, and $\widehat\psi\in {\Ph}(\Pi,\Lambda\xi,D)$. From \eqref{e-gue230114yydIV} and 
		\eqref{e-gue230114yydVI}, for every $\ell\in\N$, every $K\Subset D$, there is a constant $C>0$ independent of $k$ such that 
		\begin{equation}\label{e-gue230114yydVII}
		\|I(x,y)\|_{\cC^\ell(X\times K)}\leq C\sum_{\lambda_j>k^{1+\varepsilon}}\lambda^{-N}_j\lambda_j^{2n_0+\ell}k^{n+1-m+N+\ell}.
		\end{equation}
		From \eqref{e-gue230114yydVII}, we deduce that $I=O(k^{-\infty})$ on $X\times D$. 
	\end{proof}
	\begin{theorem}\label{t-gue230114yyd}
For $m\in\N_0$ and $B^{(m)}_z$ 
in Theorem~\ref{t-gue221218yyd}, we can find some 
$\mathscr B_k^{(m)}\in\mathcal{I}^{(m)}_\psi[k]$ 
with $\psi\in {\Ph}(\Pi,\sigma_P(\xi)^{-1}\xi,D)$
such that on $X\times D$
\begin{equation}
\label{e-gue230116yyd}
\Bigr(\mathds{1}_{[k^{1-\epsilon},k^{1+\epsilon}]}(T_P)
\circ\frac{1}{2\pi i}\int_{\mathbb C}
\frac{\partial\widetilde\chi_k}{\partial\overline z}(z)B^{(m)}_z 
dz\wedge d\overline z\Bigr)(x,y)
=\mathscr B_k^{(m)}(x,y)+O(k^{-\infty}),
\end{equation}
where $\displaystyle \mathscr B_k^{(m)}(x,y)=
\int^{+\infty}_0 e^{ikt\psi(x,y)}b^{(m)}(x,y,t,k)dt$,
		\begin{equation}
		\label{eq:b^(m)(x,y,t,k)}
		\mbox{$b^{(m)}(x,y,t,k)
		\sim\sum^{+\infty}_{j=0}b^{(m)}_j(x,y,t)k^{n+1-m-j}$ in $S^{n+1-m}_{{\rm loc\,}}(1;D\times D\times\mathbb R_+)$},
		\end{equation}
		\begin{equation}
		\mbox{$\supp_t b^{(m)}(x,y,t,k)\subset\supp\chi$,  $\supp_t b^{(m)}_j(x,y,t)\subset\supp\chi$},~j\in\N_0.
		\end{equation}
		Moreover, for $m=0$, 
		\begin{equation}
		\label{e-gue230121yyd}
		b^{(0)}_0(x,x,t)=\frac{1}{2\pi^{n+1}}\frac{dV_\xi}{dV}(x)~\sigma_P(\xi_x)^{-n-1}~\chi(t)t^n. 
		\end{equation}
	\end{theorem}
	
	\begin{proof}
		From Lemma~\ref{l-gue230114yyd}, we get on $X\times D$
		\begin{equation}\label{e-gue230116yydI}
		\begin{split}
		&\mathds{1}_{[k^{1-\varepsilon},k^{1+\varepsilon}]}(T_P)\circ\frac{1}{2\pi i}\int_{\mathbb C}\frac{\partial\widetilde\chi_k}{\partial\overline z}B^{(m)}_z(x,y)dz\wedge d\overline z\\
		=&\Pi\circ\int_{\mathbb C}\frac{\partial\widetilde\chi_k}{\partial\overline z}(z)B^{(m)}_z(x,y)dz\wedge d\overline z+O(k^{-\infty}).
		\end{split}
		\end{equation}
		From Theorem~\ref{Boutet-Sjoestrand theorem}, Lemma~\ref{l-gue230112yyd}, \eqref{e-gue230116yydI} and by using the complex stationary phase formula of Melin--Sj\"ostrand, we get \eqref{e-gue230116yyd}. The computation is straightforward, we omit the details. 
	\end{proof}
	
	\begin{lemma}\label{l-gue230114ycd}
For $A_z\in\widehat S^{(N)}_\psi[z]$, $N\in\N$, and a fixed number $\lambda\in[k^{1-\epsilon},k^{1+\epsilon}]$, where $k>0$ is large enough, we put 
		\begin{equation}\label{e-gue230114yydf}
		A_{k,\lambda}(x,y):=\frac{1}{2\pi i}\int_{\mathbb C}\frac{\partial\widetilde\chi_k}{\partial\overline z}\frac{1}{z-\lambda}A_z(x,y)dz\wedge d\overline z.
		\end{equation}
		Then, for every small enough $\delta>0$, every $\nu\in\N$ with $n-N+\nu+1<0$, every $K\Subset D$ and every large enough $k>0$, there is a constant $C>0$ independent of $k$ and $\lambda$ such that
		\begin{equation}\label{e-gue230114ycdq}
		\|A_{k,\lambda}(x,y)\|_{\cC^{\nu}(X\times K)}\leq Ck^{1+\delta}.
		\end{equation}
	\end{lemma}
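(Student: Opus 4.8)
The plan is to insert the explicit form of $A_z\in\hat S^{(N)}_\psi[z]$ into \eqref{e-gue230114yydf}, apply the Cauchy--Pompeiu (Cauchy integral) formula in $z$ exactly as in the proof of Lemma~\ref{l-gue230112yyd}, and then estimate the resulting oscillatory integral in $t$ after rescaling $t\mapsto kt$. Concretely, write
\[
A_z(x,y)=\int_0^{+\infty}e^{it\psi(x,y)}\frac{\sum_{j=0}^p r_j(x,y,t)z^j}{(z-t)^\ell}\tau_\varepsilon(t)t^{n-N}\,dt,
\]
with $r_j\in S^{\ell-j}_{\rm cl}(D\times D\times\R_+)$ properly supported, $p\le\ell$. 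Then the integrand of \eqref{e-gue230114yydf} contains $\frac{1}{(z-\lambda)(z-t)^\ell}$, and since $\lambda\in[k^{1-\epsilon},k^{1+\epsilon}]$ lies in $\operatorname{supp}_t\tau_\varepsilon\cdot\operatorname{supp}\chi_k$ on the relevant range (here $\lambda=kt$ after rescaling with $t\in\operatorname{supp}\chi$), I would perform the $\bar\partial$-integral in $z$ using $\frac{1}{\pi}\int_\C \frac{\partial g}{\partial\bar z}\frac{1}{z-w}\,L(dz)=g(w)$ together with the Leibniz/residue expansion for the higher-order pole $(z-t)^{-\ell}$, picking up $\partial_t^{\ell-1}$ acting on $e^{it\psi}\tau_\varepsilon t^{n-N}\sum r_j z^j$ evaluated along the real axis; this is precisely the computation carried out in \eqref{e-gue230113yydI}. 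The outcome is
\[
A_{k,\lambda}(x,y)=\int\chi(t/k)\sum_{q=0}^{P'}\frac{e^{it\psi(x,y)}\psi(x,y)^q\,\widetilde r_q(x,y,t)}{\,t-\lambda\,}\,dt + (\text{boundary/residue terms at } t=\lambda),
\]
or, after $t\mapsto kt$, an integral over $t\in\operatorname{supp}\chi$ of $e^{ikt\psi}$ times a symbol of order $\le n-N+\ell$ in $k$, divided by $k(t-k^{-1}\lambda)$.

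The next step is to bound this in $\cC^{\hat\ell}(K\times K)$. Differentiating under the integral in $x,y$ at most $\hat\ell$ times produces extra factors of $k$ (from $\partial_x e^{ikt\psi}$) and of $t$; since $\psi$ and its derivatives are bounded on $K\times K$ and $t$ ranges over the fixed compact $\operatorname{supp}\chi$, each such differentiation costs at most one power of $k$, giving a total of at most $k^{\hat\ell}$ from the phase, times the symbol growth $k^{n-N+\ell}$ (or rather the relevant power coming from $\widetilde r_q(x,y,kt)$), times $k^{-1}$ from the Jacobian. Using the hypothesis $n-N+\hat\ell+1<0$ one checks the net power of $k$ is negative from the "symbol part", so the only genuine danger is the factor $\frac{1}{t-k^{-1}\lambda}$, which becomes singular since $k^{-1}\lambda\in[k^{-\epsilon},k^{\epsilon}]$ — but on the support of $\chi$ and for $k\gg1$ with $\epsilon$ small, $k^{-1}\lambda$ is bounded and the principal-value / almost-analytic-extension mechanism of the Helffer--Sj\"ostrand formula keeps the $z$-integral finite; quantitatively the worst case is $|t-k^{-1}\lambda|\gtrsim $ (a negative power of $k$), which is where the loss $k^{\delta}$ for arbitrarily small $\delta>0$ enters. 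I would make this precise by splitting the $t$-integral into $|t-k^{-1}\lambda|\ge k^{-\delta}$ and $|t-k^{-1}\lambda|<k^{-\delta}$: on the first region the denominator is $\ge k^{-\delta}$, on the second the measure is $\le 2k^{-\delta}$ and one integrates by parts in $t$ (as in \eqref{e-gue230114yydV}) exploiting $\partial_{x_{2n+1}}\psi\ne0$ to trade the near-singularity for extra decay, so that in total one loses at most $k^{\delta}$, yielding the bound $Ck^{1+\delta}$.

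The main obstacle I anticipate is the careful bookkeeping of the singular factor $\frac{1}{z-\lambda}$ interacting with the pole $\frac{1}{(z-t)^\ell}$ inside the $\bar\partial$-integral: one must verify that after the residue computation no term with a non-integrable singularity in $t$ at $t=k^{-1}\lambda$ survives uncancelled, and that the resulting real-axis integral genuinely obeys the claimed uniform-in-$\lambda$ bound with only a $k^{\delta}$ loss rather than, say, a full power of $k$. This is essentially a quantitative sharpening of the Helffer--Sj\"ostrand / stationary-phase estimates already used in Lemma~\ref{l-gue230112yyd} and Lemma~\ref{l-gue230114yyd}, combined with the eigenfunction Sobolev bounds \eqref{e-gue230114yydIV}; the hypotheses $n-N+\hat\ell+1<0$ and $\lambda\in[k^{1-\epsilon},k^{1+\epsilon}]$ are exactly what is needed to absorb all other powers of $k$.
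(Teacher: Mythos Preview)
Your plan diverges from the paper's argument at the very first step, and the divergence creates a real difficulty that you do not resolve. The paper does \emph{not} evaluate the $z$-integral via Cauchy--Pompeiu. Instead it keeps the $z$-integral as is and splits it according to $|\mathrm{Im}\,z|\lessgtr k^{1-\delta_0}$. On $\{|\mathrm{Im}\,z|\le k^{1-\delta_0}\}$ the almost analyticity bound $\bigl|\partial_{\bar z}\widetilde\chi_k(z)\bigr|\le C_Q|\mathrm{Im}\,z/k|^Q$ kills the contribution. On $\{|\mathrm{Im}\,z|\ge k^{1-\delta_0}\}$ one simply uses $|z-\lambda|\ge|\mathrm{Im}\,z|$ and $|z-t|\ge|\mathrm{Im}\,z|$ (both $\lambda$ and $t$ are real) together with $|z|\le Ck$ on $\operatorname{supp}\widetilde\chi_k$, giving
\[
|II|\;\lesssim\;\frac{k^{\ell+2}}{(k^{1-\delta_0})^{\ell+1}}\int_{\varepsilon}^{\infty}t^{\,n-N}\,dt\;\lesssim\;k^{1+(\ell+1)\delta_0},
\]
and the hypothesis $n-N+\hat\ell+1<0$ is exactly what makes $\int t^{\,n-N+\hat\ell}\,dt<\infty$ after taking $\hat\ell$ derivatives in $(x,y)$. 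No residue computation, no singular $t$-integral.

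Your route, by contrast, forces you to confront a genuine pole at $t=\lambda$. Once you apply Cauchy--Pompeiu to $\frac{1}{(z-\lambda)(z-t)^\ell}$, partial fractions produce terms with explicit factors $(\lambda-t)^{-m}$ on the real axis, and the resulting $t$-integral is not absolutely convergent near $t=\lambda$. Your proposed fix---split on $|t-k^{-1}\lambda|\gtrless k^{-\delta}$ and integrate by parts ``as in \eqref{e-gue230114yydV}''---does not work as stated: \eqref{e-gue230114yydV} is integration by parts in the \emph{spatial} variable $u_{2n+1}$, which is unavailable here since you are bounding $A_{k,\lambda}(x,y)$ pointwise in $(x,y)$; and integration by parts in $t$ would use $\partial_t e^{ikt\psi}=ik\psi\,e^{ikt\psi}$, which vanishes on the diagonal $x=y$ and in any case differentiates the singular factor $(t-k^{-1}\lambda)^{-1}$, making matters worse. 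Moreover, after your rescaling the $t$-integral is \emph{not} over the compact set $\operatorname{supp}\chi$: the cut-off $\tau_\varepsilon(t)$ lives on $[\varepsilon,\infty)$, and the role of $n-N+\hat\ell+1<0$ is precisely to make the \emph{unbounded} $t$-integral converge, not to control a compact-support symbol. The paper's $|\mathrm{Im}\,z|$ splitting is the missing idea.
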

	
	\begin{proof}
		Write 
		\begin{equation}    
		A_z(x,y)=\int^{+\infty}_0e^{it\psi(x,y)}\frac{\sum^p_{j=0}r_j(x,y,t)z^j}{(z-t)^\ell}t^{n-N}\tau(\varepsilon t)dt,
		\end{equation}
		where $p, \ell\in\N_0$, $p\leq\ell$, $r_j(x,y,t)\in S^{\ell-j}_{{\rm cl\,}}(D\times D\times\mathbb R_+)$, $r_j(x,y,t)$ is properly supported on $D\times D$, $j=0,1,\ldots,p$.
		For simplicity, we assume that $r_j(x,y,t)z^j=r_j(x,y)t^{
			\ell-j}z^j$, $r_j\in\cC^\infty(D\times D)$, $j=0,1,\ldots,p$. By using the expansion $t^{\ell-j}=(z-(z-t))^{\ell-j}$, without loss of generality, we suppose that 
		\begin{equation}
		A_z(x,y)=\int^{+\infty}_0e^{it\psi(x,y)}\frac{r(x,y)z^\ell}{(z-t)^\ell}t^{n-N}\tau(\varepsilon t)dt,
		\end{equation}
		$r(x,y)\in\cC^\infty(D\times D)$. Fix a small enough number $\delta_0>0$. We have 
		\begin{equation}\label{e-gue230114ycd}
		\begin{split}
		&A_{k,\lambda}(x,y)=I(x,y)+II(x,y),\\
		&I(x,y)=\int^{+\infty}_0\int_{|{\rm Im\,}z|\leq k^{1-\delta_0}}\frac{\partial\widetilde\chi_k}{\partial\overline z}\frac{r(x,y)z^\ell}{(z-\lambda)(z-t)^\ell}e^{it\psi(x,y)}\tau(\varepsilon t)t^{n-N}dt\frac{dz\wedge d\overline z}{2\pi i},\\
		&II(x,y)=\int^{+\infty}_0\int_{|{\rm Im\,}z|\geq k^{1-\delta_0}}\frac{\partial\widetilde\chi_k}{\partial\overline z}\frac{r(x,y)z^\ell}{(z-\lambda)(z-t)^\ell}e^{it\psi(x,y)}\tau(\varepsilon t)t^{n-N}dt\frac{dz\wedge d\overline z}{2\pi i}.
		\end{split}
		\end{equation} 
		From $|\frac{\partial\widetilde\chi_k}{\partial\overline z}(z)|\leq C_Q|\frac{{\rm Im\,}z}{k}|^Q$, for every $Q\in\N$, where $C_Q>0$ is a constant independent of $k$, 
		it is straightforward to check that $I=O(k^{-\infty})$.  
		
		We only need to prove that $II$ satisfies \eqref{e-gue230114ycdq}. For simplicity, we just prove \eqref{e-gue230114ycdq} for $\nu=0$. For general $\nu\in\N$ with $n-N+\nu+1<0$, the proof is similar. We have 
		\begin{equation}\label{e-gue230114ycdr}
		\begin{split}
		&|II(x,y)|\\
		\leq&\frac{1}{2\pi}\int^{+\infty}_0\int_{|{\rm Im\,}z|\geq k^{1-\delta_0}}\left|\frac{\partial\widetilde\chi_k}{\partial\overline z}\right|\frac{|z^\ell|}{|(z-\lambda)(z-t)^\ell|}|r(x,y)|\tau(\varepsilon t)t^{n-N}idtdz\wedge d\overline z\\
		\leq&C\int^{+\infty}_0\int_{|{\rm Im\,}z|\geq k^{1-\delta_0},|z|\leq\widehat Ck}\frac{|z^\ell|}{|{\rm Im\,}z|^{\ell+1}}|r(x,y)|\tau(\varepsilon t)t^{n-N}idtdz\wedge d\overline z,
		\end{split}
		\end{equation} 
		where $C, \widehat C>0$ are constants independent of $k$ and $\lambda$. Let $K\Subset D$. 
		From \eqref{e-gue230114ycdr}, we have 
		\begin{equation}
		|II(x,y)|\leq C_K\frac{k^{\ell+2}}{(k^{1-\delta_0})^{\ell+1}}\int t^{n-N}\tau(\varepsilon t)dt
		\leq\widehat C_Kk^{1+(\ell+1)\delta_0},
		\end{equation}
		where $C_K, \widehat C_K>0$ are constants independent of $k$ and $\lambda$. The lemma follows. 
	\end{proof}
	\begin{theorem}
        \label{t-gue230114yyde}
In the situation in Theorem \ref{thm:ExpansionMain}, 
for any $Q\in\N$ sufficiently large and $\nu\in\N$, 
there is an $N_0\in\N$ such that for every $N\in\N$, 
$N\geq N_0$, we have the following property: 
For $A_z\in\widehat S^{(N)}_{\psi}[z]$ and  
		\begin{equation}
		\widehat A_k:=\mathds{1}_{[k^{1-\epsilon},k^{1+\epsilon}]}
		(T_P)\circ\frac{1}{2\pi i}\int\frac{\partial\widetilde\chi_k}
		{\partial\overline z}(z-T_P)^{-1}A_z~dz\wedge d\overline z,
		\end{equation}
then for every $K\Subset D$ and large enough $k>0$ there is a constant $C_K>0$ 
independent of $k$ such that
\begin{equation}
\|\widehat A_k(x,y)\|_{\cC^{\nu}(X\times K)}\leq C_K k^{-Q}.
\end{equation}
\end{theorem} 
	
	\begin{proof}
		We have 
		\begin{equation}
		\widehat A_k(x,y)=\sum_{\lambda_j\in[k^{1-\epsilon},k^{1+\epsilon}]}
		f_j(x)\int A_{k,\lambda_j}(u,y)\overline f_j(u)dV(u),
		\end{equation}
		where $A_{k,\lambda_j}$ is as in \eqref{e-gue230114yydf}. For every $M\in\N$, we have 
		\begin{equation}\label{e-gue230114yydg}
		\begin{split}
		\widehat A_k(x,y)=&\sum_{\lambda_j\in[k^{1-\epsilon},k^{1+\epsilon}]}\lambda^{-M}_j
		f_j(x)\int A_{k,\lambda_j}(u,y)\overline{((T_P)^Mf_j)}(u)dV(u)\\
		=&\sum_{\lambda_j\in[k^{1-\epsilon},k^{1+\epsilon}]}\lambda^{-M}_j
		f_j(x)\int\bigr((T_P)^MA_{k,\lambda_j}\bigr)(u,y)\overline f_j(u)dV(u).
		\end{split}
		\end{equation} 
From \cite[Proposition 12.1]{BG81}, 
for $k$ large we have $\sum_{\lambda_j\leq k^{1+\epsilon}}1
\leq k^{(1+\epsilon)(n+1)}$. Combining this estimate with 
Lemma~\ref{l-gue230114ycd}, \eqref{e-gue230114yydg} 
and $\lambda^{-M}_j\leq k^{-M(1-\epsilon)}$, our theorem follows. 
\end{proof}
	
Let $s(x,y,t)\in S^{-\infty}(D\times D\times\mathbb R_+)$ 
be properly supported on $D\times D$. 
Let $F$ be the operator determined by the Schwartz kernel
\begin{equation}
\frac{1}{2\pi i}\int_{\mathbb C}\int^{+\infty}_0
\frac{\partial\widetilde\chi_k}{\partial\overline z}(z-T_P)^{-1}
s(x,y,t)\frac{(1+z)^{M_2}}{(z-t)^{M_1}}\tau(\varepsilon t)
dtdz\wedge d\overline z,
\end{equation}
where $M_1, M_2\in\N_0$, and
\begin{equation}
R_k(x,y):=\Bigr(\mathds{1}_{[k^{1-\epsilon},
k^{1+\epsilon}]}(T_P)\circ F\Bigr)(x,y).
\end{equation}
	We have 
	\begin{equation}\label{e-gue230115yyd}
	\begin{split}
	&R_k(x,y)\\
	=&\sum_{\lambda_j\in[k^{1-\epsilon},k^{1+\epsilon}]}
	\frac{f_j(x)}{2\pi i}\int_{\mathbb C}\int^{+\infty}_0\frac{\partial\widetilde\chi_k}{\partial\overline z}(z-\lambda_j)^{-1}s(u,y,t)\frac{(1+z)^{M_2}}{(z-t)^{M_1}}\tau(\varepsilon t)\overline f_j(u)\\
	&\quad\quad\times dV(u)dtdz\wedge d\overline z\\
	=&\sum_{\lambda_j\in[k^{1-\epsilon},k^{1+\epsilon}]}\lambda^{-M}_j
	\frac{f_j(x)}{2\pi i}\int_{\mathbb C}\int^{+\infty}_0\frac{\partial\widetilde\chi_k}{\partial\overline z}(z-\lambda_j)^{-1}\bigr((T_P)^Ms\bigr)(u,y,t)\frac{(1+z)^{M_2}}{(z-t)^{M_1}}\\
	&\times\tau(\varepsilon t)\overline f_j(u)dV(u)dtdz\wedge d\overline z,
	\end{split}
	\end{equation}
	for every $M\in\N$. From \eqref{e-gue230115yyd} 
	and since 
	$(T_P)^Ms\in S^{-\infty}(X\times D\times\mathbb R_+)$, we deduce that $R_k=O(k^{-\infty})$ on $X\times D$.
	Summing up, we get the following.
	
	\begin{theorem}\label{t-gue230115yyd}
In the situation in Theorem \ref{thm:ExpansionMain},
for a given $F_z\in\widetilde{\mathcal{R}}_z$, on $X\times D$ 
we have
		\begin{equation}
		\mathds{1}_{[k^{1-\epsilon},k^{1+\epsilon}]}(T_P)\circ\frac{1}{2\pi i}\int\frac{\partial\widetilde\chi_k}{\partial\overline z}(z-T_P)^{-1}F_z~dz\wedge d\overline z=O(k^{-\infty}).
		\end{equation}
	\end{theorem} 
	
	We can repeat the proof of Theorem~\ref{t-gue230115yyd} 
	with minor changes and deduce the following.
	
	\begin{theorem}\label{t-gue230123yydI}
In the situation in Theorem \ref{thm:ExpansionMain}, for 
a given $F_z\in\widehat{\mathcal{R}}_z$, on $X\times D$ we have
		\begin{equation}
		\mathds{1}_{[k^{1-\epsilon},k^{1+\epsilon}]}(T_P)\circ\frac{1}{2\pi i}\int\frac{\partial\widetilde\chi_k}{\partial\overline z}(z-T_P)^{-1}F_z~dz\wedge d\overline z=O(k^{-\infty}).
		\end{equation}
	\end{theorem}
	
	\begin{theorem}\label{t-gue230116yyd}
In the situation in in Theorem \ref{thm:ExpansionMain},
for a given $F_z\in\mathcal{R}_z$, on $X\times D$ we have
		\begin{equation}
		\mathds{1}_{[k^{1-\epsilon},k^{1+\epsilon}]}(T_P)
		\circ\frac{1}{2\pi i}\int\frac{\partial\widetilde\chi_k}
		{\partial\overline z}(z-T_P)^{-1}F_z~dz\wedge d\overline z=O(k^{-\infty}).
		\end{equation}
	\end{theorem}
	\begin{proof}
		Write
		\begin{equation}    F_z(x,y)=\int^{+\infty}_0e^{it\psi(x,y)}a(x,y,t)\frac{(1+z)^{M_2}}{(z-t)^{M_1}}\tau(\varepsilon t)dt,
		\end{equation}
		where $a(x,y,t)\in S^m_{{\rm cl\,}}(D\times D\times\mathbb R_+)$  
		is properly supported on $D\times D$, $a(x,y,t)=O(|x-y|^{+\infty})$, $M_1, M_2\in\N_0$. Fix $p\in D$ and assume that $\frac{\partial\psi}{\partial y_{2n+1}}(p,p)\neq0$. From the Malgrange preparation theorem, we have 
		\begin{equation}\label{e-gue230116ycd}
		\psi(x,y)=(y_{2n+1}+\widehat\psi(x,y'))g(x,y)
		\end{equation}
near $(p,p)$, where $\widehat\psi$, $g\in\cC^\infty$, 
$y'=(y_1,\ldots,y_{2n})$. 
When $D$ is small enough, we may assume that \eqref{e-gue230116ycd} holds on $D\times D$ and as \eqref{eq:Im varphi>C|z-w|^2} we also have 
		\begin{equation}\label{e-gue230123yyd}
		{\rm Im\,}\psi(x,y)\geq C|x'-y'|^2\ \ \mbox{on $D\times D$}, 
		\end{equation}
		where $C>0$ is a constant. Let $\widetilde a$ be an almost analytic extension of $a$ in the $y$ variables. For every $N\in\N$, by using Taylor expansion at $y_{2n+1}=-\widehat\psi(x,y')$, we have 
		\begin{multline}
		a(x,y,t)=\widetilde a(x,y,t)\\
		=\sum^n_{j=0}b_j(x,y',t)(y_{2n+1}+\widehat\psi(x,y'))^j+(y_{2n+1}+\widehat\psi(x,y'))^{N+1}r(x,y,t),
		\end{multline}
		where $b_j(x,y',t)\in S^m_{{\rm cl\,}}(D\times D\times\mathbb R_+)$, 
		$r(x,y,t)\in S^m_{{\rm cl\,}}(D\times D\times\mathbb R_+)$, $j=0,1,\ldots,N$. Since $a(x,y,t)=O(|x-y|^{+\infty})$, we have \begin{equation}\label{e-gue230116ycdI}
		b_j(x,y',t)=O(|x'-y'|^{+\infty}),\ \ j=0,1,\ldots,N.
		\end{equation}
		Let $s(x,y,t):=e^{it\psi(x,y)}\sum^N_{j=0}(y_{2n+1}+\widehat\psi(x,y'))^jb_j(x,y',t)$. From \eqref{e-gue230123yyd} and \eqref{e-gue230116ycdI}, we can check that $s(x,y,t)\in S^{-\infty}(D\times D\times\mathbb R_+)$. From this observation and Theorem~\ref{t-gue230115yyd}, we conclude that on $X\times D$
		\begin{equation}\label{e-gue230116ycdh}
		\begin{split}
		&\mathds{1}_{[k^{1-\epsilon},k^{1+\epsilon}]}(T_P)\circ\frac{1}{2\pi i}\int\frac{\partial\widetilde\chi_k}{\partial\overline z}(z-T_P)^{-1}\\
		&\Bigr(\int^{+\infty}_0e^{it\psi(x,y)}\sum^N_{j=0}((y_{2n+1}+\widehat\psi(x,y'))^jb_j(x,y',t)\frac{(1+z)^{M_2}}{(z-t)^{M_1}}\tau(\varepsilon t)dt\Bigr)dz\wedge d\overline z
		\\
		=&O(k^{-\infty})
		\end{split}
		\end{equation}
		By using integration by parts in $t$, we can show that 
		\begin{multline}
		\zeta_z:=\int^{+\infty}_0e^{it\psi(x,y)}(y_{2n+1}+\widehat\psi(x,y'))^{N+1}r(x,y,t)\frac{(1+z)^{M_2}}{(z-t)^{M_1}}\tau(\varepsilon t)dt\\
		=\int^{+\infty}_0e^{it\psi(x,y)}r(x,y,z,t)\widehat\tau_\varepsilon (t)dt,
		\end{multline}
		where $r(x,y,z,t)\in\widehat S^{M_2+m-M_1-(N+1)}_\psi[z]$, $\widehat\tau_\varepsilon\in\cC^\infty(\mathbb R_+)$, $\widehat\tau_\varepsilon=1$ near $\supp\tau_\varepsilon$, $\widehat\tau=0$ near $0$. Let 
		\begin{equation}
		\widehat\zeta_k:=\mathds{1}_{[k^{1-\epsilon},
			k^{1+\epsilon}]}(T_P)\circ\frac{1}{2\pi i}
		\int\frac{\partial\widetilde\chi_k}
		{\partial\overline z}(z-T_P)^{-1}\zeta_z ~dz\wedge d\overline z.
		\end{equation}
		Fix $Q$, $\nu\in\N$. 
		From Theorem~\ref{t-gue230114yyde}, we see that we can take 
		$N$ large enough so that for every $K\Subset D$, there is a constant $C_K>0$ independent of $k$ such that 
		\begin{equation}\label{e-gue230116ycdi}
		\|\widehat\zeta_k(x,y)\|_{\cC^{\nu}(X\times K)}\leq C_Kk^{-Q}.
		\end{equation}
		From \eqref{e-gue230116ycdh} and \eqref{e-gue230116ycdi}, the theorem follows. 
	\end{proof} 
	Theorem~\ref{t-gue230115yyd}, Theorem~\ref{t-gue230116yyd} and 
	Theorem~\ref{t-gue230123yydI} yield the following. 
	
\begin{theorem}\label{t-gue230123yyd}
In the situation in Theorem \ref{thm:ExpansionMain} we have
for a given $F_z\in\mathscr{C}^\infty_z$ on $X\times D$,
		\begin{equation}
		\mathds{1}_{[k^{1-\epsilon},k^{1+\epsilon}]}(T_P)\circ\frac{1}{2\pi i}\int\frac{\partial\widetilde\chi_k}{\partial\overline z}(z-T_P)^{-1}F_z~dz\wedge d\overline z=O(k^{-\infty}).
		\end{equation}
	\end{theorem}
	Theorem \ref{thm:ExpansionMain} now follows from the next two theorems.
	\begin{theorem}
		\label{thm:chi_k(T_P) by Psi} 
In the situation in Theorem \ref{thm:ExpansionMain}, 
we can find some
$\widehat{\mathcal{A}}_k\in\mathcal{I}^{(0)}_\psi[k]$
such that on $X\times D$
\begin{equation}
\label{eq:chi_k(T_P) as Psi semi-classical FIO}
\chi_k(T_P)(x,y)=\widehat{\mathcal{A}}_k(x,y)+O\left(k^{-\infty}\right)
\end{equation}
where
\begin{equation}
\label{zero and first order of varPsi}
\psi\in {\Ph}(\Pi,\sigma_P(\xi)^{-1}\xi,D),
\end{equation}
\begin{equation}
\label{eq:FIO mathcal A_k}
\widehat{\mathcal{A}}_k(x,y)=\int_0^{+\infty} 
e^{ikt\psi(x,y)}A(x,y,t,k)dt,
\end{equation}		
\begin{equation}
\label{leading term of A(x,y,t,k) wrt hat psi}
\begin{split}
&A(x,y,t,k)\in S^{n+1}_{\mathrm{loc}}(1;D\times D\times\mathbb{R}_+),\\
&A(x,y,t,k)\sim\sum_{j=0}^{+\infty} A_j(x,y,t)k^{n+1-j}
~\mathrm{in}~S^{n+1}_{\mathrm{loc}}(1;D\times D\times{\R}_+),\\
&A_j(x,y,t)\in\mathscr{C}^\infty(D\times D\times{\R}_+),\ \ j=0,1,\ldots,\\
&A_0(x,x,t)=\frac{1}{2\pi^{n+1}}\frac{dV_{\xi}}{dV}(x)~
\sigma_P(\xi_x)^{-n-1}~\chi(t)t^n\not\equiv 0,\\
&\supp_t A(x,y,t,k)\subset\supp_t\chi,\ \ 
\supp_t A_j(x,y,t)\subset\supp\chi,\ \ j=0,1,\ldots~.
\end{split}
\end{equation}
\end{theorem} 
\begin{proof} 
		Fix a large enough $N\in\N$. From Theorem~\ref{t-gue221218yyd} and the Helffer--Sj\"ostrand 
		formula, we have for \(k\) large and on $X\times D$ that
		\begin{equation}\label{e-gue230124yyd}
		\begin{split}
		&\chi_k(T_P)\\
  =&\mathds{1}_{[k^{1-\epsilon},k^{1+\epsilon}]}(T_P)\circ\frac{1}{2\pi i}\int\frac{\partial\widetilde\chi_k}{\partial\overline z}(z-T_P)^{-1}\Pi~dz\wedge d\overline z\\
		=&\mathds{1}_{[k^{1-\epsilon},k^{1+\epsilon}]}(T_P)\circ\frac{1}{2\pi i}\int\frac{\partial\widetilde\chi_k}{\partial\overline z}\sum^N_{j=0}B^{(j)}_z dz\wedge d\overline z\\
		-&\mathds{1}_{[k^{1-\epsilon},k^{1+\epsilon}]}(T_P)\circ
		\frac{1}{2\pi i}\int\frac{\partial\widetilde\chi_k}
		{\partial\overline z}(z-T_P)^{-1}
		(R^{(N+1)}_z+F^{(N+1)}_z)dz\wedge d\overline z,
		\end{split}
		\end{equation}
where $R^{(N+1)}_z\in \widehat S^{(N+1)}_\psi[z]$ 
and $F^{(N+1)}_z\in\mathscr{C}^\infty_z$. 
For $b^{(j)}(x,y,t,k)$ in Theorem~\ref{t-gue230114yyd}, 
$j=0,1,\ldots$, we take $A(x,y,t,k)\sim
\sum^{+\infty}_{j=0}b^{(j)}(x,y,t,k)$ in 
$S^{n+1}_{{\rm loc\,}}(1;D\times D\times\mathbb R_+)$. It is clear that 
		$\supp_t A(x,y,t,k)\subset\supp\chi$. Moreover, our theorem follows from \eqref{e-gue230124yyd}, Theorem~\ref{t-gue230114yyd}, 
		Theorem~\ref{t-gue230114yyde} and Theorem~\ref{t-gue230123yyd}. 
	\end{proof} 
	\begin{theorem}
		\label{thm:ExpMain Sec 4}
In the situation in Theorem \ref{thm:ExpansionMain}
and any $\Psi\in {\Ph}(\Pi,\Lambda\xi,D)$, 
we can find some 
$\widehat{\mathcal{B}}_k\in\mathcal{I}^{(0)}_\Psi[k]$
such that on $X\times D$ 
\begin{equation}
\label{eq:asymptotic expansion of chi_k(T_P) Sec 4}
\chi_k(T_P)(x,y)=\widehat{\mathcal{B}}_k(x,y)+O\left(k^{-\infty}\right),
\end{equation}
where
\begin{equation}
\widehat{\mathcal{B}}_k(x,y)=\int_0^{+\infty} 
e^{ikt\Psi(x,y)}B(x,y,t,k)dt,
\end{equation}
\begin{equation}
\label{Eq:LeadingTermMainThm Sec 4}
\begin{split}
&B(x,y,t,k)\in S^{n+1}_{\mathrm{loc}}
(1;D\times D\times{\R}_+),\\
&B(x,y,t,k)\sim\sum_{j=0}^{+\infty} B_{j}(x,y,t)k^{n+1-j}~
\mathrm{in}~S^{n+1}_{\mathrm{loc}}(1;D\times D\times{\R}_+),\\
&B_j(x,y,t)\in\mathscr{C}^\infty(D\times D\times{\R}_+),~j=0,1,2,\ldots,\\
&B_{0}(x,x,t)=\frac{1}{2\pi ^{n+1}}
\frac{dV_{\xi}}{dV}(x)\,\Lambda(x)^{n+1}\,
\chi\left(\sigma_P(\xi_x)\Lambda(x) t\right)t^n\not\equiv 0,
\end{split}
\end{equation}
and for some compact interval $I\Subset\R_+$,
\begin{equation}\label{eq:supp_t A Sec 4}
\begin{split}
		\supp_t B(x,y,t,k),~\supp_t B_j(x,y,t)\subset I,\ \ j=0,1,\ldots~.
		\end{split}
		\end{equation}
		Moreover, for any $\tau_1,\tau_2\in\cC^\infty(X)$ 
		such that $\supp(\tau_1)\cap\supp(\tau_2)=\emptyset$, 
		we have
		\begin{equation}
		\label{Eq:FarAwayDiagonalMainThm Sec 4}
		\tau_1\chi_k(T_P)\tau_2=O\left(k^{-\infty}\right).
		\end{equation}
	\end{theorem} 
	
	\begin{proof} 
		We first prove \eqref{Eq:FarAwayDiagonalMainThm Sec 4}. We may assume that $\supp\tau_2\subset D$ and $\supp\tau_1\cap\overline{D}=\emptyset$. Since \eqref{eq:chi_k(T_P) as Psi semi-classical FIO} holds on $X\times D$ and $A(x,y,t,k)$ in \eqref{eq:FIO mathcal A_k} is properly supported on $D\times D$, it is easy to see that $\tau_1\chi_k(T_P)\tau_2=O\left(k^{-\infty}\right)$.
		
Let $\Psi\in {\Ph}(\Pi,\Lambda\xi,D)$ and let 
$\psi\in {\Ph}(\Pi,\sigma_P(\xi)^{-1}\xi,D)$ 
be as in Theorem~\ref{thm:chi_k(T_P) by Psi}. 
From~\cite[Theorem 2.4]{Hs18}, we can find 
$B(x,y,t,k)\in S^{n+1}_{\mathrm{loc}}
(1;D\times D\times{\R}_+)$, $B(x,y,t,k)$ satisfies 
\eqref{eq:supp_t A Sec 4} such that 
\begin{equation}\label{e-gue230124ycd}
\int_0^{+\infty}e^{ikt\Psi(x,y)}B(x,y,t,k)dt=
\int_0^{+\infty}e^{ikt\psi(x,y)}A(x,y,t,k)dt+O(k^{-\infty})
\end{equation}
on $D\times D$, where $A(x,y,t,k)$ is as in Theorem~\ref{thm:chi_k(T_P) by Psi}.  
Moreover, we can repeat the proof of~\cite[Lemma 6.8]{Hs18} and conclude that the 
leading term of $B$ satisfies the last equation in \eqref{Eq:LeadingTermMainThm Sec 4}. 
From this observation and \eqref{e-gue230124ycd}, the theorem follows.  
\end{proof} 
We extend now
Theorem \ref{thm:ExpansionMain} under 
the more general hypothesis that $\chi\in\cC^\infty_c(\R)$
instead of $\chi\in\cC^\infty_c(\R_+)$.
\begin{theorem}\label{thm:EGCO}
In the situation of Theorem 
\ref{thm:ExpansionMain} consider $\chi\in\cCc(\R)$. Then
for any $\Psi\in {\Ph}(\Pi,\Lambda\xi,D)$
there exists 
$\widehat{\mathcal{B}}_k\in\mathcal{I}^{(0)}_\Psi[k]$ 
such that on $X\times D$ 
			\begin{equation}
			\label{eq:general cut off exapnsion for large k}
			\Pi\circ\chi_k(T_P)(x,y)=\widehat{\mathcal{B}}_k(x,y)+\widehat{\mathcal R}_k(x,y),
			\end{equation}
			where for any $\delta>0$ and $\ell\in\N_0$, 
			\begin{equation}
			|\widehat{R}_k(x,y)|_{\mathscr C^\ell(X\times D)}=O(k^{\delta+\ell});
			\end{equation}			
			\begin{equation}
			\label{eq:hat B_k(x,y) for general cut off}
			\widehat{\mathcal{B}}_k(x,y)=\int_0^{+\infty} 
			e^{ikt\Psi(x,y)}\tau(\varepsilon k t)\widehat B(x,y,t,k)dt,
			\end{equation}		
			\begin{equation}
			\label{eq:hat B for general cut off}
			\begin{split}
			&\widehat B(x,y,t,k)\in S^{n+1}_{\mathrm{loc}}
			(1;D\times D\times{\R}_+),\\
			&\widehat B(x,y,t,k)\sim\sum_{j=0}^{+\infty} \widehat B_{j}(x,y,t)k^{n+1-j}~\text{in}~S^{n+1}_{\mathrm{loc}}(1;D\times D\times{\R}_+),\\
			&\widehat B_{0}(x,x,t)=\frac{1}{2\pi ^{n+1}}
			\frac{dV_{\xi}}{dV}(x)\,\Lambda(x)^{n+1}\,\chi\left(\sigma_P(\xi_x)\Lambda(x)t\right)t^n\not\equiv 0,
			\end{split}
			\end{equation}
			where $\tau$ is as in \eqref{eq: cut off tau}. Moreover, for any $\tau_1,\tau_2\in\cC^\infty(X)$ 
			such that $\supp(\tau_1)\cap\supp(\tau_2)=\emptyset$, 
			 any $\delta>0$ and $\ell\in\N_0$, we have
			\begin{equation}
			\label{eq:general cut off far away diagonal exapnsion for large k}
			|\tau_1\chi_k(T_P)\tau_2(x,y)|_{\cC^\ell (X\times X)}=O\big(k^{\delta+\ell}\big).
			\end{equation}
		\end{theorem}
		\begin{proof}
		We notice that
			\begin{equation}
			\displaystyle \Pi\circ\chi_k(T_P)=\int_\C \frac{\partial\widetilde{\chi}_k}{\partial\overline{z}}(z-T_P)^{-1}\Pi\frac{dz\wedge d\overline{z}}{2\pi i}.
			\end{equation}
			Thus, we can apply the expansion of $(z-T_P)^{-1}\Pi$ as before to study $\Pi\circ\chi_k(T_P)$, and we here only show how to modify the proof of Theorem~\ref{thm:chi_k(T_P) by Psi} to get our theorem. 
			
First of all, for any $\delta>0$, we have the decomposition
\begin{equation}\label{eq:decomp}
\Pi\circ\chi_k(T_P)(x,y)=\sum_{\lambda_j<k^\delta}\chi_k(\lambda_j)
f_j(x)\overline{f}_j(y)+
\sum_{\lambda_j\geq k^\delta}\chi_k(\lambda_j)f_j(x)\overline{f}_j(y),
\end{equation}
where $\{f_j\}_{j=1}^{+\infty}$ are smooth orthonormal CR eigenfunctions.
For the second sum in the right-hand side of \eqref{eq:decomp}
the arguments like in \eqref{e-gue230114yydg}
still work as in the case of $\chi\in\cCc(\R_+)$, except for
\eqref{e-gue230113yydI}. When $\chi\in\cCc(\R_+)$, 
terms such as $\chi(t)\tau(\varepsilon kt)$ always equal $\chi(t)$ for $k$ large. 
But when $\chi\in\cCc(\R)$, we cannot always 
reduce $\tau(\varepsilon kt)\chi(t)$ to $\chi(t)$ even for $k$ large. 
So we can only deduce that on $X\times D$,
$$\sum_{\lambda_j\geq k^\delta}
\chi_k(\lambda_j)f_j(x)\overline{f}_j(y)=
\widehat{B}_k(x,y)+O(k^{-\infty})$$ with 
\eqref{eq:hat B_k(x,y) for general cut off} and 
\eqref{eq:hat B for general cut off}. 
For the first sum in the right-hand side of \eqref{eq:decomp} 
we can only apply the estimates 
$\sum_{\lambda_j<k^\delta}1\leq k^{(n+1)\delta}$
in \cite[Proposition 12.1]{BG81} and \eqref{eq:eigenfunction estimates} 
to see that the $\mathscr C^\ell$-norm of this term is 
bounded by $O(k^{(n+1)\delta+\ell})$. 
By combining the arguments above we obtain the conclusion.
\end{proof}
We end this section by two useful properties  which will be applied later.
\begin{lemma}\label{lem:firstDifferentialMainThm}
In the situation of Theorem~\ref{thm:ExpansionMain}  
we have in $\mathscr{C}^\infty$-topology as $k\to+\infty$,
\begin{equation}
d_x\chi_k(T_P)(x,y)|_{y=x}=i\xi(x)\frac{k^{n+2}}{2\pi^{n+1}}
\frac{dV_{\xi}}{dV}(x)\sigma_P(\xi_x)^{-n-2}
\int_0^{+\infty} t^{n+1}\chi(t)dt+O(k^{n+1}),
\end{equation}
and
\begin{multline}
\left(d_x\otimes d_y\right)\chi_k(T_P)(x,y)|_{y=x}\\=
\xi(x)\otimes\xi(x)\frac{k^{n+3}}{2\pi^{n+1}}
\frac{dV_{\xi}}{dV}(x)\sigma_P(\xi_x)^{-n-3}
\int_0^{+\infty} t^{n+2}\chi(t)dt+O(k^{n+2}).
\end{multline}
\end{lemma}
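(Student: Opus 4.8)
The plan is to differentiate the Fourier integral representation from Theorem~\ref{thm:ExpansionMain} (equivalently Theorem~\ref{thm:chi_k(T_P) by Psi}), using the phase normalization $\psi\in Ph(\Pi,\sigma_P(\xi)^{-1}\xi)$, so that $d_x\psi(x,x)=-d_y\psi(x,x)=\sigma_P(\xi_x)^{-1}\xi(x)$. Write $\chi_k(T_P)(x,y)=\int_0^{+\infty}e^{ikt\psi(x,y)}A(x,y,t,k)\,dt+O(k^{-\infty})$ on a coordinate patch around a given point, with $A\sim\sum_j A_j(x,y,t)k^{n+1-j}$ and $A_0(x,x,t)=\frac{1}{2\pi^{n+1}}\frac{dV_\xi}{dV}(x)\chi(\sigma_P(\xi_x)t)t^n$ after undoing the change of variables; it is more convenient here to keep the form of Theorem~\ref{thm:chi_k(T_P) by Psi}, namely $A_0(x,x,t)=\frac{1}{2\pi^{n+1}}\frac{dV_\xi}{dV}(x)\chi(t)\sigma_P(\xi_x)^{-n-1}t^n$. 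Since the $O(k^{-\infty})$ term is $k$-negligible in the $\mathscr{C}^\infty$-topology (Definition~\ref{D:knegl}), its derivatives contribute only $O(k^{-\infty})$ and can be ignored throughout.

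First I would compute $d_x$ of the integrand. Applying $\partial_{x_\ell}$ to $e^{ikt\psi(x,y)}A(x,y,t,k)$ gives $e^{ikt\psi}\bigl(ikt\,\partial_{x_\ell}\psi(x,y)A(x,y,t,k)+\partial_{x_\ell}A(x,y,t,k)\bigr)$. Restricting to $y=x$ after integration: the dominant term is the one carrying the extra factor $ikt\,\partial_{x_\ell}\psi$, which at $y=x$ equals $ikt\,\sigma_P(\xi_x)^{-1}\xi_\ell(x)$, while $\psi(x,x)=0$ kills the oscillation. Thus
\begin{equation*}
d_x\chi_k(T_P)(x,y)|_{y=x}=ik\,\sigma_P(\xi_x)^{-1}\xi(x)\int_0^{+\infty} t\,A(x,x,t,k)\,dt+\int_0^{+\infty}(d_xA)(x,x,t,k)\,dt+O(k^{-\infty}).
\end{equation*}
The first term, using $A(x,x,t,k)=A_0(x,x,t)k^{n+1}+O(k^n)$ in $S^{n+1}_{\mathrm{loc}}(1;\cdot)$, contributes
$i\xi(x)\frac{k^{n+2}}{2\pi^{n+1}}\frac{dV_\xi}{dV}(x)\sigma_P(\xi_x)^{-n-2}\int_0^{+\infty}t^{n+1}\chi(t)\,dt+O(k^{n+1})$, which is exactly the claimed leading term; the second term is $O(k^{n+1})$ since $d_xA\in S^{n+1}_{\mathrm{loc}}(1;\cdot)$ and $t$ ranges over a compact interval. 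This proves the first identity. The convergence is in $\mathscr{C}^\infty$ because the symbol estimates are uniform on compacta and one may differentiate further in $x$ under the integral, each differentiation costing at most one power of $k$ from an extra $ikt\partial\psi$ factor, hence the error stays one order below the main term in every $\mathscr{C}^m$-seminorm.

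For the second identity I would iterate: apply $\partial_{y_j}$ to the already-differentiated kernel $\partial_{x_\ell}\bigl(e^{ikt\psi}A\bigr)$ before setting $y=x$. The top-order contribution comes from hitting the phase twice, producing a factor $(ikt)^2\,\partial_{x_\ell}\psi(x,y)\,\partial_{y_j}\psi(x,y)$, which at $y=x$ equals $-k^2t^2\,\sigma_P(\xi_x)^{-2}\xi_\ell(x)\xi_j(x)$ (the minus sign from $d_y\psi(x,x)=-\sigma_P(\xi_x)^{-1}\xi(x)$ combining with $(i)^2=-1$, giving overall $+$). All other terms — those where at least one derivative falls on $A$, or the mixed second derivative $\partial_{x_\ell}\partial_{y_j}\psi$ appears — carry at most one factor of $k$ from a surviving phase derivative (note $\psi(x,x)=0$ so no term with zero phase-derivatives survives the oscillatory integral at the diagonal except after integration by parts, which only lowers the order further), hence are $O(k^{n+2})$. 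Collecting, the leading term is
\begin{equation*}
\xi(x)\otimes\xi(x)\,\frac{k^{n+3}}{2\pi^{n+1}}\frac{dV_\xi}{dV}(x)\,\sigma_P(\xi_x)^{-n-3}\int_0^{+\infty}t^{n+2}\chi(t)\,dt+O(k^{n+2}),
\end{equation*}
as claimed. The bookkeeping of which terms are genuinely lower order is the only delicate point; I expect the main obstacle to be making rigorous the claim that terms without the maximal number of phase-derivative factors are indeed $O(k^{n+2})$ — this requires either a careful stationary-phase/integration-by-parts argument exploiting $\Im\psi\geq 0$ and $\psi(x,x)=0$, or simply invoking that $\int_0^{+\infty}e^{ikt\psi(x,y)}(\text{symbol in }S^{n+1-j}_{\mathrm{loc}})\,dt$ restricted to $y=x$ is $O(k^{n+1-j})$ in $\mathscr{C}^\infty$, a fact already implicit in the proof of Theorem~\ref{C:asyk} (the $j=0$ diagonal asymptotics) and its differentiated analogues. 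Once that uniform bound is granted, the computation is a direct application of the Leibniz rule under the integral sign.
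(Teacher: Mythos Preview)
Your proof is correct and follows essentially the same approach as the paper: differentiate the Fourier integral representation under the integral sign, identify the leading term as the one carrying the maximal number of phase-derivative factors, and read off the coefficient from the known $A_0(x,x,t)$. The only cosmetic difference is that you work with the phase $\psi\in Ph(\Pi,\sigma_P(\xi)^{-1}\xi)$ from Theorem~\ref{thm:chi_k(T_P) by Psi}, whereas the paper uses $\varphi\in Ph(\Pi,\xi)$ from Theorem~\ref{thm:ExpansionMain} and then performs the change of variable $t\mapsto\sigma_P(\xi_x)^{-1}t$ at the end; your choice simply absorbs that change of variable into the normalization from the start.
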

\begin{proof}
With the notation and the result in 
Theorem~\ref{thm:ExpansionMain} it follows that
\begin{equation}
\begin{split}
		&d_x\chi_k(T_P)(x,y)\\
		=&\int_0^{+\infty} ikt(d_x\varphi(x,y))e^{ikt\varphi(x,y)}{A}(x,y,t,k)dt\mod O\left(k^{-\infty}\right)\\
		+&\int_0^{+\infty} e^{ikt\varphi(x,y)}(d_x{A})(x,y,t,k)dt\mod O\left(k^{-\infty}\right).
		\end{split}
		\end{equation}
		Since \((d_x{A})(x,y,t,k)|_{y=x}=O(k^{n+1})\), \(d_x\varphi(x,y)|_{y=x}=\xi(x)\) and the formula for \({A}_0(x,x,t,k)\), by the change of variable $t\mapsto\sigma_P(\xi_x)^{-1}t$, the first part of the statement follows. Similarly, we obtain
		\begin{multline}
		\left(d_x\otimes d_y\right)\chi_k(T_P)(x,y)\\
		=\int_0^{+\infty} -k^2t^2(d_x\varphi(x,y))\otimes (d_y\varphi(x,y)) e^{ikt\varphi(x,y)}{A}(x,y,t,k)dt + R(x,y,k)
		\end{multline}
		with \(R(x,x,k)=O(k^{n+2})\). 
		Since \(d_y\varphi(x,y)|_{y=x}=-d_x\varphi(x,y)|_{y=x}\), 
		and again by the change of variable $t\mapsto\sigma_P(\xi_x)^{-1}t$, 
		the second part of the statement also follows.
	\end{proof}
	\begin{lemma}\label{lem:evpowersexpansion}
		In the situation of Theorem~\ref{thm:ExpansionMain}
		let \( 0<\lambda_1\leq \lambda_2\leq\ldots\) 
		be the positive eigenvalues of \(T_P=\Pi P\Pi\)
		with a respective orthonormal system of eigenfunctions 
		\(\{f_j\}_{j\in\N}\subset H_b^0(X)\) that is 
		\(T_Pf_j=\lambda_j f_j\), \((f_j,f_j)=1\) and 
		\((f_j,f_\ell)=0\) for all \(\ell,j\in\N\) with \(j\neq \ell\). 
		Consider the function  \(g_{k,m}\colon X\to\R \) 
		defined by \(g_{k,m}(x)=\sum_{j\geq 1}
		\lambda_j^m\chi\left(k^{-1}\lambda_j\right)|f_j(x)|^2\) 
		where \(\chi\in \mathscr{C}_0^\infty(\R_+)\) and \(m\in\N\). 
		We have that in $\mathscr{C}^\infty$-topology,
		\begin{equation}
		g_{k,m}(x)=\frac{k^{n+1+m}}{2\pi^{n+1}}
		\frac{dV_{\xi}}{\sigma_P(\xi)^{n+1}dV}(x)
		\int_\R t^{n+m}\chi(t)dt+ O(k^{n+m}).
		\end{equation} 
	\end{lemma}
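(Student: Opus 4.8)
The plan is to reduce the statement to the diagonal asymptotics of Theorem~\ref{C:asyk} by absorbing the spectral weight $\lambda_j^m$ into the cut-off function. Concretely, I would set $\eta(t):=t^m\chi(t)$. Since $\chi\in\mathscr{C}_0^\infty(\R_+)$ and $t\mapsto t^m$ is smooth and strictly positive on $(0,+\infty)$, we get $\eta\in\mathscr{C}_0^\infty((0,+\infty))$ with $\eta\not\equiv 0$, so $\eta$ is an admissible cut-off function in the sense of Theorem~\ref{thm:ExpansionMain} and Theorem~\ref{C:asyk}.

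Next I would express $g_k$ through the functional calculus of $T_P$ for $\eta$. With $\eta_k(t):=\eta(k^{-1}t)$ one has, by the spectral decomposition used in the definition of $\chi_k(T_P)$, the diagonal value
\begin{equation*}
\eta_k(T_P)(x,x)=\sum_{j\in\N}\eta(k^{-1}\lambda_j)|f_j(x)|^2 .
\end{equation*}
Since $\eta(k^{-1}\lambda_j)=(k^{-1}\lambda_j)^m\chi(k^{-1}\lambda_j)=k^{-m}\lambda_j^m\chi(k^{-1}\lambda_j)$, and since the eigenfunctions lying in $\ker T_P\cap H_b^0(X)$ give no contribution because $\eta(0)=0$, this yields the exact identity $g_k(x)=k^m\,\eta_k(T_P)(x,x)$ on $X$.

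Then I would invoke Theorem~\ref{C:asyk} with $\chi$ replaced by $\eta$: in the $\mathscr{C}^\infty$-topology,
\begin{equation*}
\eta_k(T_P)(x,x)=\mathcal{A}_0^{\eta}(x)\,k^{n+1}+O(k^{n}),\qquad
\mathcal{A}_0^{\eta}(x)=\frac{1}{2\pi^{n+1}}\frac{dV_{\xi}(x)}{\sigma_P(\xi_x)^{n+1}dV(x)}\int_{\R_+}\eta(t)t^n\,dt .
\end{equation*}
Because $\operatorname{supp}\chi\subset\R_+$ we have $\int_{\R_+}\eta(t)t^n\,dt=\int_{\R_+}t^{n+m}\chi(t)\,dt=\int_{\R}t^{n+m}\chi(t)\,dt$, and multiplying the displayed expansion by $k^m$ gives exactly the asserted formula for $g_k$, with the remainder $O(k^{n+m})$ understood in every $\mathscr{C}^{\ell}$-norm.

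There is no genuinely hard step here: the whole point is the observation that the weighted eigenfunction sum is the diagonal restriction of $\eta_k(T_P)$ for the modified test function $\eta=t^m\chi$, after which Theorem~\ref{C:asyk} applies verbatim. The only mild point to check is that restricting the full Fourier-integral expansion of Theorem~\ref{thm:ExpansionMain} to the diagonal preserves $\mathscr{C}^\infty$-convergence, but this is precisely the content of Theorem~\ref{C:asyk}, so no extra work is needed.
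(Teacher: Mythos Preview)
Your proof is correct and is essentially identical to the paper's own argument: the paper also sets $\tau(t)=t^m\chi(t)$, notes that $g_k(x)=k^m\tau_k(T_P)(x,x)$, and then reads off the diagonal expansion (citing Theorem~\ref{thm:chi_k(T_P) by Psi} rather than Theorem~\ref{C:asyk}, which amounts to the same thing here). The only cosmetic difference is that you named the auxiliary function $\eta$ instead of $\tau$.
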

	\begin{proof}
		Put \(\tau(t)=t^{m}\chi(t)\). Then \(\tau\in \mathscr{C}_c^\infty(\R_+)\) 
		and $g_{k,m}(x)=k^{m}\tau_k(T_P)(x,x)$.
		Hence, the claims follows immediately  from Theorem~\ref{thm:chi_k(T_P) by Psi}.
\end{proof}
\section{Applications and examples}
\subsection{The trace formula, scaled spectral measures and a 
Weyl law estimate}
We begin by proving Corollary \ref{C:asyk}.
\begin{proof}[Proof of Corollary \ref{C:asyk}]
We notice that \eqref{eq:1.9a} follows directly from \eqref{eq:1.8} and 
		\eqref{eq:1.9}. 
As for \eqref{eq:1.8}, we have 
\eqref{eq:asymptotic expansion of chi_k(T_P)}
on any coordinate patch $(D,x)$ by  and
since $\varphi(x,x)=0$,
\begin{equation}
		\chi_k(T_P)(x,x)\sim\sum_{j=0}^{+\infty}
		\mathcal{A}_j(x)k^{n+1-j}~\text{in}~S^{n+1}_{\operatorname{loc}}(1;D),
		\end{equation}
		where $\mathcal{A}_j\in\mathscr{C}^\infty(D)$ and
  \begin{equation}
		\label{eq: A_j}
\mathcal{A}_j(x)=\int_0^{+\infty} A_j(x,x,t)dt,
  \end{equation}
  with $A_j(x,x,t)$ as in \eqref{Eq:LeadingTermMainThm}.
  Let $(D',y)$ be another coordinate patch
with $D\cap D'\neq\emptyset$ and
		$\chi_k(T_P)(y,y)\sim\sum_{j=0}^\infty
		\mathcal{B}_j(y)k^{n+1-j}$ in $S^{n+1}_{\operatorname{loc}}(1;D')$,
		with $\mathcal{B}_j\in\cC^\infty(D')$.
		Then we have on $D\cap D'$,
		\begin{equation}
		\label{eq: A_j and B_j}
		\sum_{j=0}^{+\infty} \mathcal{A}_j(\cdot)k^{n+1-j}\sim
		\sum_{j=0}^{+\infty} \mathcal{B}_j(\cdot)k^{n+1-j}
		~\text{in}~S^{n+1}_{\operatorname{loc}}(1;D\cap D').
		\end{equation}
The relation \eqref{eq: A_j and B_j} shows that 
$\mathcal{A}_j=\mathcal{B}_j$ on $D\cap D'$ and hence there
exists global functions 
$\mathcal{A}_j\in\cC^\infty(X)$, $j\in\N_0$, such that \eqref{eq:1.8} holds. 
\end{proof}
	\begin{proof}[Proof of 
		Theorem~\ref{thm:ScaledSpectralMeasures}]
		The scaled spectral measure \(\mu_k\)  were defined
		in~\eqref{eq:scaledspectraclmeasuredefinition}. 
		We can choose a orthonormal system of eigenfunctions 
		\(\{f_j\}_{j\in\N}\) in  \(H_b^0(X)\cap\cC^\infty(X)\) with respect to the 
		positive eigenvalues  \( 0<\lambda_1\leq \lambda_2\leq\ldots\) of 
		\(T_P\), that is, \(T_Pf_j=\lambda_j f_j\), 
		\((f_j|f_\ell)=\delta_{j\ell}\) for all \(\ell,j\in\N\). 
		Given \(\chi\in \cCc(\R_+)\) the kernel of $\chi_k(T_P)$
		is given by \eqref{eq:KerChiTp},
		hence 
		\begin{multline}
		\int_X\chi_k(T_P)(x,x)dV=\int_X\sum_{j=1}^{+\infty}
		\chi(k^{-1}\lambda_j)|f_j(x)|^2dV=\sum_{j=1}^{+\infty}
		\chi(k^{-1}\lambda_j)=k^{n+1}\langle\mu_k,\chi\rangle.
		\end{multline}
		The conclusion of Theorem~\ref{thm:ScaledSpectralMeasures}
		follows now from Corollary \ref{C:asyk} 
		because we have for $k\to+\infty$,
		\begin{equation}
		\langle\mu_k,\chi\rangle=
		\frac{1}{k^{n+1}}\int_X\chi_k(T_P)(x,x)dV
		\longrightarrow
		\frac{1}{2\pi^{n+1}}\int_X\frac{dV_\xi}{\sigma_P(\xi_x)^{n+1}}
		\int_{\R_+}\chi(t)t^ndt\,.
		\end{equation}
	\end{proof}
	
\begin{remark}
Let us note that Theorem~\ref{thm:EGCO} and \eqref{eq:19b}
immediately yield a weak form of the Weyl law.
Namely, under the assumptions of Theorem \ref{thm:ExpansionMain}, 
the spectrum counting function $N(k)=\#\{j\in\N:\lambda_j\leq k\}$
of the Toeplitz operator $T_P=\Pi P\Pi$ has the asymptotics 
\begin{equation}
N(k)=\frac{\operatorname{vol}(\Sigma_1)}{2\pi^{n+1}}k^{n+1}
+o(k^{n+1}),\quad k\to+\infty,
\end{equation}
where $\operatorname{vol}(\Sigma_1)$ is the symplectic volume of the
subset $\Sigma_1\subset\Sigma$ where $\sigma_P\leq 1$.
Compared to \cite[Theorem 13.1]{BG81} the remainder is not sharp, 
cf.~\eqref{eq:Nk}.
\end{remark}
\subsection{Some related results on Grauert tubes}
We present in the sequel a series of interesting examples
where Theorem \ref{thm:ExpansionMain} applies.
\begin{example}\label{Ex:ReebFlow}
Associated with the contact form $\xi$ one has the
so-called Reeb vector field $\mathcal{T}=\mathcal{T}(\xi)$, 
uniquely defined by the equations
$\iota_{\mathcal{T}}\xi=1$, $\iota_{\mathcal{T}}d\xi=0$,
		cf.~\cite[Lemma/Definition 1.1.9]{Gei08}.
		It follows that \(dV_\xi:=\frac{2^{-n}}{n!}\xi\wedge (d\xi)^n\) is a volume form on \(X\) 
		which is preserved under the flow of \(\mathcal{T}\). Hence we find that the operator 
		\(-i\mathcal{T}\colon \mathscr{C}^\infty(X)\to \mathscr{C}^\infty(X)\) 
		is formally self-adjoint with respect to the \(L^2\)-inner product induced by
		the volume form \(dV:=dV_\xi\). 
		We can apply Theorem~\ref{thm:ExpansionMain} with \(P:=-i\mathcal{T}\)
		to study the spectral asymptotics of \(T_P=\Pi (-i\mathcal{T}) \Pi\), 
		which is in this case directly linked to the dynamics of the Reeb flow for CR functions. 
		Furthermore, it turns out that in this case the leading coefficient $\mathcal{A}_0(x)$ 
		in the asymptotic expansion of \(\chi_k(T_P)(x,x)\) is constant on \(X\) and equals 
		\(\frac{k^{n+1}}{2\pi^{n+1}}\int\chi(t)t^ndt\). We note that \(\alpha_P=\xi\) 
		holds in that case.
		In this set-up, 
		with the additional assumption that the flow of \(\mathcal{T}\) 
		preserves the CR structure, the asymptotic expansion of \(\chi_k(T_P)\) 
		on the diagonal is due to 
Herrmann--Hsiao--Li \cite[Theorem 1.2]{HHL20}.
	\end{example} 	
	\begin{example}
		\label{ex:space of holomorphic sections for polarized Kahler manifold}
		A particular case of Example \ref{Ex:ReebFlow}, very important
		for K\"ahler geometry and quantization, was introduced
		by Boutet de Monvel--Guillemin \cite{BG81}.
		Let $M$ be a compact K\"ahler manifold and $(L,h^L)\to M$
		be a positive Hermitian holomorphic line bundle, that is,
		the Chern curvature $\frac{i}{2\pi}R^L$ of $(L,h^L)$ represents 
		a K\"ahler metric on $M$. Then the principal circle bundle 
		$X=\{v\in L^*:|v|_{h^*}=1\}$ is a strictly pseudoconvex CR manifold called Grauert tube \cite{Grau62} and plays an
important application to the Kodaira embedding theorem
for singular spaces.
The connection $1$-form $\xi$ on $X$ associated to the Chern connection 
$\nabla^L$ is a contact form on $X$ and the corresponding Reeb vector field
$\mathcal{T}$ is the infinitesimal generator $\partial_\vartheta$ 
of the $S^1$-action on $X$.
In this case, $\xi(-i\partial_\vartheta)=1$, $\partial_\vartheta$ 
commutes with $\overline\partial_b$ and $\Pi$, and 
${\rm Spec}(-i\partial_\vartheta)=\Z$.
Thus the operator $P=-i\partial_\vartheta$ restricted to
$H^0_b(X)$ is an order one elliptic self-adjoint Toeplitz operator. 
Also, its positive spectrum consists of $m\in\N$ with finite multiplicity,
and the eigenspace corresponding to $m$ can be identified with the space
$H^0(M,L^m)$ of holomorphic sections of $L^m$, the $m$-th power of $L$ 
(cf.\ \cite[Lemma 14.14]{BG81}).
Moreover, $m$ large enough
occurs in the spectrum with the multiplicity $\rho(m)$, 
where $\rho$ is the Hilbert polynomial
of $(M,L)$. 
For related results about spectral asymptotics for Toeplitz operators in this
context we refer to \cite{BPU98}.
\end{example}	
\begin{example}
Given a compact real analytic manifold \(M\) of real dimension \(\dim_\R M=n+1\) and real analytic Riemannian metric \(g\), the adapted complex strucuture on a neighborhood in \(TX\) around the zero section induces a CR structure on \(X= \{v\in TX\colon g(v,v)=r\}\), which is strongly pseudoconvex  when \(r>0\) is small enough. The canonical one form \(\lambda\) on \(TX\) together with the Hamiltonian vector field coming from the function \(v\mapsto \sqrt{g(v,v)}\) and the symplectic form \(d\lambda\) induce \(\omega_0\) and \(T\) on \(X\)  satisfying the conditions in Example~\ref{Ex:ReebFlow}. Hence Theorem~\ref{thm:ExpansionMain} applies in that case. We note that in this specific set-up a scaling asymptotic result related to Theorem~\ref{thm:ExpansionMain} for a different class of cut-off functions $\chi$ and different growth order of the semi-classical parameter $k$ is due to Chang--Rabinowitz \cite[Theorem 1.1]{CRa22} by a different method.
	\end{example}
The situation in Example \ref{ex:space of holomorphic sections 
for polarized Kahler manifold} can actually provide more insight into 
Theorem~\ref{thm:ExpansionMain} and Theorem~\ref{thm:EGCO}. Let $B_m:L^2(M,L^m)\to H^0(M,L^m)$ be the Bergman projection. The smooth Schwartz kernel $B_m(p',p'')$ associated to $B_m$ is called the Bergman kernel, and $B_m(p'):=B_m(p',p')$ is called the Bergman kernel function. It is well known that $B_m(p',p'')$ admits the full asymptotic expansion, cf.~\cite{MM07,HM14} and the references therein. In particular,
		\begin{equation}
		\label{eq:BKE}
		B_m(p')\sim\sum_{j=0}^{+\infty} m^{n-j}b_{j}(p')~\text{in}~S^{n}_{\rm loc}(1;M),~n:=\dim_\C M,~m\to+\infty.
		\end{equation}
		Now, we consider the dynamical Toeplitz operator
		\begin{equation}
T_\vartheta:=\Pi(-i\partial_\vartheta)\Pi		
\end{equation}				
on the circle bundle over $M$ in Example \ref{ex:space of holomorphic sections for polarized Kahler manifold}. For $\chi\in\cCc(\R)$, we can check that 
		\begin{equation}
		\label{eq:Tr chi_k(T_P) on circle bundle}
		(\Pi\circ\chi_k(T_\vartheta))(x,x)=\frac{1}{2\pi}\sum_{m\in\Z}\chi\left(\frac{m}{k}\right)B_m(\pi_M(x)),
		\end{equation}
		where $\pi_M:X\to M$ is the natural projection. 
		\begin{theorem}
			\label{thm:Trace on circle bundle}
			In the situation of Example \ref{ex:space of holomorphic sections for polarized Kahler manifold}, when $\chi\in\cCc(\R_+)$ 
			\begin{equation}
			\label{eq:chi_k(T_vartheta)(x,x) full expansion}
			\begin{split}
			&\chi_k(T_\vartheta)(x,x)\sim\sum_{j=0}^{+\infty}a_j(x)k^{n+1-j}
			~\text{in}~S^{n+1}_{\rm loc}(1;X),
			\end{split}
			\end{equation}
			where $\displaystyle a_j(x)=\frac{1}{2\pi}\left(\int_0^{+\infty}\chi(t)t^{n-j}dt\right)b_j(\pi_M(x))$ and $b_j$ are given in \eqref{eq:BKE}, $j\in\N_0$.
		\end{theorem}
		\begin{proof}
			When $\chi\in\cCc(\R_+)$, we have $\Pi\circ\chi_k(T_\vartheta)=\chi_k(T_\vartheta)$. Also, when $k>0$ is large, by \eqref{eq:BKE} and $\supp\chi\subset\R_+$, for each $N\in\N_0$ we have a constant $c_N>0$ such that
			\begin{multline}
			\label{eq:Riemann sum of Bergman kernel function}
			\left|\sum_{m\in\Z}\chi\left(\frac{m}{k}\right)B_m(\pi_M(x))
			-\sum_{j=0}^{N}k^{n+1-j}\sum_{m\in\Z}k^{-1}\chi\left(\frac{m}{k}\right)\left(\frac{m}{k}\right)^{n-j}b_j(\pi_M(x))\right|\\
			\leq  c_N\sum_{m\in\Z}\left|\chi\left(\frac{m}{k}\right)\right|m^{n-N-1}=O(k^{n-N}).
			\end{multline} 
			By the Poisson summation formula, 
cf.~\cite[Theorem 7.2.1]{Hoe03}, for any $\tau\in\cCc(\R)$ we have
			\begin{equation}
			\label{eq:Possion summation formula}
			k^{-1}\sum_{m\in\Z}\tau\left(\frac{m}{k}\right)=\sum_{m\in\Z}\int_{\R}e^{-it(2\pi k m)}\tau(t)dt.
			\end{equation}
			In the right-hand side of the above equation, when $m\neq 0$ we can apply arbitrary times of integration by parts in $t$, and when $m=0$ we just have a number $\displaystyle\int_{\R}\tau(t)dt$. Accordingly, for any $N\in\N$, we can find a constant $C_N>0$ such that
			\begin{equation}
			\left|k^{-1}\sum_{m\in\Z}\tau\left(\frac{m}{k}\right)-\int_\R\tau(t)dt\right|<C_N k^{-N}.
			\end{equation}
			Our theorem follows immediately from this approximation and \eqref{eq:Riemann sum of Bergman kernel function}.
		\end{proof}
		\begin{remark}
			Although the asymptotic expansion in this example also directly follows from the much general Theorem \ref{thm:ExpansionMain}, \eqref{eq:chi_k(T_vartheta)(x,x) full expansion} gives a hint to general formula for the lower order coefficients of $\chi_k(T_P)(x,x)$ that they could be chosen to be independent of the derivatives of $\chi$.  
		\end{remark}

		When $\chi\in\cCc(\R)$, it is more subtle as we observe in Theorem \ref{thm:EGCO}.
  \begin{theorem}
			\label{thm:main thm is sharp}
			In the situation of Example \ref{ex:space of holomorphic sections for polarized Kahler manifold}, for $\chi\in\cCc(\R)$ such that $\chi\geq 0$ and $\chi(0)\neq 0$, we have 
			\begin{equation}
			(\Pi\circ\chi_k(T_\vartheta))(x,x)=\widehat {\mathcal B}_k(x)+\widehat {\mathcal R}_k(x), 
			\end{equation}
			where   
			\begin{equation}
			\begin{split}
			&\widehat{\mathcal B}_k(x)\sim\sum_{j=0}^{+\infty}k^{n+1-j}\widehat b_j(x)~\text{in}~S^{n+1}_{\rm loc}(1;X),\\
			&{\widehat b}_0(x)=\frac{1}{2\pi}\left(\int_0^{+\infty}\chi(t) t^n dt\right)b_0(\pi_M(x)),\\
			\end{split}
			\end{equation}
			where $b_0$ is as in \eqref{eq:BKE} and
			\begin{equation}
			\begin{split}
			&\text{when}~b_{n+1}\not\equiv 0,~\lim_{k\to+\infty}|\widehat{\mathcal R}_k|=+\infty,~\lim_{k\to+\infty}k^{-\delta}|\widehat {\mathcal R}_k|=0~\text{for all}~\delta>0;\\
			&\text{when}~b_{n+1}\equiv 0,~\widehat{\mathcal{R}}_k=O(1).
			\end{split}
			\end{equation}
In particular, the size of error term in $k$ in the approximation
of $\chi_k(T_P)$ in Theorem \ref{thm:EGCO} is sharp when $\chi(0)\neq 0$.
		\end{theorem}
  
 In this case, for every $m\in\Z$, we can not get 
 \eqref{eq:Riemann sum of Bergman kernel function} 
 as before and we need to consider
\begin{equation}
\varepsilon_m(x):=B_m(\pi_M(x))-\mathds{1}_{\{s\geq 0\}}(m)
\sum_{j=0}^{n} m^{n-j}b_j(\pi_M(x))-
\mathds{1}_{\{s\geq 1\}}(m)m^{-1}b_{n+1}(\pi_M(x)).
\end{equation}
		The formula \eqref{eq:BKE} implies that we can find some constants $m_0,c_0>0$ such that for all $m\geq m_0$, $|\varepsilon_m(x)|<c_0~m^{-2}$.  Now, we write
		\begin{equation}
		\label{eq:chi_k(T_vartheta) for general chi}
		\begin{split}
		&(\Pi\circ\chi_k(T_\vartheta))(x,x)\\
		=&\sum_{m\in\Z}\chi\left(\frac{m}{k}\right)B_m(\pi_M(x))\\
		=&\sum_{m=-\infty}^{-1}\chi\left(\frac{m}{k}\right)B_m(\pi_M(x))+\sum_{m=0}^{+\infty}\chi\left(\frac{m}{k}\right)\varepsilon_m(x)+\sum_{m=1}^{+\infty}\chi\left(\frac{m}{k}\right)m^{-1}b_{n+1}(\pi_M(x))\\
		+&\sum_{m=0}^{+\infty}\chi\left(\frac{m}{k}\right)\sum_{j=0}^{n} m^{n-j}b_j(\pi_M(x)).
		\end{split}
		\end{equation}
It is clear that
		\begin{equation}
  \label{eq:O(1) estimate}
		\begin{split}
		&\left|\sum_{m=-\infty}^{-1}\chi\left(\frac{m}{k}\right)B_m(\pi_M(x))+\sum_{m=0}^{m_0-1}\chi\left(\frac{m}{k}\right)\varepsilon_m(x)\right|=O(1),\\
		&\left|\sum_{m=m_0}^{+\infty}\chi\left(\frac{m}{k}\right)\varepsilon_m(x)\right|=O\left(c_0\sum_{m=m_0}^{+\infty}m^{-2}\right)=O(1).
		\end{split}
		\end{equation}
Also, by the classical formula $\sum_{m=1}^{k}m^{-1}=\log k+\gamma+\frac{1}{2k}-\epsilon_k$, where $\gamma$ is the Euler constant and $0\leq\epsilon_k\leq\frac{1}{8 k^2}$, when $\chi\geq 0$ we have 
		\begin{equation}
\label{eq:+infty}		\lim_{k\to+\infty}\sum_{m=1}^{+\infty}\chi\left(\frac{m}{k}\right){m}^{-1}=+\infty,
		\end{equation}
		and for any $\delta>0$
		\begin{equation}
  \label{eq:0}
		\lim_{k\to+\infty}k^{-\delta}\sum_{m=1}^{+\infty}\chi\left(\frac{m}{k}\right){m}^{-1}=0.
		\end{equation}		
		However, we can not apply \eqref{eq:Possion summation formula} as before to study $\displaystyle\sum_{m\in\N_0}\chi\left(\frac{m}{k}\right)\sum_{j=0}^{n} m^{n-j}b_j(\pi_M(x))$, and we need the following approximation.
		\begin{lemma}
			\label{lem:higher order Riemann sum}
			For any $\tau\in\cCc(\R)$ and each $N\in\N_0$, we can find a constant $C_N>0$ and functions $\tau_\ell\in\cCc(\R)$, $\ell=0,1,\ldots,N$, $\tau_0=\tau$, such that 
			\begin{equation}
			\label{eq:higher order Riemann sum}
			\left|\sum_{m\in\N_0}k^{-1}\tau\left(\frac{m}{k}\right)-\sum_{\ell=0}^N k^{-\ell}\int_0^{+\infty}\tau_\ell(t)dt\right|\leq C_N~k^{-N-1},~k\to+\infty.
			\end{equation}
		\end{lemma}
		\begin{proof}
			We start proving by the estimate of Riemann sum that for any $\tau\in\cCc(\R)$ we have a constant $C_0>0$ only depending on $\tau$ such that
			\begin{equation}
			\left|\sum_{m\in\N_0}k^{-1}\tau\left(\frac{m}{k}\right)-\int_0^{+\infty}\tau(t)dt\right|\leq C_0 ~k^{-1}.
			\end{equation}
			If we assume that \eqref{eq:higher order Riemann sum} holds for all $\tau\in\cCc(\R)$ at some $N_0\in\N_0$, then for any $\tau\in\cCc(\R)$ and as $k\to+\infty$, we can apply Taylor formula such that
			\begin{equation}
			\label{eq:Taylor expansion and Riemann sum}
			\begin{split}
			&\sum_{m\in\N_0}k^{-1}\tau\left(\frac{m}{k}\right)-\int_0^{+\infty}\tau(t)dt\\
			=&\sum_{m\in\N_0}\int_{\frac{m}{k}}^{\frac{m+1}{k}}\left(\tau\left(\frac{m}{k}\right)-\tau(t)\right)dt\\
			=&\sum_{\substack{m\in\N_0\\ m\in k\supp\tau}}\int_{\frac{m}{k}}^{\frac{m+1}{k}}\left(-\sum_{j=1}^{N_0}\frac{d^j\tau}{d t^j}\left(\frac{m}{k}\right)\frac{(t-\frac{m}{k})^j}{j!}dt+O(k^{-N_0-1})\right)\\
			=&-\sum_{j=1}^{N_0}\sum_{m\in\N_0}\frac{k^{-j}}{(j+1)!}k^{-1}\frac{d^j\tau}{d t^j}\left(\frac{m}{k}\right)+O(k^{-N_0-1}).
			\end{split}
			\end{equation}
			By applying induction hypothesis to $\displaystyle k^{-1}\frac{d^j\tau}{d t^j}\left(\frac{m}{k}\right)$ in the last line of \eqref{eq:Taylor expansion and Riemann sum}, we can see that \eqref{eq:higher order Riemann sum} holds for any $\tau\in\cCc(\R)$ at $N_0+1$ and our claim follows by induction.
		\end{proof}
\begin{proof}[Proof of Theorem \ref{thm:main thm is sharp}]  
 Lemma \ref{lem:higher order Riemann sum} implies that
\begin{equation}
\begin{split}
\label{eq:expansion hat b_0}
\sum_{m=0}^{+\infty}\chi\left(\frac{m}{k}\right)\sum_{j=0}^{n} m^{n-j}b_j(\pi_M(x))
&=\sum_{j=0}^{n}k^{n+1-j}\sum_{m=0}^{+\infty}k^{-1}\chi\left(\frac{m}{k}\right)\left(\frac{m}{k}\right)^{n-j}b_j(\pi_M(x))\\
&\sim\sum_{j=0}^{+\infty}k^{n+1-j}\widehat b_j(x)~\text{in}~S^{n+1}_{\rm loc}(1;X),\\
&b_0(x)=\frac{1}{2\pi}\left(\int_0^{+\infty}\chi(t)t^n dt\right)b_0(\pi_M(x)).
\end{split}
\end{equation}
By \eqref{eq:chi_k(T_vartheta) for general chi}, \eqref{eq:O(1) estimate}, \eqref{eq:+infty}, \eqref{eq:0}, \eqref{eq:expansion hat b_0} and taking
		\begin{equation}
		\widehat{\mathcal{B}}_k(x):=\sum_{m=0}^{+\infty}\chi\left(\frac{m}{k}\right)\sum_{j=0}^{n} m^{n-j}b_j(\pi_M(x))
		\end{equation}
		and
		\begin{equation}
		\widehat{\mathcal{R}}_k(x):=\sum_{m=-\infty}^{-1}\chi\left(\frac{m}{k}\right)B_m(\pi_M(x))+\sum_{m=0}^{+\infty}\chi\left(\frac{m}{k}\right)\varepsilon_m(x)+\sum_{m=1}^{+\infty}\chi\left(\frac{m}{k}\right)m^{-1}b_{n+1}(\pi_M(x)),
		\end{equation}
		 our theroem follows.
	\end{proof}	
		\begin{remark}
			When $0\not\in\supp\chi$, $(\Pi\circ\chi_k(T_\vartheta))(x,x)=\chi_k(T_\vartheta)(x,x)$ still has the full asymptotic expansion because here Lemma \ref{lem:higher order Riemann sum} 
works for $\sum_{m=1}^{+\infty}k^{-1}\chi\left(\frac{m}{k}\right)\left(\frac{m}{k}\right)^{-1}$.  We meanwhile notice that for Theorems \ref{thm:Trace on circle bundle} and \ref{thm:main thm is sharp}, when $\chi\in\cCc(\R_+)$, by taking the semi-classical limit $k\to+\infty$ we certainly have $a_j(x)=\widehat{b}_j(x)$, $j=1,\ldots,n$. But actually this relation can also be deduced directly from Lemma \ref{lem:higher order Riemann sum}. For simplicity we only exhibit 
			\begin{equation} a_1(x)=\widehat{b}_1(x)=\frac{1}{2\pi}\left(\int_0^{+\infty}\chi(t)t^{n-1}dt\right)b_1(\pi_M(x)),    
			\end{equation}
			and the rest situation follows similarly. From the proof of Lemma \ref{lem:higher order Riemann sum}, we can check that
			\begin{equation}
2\pi\widehat{b}_1(x)=\left(\int_0^{+\infty}\chi(t)t^{n-1}dt\right)
b_1(\pi_M(x))-\left(\frac{1}{2}\int_0^{+\infty}\frac{d}{dt}(\chi(t)t^n)dt\right)b_0(\pi_M(x)),
\end{equation}
and by using integration by parts we always have 
$\int_0^{+\infty}\frac{d}{dt}(\chi(t)t^n)dt=0$ for any $\chi\in\cCc(\R)$. This observation also suggests that the general proof of Theorem \ref{thm:main thm is sharp} with some minor change works for Theorem \ref{thm:Trace on circle bundle}.
\end{remark}

Finally, we give an explicit calculation for the spectrum of 
a Toeplitz operator on a Grauert tube over a torus. 
Consider the holomorphic action of \(\R^n\) on 
\(\C^n\), \(n\geq 2\) given by \(t\circ z=z+t.\) 
Since the action by the subgroup \(2\pi\Z^n\subset \R^n\) 
is free, discrete and properly discontionuously, 
\(M:=\C^n/2\pi\Z^n\) is a complex manifold of dimension 
	\(\dim_\C M=n\). 
	
	Define a function \(\rho\colon M\to\R\) by
	\(\rho([z])=y_1^2+\ldots+y_n^2\) where \(z=x+iy\) 
	with \(x=(x_1,\ldots,x_n),y=(y_1,\ldots,y_n)\in\R^n.\)
	Then \(\rho\in \mathscr{C}^\infty(M,\R)\) is a 
	strictly plurisubharmonic function and any non-zero value 
	is a regular value for \(\rho\).
	Hence, for some fixed \(\varepsilon>0\) we have that 
	\(X:=\rho^{-1}(\{\varepsilon^2\})\) is a strictly pseudoconvex 
	CR manifold of codimension one with CR strucutre given by 
	\(T^{1,0}X=T^{1,0}M\cap\C TX\). 
	Furhtermore, \(X\) is compact (it is actually homeomorphic to the Cartesian product 
	of an \(n\)-dimensional torus and an \((n-1)\)-dimensional sphere)
	and \(\overline{\partial}_b\) has closed range in $L^2$ 
	on $X$ for all \(n\geq2\) by \cite{Bou75}.
	
	We consider the metric and volume form on \(X\) 
	induced by the standard ones on \(M\) and \(\C^n\), respectively. 
	Then \(T=\sum_{j=1}^ny_j\frac{\partial}{\partial x_j}\) 
	defines a transversal vector field on \(X\), which is not CR, 
	such that \(T\perp T^{1,0}X\oplus T^{0,1}X\) and \(|T|=\varepsilon\). 
	Moreover, \(iT\colon \mathscr{C}^\infty(X)\to \mathscr{C}^\infty(X)\)
	is symmetric. We put \(A:=\Pi \frac{i}{\varepsilon}T\Pi\) where 
	\(\Pi\colon L^2(X)\to L^2(X)\) denotes the Szeg\H{o} projection.
	We note that \(X\) is actually the boundary of the Grauert tube of 
	the \(n\)-torus with flat Riemannian metric. 
	In that context such an example  was studied in \cite{CRa22,Leb18}. 
	We will compute the spectrum of \(A\) 
	in a way that leads to an explicit expression for the eigenvalues 
	which is directly linked to Bessel functions when \(n\) is even. 
	
	Define \(\gamma_n\colon \R\to\R\),
	\(\gamma_n(t)=\int_{0}^{\pi}e^{t\cos(\alpha)}\sin^{n-2}(\alpha)d\alpha\). Then \(\gamma_n\) 
	satisfies the equation \(\gamma_{n+2}(t)=(n-1)t^{-1}\gamma'_{n}(t)\) for all \(n\geq2\). 
	
	If \(n\geq2\) is even we have 
	\begin{equation}
	\gamma_n(t)=\frac{\pi(n-2)!}{2^{n-1}(\frac{n}{2}-1)!}\sum_{j=0}^{+\infty}\frac{\left(\frac{t}{2}\right)^{2j}}{j!(j+\frac{n-2}{2})!}=\frac{\pi(n-2)!}{2^{n-1}t^{\frac{n-2}{2}}}J_{\frac{n-2}{2}}(it),
	\end{equation}
	where \(J_\alpha\), \(\alpha\geq0\), is the Bessel function of first kind. If \(n\) is odd  we find
	\begin{equation}
	\gamma_n(t)=2^{n-1}\left(\frac{n-1}{2}-1\right)!\sum_{j=0}^{+\infty}\frac{(\frac{n-1}{2}+j)!}{j!(n-1+2j)!}t^{2j}.
	\end{equation}
	For example \(\gamma_3(t)=2t^{-1}\sinh(t)\).  
	For \(m\in\Z^n\), we put \(s_{m}\colon X\to \C\), 
	\begin{equation}
	s_{m}([z])=\frac{e^{i\langle m,z\rangle}}{(2\pi)^{n/2}\sqrt{\varepsilon^{n-1}\gamma_n(2\varepsilon|m|)\operatorname{vol}(S^{n-2})}}.
	\end{equation}
	It follows that \(s_{m}\in H_b^0(X)\cap \mathscr{C}^\infty(X)\subset L^2(X)\) holds for all \(m\in\Z^n\).
	\begin{theorem}\label{thm:Spectrum}
		With the notation above we have that \(\{s_{m}\}_{m\in\Z^n}\) is an orthonormal basis for \(H_b^0(X)\) with
		\begin{equation}
		As_{m}=|m|\frac{\gamma'_{n}(2\varepsilon|m|)}{\gamma_n(2\varepsilon|m|)}s_{m},~\text{for all}~ m\in\Z^n.
		\end{equation}
	\end{theorem}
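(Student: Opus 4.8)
The plan is to establish the three assertions — membership $s_m\in H^0_b(X)$, orthonormality, and completeness — and then to compute $As_m$ by a direct integral computation. Writing $z=x+iy$ with $x,y\in\mathbb{R}^n$, the manifold $X=\rho^{-1}(\{\varepsilon^2\})$ is diffeomorphic to $(\mathbb{R}/2\pi\mathbb{Z})^n\times S^{n-1}_\varepsilon$, $S^{n-1}_\varepsilon:=\{y\in\mathbb{R}^n:|y|=\varepsilon\}$, via $[z]\mapsto(x,y)$; since the $x$- and $y$-directions are orthogonal in $\mathbb{R}^{2n}$, the induced metric and volume form split as $dV=dx\,d\sigma_\varepsilon(y)$, where $d\sigma_\varepsilon$ is the surface measure of $S^{n-1}_\varepsilon$. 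For $m\in\mathbb{Z}^n$ the function $e^{i\langle m,z\rangle}$ is $2\pi\mathbb{Z}^n$-periodic, hence descends to a holomorphic function on $M$, so its restriction to $X$ is a smooth CR function and $s_m\in H^0_b(X)\cap\mathscr{C}^\infty(X)$. For orthonormality I would use $e^{i\langle m,z\rangle}\overline{e^{i\langle m',z\rangle}}=e^{i\langle m-m',x\rangle}e^{-\langle m+m',y\rangle}$: integrating over the torus annihilates all terms with $m\neq m'$, and for $m=m'$ one is left with $\int_{S^{n-1}_\varepsilon}e^{-2\langle m,y\rangle}d\sigma_\varepsilon$, which by rotational invariance and the substitution $y=\varepsilon\omega$ equals $\varepsilon^{n-1}\operatorname{vol}(S^{n-2})\int_0^\pi e^{-2\varepsilon|m|\cos\alpha}\sin^{n-2}\alpha\,d\alpha=\varepsilon^{n-1}\operatorname{vol}(S^{n-2})\gamma_n(2\varepsilon|m|)$ (using that $\gamma_n$ is even, via $\alpha\mapsto\pi-\alpha$). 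Hence $\|e^{i\langle m,z\rangle}\|^2=(2\pi)^n\varepsilon^{n-1}\operatorname{vol}(S^{n-2})\gamma_n(2\varepsilon|m|)$, which is exactly the normalization defining $s_m$, so $(s_m|s_{m'})=\delta_{m,m'}$.

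For completeness, the torus $(\mathbb{R}/2\pi\mathbb{Z})^n$ acts on $X$ by $t\cdot[z]=[z+t]$; these are restrictions of biholomorphisms of $M$ preserving $X$ and the metric, hence CR diffeomorphisms which are isometries, so the induced unitary action on $L^2(X)$ commutes with $\overline{\partial}_b$ and its adjoint, thus with $\Box^{(0)}_b$, and therefore with $\Pi$. Decomposing into isotypic components $L^2(X)=\bigoplus_{m\in\mathbb{Z}^n}L^2(X)_m$, with $L^2(X)_m=\{e^{i\langle m,x\rangle}g(y):g\in L^2(S^{n-1}_\varepsilon)\}$, one gets $H^0_b(X)=\bigoplus_m\big(H^0_b(X)\cap L^2(X)_m\big)$, so it suffices to show each summand equals $\mathbb{C}s_m$. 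A short computation with the defining function $\rho=|y|^2$ shows that $T^{0,1}X$, in the frame $\partial_{\overline{z}_j}=\tfrac12(\partial_{x_j}+i\partial_{y_j})$, consists of the fields $L_c=\sum_j c_j\partial_{\overline{z}_j}$ with $\langle c,y\rangle=0$; applying $L_c$ to $u=e^{i\langle m,x\rangle}g$ and using $\partial_{x_j}u=im_ju$ gives
\[
L_c u=\tfrac{i}{2}\,e^{i\langle m,x\rangle}e^{-\langle m,y\rangle}\,V_c\!\big(e^{\langle m,y\rangle}g\big),\qquad V_c:=\textstyle\sum_j c_j\partial_{y_j},
\]
where $V_c$ is the corresponding smooth tangent vector field on $S^{n-1}_\varepsilon$; this identity is meaningful for $g\in\mathscr{D}'(S^{n-1}_\varepsilon)$, so no a priori regularity of $g$ is needed. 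As $c$ (equivalently $V_c$) ranges over all smooth tangent fields, $\overline{\partial}_bu=0$ becomes the vanishing of the differential of $e^{\langle m,y\rangle}g$ on $S^{n-1}_\varepsilon$ as a distribution; since $S^{n-1}_\varepsilon$ is connected for $n\geq2$, this forces $e^{\langle m,y\rangle}g=\mathrm{const}$, i.e. $u\in\mathbb{C}s_m$. Thus $\{s_m\}_{m\in\mathbb{Z}^n}$ is an orthonormal basis of $H^0_b(X)$.

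For the eigenvalue identity, since $\Pi s_m=s_m$ and $\partial_{x_j}s_m=im_js_m$, we have $Ts_m=i\langle m,y\rangle s_m$, whence $As_m=\tfrac{i}{\varepsilon}\Pi(Ts_m)=-\tfrac1\varepsilon\Pi(\langle m,y\rangle s_m)$. The function $\langle m,y\rangle s_m$ lies in $L^2(X)_m$, so by the previous step $\Pi(\langle m,y\rangle s_m)=(\langle m,y\rangle s_m\,|\,s_m)\,s_m$. To compute the coefficient I would set $\Phi(s):=\int_{S^{n-1}_\varepsilon}e^{-s\langle m,y\rangle}d\sigma_\varepsilon(y)=\varepsilon^{n-1}\operatorname{vol}(S^{n-2})\gamma_n(\varepsilon|m|s)$ (same rotational-symmetry computation), so that $\int_{S^{n-1}_\varepsilon}\langle m,y\rangle e^{-2\langle m,y\rangle}d\sigma_\varepsilon=-\Phi'(2)=-\varepsilon^n|m|\operatorname{vol}(S^{n-2})\gamma'_n(2\varepsilon|m|)$; combining this with $(\langle m,y\rangle s_m\,|\,s_m)=|c_m|^2(2\pi)^n(-\Phi'(2))$ and $|c_m|^2=\big((2\pi)^n\varepsilon^{n-1}\operatorname{vol}(S^{n-2})\gamma_n(2\varepsilon|m|)\big)^{-1}$ yields $(\langle m,y\rangle s_m\,|\,s_m)=-\varepsilon|m|\,\gamma'_n(2\varepsilon|m|)/\gamma_n(2\varepsilon|m|)$, and therefore $As_m=|m|\,\frac{\gamma'_n(2\varepsilon|m|)}{\gamma_n(2\varepsilon|m|)}\,s_m$.

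The main obstacle is the completeness step: ruling out CR functions in a fixed isotypic component other than the evident one. The feature that makes it tractable is that on such a component the equation $\overline{\partial}_bu=0$ collapses to the vanishing of the tangential gradient of $e^{\langle m,y\rangle}g$ on the connected sphere $S^{n-1}_\varepsilon$, a statement robust at the distributional level, so the argument requires neither elliptic regularity for $\overline{\partial}_b$ nor any explicit analysis of the Szeg\H{o} projection; everything else is the two rotationally symmetric sphere integrals.
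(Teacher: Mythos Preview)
Your proof is correct and follows essentially the same route as the paper: Fourier decomposition under the torus action to reduce completeness to a one-dimensionality statement for each isotypic component, the same spherical integrals $\int_{S^{n-1}_\varepsilon}e^{-s\langle m,y\rangle}d\sigma_\varepsilon$ for orthonormality and for the eigenvalue, and the projection $As_m=\tfrac{i}{\varepsilon}\Pi(Ts_m)$ computed via the inner product with $s_m$. Your completeness step is marginally more direct than the paper's: you write down the $(0,1)$ vectors explicitly as $L_c=\sum_jc_j\partial_{\overline{z}_j}$ with $\langle c,y\rangle=0$ and obtain immediately that $\overline\partial_b u=0$ on the $m$-isotypic piece is equivalent to the vanishing of the tangential differential of $e^{\langle m,y\rangle}g$ on the connected sphere, while the paper passes through the decomposition $T_pX=W_1\oplus W_2$ and the vector $Z=Y-iJY$; both arguments are the same computation in slightly different language.
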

	Accordingly, the spectrum of \(A\) is given by 
	\begin{equation}
	\operatorname{Spec}(A)=\left\{|m|\frac{\gamma'_{n}(2\varepsilon|m|)}{\gamma_n(2\varepsilon|m|)}\colon m\in\Z^n\right\}.
	\end{equation}
	For \(k>0\) let \(\phi_k\) be the measure on \(\R_+\) defined by
	\begin{equation}
	\phi_k(t)=k^{-n}\sum_{m\in\Z^m}\delta\left(t-k^{-1}|m|\frac{\gamma'_{n}(2\varepsilon|m|)}{\gamma_n(2\varepsilon|m|)}\right),
	\end{equation}
	where \(\delta(t)\) denote the Dirac measure at zero. 
	Applying Theorem \ref{thm:ScaledSpectralMeasures} we obtain the following.
	\begin{theorem}
		For any \(\chi\in \mathscr{C}_c^\infty\left(\R_+\right)\) we have
		\begin{equation}
		\lim_{k\to+\infty} \int_0^{+\infty}\phi_k(t)\chi(t)dt=\operatorname{vol}(S^{n-1})\int_{\R}t^{n-1}\chi(t)dt.
		\end{equation}
	\end{theorem}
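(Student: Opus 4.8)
The plan is to realize $A=\Pi\,\tfrac{i}{\varepsilon}T\,\Pi$ as a Toeplitz operator covered by Theorem~\ref{thm:ScaledSpectralMeasures}, apply that theorem to obtain the limiting measure up to a multiplicative constant, and then determine the constant from the explicit spectrum in Theorem~\ref{thm:Spectrum}. Concretely I would set $P:=\tfrac{i}{\varepsilon}T\in L^1_{\mathrm{cl}}(X)$, which is formally self-adjoint because $iT$ is symmetric on $\mathscr{C}^\infty(X)$, so that $A=T_P=\Pi P\Pi$. Its principal symbol is $\sigma_P(x,\eta)=-\tfrac1\varepsilon\langle\eta,T\rangle$. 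Since $T\perp T^{1,0}X\oplus T^{0,1}X$ and $|T|=\varepsilon$, the $\langle\cdot|\cdot\rangle$-dual of $T$ is a multiple of $\xi$ of length $\varepsilon$; the relevant sign, for which $\sigma_P|_\Sigma>0$, is $\langle\xi(x),T\rangle=-\varepsilon$, and Theorem~\ref{thm:Spectrum} confirms this is the consistent choice (its spectrum is bounded below). Thus $\sigma_P(x,t\xi(x))=t>0$ for $t>0$, so $\sigma_P|_\Sigma>0$ and $\sigma_P(\xi_x)\equiv1$. Combined with the compactness and strict pseudoconvexity of $X$ and the closed range of $\overline\partial_b$ (valid for all $n\geq2$), the hypotheses of Theorems~\ref{thm:ExpansionMain} and~\ref{thm:ScaledSpectralMeasures} hold; here $\dim_{\R}X=2(n-1)+1$, so the ``CR dimension'' playing the role of $n$ in those statements is $n-1$.

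Applying Theorem~\ref{thm:ScaledSpectralMeasures} to $T_P$, the scaled spectral measures $k^{-n}\sum_{j\geq1}\delta(t-k^{-1}\lambda_j)$ formed from the positive eigenvalues of $A$ converge weakly on $(0,\infty)$ to the continuous measure $\mathcal{C}_P\,t^{n-1}\,dt$, where
\[
\mathcal{C}_P=\frac{1}{2\pi^{n}}\int_X\frac{dV_\xi}{\sigma_P(\xi)^{n}}=\frac{1}{2\pi^{n}}\int_X dV_\xi .
\]
By Theorem~\ref{thm:Spectrum} the nonzero eigenvalues of $A$, with multiplicity, are precisely $\lambda_m=|m|\,\gamma_n'(2\varepsilon|m|)/\gamma_n(2\varepsilon|m|)$, $m\in\Z^n\setminus\{0\}$, and the $m=0$ term of $\phi_k$ equals $k^{-n}\delta(t)$, which contributes nothing when paired with $\chi\in\mathscr{C}_c^\infty(\R_+)$; hence $\langle\phi_k,\chi\rangle$ coincides with the pairing of the above scaled spectral measure with $\chi$, so $\langle\phi_k,\chi\rangle\to\mathcal{C}_P\int_0^{+\infty}\chi(t)t^{n-1}\,dt$. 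It remains to show $\mathcal{C}_P=\operatorname{vol}(S^{n-1})$.

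For the constant I would exploit the large-argument behaviour of $\gamma_n$. Writing $\gamma_n'(s)/\gamma_n(s)$ as the mean of $\cos\alpha$ with respect to the probability measure proportional to $e^{s\cos\alpha}\sin^{n-2}\alpha\,d\alpha$ on $[0,\pi]$, Laplace's method (the mass concentrates at $\alpha=0$) yields $\gamma_n'(s)/\gamma_n(s)\to1$ as $s\to+\infty$, whence $\lambda_m/|m|\to1$ as $|m|\to\infty$. Since $\operatorname{supp}\chi\Subset(0,\infty)$, only lattice points with $|m|\asymp k$ enter $\langle\phi_k,\chi\rangle$, and by the mean value theorem replacing $\chi(k^{-1}\lambda_m)$ by $\chi(k^{-1}|m|)$ costs at most a constant times $k^{-n}\sum_{|m|\asymp k}\bigl|\lambda_m/|m|-1\bigr|=o(1)$, because the number of such lattice points is $O(k^n)$; and $k^{-n}\sum_{m\in\Z^n}\chi(k^{-1}|m|)$ is a Riemann sum tending to $\int_{\R^n}\chi(|\zeta|)\,d\zeta=\operatorname{vol}(S^{n-1})\int_0^{+\infty}\chi(r)r^{n-1}\,dr$. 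Comparing this with the limit from the previous paragraph, valid for every nonzero $\chi\in\mathscr{C}_c^\infty(\R_+)$, forces $\mathcal{C}_P=\operatorname{vol}(S^{n-1})$, and since $\chi$ is supported in $(0,\infty)$ one has $\int_0^{+\infty}=\int_\R$, which is the asserted identity. (Alternatively, $\mathcal{C}_P$ can be read off from the Weyl law~\eqref{eq:Nk}: weak convergence gives $k^{-n}N(k)\to\mathcal{C}_P/n$, while $N(k)=\#\{m\in\Z^n\setminus\{0\}:\lambda_m\leq k\}\sim\operatorname{vol}(S^{n-1})k^n/n$ by the same lattice count.)

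The main obstacle I anticipate is making these two limit passages uniform: the expansion $\gamma_n'(s)/\gamma_n(s)=1+O(s^{-1})$ must hold uniformly on the range $s\asymp k$ that actually contributes, and the comparison of $\#\{m:\lambda_m\leq k\}$ with the lattice count $\#\{m:|m|\leq k\}$ amounts to upgrading $\lambda_m/|m|=1+o(1)$ to the statement that the symmetric difference of the two counting sets has cardinality $o(k^n)$. The verification of $\sigma_P|_\Sigma>0$ and the bookkeeping for the sign of $\xi$ are routine but should be recorded carefully.
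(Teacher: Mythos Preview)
Your proof is correct and begins exactly as the paper does: the paper's entire proof is the single sentence ``Applying Theorem~\ref{thm:ScaledSpectralMeasures} we obtain the following,'' so the identification of $A$ as a Toeplitz operator $T_P$ with $\sigma_P(\xi)\equiv1$ and the invocation of Theorem~\ref{thm:ScaledSpectralMeasures} (with CR dimension $n-1$) is precisely the intended argument.

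The difference lies in how you pin down the constant $\mathcal{C}_P$. The paper leaves the evaluation of
\[
\mathcal{C}_P=\frac{1}{2\pi^{n}}\int_X dV_\xi
\]
implicit; a direct computation on $X\cong\mathbb{T}^n\times S^{n-1}_\varepsilon$ with $\xi=-\varepsilon^{-1}\sum_j y_j\,dx_j$ gives $\xi\wedge(d\xi)^{n-1}=\pm(n-1)!\,\varepsilon^{-(n-1)}\,d\mathbb{T}^n\wedge dS^{n-1}_\varepsilon$, hence $\int_X dV_\xi=2^{-(n-1)}(2\pi)^n\operatorname{vol}(S^{n-1})=2\pi^n\operatorname{vol}(S^{n-1})$, which yields $\mathcal{C}_P=\operatorname{vol}(S^{n-1})$ immediately. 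You instead determine $\mathcal{C}_P$ by a second, independent computation: using $\gamma_n'(s)/\gamma_n(s)=1+O(s^{-1})$ to replace $\lambda_m$ by $|m|$ and then recognising $k^{-n}\sum_{m}\chi(k^{-1}|m|)$ as a Riemann sum for $\int_{\R^n}\chi(|\zeta|)\,d\zeta$. This is valid, and the uniformity concerns you flag are genuinely mild (the $O(s^{-1})$ is uniform on $s\asymp k$, and the number of contributing lattice points is $O(k^n)$, so the replacement error is $O(k^{-1})$). The trade-off is that your lattice argument already proves the theorem on its own, so the appeal to Theorem~\ref{thm:ScaledSpectralMeasures} becomes logically superfluous; conversely, the direct volume computation is shorter and keeps Theorem~\ref{thm:ScaledSpectralMeasures} as the sole analytic input, which is the spirit of the paper's presentation.
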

	Before we prove Lemma~\ref{thm:Spectrum} below we need some preparation.
	For \(m\in\Z^n\) put \(\widetilde{s}_{m}([z])=e^{i\langle m,z\rangle}\). We have that \(\widetilde{s}_{m}\) is a smooth CR function as the restriction of a holomorphic function defined on \(M\). We denote by \(\mathbb{T}^n=\R^n/2{\pi}\Z^n\) the real \(n\)-dimensional torus and by \(S^{n-1}_r=\{x\in\R^n\mid |x|=r\}\) the \((n-1)\)-dimensional sphere of radius \(r>0\).
	\begin{lemma}\label{lem:gmma}
		We have \(\int_{S^{n-1}_1}e^{ty_1}dS_{1}^{n-1}(y)=\gamma_n(t)\operatorname{vol}(S^{n-2}_1)\) for all \(t\in \R\).
	\end{lemma}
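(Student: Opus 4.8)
The plan is to compute the surface integral $\int_{S^{n-1}_1}e^{ty_1}\,dS^{n-1}_1(y)$ by slicing the sphere with hyperplanes $y_1=\cos\alpha$, $\alpha\in[0,\pi]$, reducing the $(n-1)$-dimensional integral to a one-dimensional integral in $\alpha$ times the volume of a lower-dimensional sphere. Concretely, I would use the coordinates $y_1=\cos\alpha$ and $y'=(\sin\alpha)\,\omega$ with $\omega\in S^{n-2}_1\subset\R^{n-1}$; here $\alpha$ is the polar angle measured from the $y_1$-axis. The induced surface measure on $S^{n-1}_1$ decomposes as $dS^{n-1}_1(y)=(\sin\alpha)^{n-2}\,d\alpha\,dS^{n-2}_1(\omega)$, which is the standard formula for the round metric in "polar-type" coordinates on the sphere (one can verify it by computing the Gram determinant of the parametrization, or cite it as a well-known fact). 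Since the integrand $e^{ty_1}=e^{t\cos\alpha}$ does not depend on $\omega$, the integral over $\omega\in S^{n-2}_1$ simply contributes the factor $\operatorname{vol}(S^{n-2}_1)$.

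Carrying this out gives
\begin{equation}
\int_{S^{n-1}_1}e^{ty_1}\,dS^{n-1}_1(y)
=\left(\int_0^{\pi}e^{t\cos\alpha}(\sin\alpha)^{n-2}\,d\alpha\right)\operatorname{vol}(S^{n-2}_1)
=\gamma_n(t)\operatorname{vol}(S^{n-2}_1),
\end{equation}
where the last equality is exactly the definition $\gamma_n(t)=\int_0^{\pi}e^{t\cos\alpha}\sin^{n-2}(\alpha)\,d\alpha$ recalled before the statement. This identity holds for all $t\in\R$ by construction, and in fact for all $t\in\C$ by analytic continuation, though only the real case is needed.

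The only point requiring any care is the decomposition of the surface measure, i.e.\ the identity $dS^{n-1}_1 = (\sin\alpha)^{n-2}\,d\alpha\,dS^{n-2}_1$. I would justify it either by an explicit computation of the first fundamental form of the map $(\alpha,\omega)\mapsto(\cos\alpha,\,\sin\alpha\cdot\omega)$ — the metric is block-diagonal, equal to $d\alpha^2$ in the $\alpha$-direction and $(\sin\alpha)^2$ times the round metric of $S^{n-2}_1$ in the $\omega$-directions, so the volume density is $(\sin\alpha)^{n-2}$ — or simply by invoking the standard "onion-peeling" formula for integration in spherical coordinates. For $n=2$ one should note the convention $\operatorname{vol}(S^{0}_1)=2$ (two points), so that the formula reads $\int_{S^1_1}e^{ty_1}\,dS=2\int_0^\pi e^{t\cos\alpha}\,d\alpha=2\pi I_0(t)$, matching $\gamma_2(t)\operatorname{vol}(S^0_1)$. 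There is no real obstacle here; the lemma is essentially a bookkeeping identity for the round sphere, and the proof is short.
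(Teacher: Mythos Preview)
Your proposal is correct and follows essentially the same approach as the paper: both use the parametrization $(\alpha,\omega)\mapsto(\cos\alpha,\sin\alpha\cdot\omega)$ with $\alpha\in[0,\pi]$, $\omega\in S^{n-2}_1$, and reduce to $\int_0^\pi e^{t\cos\alpha}\sin^{n-2}\alpha\,d\alpha\cdot\operatorname{vol}(S^{n-2}_1)$. The only cosmetic difference is that the paper verifies the measure decomposition by explicitly pulling back the form $dS^{n-1}_1(y)=\sum_j(-1)^{j+1}y_j\,dy_1\wedge\cdots\wedge\widehat{dy_j}\wedge\cdots\wedge dy_n$ under this map, whereas you propose computing the Gram determinant of the induced metric; both yield the factor $\sin^{n-2}\alpha$.
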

	\begin{proof}
		Consider the map \(F\colon[0,\pi]\times S_1^{n-2}\to S^{n-1}_1\), 
		\begin{equation}
		F(\alpha,x_1,\ldots,x_{n-1})=(\cos(\alpha),\sin(\alpha)x_1,\ldots,\sin(\alpha)x_{n-1}).
		\end{equation}
		Since
		\begin{equation}
		\begin{split}
		dS^{n-1}_1(y)
		&=\sum_{j=1}^n(-1)^{j+1}y_jdy_1\ldots\widehat{dy_j}\ldots dy_n\\
		&=y_1dy_2\wedge\ldots\wedge dy_n+dy_1\wedge
		\sum_{j=1}^{n-1}(-1)^jy_{j+1}dy_2\wedge\ldots\wedge \widehat{dy_{j+1}}\wedge\ldots \wedge dy_n,
		\end{split}
		\end{equation} 
		we find 
		\begin{equation}
		\begin{split}
		&(F^* dS_{1}^{n-1})(\alpha,x)\\
		=&\cos^2(\alpha)\sin^{n-2}(\alpha)d\alpha\wedge dS^{n-2}_1(x) +
		\cos(\alpha)\sin^{n-1}(\alpha)dx_1\ldots dx_{n-1}\\
		+&\sin^n(\alpha)d\alpha\wedge dS^{n-2}_1(x).
		\end{split}
		\end{equation}
		Hence by change of variables and Fubini theorem we conclude
		\begin{equation}
		\int_{S^{n-1}_1}e^{ty_1}dS_{1}^{n-1}(y)=\int_{0}^{ \pi}e^{t\cos(\alpha)}\sin^{n-2}(\alpha)d\alpha\cdot \int_{S^{n-2}_1}dS_{1}^{n-2}=\gamma_n(t)\operatorname{vol}(S^{n-2}_1).
		\end{equation}
	\end{proof}
	\begin{lemma}\label{lem:orthogonal}
		Given \(m,m'\in\Z^n\) we have
		\begin{equation}
		(\widetilde{s}_{m},\widetilde{s}_{m'})=\begin{cases}
		(2\pi)^n\varepsilon^{n-1} \gamma_n(2\varepsilon|m|)\operatorname{vol}(S^{n-2}_1)&, \text{ if } m=m'\\
		0&, \text{ else.}
		\end{cases}
		\end{equation}
		and
		\begin{equation}
		(\widetilde{s}_{m},iT\widetilde{s}_{m'})=\begin{cases}
		(2\pi)^n\varepsilon^{n}|m| \gamma'_n(2\varepsilon|m|)\operatorname{vol}(S^{n-2}_1)&, \text{ if } m=m',\\
		0&, \text{ else.}
		\end{cases}
		\end{equation}
	\end{lemma}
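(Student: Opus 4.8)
The plan is to reduce both inner products to integrals over a product manifold and then invoke Lemma~\ref{lem:gmma}. First I would fix the geometric picture: writing $z=x+iy$ with $x,y\in\R^n$, the map $[z]\mapsto([x],y)$ identifies $X=\rho^{-1}(\{\varepsilon^2\})$ with $\mathbb{T}^n\times S^{n-1}_\varepsilon$. Since in the flat metric of $M$ the tangent directions along $\mathbb{T}^n$ are orthogonal to those along the sphere of radius $\varepsilon$, the induced metric on $X$ is the product metric, so the induced volume form is $dV=dx\wedge dS^{n-1}_\varepsilon(y)$, where $dx$ is the Haar measure on $\mathbb{T}^n$ (of total mass $(2\pi)^n$) and $dS^{n-1}_\varepsilon$ is the round surface measure on $S^{n-1}_\varepsilon$. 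On $X$ we have $\tilde s_m([z])=e^{i\langle m,x\rangle}e^{-\langle m,y\rangle}$, hence $\tilde s_m\overline{\tilde s_{m'}}=e^{i\langle m-m',x\rangle}e^{-\langle m+m',y\rangle}$, and by Fubini the $\mathbb{T}^n$-integral of the first factor equals $(2\pi)^n$ when $m=m'$ and vanishes otherwise; this gives the off-diagonal vanishing in both asserted formulas.

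For the diagonal term $(\tilde s_m,\tilde s_m)$ it remains to evaluate $\int_{S^{n-1}_\varepsilon}e^{-2\langle m,y\rangle}\,dS^{n-1}_\varepsilon(y)$. Substituting $y=\varepsilon\omega$ with $\omega\in S^{n-1}_1$ (so $dS^{n-1}_\varepsilon(y)=\varepsilon^{n-1}\,dS^{n-1}_1(\omega)$) and using rotational invariance to send $m$ to $|m|e_1$ turns this into $\varepsilon^{n-1}\int_{S^{n-1}_1}e^{-2\varepsilon|m|\,\omega_1}\,dS^{n-1}_1(\omega)$, where the sign of the exponent is irrelevant by the symmetry $\omega\mapsto-\omega$; Lemma~\ref{lem:gmma} then gives $\varepsilon^{n-1}\gamma_n(2\varepsilon|m|)\operatorname{vol}(S^{n-2}_1)$. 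Multiplying by $(2\pi)^n$ yields the first formula.

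For the second formula I would first compute $T\tilde s_{m'}=\big(\sum_{j}y_j\partial_{x_j}\big)e^{i\langle m',x\rangle-\langle m',y\rangle}=i\langle m',y\rangle\,\tilde s_{m'}$, so $iT\tilde s_{m'}=-\langle m',y\rangle\,\tilde s_{m'}$ and, $\langle m',y\rangle$ being real, $\overline{iT\tilde s_{m'}}=-\langle m',y\rangle\,\overline{\tilde s_{m'}}$. The same Fubini argument gives the off-diagonal vanishing and reduces the diagonal case to $(\tilde s_m,iT\tilde s_m)=-(2\pi)^n\int_{S^{n-1}_\varepsilon}\langle m,y\rangle\,e^{-2\langle m,y\rangle}\,dS^{n-1}_\varepsilon(y)$. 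To evaluate the remaining integral I would differentiate the family $f(s):=\int_{S^{n-1}_\varepsilon}e^{-2s\langle m,y\rangle}\,dS^{n-1}_\varepsilon(y)=\varepsilon^{n-1}\gamma_n(2s\varepsilon|m|)\operatorname{vol}(S^{n-2}_1)$ (the last equality by the computation just done, carrying the extra parameter $s$), which is legitimate since the sphere is compact and the integrand is smooth in $s$; this gives $f'(1)=2\varepsilon^n|m|\,\gamma_n'(2\varepsilon|m|)\operatorname{vol}(S^{n-2}_1)$, hence $\int_{S^{n-1}_\varepsilon}\langle m,y\rangle\,e^{-2\langle m,y\rangle}\,dS^{n-1}_\varepsilon(y)=-\tfrac12 f'(1)=-\varepsilon^n|m|\,\gamma_n'(2\varepsilon|m|)\operatorname{vol}(S^{n-2}_1)$. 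Substituting back produces $(2\pi)^n\varepsilon^n|m|\,\gamma_n'(2\varepsilon|m|)\operatorname{vol}(S^{n-2}_1)$, as claimed. I do not anticipate any genuine obstacle; the only care needed is in identifying the induced volume form with $dx\wedge dS^{n-1}_\varepsilon$ and in tracking the signs and the powers of $\varepsilon$ between the two cases.
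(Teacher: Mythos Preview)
Your proof is correct and follows essentially the same route as the paper: identify $X$ with $\mathbb{T}^n\times S^{n-1}_\varepsilon$, separate variables by Fubini to reduce to a spherical integral, then rescale to the unit sphere, rotate $m$ to $|m|e_1$, and invoke Lemma~\ref{lem:gmma}. For the second formula the paper differentiates Lemma~\ref{lem:gmma} directly in $t$ and then undoes the scaling/rotation, whereas you introduce an auxiliary parameter $s$ and differentiate the already-scaled identity; these are the same maneuver and yield the same computation.
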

	\begin{proof}
		We have
		\begin{equation}
		\int_{X_\varepsilon}\widetilde{s}_{m}\overline{\widetilde{s}_{m'}}dV_{X_\varepsilon}=\int_{x\in\mathbb{T}^n}e^{i\langle m-m',x\rangle}d\mathbb{T}^n(x)\int_{y\in S^{n-1}_\varepsilon}e^{-\langle m+m',y\rangle}dS^{n-1}_\varepsilon(y).
		\end{equation}
		and
		\begin{equation}
		\int_{X_\varepsilon}\widetilde{s}_{m}\overline{iT\widetilde{s}_{m'}}dV_{X_\varepsilon}=-\int_{x\in\mathbb{T}^n}e^{i\langle m-m',x\rangle}d\mathbb{T}^n(x)\int_{y\in S^{n-1}_\varepsilon}\langle m',y\rangle e^{-\langle m+m',y\rangle}dS^{n-1}_\varepsilon(y).
		\end{equation}
		Since
		\begin{equation}
		\int_{x\in\mathbb{T}^n}e^{i\langle m-m',x\rangle}d\mathbb{T}^n(x)=\begin{cases}
		(2\pi)^n&, \text{ if } m=m',\\
		0&, \text{ else,}
		\end{cases}
		\end{equation}
		we need to calculate \(\int_{S_{\varepsilon}^{n-1}}e^{-2\langle m,y\rangle}dS^{n-1}_{\varepsilon}(y)\) and  \(\int_{S_{\varepsilon}^{n-1}}\langle m,y\rangle e^{-2\langle m,y\rangle}dS^{n-1}_{\varepsilon}(y)\).
		By scaling and the invariance of \(dS_{1}^{n-1}\) under orthonormal transformation we find
		\begin{equation}
		\int_{S^{n-1}_\varepsilon}e^{-2\langle m,y\rangle}dS_{\varepsilon}^{n-1}(y)=\varepsilon^{n-1}\int_{S^{n-1}_1}e^{2|m|\varepsilon y_1}dS_{1}^{n-1}(y),
		\end{equation}
		and the first part of the claim follows from Lemma~\ref{lem:gmma}. Furthermore, using the same arguments we observe
		\begin{equation}
		\begin{split}
		\varepsilon^n|m|\operatorname{vol}(S^{n-2}_1)\gamma'_n(2\varepsilon|m|)=&\varepsilon^{n-1}\int_{S^{n-1}_1}\varepsilon|m|y_1e^{2\varepsilon |m|y_1}dS_{1}^{n-1}(y)\\
		=&\int_{S^{n-1}_\varepsilon}-\langle m,y\rangle e^{-2\langle m,y\rangle}dS_{\varepsilon}^{n-1}(y),
		\end{split}
		\end{equation}
		which proves the second part of the statement.
	\end{proof}  
	
	From the first part of Lemma~\ref{lem:orthogonal} it follows
	that \(\{s_{m}\}_{m\in\Z^n}\subset H^0_b(X)\) is an orthonormal system.  
	We will prove now that it is a basis.
	\begin{lemma}\label{lem:completness}
		Let \(v\in H_b^0(X)\) be a CR function. 
		There exist \(a_m\in\C\), \(m\in\Z^n\), such that \(v=\sum_{m\in\Z^n}a_ms_m\) where the sum converges in \(L^2\)-norm.
	\end{lemma}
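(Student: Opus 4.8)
The plan is to show that the closed linear span $V$ of $\{s_m\}_{m\in\Z^n}$ inside $H^0_b(X)$ is everything, by arguing that its orthogonal complement in $H^0_b(X)$ is trivial. So I would fix $v\in H^0_b(X)$ with $(v,s_m)=0$ for all $m\in\Z^n$ and aim to prove $v=0$. The key structural input is that $v$ is a smooth CR function on $X=\rho^{-1}(\{\varepsilon^2\})$; since $X$ is the boundary of the strictly pseudoconvex domain $\Omega=\rho^{-1}([0,\varepsilon^2))\subset M=\C^n/2\pi\Z^n$, by the classical results on the $\overline\partial$-Neumann problem (or by Kohn's theory and the fact that $\overline\partial_b$ has closed range here), $v$ extends to a function $\tilde v$ holomorphic on the interior $\Omega$ and smooth up to the boundary. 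Because $\Omega$ is invariant under the real torus translations $x\mapsto x+t$, $t\in\R^n/2\pi\Z^n$, I can expand $\tilde v$ in a Fourier series in the $x$-variables with coefficients depending holomorphically on the transverse direction.

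The next step is to make this concrete on the level of the model. Writing $z=x+iy$, a holomorphic function on $\Omega$ that is $2\pi\Z^n$-periodic in $x$ has an absolutely convergent expansion $\tilde v(z)=\sum_{m\in\Z^n}c_m\,e^{i\langle m,z\rangle}=\sum_{m}c_m e^{i\langle m,x\rangle}e^{-\langle m,y\rangle}$ on $\Omega$, valid up to the boundary by smoothness, where the exponentials $e^{i\langle m,z\rangle}=(2\pi)^{n/2}\sqrt{\varepsilon^{n-1}\gamma_n(2\varepsilon|m|)\operatorname{vol}(S^{n-2}_1)}\;s_m$ restrict on $X$ precisely to (scalar multiples of) the $s_m$. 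Restricting to $X$ and using Lemma~\ref{lem:orthogonal} (the orthogonality computation together with the normalization $\gamma_n(2\varepsilon|m|)>0$), the coefficient $c_m$ is, up to a nonzero constant, equal to $(v,s_m)$, which vanishes by hypothesis. Hence all $c_m=0$, so $\tilde v\equiv 0$ on $\Omega$ and therefore $v=0$ on $X$. This shows $V^\perp=\{0\}$ in $H^0_b(X)$, so $V=H^0_b(X)$, which is exactly the assertion: any $v\in H^0_b(X)$ is the $L^2$-limit of $\sum_{m}a_m s_m$ with $a_m=(v,s_m)$.

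The main obstacle, and the point needing care, is justifying the holomorphic extension of a general $L^2$ (a priori only $L^2$, not smooth) CR function $v\in H^0_b(X)$ and the convergence of its torus-Fourier expansion up to the boundary. One route is to first reduce to smooth $v$: since $H^0_b(X)\cap\mathscr C^\infty(X)$ is dense in $H^0_b(X)$ (the Szeg\H{o} projection $\Pi$ preserves $\mathscr C^\infty(X)$, cf.\ Theorem~\ref{Boutet-Sjoestrand theorem} and the smoothing-off-diagonal property), it suffices to handle $v\in\mathscr C^\infty(X)$ and then pass to the limit; and for smooth CR functions on the boundary of a strictly pseudoconvex domain the holomorphic extension and boundary regularity are standard (\cite{ChSh01}). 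Alternatively one avoids extension entirely: test $v$ directly against the smooth functions $\tilde s_m|_X$, observe that finite linear combinations of the $\tilde s_m|_X$ are dense in $\mathscr C^\infty(X)\cap H^0_b(X)$ in $L^2$ — this density is again a consequence of the torus symmetry, by convolving $v$ with an approximate identity on $\mathbb T^n$ and noting each convolution is a finite sum of the $\tilde s_m$ modulo error — and conclude $(v,w)=0$ for all $w$ in a dense subspace, hence $v=0$. Either way the computation itself is routine; the care is in the density/extension argument, and I would present the convolution-on-$\mathbb T^n$ version as the cleanest.
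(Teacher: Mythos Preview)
Your proposal is correct but follows a genuinely different route from the paper. You argue via holomorphic extension to the filled Grauert tube $\Omega=\rho^{-1}([0,\varepsilon^2))$: a smooth CR function extends holomorphically, the extension is $2\pi\Z^n$-periodic in $\Re z$, and the Cauchy--Riemann equations force the Fourier modes in $x$ to have $y$-dependence exactly $e^{-\langle m,y\rangle}$, so the extension is a series in the $e^{i\langle m,z\rangle}$. The paper, by contrast, stays entirely on $X$: it Fourier-decomposes a smooth CR function $v$ along the $\mathbb T^n$-action to get modes $v_m=f_m s_m$ with $f_m$ depending only on the $S^{n-1}_\varepsilon$ factor, and then uses the tangential CR equations on $X$ (via $\overline Z(v_m)=0$ for $Z\in T^{1,0}X$) to show each $f_m$ is constant. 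The paper explicitly acknowledges your route as valid, pointing to \cite{Le18} for the extension argument on general Grauert tubes, but opts for the intrinsic argument as more elementary and self-contained. What your approach buys is conceptual transparency (the $e^{i\langle m,z\rangle}$ are manifestly a basis for periodic holomorphic functions on the tube); what the paper's approach buys is that it avoids invoking boundary-regularity and holomorphic-extension machinery. One small note: your framing via $V^\perp$ and the density reduction are slightly tangled---taking $v\in V^\perp$ and then ``reducing to smooth $v$'' does not literally work, since approximating $v$ by smooth CR functions does not preserve orthogonality to the $s_m$; the clean statement is that every smooth CR function lies in the closed span $V$, and then density of smooth CR functions gives $V=H^0_b(X)$. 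This is what the paper does and is also implicit in your third paragraph.
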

	We note that Lemma~\ref{lem:completness} can be deduced from results about the holomorphic extension 
	of analytic functions on general Grauert tubes in \cite{Leb18}. For the sake of completness 
	we give an elementary proof for our specific case.
	\begin{proof}[Proof of Lemma~\ref{lem:completness}]
		Since smooth CR functions are dense in \(H_b^0(X)\) it is enough to prove the statement 
		when \(v\) is a smooth CR function. Note that we have a Lie group action on \(X\) 
		by the torus \(\mathbb{T}^n\)  given by \([t]\circ [z]=[z+t]\).
		Hence by  Fourier transform we can write \(v\) as the \(L^2\) converging sum 
		\(v=\sum_{m\in\Z^n}v_m\) where \(v_m\), \(m\in\Z^n\), are smooth functions given by
		\begin{equation}
		v_m([z])=(2\pi)^{-n}\int_{[t]\in\mathbb{T}^n}v([z+t])e^{-i\langle m,t\rangle}d\mathbb{T}^n([t]).
		\end{equation}
		Fix \(m\in\Z^n\). The \(\mathbb{T}^n\)-action is CR as the restriction of a holomorphic action on \(M\). It follows that \(v_m\) is a smooth CR function since \(v\) is smooth and CR. Furthermore, we can diffeomorphically identify \(\mathbb{T}^n\times S^{n-1}_\varepsilon\) and \(X\) via the mapping \(([x],y)\mapsto [x+iy]\) which we will denote by \(F\). It follows that \(v_m\) can be written as \(v_m=f_ms_m\) for some smooth function \(f_m\colon X \to\C\) with \(\frac{\partial f_m}{\partial x_j}=0\), \(1\leq j\leq n\). We will show that \(f_m\) is constant. Therefore, take a point \([x+iy]\in X\) and consider the decomposition \( T_{[x+iy]}X=W_1\oplus W_2\) where \(W_1=dF(T_{[x]}\mathbb{T}^n)\) and \(W_2=dF(T_yS^{n-1}_\varepsilon)\). We have \(\mathcal{X}(f_m)=0\) for all \(\mathcal{X}\in W_1\). So we need to show  \(Y(f_m)=0\) for all \(Y\in W_2\). We note that \(J(W_2)\subset W_1\) with \(T\perp J(W_2)\) where \(J\) is the defining endomorphism for the CR structure. It follows that for \(Y\in W_2\)  we have \(Z:=Y-iJY \in T^{1,0}X \). 
		Since \(v_m\) and \(s_m\) are smooth CR functions we find at \([x+iy]\) that
		\begin{equation}
		0=\overline{Z}(v_m)=\overline{Z}(f_m)s_m=Y(f_m)\widetilde{s}_m
		\end{equation}
		holds.
		We have that \(\widetilde{s}_m\) is nowhere zero and hence \(V(f_m)=0\) for all \(V\in T_{[x+iy]}X\). 
		We conclude that \(v=\sum_{m\in\Z^n}a_ms_m\) for some numbers \(\{a_m\}_{m\in\Z^n}\subset \C\).
	\end{proof}
	\begin{proof}[Proof of Theorem \ref{thm:Spectrum}]
		From Lemma~\ref{lem:completness} and the first part of Lemma~\ref{lem:orthogonal} it follows that the set \(\{s_m\}_{m\in\Z^n}\) forms an orthonormal basis of \(H_b^0(X)\). Hence  we find \(A f=\sum_{m\in\Z^n}(iT f,s_m)s_m\)  for any smooth CR function \(f\).  Choosing \(f=s_m\), \(m\in\Z^n\), the claim follows from the second part of Lemma~\ref{lem:orthogonal}.
	\end{proof}
	\section{Semi-classical CR embeddings and their applications}
	\label{sec:Semi-classical CR embeddings and their applications}
	\subsection{CR Kodaira embedding}\label{sec:KodairaEmbedding}
	We are going to prove Theorem~\ref{thm:embedding}. 
	Let \((X,T^{1,0}X)\) be a compact orientable strictly pseudoconvex CR manifold as in Theorem~\ref{thm:ExpansionMain}. Denote by \(\xi\) a contact form and by \(\mathcal{T}\) the respective Reeb vector field. Let \(T_P\) ba a Toeplitz operator as in Theorem~\ref{thm:ExpansionMain}. We denote by \(0<\lambda_1\leq\lambda_2\leq\ldots\)  its positive eigenvalues counting multiplicities and let \(f_1,f_2\ldots\) be a respective orthonormal system of eigenfunctions in \(H^0_b(X)\). From now on let \(\chi\in \mathscr{C}_c^\infty((\delta_1,\delta_2))\), \(\chi\not\equiv 0\), be a cut-off function for some \(0<\delta_1<\delta_2\) and we put \(\eta(t):=|\chi|^2(t)\), \(t\in \R\). In the situation and notation in Theorem~\ref{thm:ExpansionMain}, we notice that the local asymptotic expansion
	\begin{equation}
	\eta_k(T_P)(x,y)=\int_0^{+\infty} e^{ikt\varphi(x,y)}A(x,y,t,k)dt+O\left(k^{-\infty}\right)
	\end{equation}
	holds and 
	\begin{equation}
	{A}_{0}(x,x,t)=\frac{1}{2\pi ^{n+1}}
	\frac{dV_{\xi}}{dV}(x)\,|\chi(\sigma_P(\xi_x)t)|^2t^n
	\end{equation}
	satisfies $\int_0^{+\infty} A_0(x,x,t)dt\in\cC^\infty(D,]0,\infty[)$.
	
	Setting \(N_k=\#\{j\in\N:0<\lambda_j\leq \delta_2k\}\), we define 
	\begin{equation}
	G_k\colon X\to \C^{N_k},\quad    G_k(x)=\left(\chi(\lambda_1 k^{-1})f_1,\ldots,\chi(\lambda_{N_k} k^{-1})f_{N_k}\right).
	\end{equation}
	We will to show that \(G_k\) is a CR embedding for all sufficiently large \(k\).
	\begin{lemma}\label{lem:Fkgeq0}
		There exists \(k_0>0\) such that \(G_k(X)\subset \C^{N_k}\setminus\{0\}\)  for all \(k\geq k_0\).
	\end{lemma}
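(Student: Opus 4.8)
The statement to prove is Lemma~\ref{lem:Fkgeq0}: there exists $k_0>0$ such that $F_k(X)\subset\C^{N_k}\setminus\{0\}$ for all $k\geq k_0$. Equivalently, we must show that for $k$ large and for every $x\in X$, at least one coordinate $\chi(k^{-1}\lambda_j)f_j(x)$ is nonzero, i.e.\ the vector $F_k(x)$ does not vanish. The natural quantity to control is the squared norm $|F_k(x)|^2=\sum_{j=1}^{N_k}|\chi(k^{-1}\lambda_j)|^2|f_j(x)|^2$. The key observation is that, since $\supp\chi\subset(\delta_1,\delta_2)$, we have $|\chi(k^{-1}\lambda_j)|^2=\eta(k^{-1}\lambda_j)$ automatically vanishes unless $\delta_1 k<\lambda_j<\delta_2 k$, so the sum over $j\le N_k$ agrees with the full sum $\sum_{j\ge1}\eta(k^{-1}\lambda_j)|f_j(x)|^2$. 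This full sum is precisely the diagonal value $\eta_k(T_P)(x,x)$ of the spectral operator associated with $\eta=|\chi|^2\in\cC^\infty_c((\delta_1,\delta_2))$, $\eta\not\equiv0$, $\eta\ge0$.

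First I would invoke Theorem~\ref{thm:chi_k(T_P) by Psi} (or equivalently Theorem~\ref{C:asyk Sec 5}) applied to the cut-off function $\eta$ in place of $\chi$: it gives the on-diagonal asymptotic expansion
\begin{equation}
\eta_k(T_P)(x,x)=\mathcal{A}_0^\eta(x)k^{n+1}+O(k^n)
\end{equation}
in $\mathscr{C}^\infty$-topology, with leading coefficient
\begin{equation}
\mathcal{A}_0^\eta(x)=\frac{1}{2\pi^{n+1}}\frac{dV_\xi(x)}{\sigma_P(\xi_x)^{n+1}dV(x)}\int_{\R_+}\eta(t)t^n\,dt.
\end{equation}
Since $\eta=|\chi|^2\ge0$ is not identically zero and $t^n>0$ on $\R_+$, the integral $\int_{\R_+}\eta(t)t^n\,dt$ is strictly positive; since $dV_\xi$, $dV$ are positive volume forms and $\sigma_P(\xi_x)>0$ on the compact manifold $X$, the function $\mathcal{A}_0^\eta$ is bounded below by a strictly positive constant $c_0:=\min_{x\in X}\mathcal{A}_0^\eta(x)>0$. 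Thus, for $k$ large enough (say $k\ge k_0$), uniformly in $x$ we get $\eta_k(T_P)(x,x)\ge \tfrac{c_0}{2}k^{n+1}>0$.

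Finally I would assemble the conclusion: for $k\ge k_0$ and any $x\in X$,
\begin{equation}
|F_k(x)|^2=\sum_{j=1}^{N_k}|\chi(k^{-1}\lambda_j)|^2|f_j(x)|^2=\sum_{j\ge1}\eta(k^{-1}\lambda_j)|f_j(x)|^2=\eta_k(T_P)(x,x)\ge\frac{c_0}{2}k^{n+1}>0,
\end{equation}
where the middle equality uses that $\eta(k^{-1}\lambda_j)=0$ whenever $\lambda_j>\delta_2 k$ (so adding the tail terms $j>N_k$ changes nothing) and the last equality is the definition of the Schwartz kernel of $\eta_k(T_P)$ on the diagonal together with $\eta_k(T_P)=\overline\chi_k(T_P)\chi_k(T_P)$ being nonnegative. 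Hence $F_k(x)\ne0$, proving $F_k(X)\subset\C^{N_k}\setminus\{0\}$. The only mild subtlety — and the one point worth spelling out carefully — is the identification of $\sum_j\eta(k^{-1}\lambda_j)|f_j(x)|^2$ with the on-diagonal kernel value $\eta_k(T_P)(x,x)$ and the uniformity (in $x\in X$) of the remainder estimate $O(k^n)$; both follow from the $\mathscr{C}^\infty$-topology statement in Theorem~\ref{thm:chi_k(T_P) by Psi} combined with the compactness of $X$, so no genuine obstacle arises.
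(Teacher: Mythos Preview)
Your proposal is correct and follows essentially the same approach as the paper: identify $|F_k(x)|^2=\eta_k(T_P)(x,x)$ for $\eta=|\chi|^2$ and use the on-diagonal asymptotic expansion (Theorem~\ref{thm:ExpansionMain}/Theorem~\ref{C:asyk Sec 5}) together with strict positivity of the leading coefficient and compactness of $X$ to get a uniform positive lower bound for large $k$. The paper's proof is the same argument in one line; your version simply spells out the positivity of $\mathcal{A}_0^\eta$ and the tail-truncation $j\le N_k$ more explicitly.
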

	\begin{proof}
		We have \(|G_k(x)|^2=\eta_k(T_P)(x,x)\). From Theorem~\ref{thm:ExpansionMain} using \(\eta\geq 0\), \(\eta\not\equiv 0\), we find that there exists \(C,k_0>0\) such that \(|G_k|^2\geq C\) for all \(k\geq k_0\). 
	\end{proof}
	\begin{lemma}\label{lem:Fkimmersion}
		There exists \(k_0>0\) such that \(G_k\) is an immersion for all \(k\geq k_0\).
	\end{lemma}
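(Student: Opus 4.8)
The plan is to prove that the real-linear map \(dF_k|_x\colon T_xX\to\C^{N_k}\) is injective for every \(x\in X\) once \(k\) is large, the threshold \(k_0\) being uniform by covering \(X\) with finitely many coordinate patches \(D\) as in Theorem~\ref{thm:ExpansionMain}. Since \(F_k=(\chi(k^{-1}\lambda_1)f_1,\dots,\chi(k^{-1}\lambda_{N_k})f_{N_k})\) and \(\eta=|\chi|^2\), the \(j\)-th component of \(dF_k(v)\) is \(\chi(k^{-1}\lambda_j)\,v(f_j)\); hence, for a real tangent vector \(v=\sum_a v_a\partial_{x_a}\),
\[
|dF_k(v)|^2=\sum_{j\geq1}\eta(k^{-1}\lambda_j)\,|v(f_j)|^2=\sum_{a,b}v_a\overline{v_b}\,\partial_{x_a}\partial_{y_b}\eta_k(T_P)(x,y)\big|_{y=x}.
\]
So it suffices to show a uniform bound \(|dF_k(v)|^2\geq c\,k^{n+2}|v|^2\) for the Hermitian form built from the diagonal mixed derivatives of \(\eta_k(T_P)\).

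To get this I would decompose \(v=V+\overline V+s\,\mathcal{T}\) with \(V\in T^{1,0}_xX\), \(s=\xi(v)\in\R\), the three pieces mutually orthogonal for the fixed metric, so \(|v|^2=2|V|^2+s^2\). Since each \(f_j\) is a smooth CR function, \(\overline\partial_b f_j=0\), whence \(\overline V(f_j)=0\) and \(dF_k(v)=dF_k(V)+s\,dF_k(\mathcal{T})\) with \(dF_k(V)\) of components \(\chi(k^{-1}\lambda_j)V(f_j)\); therefore \(|dF_k(v)|^2=\|dF_k(V)\|^2+2s\operatorname{Re}\langle dF_k(V),dF_k(\mathcal{T})\rangle+s^2\|dF_k(\mathcal{T})\|^2\). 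The three terms I would estimate via the oscillatory integral \(\eta_k(T_P)(x,y)=\int_0^{+\infty}e^{ikt\varphi(x,y)}A(x,y,t,k)\,dt+O(k^{-\infty})\) of Theorem~\ref{thm:chi_k(T_P) by Psi} and Lemma~\ref{lem:firstDifferentialMainThm} (applied with \(\chi\) replaced by \(\eta\)). For the Reeb block, the second identity of Lemma~\ref{lem:firstDifferentialMainThm} contracted with \(\mathcal{T}\otimes\mathcal{T}\), using \(\xi(\mathcal{T})=1\) and \(\int t^{n+2}\eta\,dt>0\), gives \(\|dF_k(\mathcal{T})\|^2\geq c_2\,k^{n+3}\) for large \(k\), uniformly in \(x\). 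For the mixed term, the cancellation explained below (the top \((ikt)^2\)-term carries a factor \(V_{(x)}\varphi(x,x)=0\)) forces \(|\langle dF_k(V),dF_k(\mathcal{T})\rangle|\leq C\,k^{n+2}|V|\).

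The crux is the holomorphic block. Differentiating the integral and setting \(y=x\): since \(d_x\varphi(x,x)=\xi(x)\) annihilates \(HX\), one has \(V_{(x)}\varphi(x,x)=\langle\xi(x),V\rangle=0\) and likewise \(\overline V_{(y)}\varphi(x,x)=0\), so every term carrying a factor \(V_{(x)}\varphi\) or \(\overline V_{(y)}\varphi\) (in particular the order \(k^{n+3}\) term and the order \(k^{n+2}\) term from \(A_1\)) drops out, and exactly one term survives at order \(k^{n+2}\):
\[
\|dF_k(V)\|^2=V_{(x)}\overline V_{(y)}\eta_k(T_P)(x,y)\big|_{y=x}=i\,k^{n+2}\Big(\int_0^{+\infty}t\,A_0(x,x,t)\,dt\Big)\big(V_{(x)}\overline V_{(y)}\varphi\big)(x,x)+O(k^{n+1}).
\]
Using the tangential Hessian normal form of Theorem~\ref{tangential hessian of varphi} (and \eqref{eq:Im varphi>C|z-w|^2}) one identifies \(i\,(V_{(x)}\overline V_{(y)}\varphi)(x,x)=2\,\mathcal{L}_x(V,V)\), positive definite by strict pseudoconvexity (Definition~\ref{D:Leviform}); since \(A_0(x,x,t)=\tfrac{1}{2\pi^{n+1}}\tfrac{dV_\xi}{dV}(x)\eta(\sigma_P(\xi_x)t)t^n\geq0\) is not identically zero in \(t\), the moment \(\int_0^{+\infty}t\,A_0(x,x,t)\,dt\) is positive and continuous, so \(\|dF_k(V)\|^2\geq c_1 k^{n+2}|V|^2-C k^{n+1}|V|^2\) uniformly.

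Finally, normalizing \(|v|=1\) (so \(2|V|^2+s^2=1\)) I would combine the three bounds in two cases: if \(|s|\geq k^{-1/4}\), the Reeb term \(s^2\|dF_k(\mathcal{T})\|^2\gtrsim k^{n+3-1/2}\) beats the cross term \(\lesssim|s|k^{n+2}\) while \(\|dF_k(V)\|^2\geq0\); if \(|s|<k^{-1/4}\), then \(|V|^2=(1-s^2)/2\geq1/4\) for \(k\) large, so \(\|dF_k(V)\|^2\gtrsim k^{n+2}\) beats the cross term \(\lesssim k^{n+2-1/4}\) and the Reeb contribution is nonnegative. Either way \(|dF_k(v)|^2\geq c\,k^{n+2}\) for all unit \(v\) and all \(k\geq k_0\), with \(k_0\) uniform over the finite cover; hence \(dF_k\) is injective and \(F_k\) is an immersion. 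I expect the main obstacle to be the holomorphic-block computation: tracking which single term of the differentiated oscillatory integral survives at order \(k^{n+2}\) after contraction with \(V\), and then reading off its leading coefficient as a positive multiple of the Levi form uniformly in \(x\); the extra power of \(k\) in the Reeb direction (second part of Lemma~\ref{lem:firstDifferentialMainThm}) is precisely what keeps the Reeb/Levi cross term from causing trouble.
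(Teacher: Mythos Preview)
Your proposal is correct and follows the same overall strategy as the paper: reduce to showing the pullback quadratic form \(v\mapsto|dF_k(v)|^2\) is positive definite by splitting into the Reeb and Levi-distribution directions, identifying the Levi form as the leading coefficient on \(HX\), and controlling the cross term.

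There are, however, two genuine differences in execution worth noting. First, to extract the Levi form on the holomorphic block the paper does \emph{not} differentiate the oscillatory integral twice as you do; instead it uses the commutator identity
\[
\langle Z_1 F_k,\,Z_2 F_k\rangle=\overline{Z_2}\bigl(Z_1|F_k|^2\bigr)+\langle [Z_1,\overline{Z_2}]F_k,\,F_k\rangle,
\]
valid because \(F_k\) is CR, so the leading contribution comes from \(d_x\eta_k(T_P)(x,y)|_{y=x}\) contracted with \([Z_1,\overline{Z_2}]\), and the Levi form appears via \(\xi([Z_1,\overline{Z_2}])=-2i\mathcal{L}(Z_1,Z_2)\). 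This reduces everything to the \emph{first}-derivative expansion of Lemma~\ref{lem:firstDifferentialMainThm}, whereas you invoke the tangential Hessian of \(\varphi\) from Theorem~\ref{tangential hessian of varphi} to evaluate \(i(V_{(x)}\overline V_{(y)}\varphi)(x,x)=2\mathcal{L}_x(V,V)\) directly. Both computations are equivalent, but the paper's route avoids the local normal form. Second, to combine the two scales the paper introduces an anisotropic rescaling \(L_k\colon \mathcal{T}\mapsto k^{-1}\mathcal{T}\), \(V\mapsto k^{-1/2}V\) on \(H\), and shows \(k^{-n-1}L_k^*(F_k^*g_{\mathrm{eucl}})\to a\,\xi\!\otimes\!\xi+b\,h\) with \(a,b>0\); positivity is then immediate. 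Your case split \(|s|\gtrless k^{-1/4}\) achieves the same thing but is ad hoc, whereas the rescaling packages the anisotropy once and for all (and is reused later, e.g.\ in Lemma~\ref{lem:PullbackMetricIsMetricReebCase}).
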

	\begin{proof}
		Put \(H=\{H_x\}_{x\in X}\), \(H_x:=\text{Re}( T^{1,0}_xX)\), and  
		define an isomorphism \(L_k\colon   TX \to   TX\) by \(L_k(\mathcal{T})=\frac{1}{k}\mathcal{T}\) and \(L_k(V)=\frac{1}{\sqrt{k}}V\) for \(V\in  H\). Denote by \(h\) the  positive symmetric bilinearform on \(H\) induced by the Levi-form. Extend \(h\) to a symmetric bilinearform on \(TX\) by \(h(\mathcal{T},\cdot)\equiv 0\). Moreover, let \(\langle \cdot\mid\cdot\rangle_X\) be a smooth metric on \(\C TX\) as in Section~\ref{sec:CRmanifoldsMicroLocal}. We will show that there exists positive smooth  functions \(a,b\colon X\to \R_+\) such that
		\begin{equation}
		k^{-n-1}L_k^*(G_k^* g_{\text{eucl}})=a\xi\otimes\xi +bh+O(k^{-\frac{1}{2}})    
		\end{equation}
		where \(g_\text{eucl}\) denotes the Riemannian metric on \(\C^{N_k}\) induced by its standard Hermitian metric \(\langle \cdot,\cdot\rangle\).
		More precisely, there exists a constant \(C>0\) such that
		\begin{equation}\label{eq:PullbackEuclideanCN}
		\left|k^{-n-1}L_k^*(F^*_kg_{\text{eucl}})(V,W) - a\xi(V)\xi(W)-bh(V,W)\right|\leq \frac{C}{\sqrt{k}}|V|_X|W|_X
		\end{equation}
		for all \(k\geq 1\), \(V,W \in T_pX\), \(p\in X\). 
		From this statement it follows that the pullback of the 
		Riemannian metric on \(\C^{N_k}\) under \(G_k\) becomes 
		a Riemannian metric on \(X\) when \(k\) is large enough. 
		Hence \(G_k\) is an immersion for all sufficiently large \(k\).
		
		Let us show that~\eqref{eq:PullbackEuclideanCN} holds.
		With Theorem~\ref{thm:ExpansionMain} and 
		Lemma~\ref{lem:firstDifferentialMainThm} we have 
		\begin{equation}
		\begin{split}
		&k^{-n-1}L_k^*(G_k^*g_{\text{eucl}})(\mathcal{T},\mathcal{T})\\
		&=k^{-n-2}\Re\left( (d_x\otimes d_y\eta_k(T_P)(x,y)|_{x=y})(\mathcal{T}\otimes \mathcal{T})\right)\\
		&=a+O(k^{-1})
		\end{split}
		\end{equation}
		for some positive smooth function \(a\colon X\to\R_+\).  
		Hence there exists \(C_1>0\) with 
		\begin{equation}
		|k^{-n-1}L_k^*(G_k^* g_\text{eucl})(\mathcal{T},\mathcal{T})-a|\leq
		C_1k^{-1},\:\:\text{for all \(k\geq 1\).}
		\end{equation}
		
		From Lemma~\ref{lem:firstDifferentialMainThm} we also have for \(V\in H_x\) that 
		\begin{equation}
		k^{-n-1}(d_x\otimes d_y\eta_k(T_P)(x,y)|_{x=y})(\mathcal{T}\otimes V)=O(k).
		\end{equation}
		We conclude that there exist \(C_2>0\) such that
		\begin{equation}
		|k^{-n-1}L_k^*(G_k^*g_\text{eucl})(\mathcal{T},V)|\leq C_2k^{-\frac{1}{2}}|V|_X
		\end{equation}
		holds for all \(V\in H\) and all \(k\geq 1\). 
		We define a symmetric bilinear form on 
		\(H\) by \(R_k=\{(R_k)_x\}_{x\in X}\), \((R_k)_x\colon H_x\times H_x\to \C \),
		\begin{equation}
		(R_k)_x(V_1,V_2):=k^{-n-1}G_k^*g_{\text{eucl}}(V_1,V_2)=
		k^{-n-1}\text{Re}\langle V_1G_k(x),V_2G_k(x)\rangle.
		\end{equation}
		From Theorem~\ref{thm:ExpansionMain} we find bilinear forms 
		\(r_j=\{(r_j)_x\}_{x\in X}\), \((r_j)_x\colon H_x\times H_x\to \R \), \(j=0,1\), 
		and a constant \(C_3>0\) such that 
		\begin{equation}
		|R_k(V_1,V_2)-k^{2}r_0(V_1,V_2)-k^1r_1(V_1,V_2)|\leq C_3|V_1|_X|V_2|_X
		\end{equation}
		holds for all \(k\geq 1\), \(x\in X\) and \(V_1,V_2\in H_x\). From Lemma~\ref{lem:firstDifferentialMainThm} we obtain \(r_0\equiv 0\). So we will calculate \(r_1\). Given \(x\in X\) and \(V_1,V_2\in H_x\) we can find smooth sections  \(Z_j\in\Gamma(U,T^{1,0}X)\) with \(2\text{Re}(Z_j)_x=V_j\), \(j=1,2\),  defined in an open neighborhood \(U\) around \(x\).
		At \(x\in X\), since \(G_k\) is a CR map, we find
		\begin{equation}
		\begin{split}
		\langle(\chi(k^{-1}\lambda_1)V_1f_1,\ldots,&
		\chi(k^{-1}\lambda_{N_k})V_1f_{N_k})),
		(\chi(k^{-1}\lambda_1)V_2 f_1,\ldots,
		\chi(k^{-1}\lambda_{N_k})V_2 f_{N_k}))\rangle\\
		&=:\langle V_1 G_k, V_2 G_k\rangle=\langle Z_1 G_k,Z_2 G_k\rangle\\
		&=\overline{Z_2}(Z_1\langle G_k, G_k\rangle)-\langle \overline{Z_2}(Z_1 G_k), G_k\rangle\\
		&=\overline{Z_2}(Z_1\langle G_k, G_k\rangle)+\langle [Z_1,\overline{Z_2}] G_k, G_k\rangle.
		\end{split}
		\end{equation}
		We have
		\begin{equation}
		k^{-n-1}\overline{Z_2}(Z_1 \eta_k(T_P)(x,x))=O(1)
		\end{equation}
		and
		\begin{multline}
		k^{-n-1}(d_x\eta_k(T_P)(x,y)|_{x=y})([Z_1,\overline{Z_2}])\\
		-i\left(\frac{k}{2\pi^{n+1}}
		\frac{dV_{\xi}}{\sigma_P(\xi)^{n+2}dV}
		\int_0^{+\infty} t^{n+1}\eta(t)dt\right)\xi([Z_1,\overline{Z}_2])=O(1).
		\end{multline}
		We find, with \(\xi([Z_1,\overline{Z}_2])=-2i\mathcal{L}(Z_1,Z_2)\), that \(r_1(V_1,V_2)=bh(V_1,V_2)\) for some positive smooth function \(b\colon X\to \R_+\). Hence we conclude that there exist a constant \(C_4>0\) such that  
		\begin{equation}
		|k^{-n-1}L_k^*(G_k^*g_\text{eucl})(V_1,V_2)-bh(V_1,V_2)|\leq C_4k^{-1}|V_1|_X|V_2|_X    
		\end{equation}
		holds for all \(k\geq 1\) and all \(V_1,V_2\in H_x\), \(x\in X\). The statement follows.
	\end{proof}
	We are going to show now that \(G_k\) is injective for all sufficiently large \(k\). Given a point \(p\in X\) we fix coordinates \((x,y)\) around \((p,p)\in X\times X\) centered in the origin as in Theorem~\ref{thm:tangential hessian of varphi}.
	We write $x'=(x_1,\ldots,x_{2n}),~z=(z_1,\ldots,z_n)$, $y'=(y_1,\ldots,y_{2n}),~w=(w_1,\ldots,w_n)$, where $z_j=x_{2j-1}+ix_{2j},~w_j=y_{2j-1}+y_{2j}$. By Theorem~\ref{thm:tangential hessian of varphi}, for any phase function \(\varphi\in {\Ph}(\Pi,\xi,D)\), we have
	\begin{equation}
	\label{Eq:TangentialHessianVarPsi}
	\begin{split}
	\varphi(x,y)
	=&x_{2n+1}-y_{2n+1}+i\sum_{j=1}^n\mu_j|z_j-w_j|^2+\sum_{j=1}^ni\mu_j(\overline{z}_jw_j-z_j\overline{w}_j)\\
	+&\sum_{j=1}^n \left(d_j(z_jx_{2n+1}-w_jy_{2n+1})+\overline{d}_j(\overline{z}_jx_{2n+1}-\overline{w}_jy_{2n+1})\right)\\
	+&(x_{2n+1}-y_{2n+1})f_\varphi(x,y)+O\left(|(x,y)|^3\right).
	\end{split}
	\end{equation}
	Furthermore, for this coordinates there exists a constant \(C>0\) with \(\Im\varphi(x,y)\geq C |z-w|^2\).
	\begin{lemma}\label{lem:Fkinjective}
		There exists \(k_0>0\) such that \(\widetilde{G}_k\colon X\to  \C^{N_k}\setminus\{0\}\), \(\widetilde{G}_k(x)=\frac{G_k(x)}{|G_k(x)|}\) is well-defined and injective  for all \(k\geq k_0\).
	\end{lemma}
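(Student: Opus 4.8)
The plan is to use a standard dichotomy argument: if $\tilde F_k$ fails to be injective for arbitrarily large $k$, then we can find sequences of points $p_k\neq q_k$ with $\tilde F_k(p_k)=\tilde F_k(q_k)$, and by compactness of $X$ we may pass to subsequences $p_k\to p$, $q_k\to q$. We then split into two cases. \textbf{Case 1: $p\neq q$.} Here we would use the off-diagonal decay \eqref{Eq:FarAwayDiagonalMainThm}. Fix disjoint cut-offs $\tau_1,\tau_2\in\cC^\infty(X)$ equal to $1$ near $p$ and $q$ respectively; for $k$ large $p_k$, $q_k$ lie in these neighbourhoods. From $\tilde F_k(p_k)=\tilde F_k(q_k)$ we get $F_k(p_k)=c_k F_k(q_k)$ for some $c_k\in\C$, hence by Cauchy--Schwarz $|\eta_k(T_P)(p_k,q_k)|^2=\eta_k(T_P)(p_k,p_k)\,\eta_k(T_P)(q_k,q_k)$. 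The right-hand side is $\sim C k^{2(n+1)}$ by Theorem~\ref{thm:ExpansionMain} (with $\eta=|\chi|^2\geq 0$, $\eta\not\equiv0$), while $\tau_1\eta_k(T_P)\tau_2=O(k^{-\infty})$ forces the left-hand side to be $O(k^{-\infty})$, a contradiction.

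\textbf{Case 2: $p=q$.} This is the main obstacle and requires the local form of the phase. Work in the coordinates of Theorem~\ref{tangential hessian of varphi} centered at $p$, so that \eqref{Eq:TangentialHessianVarPsi} and $\Im\varphi(x,y)\ge C|z-w|^2$ hold. Write $p_k\leftrightarrow u_k$, $q_k\leftrightarrow v_k$ in these coordinates, with $u_k,v_k\to 0$. The equation $F_k(p_k)=c_k F_k(q_k)$ again yields
\begin{equation}
|\eta_k(T_P)(p_k,q_k)|^2=\eta_k(T_P)(p_k,p_k)\,\eta_k(T_P)(q_k,q_k).
\end{equation}
Now plug in the oscillatory integral representation. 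After the change of variable $t\mapsto t/k$ one is comparing $\bigl|\int_0^{+\infty} e^{it\varphi(u_k,v_k)}A(u_k,v_k,t/k,k)\,dt\bigr|^2$ with the analogous diagonal expressions; the leading behaviour is governed by $e^{-2k\,\Im\varphi(u_k,v_k)}$ against $1$, so the identity forces $k\,\Im\varphi(u_k,v_k)\to 0$, hence $\sqrt{k}\,|z(u_k)-w(v_k)|\to 0$ by \eqref{eq:Im varphi>C|z-w|^2}. Thus after rescaling the ``horizontal'' coordinates by $\sqrt{k}$ and the ``vertical'' coordinate $x_{2n+1}$ by $k$, the two points converge to each other in rescaled coordinates; but one must still extract a contradiction with $p_k\neq q_k$. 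The way to do this is to examine $\langle V F_k(p_k),F_k(q_k)\rangle$ for $V$ running over the rescaled frame $\{L_k \mathcal T, L_k(\mathrm{Re}\,W_j), L_k(\mathrm{Im}\,W_j)\}$ of Lemma~\ref{lem:Fkimmersion}: the Gram-type relations from $F_k(p_k)=c_kF_k(q_k)$ combined with the immersion estimate \eqref{eq:PullbackEuclideanCN} (which shows $k^{-n-1}L_k^*(F_k^*g_{\mathrm{eucl}})$ is uniformly comparable to $a\xi\otimes\xi+bh$) imply that, in the rescaled coordinates, the displacement $\delta_k$ between $u_k$ and $v_k$ satisfies $|\delta_k|\to 0$, yet the first-order Taylor expansion of $F_k$ along $\delta_k$ (again controlled via Lemma~\ref{lem:firstDifferentialMainThm} and Theorem~\ref{thm:ExpansionMain}) shows $|F_k(p_k)-F_k(q_k)|^2 / |F_k(p_k)|^2 \gtrsim |\delta_k|^2$ up to $O(k^{-1/2})$-errors, contradicting $\tilde F_k(p_k)=\tilde F_k(q_k)$ once $|\delta_k|$ is bounded below — which we may assume after rescaling, since $p_k\neq q_k$ means $\delta_k\neq 0$ and we can renormalize the rescaling exponents to keep $|\delta_k|=1$ along a subsequence.

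I expect the delicate point of Case~2 to be the bookkeeping in the rescaled coordinates: one has anisotropic scaling ($\sqrt{k}$ in the $z$-directions, $k$ in the $x_{2n+1}$-direction coming from the $\Sigma$-cone structure of $\varphi$), and one needs the local Hessian formula \eqref{Eq:TangentialHessianVarPsi} to see that in these scaled variables $e^{ikt\varphi}$ converges to the model phase $e^{it(x_{2n+1}-y_{2n+1})+it\sum_j(i\mu_j|z_j-w_j|^2+i\mu_j(\bar z_jw_j-z_j\bar w_j))}$, i.e. the flat Heisenberg-group Szeg\H{o} kernel. Matching the comparison of kernels against this explicit model, analogously to the leading-coefficient computations in \cite{Hs10}, \cite{Hs15}, \cite{HLM21}, is what makes the injectivity near the diagonal work; away from that, everything reduces to Theorem~\ref{thm:ExpansionMain}, Lemma~\ref{lem:firstDifferentialMainThm} and Lemma~\ref{lem:Fkimmersion}.
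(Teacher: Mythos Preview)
Your Case 1 ($p\neq q$) is correct and matches the paper. The problem is entirely in Case 2.

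First, you pass from $k\,\Im\varphi(u_k,v_k)\to 0$ directly to ``in rescaled coordinates the two points converge to each other.'' This skips a whole regime. The bound $\Im\varphi\ge C|z-w|^2$ only gives $\sqrt{k}\,|z^k-w^k|\to 0$; it says nothing about the vertical direction. If $k|x^k_{2n+1}-y^k_{2n+1}|$ (equivalently $k|\langle\partial_x\varphi(y_k,y_k),x_k-y_k\rangle|$) stays bounded away from $0$, the kernel $\eta_k(T_P)(u_k,v_k)$ is dominated by an \emph{oscillatory} integral, and one must argue separately that the ratio is then strictly below $1$ (the paper splits this off as a separate case and uses either integration by parts when $k\Re\varphi\to\infty$ or the strict inequality $|\int e^{ita}A_0\,dt|<\int A_0\,dt$ when $k\varphi\to a+ib$ with $a\neq 0$). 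Your claim that ``the leading behaviour is governed by $e^{-2k\Im\varphi}$ against $1$'' is false in this regime.

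Second, in the remaining degenerate case where both rescaled displacements go to $0$, your plan to ``renormalize the rescaling exponents to keep $|\delta_k|=1$'' does not work: the asymptotic expansion of Theorem~\ref{thm:ExpansionMain} and the immersion estimate \eqref{eq:PullbackEuclideanCN} are tied to the specific anisotropic scaling $L_k$, and there is no reason they survive an additional blow-up by the unknown factor $|\delta_k|^{-1}$. What you actually need is a uniform $C^2$ bound on $\tilde F_k\circ L_k$ in rescaled coordinates so that a quantitative inverse function theorem applies on a ball of size independent of $k$; you have not established this, and the immersion lemma alone (a first-order statement) cannot supply it. The paper sidesteps this entirely: it introduces the scalar function
\[
\mathscr{H}_k(u)=\frac{|\eta_k(T_P)(ux_k+(1-u)y_k,\,y_k)|^2}{\eta_k(T_P)(y_k,y_k)\,\eta_k(T_P)(ux_k+(1-u)y_k,\,ux_k+(1-u)y_k)},
\]
notes that $\mathscr{H}_k(0)=\mathscr{H}_k(1)=1$ and $0\le\mathscr{H}_k\le 1$, so $\mathscr{H}_k''(u_k)\ge 0$ at some interior point, and then computes $\mathscr{H}_k''(u_k)$ directly using the tangential Hessian \eqref{Eq:TangentialHessianVarPsi} together with the strict H\"older inequality $(\int tA_0)^2<\int t^2A_0\cdot\int A_0$ to show it is strictly negative for large $k$. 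This second-derivative argument is precisely the missing second-order control that your Taylor-expansion sketch needs but does not provide.
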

	\begin{proof}
		The fact that \(\widetilde{G_k}\) is well-defined for all sufficiently large \(k>0\) follows immediately from Lemma~\ref{lem:Fkgeq0}. 
We prove the injectivity statement by contradiction following 
the methods in \cite[Theorem 4.8]{HLM21}. 
Suppose we can find sequences of integers $\{k_j\}_{j=1}^\infty$,
sqeunces of points $\{x_{k_j}\}_{j=1}^\infty, 
\{y_{k_j}\}_{j=1}^\infty$ in $X$ with
$\lim_{j\to+\infty}k_j=+\infty$ such that $x_{k_j}\neq y_{k_j}$
and $\widetilde{G}_{k_j}(x_{k_j})=\widetilde{G}_{k_j}(y_{k_j})$
for each $j\geq 1$. For simplicity, we just use the index 
$k$ instead of $k_j$. Because $X$ is compact, we may assume 
by passing to a subsequence that $x_k\to p\in X$ and 
$y_k\to q\in X$ as $k\to+\infty$.
Since \(\langle G_k(x), G_k(y)\rangle=
\eta_k(T_P)(x,y)\) and \(|G_k(x)|=\sqrt{\eta_k(T_P)(x,x)}\), here we have 
\begin{eqnarray}\label{eq:NotInjectiveAssumption}
\frac{\eta_k(T_P)(x_k,y_k)}{\sqrt{\eta_k(T_P)(x_k,x_k)}
\sqrt{\eta_k(T_P)(y_k,y_k)}}=\frac{|G_k(x_k)|^2}{|G_k(x_k)||G_k(x_k)|}=1
\end{eqnarray}
for all sufficiently large \(k>0\). 
From~\eqref{Eq:FarAwayDiagonalMainThm} in 
Theorem~\ref{thm:ExpansionMain} 
and the above equation we conclude that \(p=q\) must hold.
We consider several cases for the convergent speed of 
$x_k$ and $y_k$ to $p$. 
Denote $x_k=(z^k,x^k_{2n+1})$ and $y_k=(w^k,y^k_{2n+1})$ in local coordinates around \(p\) as above when \(k>0\) is large enough. We identify $p$ as $0\in\R^{2n+1}$, and we let \( \mathscr{M}\geq 0\) be a constant with
		\begin{equation}
		k|z^k-w^k|^2\geq\mathscr{M}
		\end{equation}
		for infinitely many sufficiently large \(k>0\).\\
		Case I: \(\mathscr{M}>0\).  
		We find
		by (\ref{eq:asymptotic expansion of chi_k(T_P)}) in Theorem \ref{thm:ExpansionMain} and Fatou lemma that for some subsequence we have
		\begin{equation}
		\begin{split}
		\limsup_{k\to+\infty}k^{-n-1}|\eta_k(T_P)(x_k,y_k)|
		&\leq\limsup_{k\to+\infty}k^{-n-1}\int e^{-kt\operatorname{Im}
			\varphi(x_k,y_k)}|{A}(x_k,y_k,t,k)|dt\\
		&\leq\limsup_{k\to+\infty}k^{-n-1}
		\int e^{-Ck|z^k-w^k|^2t}|{A}(x_k,y_k,t,k)|dt\\
		&=\int e^{-C\mathscr{M}t}{A}_0(0,0,t)dt<\int {A}_0(0,0,t)dt.
		\end{split}
		\end{equation}
		However, we also have 
		\begin{equation}\label{eq:limitOnDiagonal}
		\lim_{k\to+\infty}k^{-n-1}\eta_k(T_P)(x_k,x_k)=
		\lim_{k\to+\infty}k^{-n-1}\eta_k(T_P)(y_k,y_k)=\int {A}_0(0,0,t)dt>0,
		\end{equation}
		and hence
		\begin{equation}
		\limsup_{k\to+\infty}\frac{|\eta_k(T_P)(x_k,y_k)|}
		{\sqrt{\eta_k(T_P)(x_k,x_k)}\sqrt{\eta_k(T_P)(y_k,y_k)}}<1
		\end{equation}
		which contradicts (\ref{eq:NotInjectiveAssumption}). So, we must have 
		\begin{equation}
		\lim_{k\to+\infty}k|z^k-w^k|^2=0.
		\end{equation}
		Now, take a subsequence such that
		\begin{equation}
		\lim_{k\to+\infty} k|\langle \partial_x\varphi(y_k,y_k),x_k-y_k\rangle|
		=\mathscr{N}
		\end{equation}
		for some \(\mathscr{N}\in [0,+\infty]\). \\
		Case II: \(\lim_{k\to+\infty}k|z^k-w^k|^2=0\) and $\mathscr{N}\in (0,+\infty]$. 
		From Taylor's formula, (\ref{Eq:PhaseFuncMainThm}) and \(\partial_x\Im\varphi(x,x)=0\) we have
		\begin{equation}
		\begin{split}
		|\varphi(x_k,y_k)-\langle\partial_x\varphi(y_k,y_k),x_k-y_k\rangle|&\leq C_1 |x_k-y_k|^2\\
		|\Re\varphi(x_k,y_k)-\langle\partial_x\varphi(y_k,y_k),x_k-y_k\rangle|&\leq C_2 |x_k-y_k|^2
		\end{split}
		\end{equation}
		and
		\begin{equation}
		|x^k_{2n+1}-y^k_{2n+1}|\leq C_3(|\langle\partial_x\varphi(y_k,y_k),x_k-y_k\rangle|+|z^k-w^k|)
		\end{equation}	
		for some constants \(C_1,C_2,C_3>0\) independent of \(k>0\).
		Since \(\lim_{k\to+\infty}\langle\partial_x\varphi(y_k,y_k),x_k-y_k\rangle=0\)
		we find
		\begin{equation}
		\label{eq:varphigeqleqMN}
		\begin{split}
		\left|\varphi(x_k,y_k)\right|&\geq\left|\Re\varphi(x_k,y_k)\right|\geq c(|\langle\partial_x\varphi(y_k,y_k),x_k-y_k\rangle|-|z_k-w_k|^2)\\
		\left|\varphi(x_k,y_k)\right|&\leq \frac{1}{c}(|\langle\partial_x\varphi(y_k,y_k),x_k-y_k\rangle|+|z_k-w_k|^2)
		\end{split}
		\end{equation}
		when \(k>0\) is large enough for some constant \(c>0\).
		Suppose that $\mathscr{N}=\infty$. 
		Since we have \(\lim_{k\to+\infty}k|z^k-w^k|^2=0\) it follows from~\eqref{eq:varphigeqleqMN} that \(\lim_{k\to+\infty}k\left|\varphi(x_k,y_k)\right|=\infty\).
		We obtain
		\begin{equation}
		\begin{split}
		&\limsup_{k\to+\infty}k^{-n-1}|\eta_k(T_P)(x_k,y_k)|\\
		=&\limsup_{k\to+\infty}k^{-n-1}\left|\int e^{ikt\varphi(x_k,y_k)}{A}(x_k,y_k,t,k)dt\right|\\
		\leq&\limsup_{k\to+\infty}k^{-n-1}\int \left|e^{ikt\varphi(x_k,y_k)}\frac{1}{ik\varphi(x_k,y_k)}\frac{d}{dt}{A}(x_k,y_k,t,k)\right|dt\\
		=&\limsup_{k\to+\infty}\int e^{-kt\operatorname{Im}\varphi(x_k,y_k)}\frac{1}{k|\varphi(x_k,y_k)|}\left|\frac{d}{dt}{A}(x_k,y_k,t,k)\right|k^{-n-1}dt\\
		\leq&\int\limsup_{k\to+\infty}\left( e^{-Ctk|z^k-w^k|^2}\frac{1}{k|\varphi(x_k,y_k)|}\left|\frac{d}{dt}{A}(x_k,y_k,t,k)\right|k^{-n-1}\right)dt\\
		=&0.
		\end{split}
		\end{equation} 
		With~\eqref{eq:limitOnDiagonal} we find
		\begin{equation}
		\limsup_{k\to+\infty}\frac{|\eta_k(T_P)(x_k,y_k)|}{\sqrt{\eta_k(T_P)(x_k,x_k)}\sqrt{\eta_k(T_P)(y_k,y_k)}}=0,
		\end{equation}
		which contradicts~\eqref{eq:NotInjectiveAssumption}.\\
		If $\mathscr{N}\in(0,+\infty)$, then by~\eqref{eq:varphigeqleqMN} we have that \(k|\varphi(x_k,y_k)|\) is bounded. Furthermore, we have that \(k|\Re\varphi(x_k,y_k)|\) is bounded from below by a positive constant when \(k\) is large and \(\Im\varphi(x_k,y_k)\geq 0\). Hence, by passing to a subsequence we can assume \(\lim_{k\to+\infty}k\varphi(x_k,y_k)=a+ib\) with \(a>0\) and \(b\geq 0\). 
		With \(A_0(0,0,t)\geq 0\) and \(A_0(0,0,\cdot)\not\equiv 0\) we conclude
		\begin{equation}
		\begin{split}
		&\lim_{k\to+\infty}k^{-n-1}|\eta_k(T_P)(x_k,y_k)|\\
		=&\lim_{k\to+\infty}\left|\int e^{itk\operatorname{Re}\varphi(x_k,y_k)}e^{-tk\operatorname{Im}\varphi(x_k,y_k)}k^{-n-1}{A}(x_k,y_k,t,k)dt\right|\\
		=&\left|\int e^{ita}e^{-tb}{A}_0(0,0,t)dt\right|
		<\left|\int e^{-tb}{A}_0(0,0,t)dt\right|\\
		\leq&\int {A}_0(0,0,t)dt.
		\end{split}
		\end{equation}
		With~\eqref{eq:limitOnDiagonal} it follows that
		\begin{equation}
		\limsup_{k\to+\infty}\frac{|\eta_k(T_P)(x_k,y_k)|}{\sqrt{\eta_k(T_P)(x_k,x_k)}\sqrt{\eta_k(T_P)(y_k,y_k)}}<1
		\end{equation}	
		which contradicts~\eqref{eq:NotInjectiveAssumption}.
		So we may write
		\begin{equation}
		\lim_{k\to+\infty}k|\langle \partial_x\varphi(y_k,y_k),x_k-y_k\rangle|=0.
		\end{equation}
		Case III: $\lim_{k\to+\infty}k|z^k-w_k|^2=0$ and $\lim_{k\to+\infty}k|\langle \partial_x\varphi(y_k,y_k),x_k-y_k\rangle|=0$. 
		Define
		\begin{equation}
		\begin{split}
		&\mathscr{F}_k(u):=|\eta_k(T_P)(ux_k+(1-u)y_k,y_k)|^2\\
		&\mathscr{G}_k(u):=\eta_k(T_P)(y_k,y_k)
		\eta_k(T_P)(ux_k+(1-u)y_k,ux_k+(1-u)y_k),\\
		&\mathscr{H}_k(u):=\frac{\mathscr{F}_k(u)}{\mathscr{G}_k(u)}.
		\end{split}
		\end{equation}
		From~\eqref{eq:NotInjectiveAssumption} and basic inequalities,
		\begin{equation}	
		0\leq\mathscr{H}_k(u)\leq 1.
		\end{equation}
		Moreover, because $\mathscr{H}_k(0)=\mathscr{H}_k(1)=1$, 
		we can find a $u_k\in (0,1)$ such that
		\begin{equation}\label{eq:Hukgeq0}
		\mathscr{H}_k''(u_k)\geq 0.
		\end{equation}
		We compute
		\begin{equation}
		\mathscr{H}_k''(u_k)=\frac{\mathscr{F}_k''(u_k)}
		{\mathscr{G}_k(u_k)}-2\frac{\mathscr{F}_k'(u_k)
			\mathscr{G}_k'(u_k)}{\mathscr{G}_k^2(u_k)}-
		\frac{\mathscr{F}_k(u_k)\mathscr{G}_k''(u_k)}
		{\mathscr{G}_k^2(u_k)}+
		2\frac{\mathscr{F}_k(u_k)\mathscr{G}_k'^2(u_k)}
		{\mathscr{G}_k^3(u_k)}.
		\end{equation}
		For simplicity we denote
		\begin{equation}
		\alpha_k(u):=\eta_k(T_P)(ux_k+(1-u)y_k,y_k).
		\end{equation}
		Then
		\begin{equation}
		\begin{split}
		&\mathscr{F}_k(u)=|\alpha_k(u)|^2,\\
		&\mathscr{F}_k'(u)=\alpha_k'(u)\overline{\alpha}_k(u)+
		\alpha_k(u)\overline{\alpha}_k'(u),\\
		&\mathscr{F}_k''(u)=\alpha_k''(u)\overline{\alpha}_k(u)+
		2|\alpha_k'(u)|^2+\alpha_k(u)\overline{\alpha}_k''(u),
		\end{split}
		\end{equation}
		and (\ref{eq:asymptotic expansion of chi_k(T_P)}) 
		in Theorem~\ref{thm:ExpansionMain} implies that
		\begin{equation}
		\alpha_k(u)=\int e^{ikt\varphi(ux_k+(1-u)y_k,y_k,t)}
		A(ux_k+(1-u)y_k,y_k,t,k)+\gamma_k(u), 
		\end{equation}
		for some $\gamma_k(u)=O\left(k^{-\infty}\right)$. 
		Again for simplicity, we denote
		\begin{equation}
		\beta_k(u):=\int e^{ikt\varphi(ux_k+(1-u)y_k,y_k,t)}
		{A}(ux_k+(1-u)y_k,y_k,t,k)dt.
		\end{equation}
		We have
		\begin{equation}
		\begin{split}
		\beta_k'(u)=&\int e^{ikt\varphi(ux_k+(1-u)y_k,y_k)}ikt\left\langle\frac{\partial\varphi}{\partial x}(ux_k+(1-u)y_k,y_k),(x_k-y_k)\right\rangle \\
		&{A}(ux_k+(1-u)y_k,y_k,t,k)dt\\
		+&\int e^{ikt\varphi(ux_k+(1-u)y_k,y_k)}\left\langle\frac{\partial {A}}{\partial x}(ux_k+(1-u)y_k,y_k),(x_k-y_k)\right\rangle dt,
		\end{split}
		\end{equation}
		\begin{equation}
		\begin{split}
		\beta_k''(u)=&\int  e^{ikt\varphi(ux_k+(1-u)y_k,y_k)}(ikt)^2\left\langle\frac{\partial\varphi}{\partial x}(ux_k+(1-u)y_k,y_k),(x_k-y_k)\right\rangle^2\\
		&{A}(ux_k+(1-u)y_k,y_k,t,k)dt
		\\
		+&\int e^{ikt\varphi(ux_k+(1-u)y_k,y_k,t)}ikt\sum_{j,l}\frac{\partial^2\varphi}{\partial x_j\partial x_l}(ux_k+(1-u)y_k,y_k)(x_k^j-y_k^j)(x_k^l-y_k^l)\\
		&{A}(ux_k+(1-u)y_k,y_k,t,k)dt\\
		+&\int e^{ikt\varphi(ux_k+(1-u)y_k,y_k,t)}ikt\left\langle\frac{\partial\varphi}{\partial x}(ux_k+(1-u)y_k,y_k),(x_k-y_k)\right\rangle\\
		&\frac{\partial {A}}{\partial x}(ux_k+(1-u)y_k,y_k,t,k)dt\\
		+&\int e^{ikt\varphi(ux_k+(1-u)y_k,y_k,t)}\sum_{k,l}\frac{\partial^2 {A}}{\partial x_j\partial x_l}(ux_k+(1-u)y_k,y_k,t,k)(x_k^j-y_k^j)(x_k^l-y_k^l).
		\end{split}
		\end{equation}
		With the computations above we find
		\begin{equation}
		\begin{split}
		\mathscr{F}_k''(u_k)
		=&2|\beta_k'(u_k)|^2+\beta_k''(u_k)\overline{\beta}_k(u_k)+\beta_k(u_k)\overline{\beta_k}''(u_k)\\
		+&2\beta_k'(u_k)\overline{\gamma}_k'(u_k)+2\gamma_k'(u_k)\overline{\beta}_k'(u_k)\\
		+&\beta_k''(u_k)\overline{\gamma}_k(u_k)+\gamma_k(u_k)\overline{\beta}_k''(u_k)+\overline{\beta}_k(u_k)\gamma_k''(u_k)+\beta_k(u_k)\overline{\gamma}_k''(u_k)\\
		+&\gamma_k''(u_k)\overline{\gamma}(u_k)+\gamma_k(u_k)\overline{\gamma}_k''(u_k)+2|\gamma_k'(u_k)|^2.
		\end{split}
		\end{equation}
		Directly, by Taylor formula,
		we can write
		\begin{equation}
		\begin{split}
		&2|\beta_k'(u_k)|^2+\beta_k''(u_k)\overline{\beta}_k(u_k)+\overline{\beta''_k(u_k)}\beta_k(u_k)\\
		=&2k^{2n+4}\left(\left|\int t{A}_0(0,0,t)dt\right|^2-\int t^2{A}_0(0,0,t)\int {A}_0(0,0,t)dt\right)|\langle \partial_x\varphi(y_k,y_k),x_k-y_k\rangle|^2\\
		&-2k^{2n+3}\int\sum_{j,l=1}^{2n}\frac{\partial^2\Im\varphi}{\partial x_j\partial x_l}(0,0)(x_k^j-y_k^j)(x_k^l-y_k^l){A}_0(0,0,t)dt\int {A}_0(0,0,t)dt\\
		+&o(k^{2n+2})O\left(\left(\sqrt{k}\sum_{j=1}^{2n}|x_k^j-y_k^j|+k\left|\left\langle\frac{\partial\varphi}{\partial x}(y_k,y_k),y_k-x_k\right\rangle\right|\right)^2\right).
		\end{split}
		\end{equation}
		Notice that from H\"older's inequality,
		\begin{equation}
		\left|\int t{A}_0(0,0,t)dt\right|^2<\int t^2{A}_0(0,0,t)dt\int {A}_0(0,0,t)dt,
		\end{equation}
		so we can find a constant $c_1>0$ such that
		\begin{equation}
		\begin{split}
		& \left(\left|\int t{A}_0(0,0,t)dt\right|^2-\int t^2{A}_0(0,0,t)\int {A}_0(0,0,t)dt\right)|\langle \partial_x\varphi(y_k,y_k),x_k-y_k\rangle|^2\\
		&<-c_1|\langle \partial_x\varphi(y_k,y_k),x_k-y_k\rangle|^2.
		\end{split}
		\end{equation}
		Also, we can find a constant $c_2>0$ such that
		\begin{equation}
		-\int\sum_{j,l=1}^{2n}\frac{\partial^2\Im\varphi}{\partial x_j\partial x_l}(0,0)(x_k^j-y_k^j)(x_k^l-y_k^l){A}_0(0,0,t)dt<-c_2\sum_{j=1}^{2n}|x_k^j-y_k^j|^2.
		\end{equation}
		Accordingly, we can find a constant $c_0>0$ such that
		\begin{equation}
		\limsup_{k\to+\infty}\left(\sqrt{k}\sum_{j=1}^{2n}|x_k^j-y_k^j|+k|\langle\partial_x\varphi(y_k,y_k),y_k-x_k\rangle|\right)^{-2}\frac{\mathscr{F}_k''(u_k)}{\mathscr{G}_k(u_k)}\leq -c_0<0.
		\end{equation}
		One can check that
		\begin{equation}
		\begin{split}
		&\limsup_{k\to+\infty}\left(\sqrt{k}\sum_{j=1}^{2n}|x_k^j-y_k^j|+k|\langle\partial_x\varphi(y_k,y_k),y_k-x_k\rangle|\right)^{-2}\\
		&\left(-2\frac{\mathscr{F}_k'(u_k)\mathscr{G}_k'(u_k)}{\mathscr{G}_k^2(u_k)}-\frac{\mathscr{F}_k(u_k)\mathscr{G}_k''(u_k)}{\mathscr{G}_k^2(u_k)}+2\frac{\mathscr{F}_k(u_k)\mathscr{G}_k'^2(u_k)}{\mathscr{G}_k^3(u_k)}\right)=0.
		\end{split}
		\end{equation}
		holds. Thus, we obtain
		\begin{equation}
		\limsup_{k\to+\infty}\left(\sqrt{k}
		\sum_{j=1}^{2n}|x_k^j-y_k^j|+
		k|\langle\partial_x\varphi(x_k,x_k),y_k-x_k\rangle|\right)^{-2}
		\mathscr{H}_k''(u_k)\leq -c_1<0.
		\end{equation}
		This is a contradiction to~\eqref{eq:Hukgeq0}.
	\end{proof}
	
	\begin{proof}[Proof of Theorem  \ref{thm:embedding}]
		From Lemma~\ref{lem:Fkimmersion} and Lemma~\ref{lem:Fkinjective} it follows that \(G_k\) is an injective immersion for all sufficiently large \(k\). Since \(X\) is compact it follows that there exists \(k_0>0\) such that \(G_k\) is an embedding for all \(k\geq k_0\). From the definition it follows that \(G_k\) is a CR map for any \(k>0\). Since \(X\) is a codimension one CR manifold we conclude that \(G_k\) is a CR embedding for all \(k\geq k_0\). 
	\end{proof}
	
	\subsection{$T_{\lambda(k)}$ and an almost equivariant embedding}\label{sec:AlmostEquivariant}
	We consider the same setting as in Section~\ref{sec:KodairaEmbedding} but we rescale the map \(F_k\) by some \(k\)-dependent factor. So from now on we put
	\begin{equation}\label{eq:FkSectionAlmostEquivariant}
	F_k(x):=\sqrt{\frac{2\pi^{n+1}}{k^{n+1}}}
	\left(\chi(k^{-1}\lambda_1)f_1(x),\ldots,
	\chi(k^{-1}\lambda_{N_k})f_{N_k}(x)\right)
	\end{equation}
	with \(\lambda_j,f_j,\chi, N_k\) as in Section~\ref{sec:KodairaEmbedding}. Recall the definition of \(\mathcal{T}_{\lambda(k)}\) in~\eqref{eq:DefinitionTbetaIntroduction} with \(\lambda(k)=(\lambda_1,\ldots,\lambda_{N_k})\). 
	\begin{proof}[Proof of Theorem~\ref{thm:vectorfield}]
		First, we notice that in this situation we have
		\begin{equation}
		\sigma_P(\xi)=\frac{dV_{\xi}}{dV}=1.
		\end{equation}
		By writing  \(|dF_k\mathcal{T}|^2=
		\frac{2\pi^{n+1}}{k^{n+1}}
		\sum_{j=1}^{N_k}|\chi(k^{-1}\lambda_j)|^2|
		\mathcal{T}f_j|^2\) and using the second relation in Lemma~\ref{lem:firstDifferentialMainThm}, 
		we obtain \(|k^{-1}dF_{k}\mathcal{T}|^2=C'_\chi+O(k^{-1})\) 
		in $\mathscr{C}^\infty$-topology. We also have
		\begin{equation}
		\frac{k^{n+1}}{2\pi^{n+1}}|\mathcal{T}_{\lambda(k)}\circ F_k|^2=
		\sum_{j=1}^{N_k}\lambda_j^2|\chi(k^{-1}\lambda_j)|^2|f_j|^2.
		\end{equation}
		Hence, \(|k^{-1}\mathcal{T}_{\lambda(k)}\circ F_k|^2=C'_\chi+O(k^{-1})\) in $\mathscr{C}^\infty$-topology by Lemma~\ref{lem:evpowersexpansion}. 
		Furthermore, 
		\begin{equation}
		\frac{k^{n+1}}{2\pi^{n+1}}\langle \mathcal{T}_{\lambda(k)}\circ F_k,dF_{k}\mathcal{T}\rangle=\sum_{j=1}^{N_k} i\lambda_jf_j\mathcal{T}\overline{f_j}=ik(\mathcal{T}_y\tau_k(T_P)(x,y))|_{y=x}.
		\end{equation}
		Hence, using Lemma~\ref{lem:firstDifferentialMainThm} again, we find in $\mathscr{C}^\infty$-topology that
		\begin{equation}
		\operatorname{Re}\langle \mathcal{T}_{\lambda(k)}\circ F_k,dF_{k}\mathcal{T}\rangle=k^2C'_\chi+O(k^{1}).
		\end{equation}
		The third relation in this theorem then follows from
		\begin{equation}
		|k^{-1}T_{\lambda(k)}\circ F_k-k^{-1}dF_k\mathcal{T}|^2=|k^{-1}\mathcal{T}_{\lambda(k)}|^2+|k^{-1}dF_k\mathcal{T}|^2-2k^{-2}\operatorname{Re}\langle \mathcal{T}_{\lambda(k)},dF_{k}\mathcal{T}\rangle).
		\end{equation}
	\end{proof}
	Since an equality \((F_k)_*\mathcal{T}=
	\mathcal{T}_{\lambda(k)}\circ F_k\) would imply that the map 
	\(F_k\) is equivariant, Theorem~\ref{thm:vectorfield} 
	could be seen as an almost equivariant embedding result.
	\subsection{$\omega_{\lambda(k)}$ and an almost spherical embedding}
	We consider the same setting as in Section~\ref{sec:AlmostEquivariant} 
	with \(F_k\) given by~\eqref{eq:FkSectionAlmostEquivariant}.
	Let \(\alpha\) be the smooth $1$-form on \(\C^N\) defined by
	\begin{equation}
	\label{eq:OneFormAlpha}
	\alpha=\frac{1}{2i}\sum_{j=1}^{N}(\overline{z}_j dz_j-z_jd\overline{z_j}),
	\end{equation}
	With \(\eta=|\chi|^2\) in Lemma~\ref{lem:firstDifferentialMainThm}, we obtain that in $\mathscr{C}^\infty$-topology
	\begin{equation}
	\frac{k^{n+1}}{2\pi^{n+1}}F_k^*\alpha=
	\frac{1}{2i}\sum_{j=1}^{N_k}|\chi(k^{-1}\lambda_j)|^2(\overline{f_j}df_j-f_jd\overline{f_j})=\operatorname{Im}(d_x\eta_k(T_P)(x,y)|_{y=x}), 
	\end{equation}
	and hence
	\begin{equation}\label{eq:pullbackOneForm}
	k^{-1}F_k^*\alpha(x)=C_\chi\frac{dV_{\xi}}{\sigma_P(\xi)^{n+2}dV}(x)\xi(x)+O(k^{-1}),
	\end{equation}
	as \(k\to +\infty\) where \(C_\chi=
	\int_0^{+\infty} t^{n+1}|\chi(t)|^2 dt\).
	Recall the definition of \(\omega_{\lambda(k)}\) in~\eqref{eq:DefinitionOmegabetaIntroduction} with  \(\lambda(k)=(\lambda_1,\ldots,\lambda_{N_k})\).
	\begin{proof}[Proof of Theorem \ref{thm:pullbackweightoneform}]
		Consider the function \(g\colon \C^{N_k}\to\R\), 
		\(g(z)=\sum_{j=1}^{N_k}\lambda_j|z_j|^2\). We obtain
		\begin{equation}
		\frac{k^{n+1}}{2\pi^{n+1}}F_k^*g=
		\sum_{j=1}^{N_k}\lambda_j|\chi(k^{-1}\lambda_j)|^2|f_j|^2.
		\end{equation}
		Hence, by Lemma~\ref{lem:evpowersexpansion},
		\begin{equation}\label{eq:PullbackGInOneFormOmega}
		F_k^*g=kC_\chi\frac{dV_{\xi}}{\sigma_P(\xi)^{n+1}dV}+O(1),
		\end{equation}
		in $\mathscr{C}^\infty$-topology as \(k\to+\infty\) and it follows that \(F_k^*g>0\) for all sufficiently large \(k>0\). 
		We have \(\omega_{\lambda(k)}=g^{-1}\alpha\) with \(\alpha\) as in \eqref{eq:OneFormAlpha}. Hence, \(F_k^*\omega_{\lambda(k)}\) is well-defined for all sufficiently large \(k>0\) and by using~\eqref{eq:pullbackOneForm} the statement follows. 
	\end{proof}
	Let \(S^{2N-1}\subset \C^N\) be the sphere of radius one centered in zero. Given a smooth function \(f\colon S^{2N-1}\to (-1,+\infty)\) we recall that the perturbed sphere \(S^{2N-1}(f)\) is defined by
	\begin{equation}
	S^{2N-1}(f):=\left\{z\in\C^N\setminus\{0\}\colon |z|=1+f\left(\frac{z}{|z|}\right)\right\}.
	\end{equation}
	We note that \(S^{2N-1}(f)\) carries naturally the structure of a codimension one CR manifold as a real hypersurface in \(\C^N\setminus\{0\}\), and recall that for \(\beta\in\R_+^N\) the induced CR structure \(T^{1,0}S^{2N-1}(f)\) of \(S^{2N-1}(f)\) is annihilated by the real $1$-form \(\alpha_{f,\beta}\) defined in~\eqref{eq:DefinitionAlphafbetaIntroduction}.
	\begin{proof}[Proof of Theorem \ref{thm:embeddingperturbedsphere}]
		We choose $dV=dV_\xi$ and $P=-i\mathcal{T}$ so that $\sigma_P(\xi)=\frac{dV}{dV_\xi}=1$. Consider the map $\mathcal{F}_k$ as in \eqref{eq:FkSectionAlmostEquivariant} and 
		\begin{equation}
		F_k\colon X\to \C^{N_k},~F_k(x)=\left(\int_0^{+\infty} t^{n}|\chi(t)|^{2}dt\right)^{-\frac{1}{2}}\mathcal{F}_k.
		\end{equation}
		With that rescaling it follows that here
		\(|F_k|^2=1+O(k^{-1})\) in 
		$\mathscr{C}^\infty$-topology and hence we find for 
		\(u_k(x):=|F_k(x)|-1\) that \(u_k=O(k^{-1})\) in 
		$\mathscr{C}^\infty$-topology as \(k\to+\infty\). 
		We will construct a sequence of functions 
		\(v_k\colon S^{2N_k-1}\to (-1,\infty)\) such that 
		\(F_k(X)\subset S^{2N_k-1}(v_k)\) for any sufficiently large 
		\(k>0\). By examining the \(k\)-dependence of \(v_k\) 
		we will deduce the statement by putting \(f=v_k\) for some large enough \(k\).
		Consider the map \(\widetilde{F}_k\colon X\to S^{2N_k-1}\), \(\widetilde{F}_k(x)
		=F_k(x)/|F_k(x)|\). From Lemma~\ref{lem:Fkinjective}
		it follows that \(\widetilde{F}_k\) is well-defined and injective 
		when \(k\) is large. By our choice here, the leading coefficient 
		of $|\chi|^2_k(T_P)(x,x)$ is a constant. It follows that \(d|F_k|=O(k^{-1})\) and hence with~\eqref{eq:PullbackEuclideanCN} we can check that \(\widetilde{F_k}\) is an immersion for all sufficiently large \(k\). We conclude that there exist \(k_0>0\) such that \(\widetilde{F}_k\) is an embedding of \(X\) as a real manifold into \(S^{2N_k-1}\) for all \(k\geq k_0\). Fix \(k\geq k_0\). Define \(\widetilde{v}_k\colon \widetilde{F}_k(X)\to \R\) by \(\widetilde{v}_k(\widetilde{F}_k(x))=|F_k(x)|-1=u_k(x)\). We extend \(\widetilde{v}_k\) to a function \(v_k\colon S^{2N_k-1}\to \R\) as follows. Put \(\widetilde{X}_k=\widetilde{F}_k(X)\) and let   \(N \widetilde{X}_k\) be the normal bundle of \(\widetilde{X}_k\subset S^{2N_{k}-1}\) with projection map \({\rm Pr}\). The standard metric on \(S^{2N_{k}-1}\subset \C^{N_k}\) induces a fibre metric \(h\) on \(N\widetilde{X}_k\). Let \(N^\delta\widetilde{X}_k\) be a \(\delta\)-tube in \(N\widetilde{X}_k\) with respect to \(h\).  Choosing \(\delta\) small enough we can identify \(N^\delta \widetilde{X}_k\) with a small open neighborhood \(U\) of \(\widetilde{X}_k\) in  \(S^{2N_{k}-1}\) via the exponential map.  Let \(\nu\in \cC_c^\infty(\R,[0,1])\) be a function supported in \([-\delta/2,\delta/2]\) with \(\nu\equiv 1\) in a neighborhood of \(0\). We then define \(v_k\) on \(S^{2N_k-1}\) by \(v_k(z)=0\) for \(z\notin U\) and \(v_k(z)=\nu(h(z,z))\widetilde{v}_k({\rm Pr}(z))\) for \(z\in U\). From this construction we obtain a family of functions \(\{v_k\}_{k\geq k_0}\) such that for each \(k\geq k_0\) we have that \(v_k\colon S^{2N_k-1}\to (-1,\infty)\) is smooth with \(v_k=\widetilde{v}_k\) on \(\widetilde{X}_k\) and \(\sup_{S^{2N_k-1}} |v_k|\leq \sup_{\widetilde{X}_k}|\widetilde{v}_k|\). Furthermore, for \(k\geq k_0\) and \(x\in X\), let \(V_1,\ldots,V_{2N_k-1}\) be an orthonormal basis of \(T_{\widetilde{F}_k(x)}S^{2N_k-1}\) such that \(V_1,\ldots,V_{2n+1}\in T_{\widetilde{F}_k(x)}\widetilde{X}_k\).
		We find \(|(dv_{k})_{\widetilde{F}_k(x)}|^2=|V_1(v_k)|^2+\ldots+|V_{2n+1}(v_k)|^2\). Using \(d|F_k|=O(k^{-1})\) again, by Lemma~\ref{lem:PullbackMetricIsMetricReebCase} below, we can find a constant \(C>0\) independent of \(k\) such that \(|(d\widetilde{F}_k)^{-1}V|^2\leq Ck^{-1}|V|^2\) for all \(V\in T\widetilde{X}_k\) and all sufficiently large \(k\geq k_0\). Since \(V_j(v_k)=((d\widetilde{F}_k)^{-1}V_j)(u_k)\), \(j=1,\ldots,2n+1\), we obtain \(|(dv_{k})|^2 =O(k^{-1})\) on \(\widetilde{X}_k\) as \(k\to +\infty\). Now consider the map \(H\colon \C^{N_k}\setminus\{0\}\to S^{2N_k-1}\), \(H(z)=z/|z|\). We find \(\|dH\|_{\operatorname{op}}\leq |z|^{-1}+1\).
		By Theorem~\ref{thm:pullbackweightoneform}, we have
		\begin{equation}
		F^*_k\alpha_{\lambda_{(k)},v_k}=\frac{F^*_k\omega_{\lambda_{(k)}}}{|F_k|^3}+\frac{\gamma_k}{g(F_k)}=\xi+\frac{\gamma_k}{g(F_k)}+O(k^{-1}),
		\end{equation}
		where \(\gamma_k:=F^*_ki(\partial (v_k\circ H)-\overline{\partial}(v_k\circ H))\) and \(g(z)=\sum_{j=1}^{N_k}\lambda_j|z|^2\). From~\eqref{eq:PullbackEuclideanCN} we find a constant \(C>0\) such that \(|dF_kV|\leq Ck|V|\) holds for all \(V\in TX\) and all sufficiently large \(k>0\).  Since
		\(\gamma_k=dv_k\circ dH\circ J\circ dF_k\) where \(J\) denotes the complex structure of \(\C^{N_k}\), we have \(\gamma_k = O(\sqrt{k})\). Recall that \(g(F_k)=ck +O(1)\) for some constant \(c>0\) independent of \(k\) by \eqref{eq:PullbackGInOneFormOmega}. As a consequence we obtain \(F^*_k\alpha_{\lambda_{(k)},v_k}-\xi=O(k^{-\frac{1}{2}})\). \\
		Now given \(\varepsilon>0\) and \(m\in\N\) we can choose \(k\geq k_0\) large enough such that \(\sup|v_k|\leq \varepsilon\), \(\sup_{\widetilde{X}_k}|dv_k|\leq \varepsilon/2\), \(\|v_k\circ \widetilde{F}_k\|_{\mathscr{C}^m(X)}=\|u_k\|_{\mathscr{C}^m(X)}\leq \varepsilon\) and \(|F^*_k\alpha_{\lambda_{(k)},v_k}-\xi|\leq \varepsilon\). In addition, using Theorem~\ref{thm:vectorfield} and Theorem~\ref{thm:pullbackweightoneform}, we can achieve that (vii) is also valid.  By construction we have \(F_k(X)\subset S^{2N_k-1}(v_k)\) and since \(|dv_k|\) is continuous we find an open neighborhood around \(\widetilde{X}_k\) in \(S^{2N_k-1}\) such that \(|dv_k|\leq \varepsilon\) is valid there. Claim (vi) follows from (v) by choosing \(\varepsilon<1\). 
	\end{proof}
	\begin{lemma}\label{lem:PullbackMetricIsMetricReebCase}
		Under the assumptions of Theorem~\ref{thm:ExpansionMain} with \(dV=dV_\xi\) and \(P=-i\mathcal{T}\) where \(\mathcal{T}\) is the Reeb vector field with respect to \(\xi\) let  \(F_k\) be as in~\eqref{eq:FkSectionAlmostEquivariant}. There exists a constant \(c>0\) such that \(|dF_k V|^2\geq ck|V|^2\) holds for all \(V\in TX\) and all sufficiently large \(k> 0\) where we choose a Hermitian metric on \(X\) as in Section~\ref{sec:CRmanifoldsMicroLocal}.
	\end{lemma}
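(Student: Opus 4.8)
The plan is to deduce the bound directly from the pullback–metric asymptotics \eqref{eq:PullbackEuclideanCN}, which is already established in the proof of Lemma~\ref{lem:Fkimmersion}, combined with the anisotropic rescaling $L_k$ used there; essentially no new analysis is required, only careful bookkeeping of the two different powers of $k$ along $\mathcal{T}$ and along $H=\operatorname{Re}(T^{1,0}X)$. First I would record the effect of the normalization in \eqref{eq:FkSectionAlmostEquivariant}: if $\widehat F_k$ denotes the unnormalized map of Section~\ref{sec:KodairaEmbedding}, then $F_k=\sqrt{2\pi^{n+1}k^{-n-1}}\,\widehat F_k$, so $F_k^*g_{\mathrm{eucl}}=2\pi^{n+1}k^{-n-1}\,\widehat F_k^*g_{\mathrm{eucl}}$, and \eqref{eq:PullbackEuclideanCN} yields a constant $C>0$ with
\[
\bigl|L_k^*(F_k^*g_{\mathrm{eucl}})(V,W)-2\pi^{n+1}\bigl(a\,\xi(V)\xi(W)+b\,h(V,W)\bigr)\bigr|\le Ck^{-1/2}|V|_X|W|_X
\]
for all $k\ge 1$, $p\in X$, $V,W\in T_pX$, where $a,b\in\mathscr{C}^\infty(X,\R_+)$ are the positive functions appearing there (with $dV=dV_\xi$ and $P=-i\mathcal{T}$ one has $\sigma_P(\xi)=dV_\xi/dV\equiv1$, so by Lemma~\ref{lem:firstDifferentialMainThm} $a$ and $b$ are in fact positive constants, but only positivity is needed).

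The second step is to observe that the limit form $Q:=a\,\xi\otimes\xi+b\,h$ is positive definite on $TX$: since $h(\mathcal{T},\cdot)\equiv 0$ and $\xi$ vanishes on $H$, writing $V=V_H+\tau\mathcal{T}$ with $V_H\in H$ and $\tau\in\R$ gives $Q(V,V)=a\tau^2+b\,h(V_H,V_H)$, which vanishes only for $V=0$. By compactness of $X$ there is $c_0>0$ with $Q\ge c_0\langle\cdot\mid\cdot\rangle_X$ on $TX$, and absorbing the $O(k^{-1/2})$ remainder in the displayed estimate produces $k_0>0$ and $c_1>0$ such that $L_k^*(F_k^*g_{\mathrm{eucl}})(W,W)\ge c_1|W|_X^2$ for all $W\in TX$ and all $k\ge k_0$.

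Finally I would unwind $L_k$. Setting $W=L_k^{-1}V$, so that $L_kW=V$, gives
\[
|dF_kV|^2=(F_k^*g_{\mathrm{eucl}})(V,V)=L_k^*(F_k^*g_{\mathrm{eucl}})(L_k^{-1}V,L_k^{-1}V)\ge c_1|L_k^{-1}V|_X^2 ,
\]
and since the metric chosen in Section~\ref{sec:CRmanifoldsMicroLocal} satisfies $|\mathcal{T}|=1$ and $\mathcal{T}\perp HX$, for $V=V_H+\tau\mathcal{T}$ one has $|L_k^{-1}V|_X^2=|\sqrt{k}\,V_H+k\tau\,\mathcal{T}|_X^2=k|V_H|^2+k^2\tau^2\ge k\,|V|_X^2$ for $k\ge 1$. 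Combining, $|dF_kV|^2\ge c_1k|V|_X^2$, which is the claim with $c=c_1$. The only delicate point — and the one I would be careful about — is that $L_k$ rescales $\mathcal{T}$ and $H$ by different powers of $k$, so one must make sure the mixed terms (of size $k^{-1/2}$ in the $L_k$-frame) cannot destroy positivity; routing the last step through the clean inequality $|L_k^{-1}V|_X^2\ge k|V|_X^2$ makes this automatic, because the positivity of $Q$ is verified once and for all precisely in the frame where the asymptotic estimate is uniform. All remaining steps are routine.
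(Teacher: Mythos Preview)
Your argument is correct and in fact cleaner than the paper's. The paper does \emph{not} simply unwind \eqref{eq:PullbackEuclideanCN}; instead it bounds $|dF_kV_H|^2$ and $|dF_k\mathcal{T}|^2$ from below separately and then works to show that the cross term satisfies $|\Re\langle dF_k\mathcal{T},dF_kV_H\rangle|\le C|V_H|$ with a $k$-independent constant. To get this sharp cross-term estimate the paper uses the identity $\langle dF_k\mathcal{T},dF_kZ\rangle=(\iota_{\mathcal{T}}d(F_k^*\alpha))(Z)$ for $Z\in T^{1,0}X$, the asymptotic $F_k^*\alpha=kC_\chi\xi+O(1)$ from \eqref{eq:pullbackOneForm}, and crucially the Reeb condition $\iota_{\mathcal{T}}d\xi=0$, which kills the leading $k$-term. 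Your route sidesteps this entirely: by staying in the $L_k$-rescaled frame where the limit form $Q=a\,\xi\otimes\xi+b\,h$ is positive definite and absorbing the $O(k^{-1/2})$ error there, you never need an independent cross-term estimate, and then the purely linear-algebraic inequality $|L_k^{-1}V|_X^2\ge k|V|_X^2$ finishes the job. As you note, your argument only needs $a,b>0$, hence does not rely on the special choice $P=-i\mathcal{T}$ or on $\iota_{\mathcal{T}}d\xi=0$; the paper's proof, by contrast, uses the Reeb hypothesis in an essential way but yields the finer information that the cross terms are $O(1)$ rather than $O(k)$.
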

	\begin{proof}
		Put \(H=\Re(T^{1,0}X)\). From \eqref{eq:PullbackEuclideanCN} 
		we find that there is a constant \(c_0>0\) such that \(|dF_k V|^2\geq c_0k|V|^2\) and \(|dF_k \mathcal{T}|^2\geq c_0k\) holds for all \(V\in H\) and all sufficiently large \(k> 0\). In order to prove the statement it is enough to show then that there exists a constant \(C>0\) with
		\begin{eqnarray}\label{eq:InnerProdBetweenTandHBounded}
		|\Re\langle dF_k \mathcal{T}, dF_k V\rangle|\leq C|V|
		\end{eqnarray}
		for all \(V\in H\) and all large enough \(k\). For each \(V\in H\) we find a unique \(Z\in T^{1,0}X\) with \(V=Z+\overline{Z}\). By the choice of Hermitian metric on \(X\) we further have \(|V|^2=|Z|^2+|\overline{Z}|^2\). Hence, using that \(\mathcal{T}\) is a real vector field, \eqref{eq:InnerProdBetweenTandHBounded} follows immediately from showing that for some constant \(C_1>0\) we have  
		\begin{eqnarray}\label{eq:InnerProdBetweenTandT10XBounded}
		|\langle dF_k \mathcal{T}, dF_k Z\rangle|\leq C_1|Z| \text{ for all } Z\in T^{1,0}X
		\end{eqnarray}
		and all large enough \(k\).
		We have
		\begin{eqnarray}\label{eq:RelationInnerProdAndOneForm}
		|\langle dF_k \mathcal{T}, dF_k Z\rangle|=|(F^*_kd\alpha)( \mathcal{T},Z)|=|(\iota_\mathcal{T} d(F^*_k\alpha))(Z)|
		\end{eqnarray}
		with \(\alpha\) as in \eqref{eq:OneFormAlpha}.  With \(\sigma_P(\xi)\equiv 1\) and \(dV=dV_\xi\)  we obtain from \eqref{eq:pullbackOneForm} that
		\begin{eqnarray}\label{eq:pullbackOneFormReebCase}
		F^*_k\alpha=kC_\chi\xi +O(1)
		\end{eqnarray} 
		holds in $\mathscr{C}^\infty$-topology as $k\to+\infty$ where \(C_\chi=\int_0^{+\infty} t^{n+1}|\chi|^2(t)dt>0\). 
		Applying the exterior differential on both sides 
		in~\eqref{eq:pullbackOneFormReebCase} and using 
		\(\iota_\mathcal{T} d\xi = 0\) we find from~\eqref{eq:RelationInnerProdAndOneForm} 
		that there is a constant \(C_1>0\) such that~\eqref{eq:InnerProdBetweenTandT10XBounded} 
		holds for all large enough \(k\) which in conclusion verifies the statement.
	\end{proof}
	From Theorem~\ref{thm:embeddingperturbedsphere} it follows that any CR manifold \((X,T^{1,0}X)\) satisfying the assumptions in Theorem~\ref{thm:ExpansionMain} can be CR embedded into a sphere with a CR structure arbitrarily close to the standard one. Let us introduce the following notation in order to state this fact more precisely.   
	Given a Riemannian manifold \(M\) and two smooth complex subbundles \(E,E'\) of \(\C TM\) of same rank we denote for any \(x\in M\) by \(\operatorname{dist}(E_x,E'_x)\) the Grassmannian distance between \(E_x\) and \(E'_x\) with respect to the metric on \(\C TM\) induced by the Riemannian metric on \(M\). For \(U\subset\subset M\) put \(\operatorname{dist}_U(E,E'):=\sup_{x\in U}\operatorname{dist}(E_x,E'_x)\). As a consequence of Theorem~\ref{thm:embeddingperturbedsphere} we obtain the following.
	\begin{theorem}\label{cor:SphericalEmbedding}
		In the situation of Theorem~\ref{thm:embeddingperturbedsphere} given \(\varepsilon>0\) there exist \(N\in\N\), a CR structure \(\mathcal{C}^{1,0}\) on \(S^{2N-1}\), an open set \(U\subset S^{2N-1}\), \(\beta\in(\R_+)^N\) 	and a CR embedding \(F\colon (X,T^{1,0}X)\to (S^{2N-1},\mathcal{C}^{1,0}) \), i.e \(F\) is a smooth embedding with \(dF T^{1,0}X=\C TF(X)\cap \mathcal{C}^{1,0}\), such that
		\begin{itemize}
			\item[(i)]  \(F(X)\subset U\) and \(\operatorname{dist}_U(T^{1,0}S^{2N-1},\mathcal{C}^{1,0})\leq \varepsilon\).
			\item[(ii)] \(\left|1-\frac{|F_*\mathcal{T}|}{|\mathcal{T}_\beta\circ F|}\right|\leq \varepsilon\) and \(\left|1-\frac{\langle F_*\mathcal{T},\mathcal{T}_\beta\circ F\rangle}{|F_*\mathcal{T}||\mathcal{T}_\beta\circ F	|}\right|\leq \varepsilon\).
			\item[(iii)] \(\operatorname{dist}_X(T^{1,0}X\oplus T^{0,1} X,\C \ker F^*\omega_\beta)\leq \varepsilon\). 
		\end{itemize}
	\end{theorem}
	For the proof of Theorem~\ref{cor:SphericalEmbedding} we need two results from linear algebra.
	\begin{lemma}\label{lem:GDist-AlmostOrthogonal}
		Let \(E\) be a finite dimensional complex Hermitian vector space and let \(V,W\subset E\) two subspaces of equal dimension. Assume that there exists \(0\leq \delta<1\) such that
		\begin{eqnarray}
		|\langle v,w\rangle|&\leq& \delta |v||w|\text{ for all }v\in V,w\in W^\perp,\\
		|\langle v,w\rangle|&\leq& \delta |v||w|\text{ for all }v\in V^\perp,w\in W.	
		\end{eqnarray}
		Then, for the Grassmannian distance between \(V\) and \(W\) we have \(\operatorname{dist}(V,W)\leq\sqrt{2\delta}\).
	\end{lemma}
	\begin{proof}
		Let \(P_V\) and \(P_W\) denote the orthogonal projections to \(V\) and \(W\) respectively. Given \(a\in E\) we find
		\begin{equation}
		\begin{split}
		&|(P_V-P_W)a|^2\\
		=&\langle P_V a,a\rangle+\langle P_W a,a\rangle- \langle P_V a, P_Wa\rangle- \langle P_W a,P_Va\rangle\\
		=&\langle P_V a,(1-P_W)a\rangle+\langle P_W a,(1-P_V)a\rangle.
		\end{split}
		\end{equation}
		It follows from the assumptions that \(|(P_V-P_W)a|^2\leq 2\delta|a|^2\) and hence
\begin{equation}
\operatorname{dist}(V,W):=\|P_V-P_W\|_{\text{op}}\leq \sqrt{2\delta}.
\end{equation}
	\end{proof}
	\begin{lemma}\label{lem:GDist-KernelMatrix}
		Let \(E=\C^n\) be the \(n\)-dimensional complex Euclidean space with the standard Hermitian metric and let \(A,D\in\operatorname{Mat}_{m\times n}(\C)\) two matrices with \(m<n\). Assume that there exists \(c,\varepsilon>0\) such that \(|A^* v|\geq c|v|\) for all \(v\in \C^m\) and \(\|D\|_{\text{op}}\leq \varepsilon \) with \(c>2\varepsilon\). Put \(V=\ker A\) and \(W=\ker (A+D)\). We have
		\begin{eqnarray}
		|\langle v,w\rangle|&\leq& \frac{\varepsilon}{c-2\varepsilon} |v||w|\text{ for all }v\in V,w\in W^\perp,\\
		|\langle v,w\rangle|&\leq& \frac{\varepsilon}{c-2\varepsilon} |v||w|\text{ for all }v\in V^\perp,w\in W.	
		\end{eqnarray}
	\end{lemma}
	\begin{proof}
		Put \(B=A+D\) and let \(v\in V\) and \(w\in W^\perp\) be arbitrary. We have \(Av=0\) and \(w\in \operatorname{ran}B^*\). Hence, there exists \(b\in \C^m\) with \(w=A^*b+D^*b\). We obtain
\begin{equation}
|\langle v,w\rangle|=|\langle v,A^*b\rangle|+|\langle v,D^*b\rangle|\leq|v||D^*b|\leq \varepsilon |v||b|.
\end{equation}
		Then the first inequality follows from 
		\(|w|\geq|A^* b|-|D^* b|\geq (c-\varepsilon)|b|\). The second inequality follows similarly using \(A=B+(-D)\) with \(|B^* v|\geq (c-\varepsilon)|v|\) for all \(v\in \C^m\).
	\end{proof}
	The first claim in Theorem~\ref{cor:SphericalEmbedding} will be a consequence of the following lemma.
	\begin{lemma}\label{lem:PerturbedCRStructureAtOnePoint}
		Let \(S^{2N+1}(f)\) be a perturbed sphere where \(f\colon S^{2N+1}\to (-1,+\infty)\) is a smooth function and put \(H\colon S^{2N+1}(f)\to S^{2N+1} \), \(H(w)=\frac{w}{|w|}\). We have that \(\mathcal{C}^{1,0}:=dH T^{1,0}S^{2N+1}(f)\) defines a (codimension one) CR structure on \(S^{2N+1}\). Furthermore, given \(p\in S^{2N+1}\) and \(0<\varepsilon<1\)  with  \(|df_p|(1+f(p))^{-1}< \varepsilon/2\) one has
		\begin{equation}
		\operatorname{dist}(C^{1,0}_p,T^{1,0}_pS^{2N+1})\leq \sqrt{\frac{\varepsilon}{1-\varepsilon}}
		\end{equation}
	\end{lemma}
	\begin{proof}
		Since \(H\) is a diffeomorphism between \(S^{2N+1}(f)\) and \(S^{2N+1}\) it follows that \(\mathcal{C}^{1,0}\) defines a CR structure on \(S^{2N+1}\).
		Given \(w\in S^{2N+1}(f)\) we have that \(v\in \C T_wS^{2N+1}(f)\) is in element of \(T^{1,0}S^{2N+1}(f)\) if and only if 
		\begin{eqnarray}
		d\overline{w}_j(v)&=&0,\,\,\text{ for all }j=0,\ldots, N\\
		\sum_{j=0}^N\overline{w}_jdw_j(v)&=&2|w|d(f\circ H)v.
		\end{eqnarray}
		We note that \(H^{-1}(z)=(1+f(z))z\). Hence, given \(z\in S^{2N+1}\) we have that \(v\in \C T_zS^{2N+1}\) is in element of \(\mathcal{C}^{1,0}_z\) if and only if 
		\begin{equation}\label{eq:perturbedStructure}
		\begin{split}\left(d\overline{z}_j+\frac{\overline{z}_jdf_z}{1+f(z)}\right)(v)&=0,\,\,\text{ for all }j=0,\ldots, N\\
\Big(\sum_{j=0}^N\overline{z}_jdz_j\Big)(v)-\frac{df_z(v)}{1+f(z)}&=0.
		\end{split}
		\end{equation}
Recall that \(v\in \C T_zS^{2N+1}\) is in element of 
\(T^{1,0}_zS^{2N+1}\) if and only if 
		\begin{equation}\label{eq:unperturbedStructure}
		\begin{split}
		d\overline{z}_j(v)&=0,\,\,\text{ for all }j=0,\ldots, N\\
		\Big(\sum_{j=0}^N\overline{z}_jdz_j\Big)(v)&=0.
		\end{split}
		\end{equation}
		After an unitary transformation we can assume \(p=(1,0,\ldots,0)\). Using \(z_j=x_j+y_j\), \(x_j,y_j\in\R\), \(j=0,\ldots,N\), we can isometrically identify \(\C TS^{2N+1}\) with \(\C^{2N+1}\) using the orthonormal basis 
		\begin{equation}
		\frac{\partial}{\partial y_0},\frac{\partial}{\partial x_1},\frac{\partial}{\partial y_1},\ldots,\frac{\partial}{\partial x_N},\frac{\partial}{\partial y_N}.
		\end{equation}
		From~\eqref{eq:perturbedStructure} and~\eqref{eq:unperturbedStructure} it follows that \(T^{1,0}_pS^{2N+1}=\ker A\), \(\mathcal{C}^{1,0}_p=\ker(A+D)\), for some matrices \(A,D\in\operatorname{Mat}_{(N+1)\times (2N+1)}\), such that \(|A^*v|\geq |v|\) and \(\|D\|_{\text{op}}\leq \varepsilon/2\). Then the claim follows from  Lemma~\ref{lem:GDist-AlmostOrthogonal} and Lemma~\ref{lem:GDist-KernelMatrix}.
	\end{proof}
	\begin{proof}[Proof of Theorem~\ref{cor:SphericalEmbedding}]
		In order to verify claim~(ii) and claim~(iii) consider the sequences of maps \(F_k\), \(\widetilde{F}_k\) and vectors \(\lambda(k)\in(\R_+)^{N_k}\) in the proof of Theorem~\ref{thm:embeddingperturbedsphere}, for sufficiently large \(k>1\). Since \(|F_k|=1+O(k^{-1})\) in \(\mathscr{C}^\infty\)-topology we observe that there is a constant \(C_1>0\) such that \(|dF_kv-d\widetilde{F}_kv|\leq C_1\) for all \(v\in \C TX\) and all sufficiently large \(k>1\). Furthermore,  for \(k\) large enough one has \(|\mathcal{T}_{\lambda(k)}\circ \widetilde{F}_k|\geq C_2k\), \(|\mathcal{T}_{\lambda(k)}\circ \widetilde{F}_k- \mathcal{T}_{\lambda(k)}\circ F_k |\leq C_3\) for some constants \(C_2,C_3>0\) independent of \(k>1\). Similarly, we obtain \(|\widetilde{F}_k^*\omega_{\lambda(k)}-\xi|=O(k^{-1})\) from Theorem~\ref{thm:pullbackweightoneform}. We note that \(T^{1,0}X\oplus T^{0,1}X=\C\ker \xi\). Then for any given \(\varepsilon>0\) putting \(F=\widetilde{F_k}\) and \(\beta=\lambda(k)\) for \(k\) large enough,  claim~(ii) can be deduced form Theorem~\ref{thm:vectorfield} and claim~(iii) follows from  Lemma~\ref{lem:GDist-AlmostOrthogonal} and Lemma~\ref{lem:GDist-KernelMatrix}.\\ In addition, we can choose \(k\) large enough such that \(\widehat{F}:=F_k\) is a CR embedding \(\widehat{F}\colon X\to S^{2N-1}(f)\subset \C^N\)  satisfying the properties in Theorem~\ref{thm:embeddingperturbedsphere} for \(\varepsilon'>0\) with
		\begin{equation}
		\varepsilon':= \frac{1}{3}\frac{\varepsilon^2}{(1+\varepsilon^2)}.
		\end{equation} 
		We observe
		\begin{equation}
		\frac{|df_p|}{1+f(p)}\leq \frac{\varepsilon'}{1-\varepsilon'}\leq\frac{1}{2}\frac{\varepsilon^2}{(1+\varepsilon^2)}
		\end{equation}
		for all \(p\in U\) with \(U\) as in Theorem~\ref{thm:embeddingperturbedsphere}.
Put \(H\colon S^{2N-1}(f)\to S^{2N-1}\), \(H(w)=\frac{w}{|w|}\). 
From Lemma~\ref{lem:PerturbedCRStructureAtOnePoint}
we conclude  that \(\mathcal{C}^{1,0}:=dH T^{1,0}S^{2N-1}(f)\) 
defines a (codimension one) CR structure on \(S^{2N-1}\) 
such that 
\begin{equation}
\operatorname{dist}(C^{1,0}_p,T^{1,0}_pS^{2N+1})\leq \varepsilon
\end{equation}		
for all \(p\in U\). This shows that claim~(i) holds when taking
\(F:=H\circ\widehat{F}=\widetilde{F}_k\).
\end{proof}


\end{document}